\documentclass[reqno, 11pt]{article}

\usepackage{lineno}

\usepackage{amsmath, amssymb, amsthm, amsfonts, txfonts, pxfonts, amsbsy}
\usepackage[english]{babel}
\usepackage{a4wide}
\usepackage{bbm}
\usepackage{enumerate}
\usepackage[all]{xypic}
\usepackage{mathtools}
\usepackage{hyperref}
\usepackage{mathrsfs}
\numberwithin{equation}{section}


\usepackage{color}


\theoremstyle{plain}
\newtheorem{thm}{Theorem}[section]
\newtheorem{lem}[thm]{Lemma}
\newtheorem{prop}[thm]{Proposition}

\newtheorem{cor}[thm]{Corollary}

\theoremstyle{definition}
\newtheorem{defi}[thm]{Definition}

\theoremstyle{remark}
\newtheorem{ex}[thm]{Example}

\newtheorem{rem}[thm]{Remark}


\newcommand{\sk}{\vskip .4cm}
\newcommand{\nn}{\nonumber}
\newcommand{\ot}{\otimes}
\newcommand{\be}{\begin{equation}}
\newcommand{\ee}{\end{equation}}
\newcommand{\ra}{\rightarrow}
\newcommand{\tra}{\triangleright}
\newcommand{\trl}{\triangleleft}
\newcommand{\id}{\mathrm{id}}
\newcommand{\U}{\mathcal{U}}
\newcommand{\A}{\mathcal{A}}
\renewcommand{\Q}{\mathfrak{G}}

\newcommand{\IR}{\mathbb{R}}
\newcommand{\can}{\chi}
\newcommand{\da}{\rho}
\newcommand{\bbK}{\mathbb{K}}
\renewcommand{\1}{1}

\newcommand{\F}{\mathcal{F}}
\newcommand{\f}{\texttt{f}}
\newcommand{\tg}{\texttt{g}}

\newcommand{\ofu}[1]{\f^{-#1}}
\newcommand{\ofd}[1]{\f_{-#1}}

\renewcommand{\cot}{\gamma}
\newcommand{\co}[2]{\cot\left({#1}\ot{#2}\right)}
\newcommand{\coin}[2]{\bar\cot\left({#1}\ot{#2}\right)}
\newcommand{\mt}{\cdot_\cot}
\newcommand{\mtco}{\bullet_\cot}
\newcommand{\sg}{\sigma}
\newcommand{\sig}[2]{\sigma\left( {#1}\ot{#2}\right)}
\newcommand{\taug}[2]{\tau\left( {#1}\ot{#2}\right)}

\newcommand{\cog}[2]{\widetilde{\cot}\left({#1}\ot{#2}\right)}
\newcommand{\coing}[2]{\overline{\widetilde{\cot}}\left({#1}\ot{#2}\right)}

\newcommand{\hg}{H_\cot}
\newcommand{\pg}{A_\cot}


\newcommand{\pgls}{{_\sg A}}
\newcommand{\bgls}{{_\sg B}}

\newcommand{\mtcols}{~{_\sg\bullet}}
\newcommand{\col}{\varphi^\ell}

\newcommand{\zero}[1]{{#1}_{\scriptscriptstyle{(0)}}}
\newcommand{\one}[1]{{#1}_{\scriptscriptstyle{(1)}}}
\newcommand{\two}[1]{{#1}_{\scriptscriptstyle{(2)}}}

\newcommand{\three}[1]{{#1}_{\scriptscriptstyle{(3)}}}
\newcommand{\four}[1]{{#1}_{\scriptscriptstyle{(4)}}}
\newcommand{\five}[1]{{#1}_{\scriptscriptstyle{(5)}}}

\newcommand{\mzero}[1]{{#1}_{\scriptscriptstyle{(0)}}}
\newcommand{\mone}[1]{{#1}_{\scriptscriptstyle{(-1)}}}
\newcommand{\mtwo}[1]{{#1}_{\scriptscriptstyle{(-2)}}}
\newcommand{\mthree}[1]{{#1}_{\scriptscriptstyle{(-3)}}}
\newcommand{\mfour}[1]{{#1}_{\scriptscriptstyle{(-4)}}}


\newcommand{\C}{\mathbb C}

\newcommand{\LL}{{L}}

\newcommand{\fS}{{\mathfrak{S}}{}} 

\newcommand{\fL}{{\mathfrak{l}}{}}

\newcommand{\Ad}{\mathrm{Ad}}

\newcommand{\KMH}{{}^K{\!\cal M}{}^{H}}

\newcommand{\ft}{\mathcal{A}}
\newcommand{\sft}{\mathscr{A}}

\newcommand{\ocn}{\mathcal{O}(\alpha, \beta, x, c_{\scriptscriptstyle{N}}^{\pm 1})}
\newcommand{\ocs}{\mathcal{O}(\alpha, \beta, x, c_{\scriptscriptstyle{S}}^{\pm 1})}
\newcommand{\ocns}{\mathcal{O}(\alpha, \beta, x, c_{\scriptscriptstyle{N}}^{\pm 1}, c_{\scriptscriptstyle{S}}^{\pm 1})}

\title{\bf Noncommutative principal bundles \\
through twist deformation\\[10pt]}
\date{May 2016}
\author{
{\bf Paolo Aschieri}$\strut^{1}$, {\bf Pierre Bieliavsky}$\strut^{2}$, {\bf Chiara Pagani}$\strut^{3}$, {\bf Alexander Schenkel}$\strut^{4}$
\\[5pt]
$\strut^{1}$ {\small Dipartimento di Scienze e Innovazione Tecnologica}\\
 {\small and INFN Torino, }\\
{\small Universit{\`a} del Piemonte Orientale, 
Viale T.~Michel~11,~15121~Alessandria,~Italy.} \\ {\small {(e-mail: \texttt{aschieri@to.infn.it}})}
\\[5pt]
$\strut^{2}$
{\small  Institut de recherche en math\'ematique et physique, Universit\'e de Louvain,}\\
{\small Chemin du Cyclotron 2 bte L7.01.02, 1348 Louvain-la-Neuve, Belgium.} \\ {\small {(e-mail: \texttt{pierre.bieliavsky@uclouvain.be}})}
\\[5pt]
$\strut^{3}$
{\small  Mathematisches Institut,
Georg-August-Universit\"at G\"ottingen,}\\
{\small Bunsenstra\ss e 3-5,
37073 G\"ottingen,
Germany.} \\ {\small {(e-mail: \texttt{chiara.pagani@mathematik.uni-goettingen.de}})}
\\[5pt]
$\strut^{4}$
{\small Fakult\"at f\"ur Mathematik, Universit\"at Regensburg, 93040 Regensburg, Germany.}\\ 
{\small {(e-mail: \texttt{aschenkel83@gmail.com}})}
}


\begin{document}

\maketitle

\begin{abstract}
We construct noncommutative principal bundles deforming principal bundles with a Drinfeld twist (2-cocycle). If the twist is associated with the structure group then we have a deformation of the fibers. If the twist is associated with the automorphism group of the principal bundle, then we obtain noncommutative deformations of the base space as well. Combining the two twist deformations we obtain noncommutative principal bundles with both noncommutative fibers and base space. More in general, the natural isomorphisms proving the equivalence of a closed monoidal category of modules and its twist related one are used to obtain new Hopf-Galois extensions as twists of Hopf-Galois extensions. A sheaf approach is also considered, and examples presented.
\end{abstract}

\paragraph*{Keywords:} noncommutative geometry, noncommutative principal bundles, Hopf-Galois extensions, cocycle twisting

\paragraph*{MSC 2010:} 16T05, 16T15, 53D55, 81R50, 81R60, 18D35.

\newpage

\tableofcontents


\section{Introduction}
Given the relevance of Lie groups and principal bundles, the noncommutative analogues of these structures have been
studied since the early days of noncommutative (NC) geometry, first
examples being quantum groups and their coset spaces.
The algebraic structure underlying NC principal bundles is that of
Hopf-Galois extension, the structure group $G$ being replaced by a Hopf algebra
$H$ (e.g.\ that of functions on $G$, or more in general a
neither commutative nor cocommutative Hopf algebra). 
Presently, in the literature, there are many examples of NC principal
bundles, most of them can be understood as deformation quantization of
classical principal bundles, see e.g.\ \cite{BrMaj, gs}.   Our NC geometry study of principal bundles
is in this deformation quantization context, and specifically Drinfeld twist (or 2-cocycle)
deformations \cite{Dri83, doi}.    We provide a general theory where both
the base space and the fibers are deformed, this allows to recover
previously studied examples as particular cases, including a wide
class of NC principal bundles
on quantum coset spaces, as well as the NC instanton bundle on
the $\theta$-sphere $S^4_\theta$ \cite{cl, gw, gw07}.

Drinfeld twist deformation is indeed a powerful method.
It applies to any  algebra $A$ that carries an action of a group (more in
general a coaction of a Hopf algebra $K$). Given a twist on the group one first deforms the
group in a quantum group and then canonically induces via its action a
deformation of the algebra. Similarly, modules over algebras are
twisted  into modules over deformed algebras, in particular into NC vector bundles.
This program has been successfully extended in \cite{AS} to the differential geometry 
of NC vector bundles. It has led  to a theory of arbitrary (i.e., not
necessary equivariant) connections  on bimodules and on their tensor
products that generalizes the notion of bimodule connection introduced
in \cite{Mou, DV}. The construction is categorical, and in particular
commutative connections can be canonically quantized to NC
connections. As sharpened in \cite{BSS, BSS2} the categorical setting is that of closed
monoidal categories.

In the next level of complexity after algebras and (bi)modules we find
$(A,H)$-relative Hopf modules, i.e.\ (bi)modules with respect to an algebra $A$ and
comodules with respect to an Hopf algebra $H$ (in particular we will
be concerned with the example $A\otimes H$, that in the commutative
case corresponds to the algebra of functions on the total space of a principal bundle tensored
that on the structure group). 
They are the first objects of our interest because  principality of
NC bundles is bijectivity of a map (the so-called canonical map)
between relative Hopf modules.

In this paper we thus first deform the category of $(A,H)$-relative Hopf modules by
considering a twist associated with $H$ itself 
(this is the special
degenerate case where $K=H$). Next we consider the case where there is a different Hopf algebra $K$ that coacts on $A$ and on $(A,H)$-relative Hopf modules,
and study how to twist deform this category using twists on $K$, and
then using twists both on $K$ and on $H$.
Studying this category we
are canonically led to twist deform classical principal bundles into  NC
principal bundles and more generally to prove that NC principal
bundles are twisted  into new NC principal bundles. 
The key point is to relate the canonical map between the twisted
modules to the initial canonical map, so to deduce bijectivity of the
first from bijectivity of the second.
This is achieved via a set of isomorphisms that are explicitly
constructed and have a categorical interpretation as components of
natural isomorphisms. 

Considering a  twist on the ``structure group''
$H$ leads to a deformation of the fibers of the principal bundle; this result was also obtained in  \cite{MS05} with a different proof
that disregards the natural categorical setting we are advocating. 
Considering a twist 
on ``the external symmetry Hopf algebra'' $K$ (classically associated
with a subgroup of the automorphism group of the bundle) leads to a
deformation of the base space. 
Combining  twists on $H$ and on $K$ 
we obtain deformations of both the fibers and the base space.

The categorical context of relative Hopf modules and of their twist
deformations we set up  is furthermore used in order to prove that principal $H$-comodule algebras
(i.e.\ Hopf-Galois extensions that admit the construction of associated vector bundles) are deformed into principal $H$-comodule algebras (Corollary
\ref{cor2:diagr-can2}), here principality is captured by a linear map that is
not in general $K$-equivariant and that has to be properly deformed.
This deformation is explicitly given and shown to be related to the natural isomorphism proving the
equivalence of the categories of Hopf algebra modules and of twisted Hopf algebra modules
as closed monoidal categories.
This same categorical context is relevant for planned further
investigations in the  geometry of NC principal bundles, in particular in the notion of gauge group and of
principal connection. Indeed both gauge transformations and
connections, as is the case for connections on NC vector bundles \cite{AS},
will not in general be $K$-equivariant maps.

Complementing the global description of principal $G$-bundles as
$G$-manifolds with extra properties, there is the important local description
based on trivial principal bundles  and on transition functions. 
We therefore also present a local theory of twists
deformations of NC principal
bundles, based on a sheaf theoretic approach that complements the
initial global approach. The explicit example of the 
$\theta$-sphere $S^4_\theta$ is detailed.

Finally we observe that  the present study is mainly algebraic so that the examples treated are
either in the context of formal deformation quantization, using Fr{\'e}chet Hopf-Galois
extensions on the ring $\mathbb{C}[[\hbar]]$ (cf.\ the main Example \ref{FDQ}), or obtained
via abelian Drinfeld twists associated with tori actions on algebraic
varieties. However these latter NC algebras can be completed to
$C^*$-algebras by the general deformation construction of
Rieffel \cite{R}; furthermore, also deformations of smooth manifolds
based on nonabelian Drinfeld twists can be constructed nonformally \cite{BG}.
It is then promising to combine these nonformal deformation
techniques with the algebraic and categorical ones here
developed in order to consider nonformal
deformations of principal bundles. This is even more so because, contrary to the well established theory of NC vector bundles
(consider for example finite projective $C^*$-modules over $C^*$-algebras),
a general characterization of NC principal bundles beyond the algebraic level and
in terms of NC topology is still missing.
In particular we are interested in the wide class of nonformal NC principal bundles that could be obtained via  twists based on an external symmetry Hopf
algebra $K$. The present paper is also motivated by this program and
can be seen as the first step toward its accomplishment.
\sk

The outline of the paper is the following: in \S \ref{sec:HG} we recall the basic
definitions and results about Hopf-Galois extensions, while in \S \ref{sec:twists}
we review some results  from the theory of  
deformations  of Hopf algebras and comodule algebras by 2-cocycles and extend them to 
the category of relative Hopf-modules, relevant to our study. 
The main results of the present paper are contained in
\S \ref{sec:twHG}: in three successive subsections we study
deformations of $H$-Hopf-Galois extensions by 2-cocycles on the
structure group $H$ (\S \ref{sec:def_sg}), on an external Hopf algebra
$K$ of symmetries (\S \ref{sec:def_es}), and the combination of these deformations (\S \ref{sec:combidef}).
In \S \ref{appsect4} we apply  the theory developed to
deformations of  quantum homogeneous spaces (\S \ref{sec:qhom}) and
 to encompass sheaves of Hopf-Galois extensions (\S
\ref{sec:sheaves}), providing also examples.
Appendix \ref{app:twists} reviews the close relationship
between the theory of 2-cocycles and that of Drinfeld twists,
and Appendix \ref{appC} clarifies the relationship between one of our 
deformation maps (the $\Q$-map) and the natural transformation
which establishes that twisting may be regarded as an equivalence of closed monoidal categories.
Appendix \ref{appB}  presents a complementary study of the twisted sheaf
describing the instanton bundle on $S^4_\theta$. 

\sk
\paragraph{Acknowledgments:}
We would like to thank Tomasz Brzezi\'nski, Lucio Cirio, Francesco D'Andrea,
Rita Fioresi, Giovanni Landi, Zoran \v{S}koda and Stefan Waldmann for useful
comments and discussions.
The research of P.B. and (in the initial stages of the project) C.P.  was supported by the Belgian Scientific Policy under IAP grant DYGEST.
A.S.\  was supported by a research fellowship of the Deutsche
Forschungsgemeinschaft (DFG, Germany).  The authors are members of 
the COST Action MP1405 QSPACE, supported by COST (European
Cooperation in Science and Technology). P.A.\ is affiliated to INdAM,
GNFM (Istituto Nazionale di Alta Matematica, Gruppo Nazionale di
Fisica Matematica).

\section{Background}
\paragraph{Notation:}
We work in the category of $\bbK$-modules, for $\bbK$ a fixed commutative ring 
with unit $\1_\bbK$. 
We denote the tensor product over $\bbK$ just  by $\otimes$. Morphisms of $\bbK$-modules  are simply called $\bbK$-linear maps.
In the following, all algebras  are over $\bbK$ and assumed to be
unital and associative. 
The product in an algebra $A$ is denoted by $m_A: A \ot A \ra A$,  $a \ot b \mapsto ab$ and 
 the unit map by $\eta_A: \bbK \ra A$, with $\1_A:= \eta_A(\1_\bbK)$ the unit element.   
Analogously all coalgebras  are assumed to be over $\bbK$,  counital   and coassociative. 
We denote the  coproduct and counit of a coalgebra $C$ by $\Delta_C: C \ra C \ot C$ and  $\varepsilon_C: C \ra \bbK$ respectively. 
We use the standard Sweedler notation for the coproduct:  $\Delta_C(c)= \one{c} \ot \two{c}$ (sum understood), for all $ c \in C$, 
and for  iterations of it $\Delta_C^n=(\id \ot  \Delta_C) \circ\Delta_C^{n-1}: c \mapsto \one{c}\ot \two{c} \ot 
\cdots \ot c_{\scriptscriptstyle{(n+1)\;}}$, $n >1$.
We denote by $*$ the convolution product in the  dual $\bbK$-module
$C':=\mathrm{Hom}(C,\bbK)$, $(f * f') (c):=f(\one{c})f'(\two{c})$, for all $c \in C$, $f,f' \in C'$.
Finally, for a Hopf algebra $H$, we denote by $S_H: H \ra H$ its antipode. 
For all maps mentioned above we will omit the subscripts referring to
the co/algebras involved when no risk of confusion can occur.  
Many of the examples presented will concern co/algebras
 equipped with an antilinear involution ($*$-structure); we will assume all maps therein to be compatible with the $*$-structure.
To indicate an  object $V$ in a  category $\mathcal{C}$ we frequently simply write $V \in \mathcal{C}$.
Finally, all monoidal categories appearing in this paper will have a trivial associator, hence we can unambiguously write $V_1\otimes V_2\otimes \cdots \otimes V_n$ for the tensor product of $n$ objects.

\subsection{Hopf-Galois extensions} \label{sec:HG}
We briefly collect the algebraic preliminaries on Hopf-Galois extensions
required for our work. 
Let $H$ be a bialgebra (or just a coalgebra).
A right $H$-\textit{comodule} is a $\bbK$-module $V$ with a $\bbK$-linear map
$\delta^V:V\to V\otimes H$ (called a right $H$-coaction) such that 
\be \label{eqn:Hcomodule}
(\id\otimes \Delta)\circ \delta^V = (\delta^V\otimes \id)\circ \delta^V~,\quad 
(\id\otimes \varepsilon) \circ \delta^V =\id~. 
\ee
The coaction on an element
$v\in V$ is written in Sweedler notation as $\delta^V(v) = \zero{v}\otimes \one{v}$ (sum understood). The right $H$-comodule properties
(\ref{eqn:Hcomodule}) then read as, for all $v\in V$,
\begin{eqnarray}
\zero{v} \otimes \one{(\one{v})}\otimes \two{(\one{v})} &=& \zero{(\zero{v})}\otimes \one{(\zero{v})} \otimes \one{v}=: \zero{v} \ot\one{v} \ot \two{v} ~, \\
\zero{v} \,\varepsilon (\one{v}) &=& v~.\nn
\end{eqnarray}
We denote by ${\cal M}^H$ the category of right $H$-comodules, with the obvious
definition of right $H$-comodule morphisms:  a morphism
 between  $V, W \in  {\cal M}^H$ is a $\bbK$-linear map $\psi:V\to W$ which satisfies
$\delta^W\circ \psi=(\psi\otimes\id)\circ \delta^V$ ($H$-equivariance condition). 
If $H$ is a bialgebra, then ${\cal M}^H$ is a monoidal category:  given
$V,W\in {\cal M}^H$, then the tensor product
$V\otimes W$ is an object in $ {\cal M}^H$ with  the right $H$-coaction  
\begin{eqnarray}\label{deltaVW}
 \delta^{V\otimes W} :V\otimes W & \longrightarrow & V\otimes W\otimes H~,\\
 v\otimes w & \longmapsto & \zero{v}\otimes \zero{w} \otimes 
 \one{v}\one{w} ~.\nn
\end{eqnarray}
The unit object in ${\cal M}^H$ is $\bbK$ together with the coaction
given by the unit map  of $H$, i.e.,
 $\delta^{\bbK}:=\eta_H:\bbK\to \bbK\otimes H \simeq H$.
\sk

If $A$ is a right $H$-comodule and also an algebra it is natural to require  the
additional structures of product $m_A: A\otimes A\to A$ and
unit $\eta_A: \bbK \to A$ to be morphisms in the category ${\cal M}^H$ (with $A \ot
A\in {\cal M}^H $ via  $\delta^{A\otimes A} $ as above). Explicitly, 
a right $H$-\textit{comodule algebra} $A$ is an algebra  which is 
a right $H$-comodule and such that
\be\label{eqn:Hcomodulealgebra}
\delta^A \circ m_A = (m_{A}\otimes \id) \circ \delta^{A\otimes A}~,\quad
\delta^A\circ \eta_A = (\eta_A\otimes \id)\circ\delta^\bbK~.
\ee
This is equivalent to require the coaction $\delta^A: A\to A\otimes H$
to be a morphism of unital algebras (where $A\otimes H$ has the tensor
product algebra structure),  for all $a,a^\prime\in A \, $,
\be
\delta^A(a\,a^\prime) =\delta^A(a)\,\delta^A(a')~~\;,\quad
\delta^A(\1_A) = \1_A\otimes \1_H~\; .
\ee
A morphism between two right $H$-comodule algebras is a morphism in ${\cal M}^H$
which preserves products and units. We shall denote by ${\cal A}^H$ the category of
right $H$-comodule algebras.
\sk

If $V$ is a right $H$-comodule and also a  left $A$-module, where $A\in {\cal A}^H$,
it is natural to require the $A$-module structure 
(or $A$-action) $\,\tra_V: A \ot V \ra V, ~a \ot v \mapsto a \tra_V v$
to be a morphism
in the category ${\cal M}^H$, i.e.\ $\delta^V\circ \tra_V =
(\tra_V\otimes \id)\circ \delta^{A\otimes V}$  (with $A\otimes V\in {\cal M}^H$ via
$\delta^{A\otimes V}$ as above). We thus define the category of {\it
  relative Hopf modules}:

\begin{defi} \label{def:AMH}
Let $H$ be a bialgebra and $A\in{\cal A}^H$.
An  
$(A,H)$-\textbf{relative Hopf module}
$V$ is a right $H$-comodule with a compatible left $A$-module structure,
i.e.\ the left  $A$-action $\tra_V$ is a morphism of $H$-comodules according to
the following commutative diagram
\be\label{AMHdiag}
\xymatrix{
\ar[d]_-{\tra_V} A\otimes V \ar[rr]^-{\delta^{A \ot V}} &&\ar[d]^-{\tra_V \otimes \id} A \otimes V\otimes H\\
V \ar[rr]^-{\delta^V}&& V\otimes H
}
\ee
Explicitly, for all $a\in A$ and $v\in V$,
\be\label{eqn:modHcov} 
\zero{(a \tra_V v)} \ot \one{(a \tra_V v)} = \zero{a} \tra_V \zero{v} \ot \one{a}\one{v} ~. 
\ee
A morphism of 
$(A,H)$-relative Hopf modules is a morphism of
right $H$-comodules  which is also a morphism of left $A$-modules.
We denote by $ {}_{A}{\cal M}^H$ the category of 
$(A,H)$-relative Hopf modules.
\end{defi}
\begin{rem}
If $V$ is a left $A$-module then $V\otimes H$ is a left $(A\otimes H)$-module
via the left $(A\otimes H)$-action $(a\ot h)\,
(v\ot h'):=a\tra_V v\ot hh'$, for all $a\in
A,\,v\in V,\, h,h'\in H$. The commutativity of the diagram
(\ref{AMHdiag}) is equivalent to
\be
\delta^V(a\tra_V v)=\delta^A(a)
_{\:\!}\delta^V(v)~,
\ee
i.e., $\delta^V$ is a left $A$-module morphism, where the left $A$-action on 
$V\otimes H$ is via $\delta^A : A\to A\otimes H$ and the left $(A\otimes H)$-action above. 
\end{rem}
\sk

Analogously  to Definition \ref{def:AMH} 
we define the categories of relative Hopf modules ${{\cal M}_A}^{\,H}$ and ${{}_{A}{\cal M}_A}^{\!H}\,$:
\begin{defi} \label{def:AMAH}
Let $H$ be a bialgebra and $A\in {\cal A}^H$.
\begin{itemize}
\item[(i)] 
The objects in the category
${{\cal  M}_A}^{\!H}$
are  right $H$-comodules with a compatible right $A$-module structure, 
i.e.\ $V$ is an object in ${{\cal  M}_A}^{\!H}$ if the right $A$-action $\trl_V$
is a morphism of $H$-comodules according to the following commutative diagram
\be\label{MAHdiag}
\xymatrix{
\ar[d]_-{\trl_V} V\otimes A \ar[rr]^-{\delta^{V \ot A}} &&\ar[d]^-{\trl_V \otimes \id} V \otimes A\otimes H\\
V \ar[rr]^-{\delta^V}&& V\otimes H
}
\ee
Explicitly, for all $a\in A$ and $v\in V$,
\be\label{eqn:rightmodHcov} 
\zero{(v \trl_V a)} \ot \one{(v \trl_V a)} = \zero{v} \trl_V \zero{a} \ot \one{v}\one{a} ~. 
\ee
The morphisms in the category ${{\cal M}_A}^{\!H}$ are morphisms of
right $H$-comodules  which are also  morphisms of right $A$-modules.

\item[(ii)]
The objects in the category ${_A{\cal  M}_A}^{\!H}$
are  right $H$-comodules with a compatible  $A$-bimodule structure, 
i.e.\ the commuting left and  right $A$-actions satisfy respectively (\ref{AMHdiag}) and (\ref{MAHdiag}).
The morphisms in the category ${_A{\cal M}_A}^{\!H}$ are  morphisms of
right $H$-comodules  which are also morphisms of $A$-bimodules.
\end{itemize}
\end{defi}
\sk

Given a left $A$-module  $V$ and a $\bbK$-module  $W$, 
the $\bbK$-module $V \ot W$ is a left $A$-module  with left action
defined by 
$\tra_{V \ot W}:=\tra_{V}\otimes\ \id$, i.e.\ 
\begin{eqnarray}\label{tra_ot}
\tra_{V \ot W}: A \ot V \ot W &\longrightarrow & V \ot W~, \\
a \ot v \ot w \;&\longmapsto &  (a \tra_V v) \ot  w ~ .\nonumber
\end{eqnarray}

\begin{lem}\label{lem:ot}
If $V\in {}_{A}{\cal M}^H$ and $W\in {\cal M}^H$, then the right $H$-comodule $V \ot W$
equipped with the left $A$-action given by \eqref{tra_ot} is an object in $ {}_{A}{\cal M}^H$.
\end{lem}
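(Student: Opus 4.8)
The plan is to verify that the left $A$-action $\tra_{V\otimes W}$ defined in \eqref{tra_ot} turns the right $H$-comodule $V\otimes W$ into an object of ${}_A\mathcal{M}^H$. By Definition \ref{def:AMH} this amounts to two things: that $\tra_{V\otimes W}$ is an associative, unital left $A$-action, and that it is a morphism of right $H$-comodules, i.e.\ that the diagram \eqref{AMHdiag} commutes for $V\otimes W$.

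First I would dispose of the module axioms. Since $\tra_{V\otimes W}=\tra_V\otimes\id_W$ and $\tra_V$ is an associative unital left $A$-action on $V$, associativity and unitality for $V\otimes W$ follow immediately by applying the corresponding identities in the first tensor leg while leaving $W$ untouched; no comodule structure enters here.

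The substantive point is the $H$-equivariance of $\tra_{V\otimes W}$, which I would argue categorically. By hypothesis $V\in{}_A\mathcal{M}^H$, so $\tra_V\colon A\otimes V\to V$ is a morphism in $\mathcal{M}^H$, and trivially $\id_W\colon W\to W$ is one as well. Because $\mathcal{M}^H$ is monoidal with the coaction \eqref{deltaVW}, the tensor product $\tra_V\otimes\id_W$ is again a morphism in $\mathcal{M}^H$. Since the associator is trivial we may identify $(A\otimes V)\otimes W$ with $A\otimes V\otimes W$ carrying the triple tensor product coaction, and under this identification the map $\tra_V\otimes\id_W$ is exactly $\tra_{V\otimes W}\colon A\otimes V\otimes W\to V\otimes W$; hence \eqref{AMHdiag} commutes for $V\otimes W$ and $V\otimes W\in{}_A\mathcal{M}^H$.

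For readers who prefer a direct check, I would instead unwind \eqref{eqn:modHcov} in Sweedler notation: applying $\delta^{V\otimes W}$ to $a\tra_{V\otimes W}(v\otimes w)=(a\tra_V v)\otimes w$ and then using that $V$ is a relative Hopf module, so that $\delta^V(a\tra_V v)=\zero{a}\tra_V\zero{v}\otimes\one{a}\one{v}$, yields $(\zero{a}\tra_V\zero{v})\otimes\zero{w}\otimes\one{a}\one{v}\one{w}$, which is precisely $\zero{a}\tra_{V\otimes W}\zero{(v\otimes w)}\otimes\one{a}\one{(v\otimes w)}$. Since the verification only ever uses the relative Hopf module property of $V$ together with the definition \eqref{deltaVW} of the tensor coaction, there is no real obstacle; the only thing to watch is keeping the Sweedler legs of $V$ and $W$ in the correct order under \eqref{deltaVW}.
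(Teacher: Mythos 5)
Your proof is correct and the direct Sweedler computation in your final paragraph is essentially the paper's own proof: one expands $\delta^{V\otimes W}\bigl(a\tra_{V\otimes W}(v\otimes w)\bigr)$ and applies the relative Hopf module property \eqref{eqn:modHcov} of $V$ in the first tensor leg. Your categorical argument via the monoidal structure of $\mathcal{M}^H$ is a valid alternative phrasing of the same fact, which the paper itself only records afterwards in Remark \ref{rem:modulecategory} rather than using as the proof.
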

\begin{proof}
We prove that  the compatibility condition \eqref{eqn:modHcov} between the left $A$-action $\tra_{V \ot W}$  
(see \eqref{tra_ot}) and the  right $H$-coaction $\delta^{V \ot W}$ (see \eqref{deltaVW})   is satisfied:
\begin{eqnarray*}
\zero{\left(a \tra_{V \ot W} (v \ot w) \right)} \ot \one {\left(a \tra_{V \ot W} (v \ot w) \right)}
&=& 
\zero{\left( (a \tra_V  v )\ot w \right)} \ot \one {\left( (a \tra_V v) \ot w \right)}
\\
&=& 
\zero{ (a \tra_V  v )} \ot \zero{w} \ot \one{ (a \tra_V  v )} \one{w}
\\
&=& 
\left( \zero{a} \tra_V \zero{v} \right) \ot \zero{w} \ot \one{a} \one{v} \one{w}
\\
&=& 
\zero{a} \tra_{V \ot W} \left(  \zero{v} \ot \zero{w} \right)  \ot \one{a} \one{v} \one{w} 
\\
&=& 
\zero{a} \tra_{V \ot W}  \zero{(v \ot w)}   \ot \one{a} \one{(v  \ot w)}~,
\end{eqnarray*}
where  in the third passage we have used that  \eqref{eqn:modHcov} holds for the left $A$-action  $\tra_V$
 and the right $H$-coaction $\delta^V$. 
\end{proof}
\begin{rem}\label{rem:modulecategory}
More abstractly, Lemma \ref{lem:ot} states that ${}_{A}{\cal M}^H$ is a (right) module category
over the  monoidal category ${\cal M}^H$, see e.g.\ \cite{Ostrik}. Indeed, we have a
 bifunctor (denoted with abuse of notation also by $\otimes $) 
 $\otimes : {}_{A}{\cal M}^H \times {\cal M}^H \to {}_{A}{\cal M}^H$ which assigns to an
 object $(V,W)\in  {}_{A}{\cal M}^H \times {\cal M}^H$ 
 the object $V\otimes W\in  {}_{A}{\cal M}^H$ constructed in  Lemma \ref{lem:ot}
 and to a morphism $(f:V_1\to V_2\,,\; g: W_1\to W_2)$ in ${}_{A}{\cal M}^H \times {\cal M}^H$
 the ${}_{A}{\cal M}^H $-morphism $f\otimes g : V_1\otimes W_1\to V_2 \otimes W_2 \,,~v\otimes w \mapsto f(v)\otimes g(w)$.
 (It is easy to check that $f\otimes g$ is a morphism of left $A$-modules).
\end{rem}
\sk

Analogously, given a right $A$-module  $V$ and a $\bbK$-module  $W$, 
the $\bbK$-module $W\ot V$ is a right $A$-module  with right action
defined by 
$\trl_{W \ot V}:=\id\otimes \trl_{V}$. We omit the proof of the corresponding
\begin{lem}\label{lem:otl}
If $V\in {{\cal M}_A}^{\!H}$ and $W\in {\cal M}^H$, then the right $H$-comodule $W \ot V$ equipped with the right
  $A$-action given by $\trl_{W \ot V}:=\id\otimes \trl_{V}$ is an
  object in $ {{\cal M}_A}^{\!H}$.
\end{lem}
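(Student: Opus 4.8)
The plan is to follow the proof of Lemma \ref{lem:ot} verbatim, interchanging the roles of left and right, so that the only substantive thing to check is the right-handed compatibility condition \eqref{eqn:rightmodHcov} for the pair $(\trl_{W \ot V}, \delta^{W \ot V})$. That $W \ot V$ is already a right $H$-comodule is part of the monoidal structure of ${\cal M}^H$ recorded in \eqref{deltaVW}, and that $\trl_{W \ot V} = \id \ot \trl_V$ is a well-defined, associative and unital right $A$-action is immediate from the corresponding properties of $\trl_V$ (since $\id$ acts trivially on the $W$-slot). Hence the entire content of the lemma reduces to the commutativity of the square \eqref{MAHdiag} for $W \ot V$.

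First I would record the two ingredients explicitly: by \eqref{deltaVW} the coaction reads $\delta^{W \ot V}(w \ot v) = \zero{w} \ot \zero{v} \ot \one{w}\one{v}$, and by definition the action reads $(w \ot v) \trl_{W \ot V} a = w \ot (v \trl_V a)$. The verification is then the single chain of equalities
\begin{eqnarray*}
\zero{\left( (w \ot v) \trl_{W \ot V} a \right)} \ot \one{\left( (w \ot v) \trl_{W \ot V} a \right)}
&=& \zero{\left( w \ot (v \trl_V a) \right)} \ot \one{\left( w \ot (v \trl_V a) \right)} \\
&=& \zero{w} \ot \zero{(v \trl_V a)} \ot \one{w}\, \one{(v \trl_V a)} \\
&=& \zero{w} \ot \left( \zero{v} \trl_V \zero{a} \right) \ot \one{w}\, \one{v}\one{a} \\
&=& \left( \zero{w} \ot \zero{v} \right) \trl_{W \ot V} \zero{a} \ot \one{w}\one{v}\, \one{a} \\
&=& \zero{(w \ot v)} \trl_{W \ot V} \zero{a} \ot \one{(w \ot v)}\, \one{a}~,
\end{eqnarray*}
where the third equality uses that \eqref{eqn:rightmodHcov} holds for the right $A$-action $\trl_V$ and the coaction $\delta^V$ on $V$, exactly as the third passage in the proof of Lemma \ref{lem:ot} used \eqref{eqn:modHcov} on $V$.

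The only point requiring attention --- and it is a bookkeeping point rather than a genuine obstacle --- is that here the comodule factor $W$ contributes to the $H$-leg through its own coaction while remaining inert under the $A$-action, so that $a$ is fed only into the $V$-slot; keeping the order of the $H$-factors $\one{w}\one{v}\one{a}$ straight is all that is needed. There is no hard step: the statement is the exact mirror image of Lemma \ref{lem:ot}, and the proof transports under the left--right symmetry of the defining diagrams \eqref{AMHdiag}--\eqref{MAHdiag}, which is precisely why the authors omit it.
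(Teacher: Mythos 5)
Your proof is correct and is exactly the argument the paper has in mind: the authors explicitly omit the proof of Lemma \ref{lem:otl} as the left--right mirror of Lemma \ref{lem:ot}, and your chain of equalities verifying \eqref{eqn:rightmodHcov} for $\trl_{W\ot V}=\id\ot\trl_V$ against $\delta^{W\ot V}$ is precisely that mirror-image computation, with the third equality correctly invoking the compatibility of $\trl_V$ with $\delta^V$.
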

\begin{rem}\label{rem:modulecategory2}
Analogously to Remark \ref{rem:modulecategory}, Lemma \ref{lem:otl} states that
${{\cal M}_A}^{\!H}$ is a  (left) module category over the monoidal category ${\cal M}^H$.
We denote again with an abuse of notation the corresponding bifunctor  simply by
$\otimes : {\cal M}^H \times {{\cal M}_A}^{\!H} \to {{\cal M}_A}^{\!H}$.
\end{rem}
\begin{rem}\label{rem:bifunctorAMAH}
Combining Remark \ref{rem:modulecategory} and Remark \ref{rem:modulecategory2},
we have a bifunctor (denoted again by the same symbol) 
$\otimes: {}_{A}{\cal M}^{H}\times {\cal M}_{A}{}^{H}\to
{}_{A}{\cal M}_{A}{}^{H}$. Precomposing this functor with the forgetful functor
${}_{A}{\cal M}_{A}{}^{H}\times {}_{A}{\cal M}_{A}{}^{H} \to {}_{A}{\cal M}^{H}\times {\cal M}_{A}{}^{H}$
we obtain another bifunctor (denoted once more by the same symbol)
$\otimes : {}_{A}{\cal M}_{A}{}^{H} \times {}_{A}{\cal M}_{A}{}^{H}\to {}_{A}{\cal M}_{A}{}^{H}$.
Notice that this bifunctor satisfies the  associativity constraint, however it {\it does not} structure
${}_{A}{\cal M}_{A}{}^{H}$ as a monoidal category since there exists no unit object $I\in {}_{A}{\cal M}_{A}{}^{H}$.
We therefore consider the tensor product over the algebra $A$, 
 $\otimes_A : {}_{A}{\cal M}_{A}{}^{H}\times {}_{A}{\cal M}_{A}{}^{H}\to {}_{A}{\cal M}_{A}{}^{H}$
 (obtained via the standard quotient procedure) that turns 
 ${}_{A}{\cal M}_{A}{}^{H}$ into a  monoidal category with unit object $A\in {}_{A}{\cal M}_{A}{}^{H}$.
\end{rem}
\sk

In the following $H$ will be assumed to be a Hopf algebra.
\begin{defi} \label{def:hg}
Let $H$ be a Hopf algebra and $A\in{\cal A}^H$.
Let $B\subseteq A$ be the subalgebra of coinvariants, i.e.\
\be
B:= A^{coH}=\big\{b\in A ~|~ \delta^A (b) = b \ot \1_H \big\}~.
\ee 
The map 
\begin{eqnarray}\label{can}  \can := (m \ot \id) \circ (\id \otimes_B \delta^A ) : A \otimes_B A &\longrightarrow& A \ot H~  , 
 \\  a\ot_B a'
&\longmapsto&
a _{}a'_{\;(0)} \ot a'_{\;(1)} \nn
\end{eqnarray} is called the \textit{canonical map}. 
The extension $B\subseteq 
A$ is called an $H$-\textbf{Hopf-Galois extension} provided the
canonical map is bijective. 
\end{defi}

The notion of  Hopf-Galois extensions in this general context of (not
necessarily commutative) algebras appeared in \cite{KT}. It  generalizes the
classical notion of  Galois field extensions and with a  noncommutative
flavor it can be viewed as encoding the data of a principal bundle.
We refer the reader to the references \cite{Mont}, \cite[Part VII]{tok-notes} and examples therein.
See also  Example \ref{ex:principalbundle} below.
\sk

In the special case when $A$ is commutative (and hence also $B\subseteq A$ is commutative), then
$A\otimes_B A$ is an algebra and the canonical map $\can$  is an algebra morphism.
In general however $A$ is noncommutative and also $B$ is not contained in the center of $A$, so 
$A\otimes_B A$ does not even inherit an algebra structure. 
As we shall  now show, in the general case the canonical map $\can$ is 
 a morphism in the category of relative Hopf modules ${_A{\cal  M}_A}^{\!H}$.
\sk

The tensor product $A \ot A$ is an object in ${{}_A{\cal M}_A}^{\!H}$ because of
Lemma \ref{lem:ot} (take $V=A$ with left $A$-action given by the product in $A$ and $W=A$) 
and of Lemma  \ref{lem:otl} (take $V=A$ with right $A$-action given by the product in $A$ and $W=A$);
the  compatibility between the left and the right $A$-actions is immediate: 
 $c((a\otimes a')c')=ca\otimes a'c'=(c(a\otimes a'))c'$, for all $a,a',c,c'\in A $.
The right $H$-coaction $\delta^{A\otimes A}: A\otimes A\to A\otimes A\otimes H$ 
descends to the quotient $A\ot_B A$ because $B\subseteq A$ is the
subalgebra of $H$-coinvariants. We denote the induced right $H$-coaction by
$\delta^{A\otimes_B A}$. The left and right $A$-actions on $A\ot A$ also canonically descend to the
quotient $A\ot_B A$, hence they are compatible with the right $H$-comodule
structure (cf.\ (\ref{AMHdiag}) and (\ref{MAHdiag})) and therefore $A\ot_B
A \in  {{}_{\,A}{\cal M}_A}^{\!H}$.
\sk

The tensor product $A\otimes H$ is also an object in  ${_A{\cal M}_A}^{\!H}$.
First we regard the Hopf algebra $H$ as the right $H$-comodule
$\underline{H}$ defined to be the $\bbK
$-module $H$ with 
the right adjoint $H$-coaction
\be\label{adj}
\delta^{\underline{H}}=\mathrm{Ad} : \underline{H}\longrightarrow \underline{H}\ot H 
~,~~h \longmapsto \two{h}\otimes S(\one{h})\,\three{h} ~.
\ee
(The notation $\underline{H}$ is 
in order to distinguish this structure from the Hopf algebra
structure). 
Then, since $A\in {}_A{\cal M}^H$ and $\underline{H}\in  {\cal M}^H$,  Lemma \ref{lem:ot}
implies that 
$A\otimes \underline{H}\in {}_A{\cal M}^H$. Explicitly the 
right $H$-coaction $\delta^{A\otimes \underline{H}}: A\ot
\underline{H}\to A\ot \underline{H}\ot H$ is given by
\footnote{Similarly on the
  tensor product $A\otimes H$ we also  have the $H$-comodule structure
$\delta^{A\otimes {{H}}}$ induced by the right regular
coaction (coproduct) of $H$.
Notice that if $A$  is isomorphic to the $H$-comodule $B \ot H$ with right coaction $\id_B \ot \Delta$ (hence in particular if $A$ is cleft, see page \pageref{cleaving}), then the $H$-comodules
$\left(A \ot H, \delta^{A\otimes {{H}}}\right)$ and 
$\left(A \ot \underline{H}, \delta^{A\otimes {\underline{H}}}\right)$
are isomorphic. 
The isomorphism  is given by $A\otimes H \rightarrow A\otimes\underline{H}$, $(a \ot h)\mapsto (a h_1 \ot h_2)$, with inverse  $A\otimes\underline{H} \rightarrow A\otimes H $, $(a \ot h)\mapsto (a S(h_1) \ot h_2)$, where 
$a h$ indicates the action of $H$ on $A\simeq B \ot H$ given by right multiplication.
}
(cf.\ \eqref{deltaVW}), for all $ a\in A,~ h \in \underline{H}$,
\be\label{AHcoact}
\delta^{A\otimes \underline{H}}(a\otimes h) = \zero{a}\otimes \two{h} \otimes \one{a}\,S(\one{h})\, \three{h} \in A \ot \underline{H} \ot H~.
\ee 
Finally, $A\otimes \underline{H}$ is a right $A$-module with the action
\begin{eqnarray}\label{trl_ot}
\trl_{A \ot \underline{H}}: A \ot \underline{H} \ot A &\longrightarrow & A \ot \underline{H}~, \\
a \ot h \ot c \;&\longmapsto &  a\zero{c} \otimes h\one{c} ~ .\nonumber
\end{eqnarray}
This right $A$-action is easily seen to be a morphism in ${\cal M}^H$,
indeed the diagram
\be\label{AHAdiag}
\xymatrix{
\ar[d]_-{\trl_{A\otimes \underline{H}}} A\otimes \underline{H}\ot A \ar[rr]^-{\delta^{A \ot
      \underline{H}\ot A}} &&\ar[d]^-{\trl_{A\ot \underline{H}} \otimes \id} A \otimes \underline{H}\otimes A\otimes H\\
A\ot \underline{H} \ar[rr]^-{\delta^{A\ot \underline{H}}}&& A\otimes \underline{H}\ot H
}~
\ee
is commutative. Here, according to (\ref{deltaVW}),  $\delta^{A\ot
  \underline{H}\ot A}(a\ot h\ot c) =\zero{a}\ot \two{h}\ot \zero{c}\ot
\one{a}S(\one{h})\three{h}\one{c}$, for all $ a,c\in A,~ h\in \underline{H}$.
This shows that $A\otimes \underline{H}\in {{\cal M}_A}^{\!H}$. Since the left and
right $A$-actions commute we conclude that $A\otimes \underline{H}\in {_A{\cal M}_A}^{\!H}$.

\begin{prop}\label{prop_canMorph}  
The canonical map
 $\can = (m \ot \id) \circ (\id \otimes_B \delta^A ) : A \otimes_B A \ra A \ot \underline{H}$
is a morphism in ${{}_A{\cal M}_A}^{\!H}$ with respect to the
${{}_A{\cal M}_A}^{\!H}$-structures on $A \otimes_B A $ and $A \ot \underline{H}$
 described above.
 \end{prop}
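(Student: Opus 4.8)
The plan is to verify the three defining properties of a morphism in ${}_A{\cal M}_A^{\,H}$: that $\can$ is left $A$-linear, right $A$-linear, and $H$-colinear, all with respect to the module and comodule structures on $A\ot_B A$ and $A\ot\underline{H}$ recalled just before the statement. Well-definedness of $\can$ on the balanced tensor product is automatic, since for $b\in B$ one has $\delta^A(b)=b\ot\1_H$, so that $\can(ab\ot_B a')=\can(a\ot_B ba')$; I would mention this only in passing and concentrate on the three conditions.

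The two module-linearity checks are short. For left $A$-linearity, the left action on both $A\ot_B A$ and (by Lemma \ref{lem:ot}, with $W=\underline{H}$) on $A\ot\underline{H}$ is simply left multiplication on the first tensor leg; hence $\can\big(c\tra(a\ot_B a')\big)=\can(ca\ot_B a')=ca\,\zero{a'}\ot\one{a'}=c\tra\can(a\ot_B a')$ follows from associativity of $m$. For right $A$-linearity I would use that $\delta^A$ is an algebra map, condition \eqref{eqn:Hcomodulealgebra}: on one side $\can\big((a\ot_B a')\trl c\big)=\can(a\ot_B a'c)=a\,\zero{a'}\zero{c}\ot\one{a'}\one{c}$, while on the other, using the right action \eqref{trl_ot} on $A\ot\underline{H}$, $\can(a\ot_B a')\trl c=(a\,\zero{a'}\ot\one{a'})\trl c=a\,\zero{a'}\zero{c}\ot\one{a'}\one{c}$, and the two agree.

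The colinearity condition $\delta^{A\ot\underline{H}}\circ\can=(\can\ot\id)\circ\delta^{A\ot_B A}$ is the substantial step. Computing the right-hand side first, $\delta^{A\ot_B A}(a\ot_B a')=\zero{a}\ot_B\zero{a'}\ot\one{a}\one{a'}$ descends from \eqref{deltaVW}; applying $\can\ot\id$ and then the comodule axiom to relabel the iterated coaction of $a'$ yields $\zero{a}\zero{a'}\ot\one{a'}\ot\one{a}\two{a'}$. For the left-hand side I would start from $\can(a\ot_B a')=a\,\zero{a'}\ot\one{a'}$ and apply the coaction \eqref{AHcoact}, treating $a\,\zero{a'}$ as the $A$-slot and $\one{a'}$ as the $\underline{H}$-slot: the coaction of the product $a\,\zero{a'}$ splits by \eqref{eqn:Hcomodulealgebra}, while the coproduct of $\one{a'}$ produces the antipode factor $S(\cdot)$. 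After relabelling every leg as an iterated coaction of $a'$ (folding the extra $H$-leg created by coacting the first slot into the coproduct legs of the second), one is left with a string $\one{a'}S(\two{a'})$ sitting among four consecutive legs $\one{a'},\two{a'},\three{a'},\four{a'}$.

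The hard part will be exactly this Sweedler bookkeeping with the adjoint-type coaction: one has to arrange the legs via coassociativity and the comodule axiom so that the antipode identity $\one{a'}S(\two{a'})\ot\three{a'}\ot\four{a'}=\1_H\ot\one{a'}\ot\two{a'}$ can be invoked to collapse two legs and shift the remaining indices down, even though these legs are distributed across different tensor factors. Once this collapse is carried out, the left-hand side reduces to the same expression $\zero{a}\zero{a'}\ot\one{a'}\ot\one{a}\two{a'}$ found for the right-hand side, which finishes the proof. I expect no conceptual difficulty beyond keeping the indices straight through the relabelling and the antipode collapse.
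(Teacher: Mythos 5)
Your proposal is correct and follows essentially the same route as the paper: the left and right $A$-linearity checks are the immediate ones you describe, and the $H$-colinearity is verified by exactly the computation you outline — coacting on $a\,\zero{a'}\otimes\one{a'}$ via the adjoint-type coaction \eqref{AHcoact}, splitting the coaction of the product with \eqref{eqn:Hcomodulealgebra}, and collapsing $\one{a'}S(\two{a'})$ by the antipode axiom to land on $\zero{a}\zero{a'}\otimes\one{a'}\otimes\one{a}\two{a'}$, matching $(\can\otimes\id)\circ\delta^{A\otimes_B A}$. The Sweedler bookkeeping you flag as the "hard part" goes through exactly as you anticipate.
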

\begin{proof}
We show that the canonical map is a morphism of right $H$-comodules,
for all $a,a'\in A$,
\begin{flalign}
\nonumber \delta^{A\otimes \underline{H}}\big(\can(a\otimes_B a^\prime)\big)&= \delta^{A\otimes \underline{H}}(a\, \zero{a^\prime}\otimes \one{a^\prime})
= \zero{a}\zero{a'} \otimes \three{a'} \ot \one{a} \one{a^\prime} S(\two{a^\prime} ) \four{a^\prime} \\
\nonumber &=
\zero{a}\zero{a'} \otimes \one{a'} \ot \one{a} \two{a^\prime} 
=
(\can\otimes\id)\left( \left(\zero{a} \ot_B \zero{a'}\right) \otimes \one{a}  \one{a'}  \right)
\\
&=
(\can\otimes\id)\big(\delta^{A\otimes_B A}(a\otimes_B a^\prime)\big)~.\nn
\end{flalign}
It is immediate to see that $\can$ is a morphism of left and right $A$-modules. 
\end{proof}

\begin{ex}
Let $A=H$ be the right $H$-comodule algebra with right $H$-coaction given by the coproduct $\Delta$.
Since $(\varepsilon\otimes \id)\Delta(h)=h$, for all $ h\in H$, we have
$H^{coH}\simeq \bbK$. The canonical map $\chi: H\otimes H\to H\otimes \underline{H}$ is
an isomorphism with inverse $\chi^{-1}(h\otimes h')=hS(\one{h'})\otimes\two{h'}$, for all $ h\in H$ and $ h'\in \underline{H}$. Hence, 
$\bbK\subseteq H$ is an $H$-Hopf-Galois extension.
Notice that $S(h)=(\id\otimes \varepsilon)\chi^{-1}(\1\otimes h)$, for all
$h\in H$; actually a bialgebra $H$ is a Hopf algebra if and only if
$\chi : H\otimes H\to H\otimes \underline{H}$ is an isomorphism. 
\end{ex}

\begin{ex}\label{ex:trivialprincipalbundle}
Let $B$ be an algebra with trivial right $H$-coaction, i.e.\ $\delta^B (b) = b\otimes \1$ for all $b\in B$.
 Let, as in the previous example, $H$ be the right $H$-comodule algebra with the coaction given by the
 coproduct $\Delta$.  Then $A:=B\otimes H$ is a right $H$-comodule algebra (with the usual tensor product algebra
and right $H$-comodule structure). We have $A^{coH}\simeq B$ and 
$\chi : (B\otimes H)\otimes_B (B\otimes H)\simeq B\otimes H\otimes
H \to B\otimes H\otimes \underline{H}\,,~b\otimes h\otimes h^\prime \mapsto b\otimes h\one{h^\prime}\otimes
\two{h^\prime}$ is easily seen to be invertible; hence
$B\subseteq A$ is an $H$-Hopf-Galois extension. 
\end{ex}

\begin{ex}\label{ex:principalbundle}
Let $G$ be a Lie group, $M$ a manifold and $\pi:P\to M$ a principal $G$-bundle over $M$
with right $G$-action denoted by $r_P : P\times G\to P\,,~(p,g)\mapsto p\,g$.
(All manifolds here are assumed to be finite-dimensional and second countable).
We assign to the total space $P$ its space of smooth functions $C^\infty(P)$ and recall that
it is a (nuclear) Fr{\'e}chet space with respect to the usual $C^\infty$-topology. 
Even more, the Fr{\'e}chet space  $A=C^\infty(P)$ is a unital Fr{\'e}chet algebra with (continuous) product
$m:= \mathrm{diag}_P^\ast : A\, \widehat{\otimes}\, A \to A$ and unit
$\eta := \mathrm{t}_P^\ast : \bbK \to A$. Here $A\, \widehat{\otimes}\, A\simeq C^\infty(P\times P)$
denotes the completed tensor product and the product and unit are defined as the pull-back on functions
of the diagonal map $\mathrm{diag}_P : P\to P\times P$ and the terminal map $\mathrm{t}_P : P\to \mathrm{pt}$ to a point.
Similarly, $B= C^\infty(M)$ is a Fr{\'e}chet algebra  
and $H =C^\infty(G)$ 
is a Fr{\'e}chet Hopf algebra  with co-structures and antipode defined
by the pull-backs of the Lie group structures on $G$. (In a Fr{\'e}chet Hopf algebra also the
antipode, counit and coproduct $\Delta : H\to
H\widehat{\ot} H$ are continuous maps).
The right $G$-action $r_P : P\times G\to P$ induces the structure of a Fr{\'e}chet 
right $H$-comodule algebra on $A$ and we denote the (continuous)  right $H$-coaction 
by $\delta^A := r_P^\ast  : A \to A\, {\widehat{\otimes}}\, H$.
The $H$-coinvariant subalgebra is $A^{\text{co}H}=C^\infty(P/G)$ and  $A^{\text{co}H}\simeq B=C^\infty(M)$
is the pull-back of the isomorphism  $M \simeq P/G$ of the principal
$G$-bundle $P\to M$.
The canonical map in the present case may be obtained by considering the
pull-back of the smooth map 
\begin{flalign}\label{eqn:canmapspaces}
P\times G \longrightarrow  P\times_M P~,~~(p,g)\longmapsto (p,p\,g)~,
\end{flalign}
where $P\times_M P:= \{(p,q)\in P\times P ~\vert ~\pi(p) = \pi(q) \}$ is the fibered product.
This map is an isomorphism of right $G$-spaces, because the $G$-action
on the fibers of $P$ is free and transitive. It follows that the
canonical map\footnote{
The topological tensor product over $B$ is defined as follows: Consider the two parallel continuous linear maps
$m \,\widehat{\otimes}\,\id$ and $\id\,\widehat{\otimes} \, m$ from $A\,\widehat{\otimes} \,B \,\widehat{\otimes} \,A$
to $A\,\widehat{\otimes}\,A$, which describe the right and respectively left $B$-action on $A$.
We set $A\,\widehat{\otimes}_B\, A := A\,\widehat{\otimes} \,A / \,\overline{\mathrm{Im}(m\widehat{\otimes}\id - \id\,\widehat{\otimes}\,m)}$,
where $\overline{\phantom{\mathrm{I}({\otimes\!\!\!\!})}}$
denotes the closure in the Fr{\'e}chet space $A\,\widehat{\otimes}\,A$.
Notice that $A\,\widehat{\otimes}_{B}\, A \simeq C^\infty(P\times_M P)$.
} 
$\can : A\,\widehat{\otimes}_B\, A \to A\,\widehat{\otimes}\, \underline{H}$
is an  ${{}_A{\cal M}_A}^{\!H}$-isomorphism, hence 
$B\subseteq A$ is a Fr{\'e}chet $H$-Hopf-Galois extension.

The previous two examples correspond to (algebraic versions of)
the principal $G$-bundle $G \to \mathrm{pt}$ over a point and to the trivial principal $G$-bundle $M\times G \to M$ over $M$.
\end{ex}
\sk
An $H$-Hopf-Galois extension $B:=A^{coH}\subseteq A$ is said to have the
normal basis property if there exists an
isomorphism  $A \simeq B \ot H$ of left $B$-modules and right $H$-comodules (where
$B \ot H$ is a left $B$-module via $m_B \ot \id$ and a right
$H$-comodule via $\id \ot \Delta$, cf.~Example
\ref{ex:trivialprincipalbundle}). This condition captures the
algebraic aspect of triviality of a principal bundle.
We recall that  the normal basis property is equivalent to the existence of a convolution
invertible map $j:H \ra A$ (called \textit{cleaving map}) that  is a right $H$-comodule morphism, i.e.\

\be\label{cleaving}
\delta^A\circ  j= (j \ot \id)\circ  \Delta~.
\ee
A comodule algebra $A$ for which there exists a convolution invertible morphism of $H$-comodules  $j:H \ra A$ is called a \textit{cleft extension} of $A^{coH}$.
Given an isomorphism $\theta: B \ot H \ra A$ of left $B$-modules and right $H$-comodules, then a cleaving map 
$j : H\to A$ and its  convolution inverse $\bar{j} : H\to A$ are
determined by
\begin{flalign}\label{j-theta}
j : H \longrightarrow A ~,~~&h \longmapsto \theta(\1 \ot h) ~,\\
\bar{j} : H\longrightarrow A~,~~ &h \longmapsto (\id \ot \varepsilon)\circ (\id \ot_B \theta^{-1}) \circ \can^{-1}(\1 \ot h)~.\nn
\end{flalign}
(In order to prove that $j\ast\bar{j}=\eta_A \circ \varepsilon$ use
$A$-linearity of $\can^{-1}$, then that $\theta$ is an $H$-comodule
map and then recall the definition of $\can$. In order to prove that
$\bar{j}\ast j=\eta_A \circ \varepsilon$ it is convenient to set
$\can^{-1}(h)=h^{<1>}\ot h^{<2>}$ for all $h\in H$, then use $(\can^{-1}\ot \id)(\id\ot
\Delta)=(\id\ot \delta^A)\can^{-1}$, observe that since $\theta$ is an
$H$-comodule map so is $\theta^{-1}$ and hence $(\id\ot\varepsilon\ot
\id)(\theta^{-1}\ot\id)\delta^A =\theta^{-1}$, and that, due to left
$B$-linearity of $\theta$, $(\id\otimes \varepsilon)(\theta^{-1}(a_{(0)})\theta(1\ot
a_{(1)})=a$ for all $a\in A$. See e.g.\ \cite[Theorem 8.2.4]{Mont}).
\sk

To conclude this subsection, let us recall the definition of principal
comodule algebra which, as it is the case  for principal bundles,
allows for the construction
of associated vector bundles (i.e.\ associated finitely generated and
projective $B$-modules). 
Among the equivalent formulations we consider the one here below \cite{DHG}  (see
also  \cite[Part VII, \S 6.3 and \S 6.4]{tok-notes}) because it will be
easily seen to be preserved by twist deformations. 

\begin{defi}  \label{def:pHcomodalg} 
Let $H$ be a Hopf algebra with invertible  antipode over a field $\bbK$. A
{\bf principal $H$-comodule algebra} is an object $A\in {\cal A}^H$ 
such that $B:=A^{coH}\subseteq A$ is an $H$-Hopf-Galois extension
and $A$ is equivariantly projective as a left $B$-module, i.e.\ there exists a left $B$-module and right $H$-comodule morphism $s: A\to
B\ot A$ that is a section of the (restricted) product $m: B\ot A\to A$,
i.e.\ such that $m\circ s=\id_A$.
\end{defi}  

In particular if $H$ is a Hopf algebra with bijective antipode over a field, 
the condition of equivariant projectivity of $A$ is equivalent to that of faithful
flatness of $A$ \cite{SchSch}. 
Moreover, by the   Theorem of characterization of faithfully flat extensions \cite{Sch},  if $H$ is cosemisimple and has a bijective antipode, then surjectivity of the canonical map is sufficient to prove the principality of $A$. 

\subsection{Deformations by 2-cocycles}\label{sec:twists}
 We review some results  from the theory of  
   deformations of Hopf algebras $H$ and their comodule (co)algebras by 2-cocycles 
   $\cot:H \otimes H \ra \bbK$.     The notion of 2-cocycle is dual to that of Drinfeld twist. In Appendix
\ref{app:twists} we  detail this duality for the reader more familiar
with the Drinfeld twist notation. 
We omit some of the  proofs of standard results,
see e.g.\ \cite{doi} and also
   \cite[\S 10.2]{KS}, or, in the dual Drinfeld twist picture,
\cite[\S 2.3]{Majid}.
 We also extend results from the category of $H$-comodules to those of
   relative Hopf (co)modules and bicomodules (cf.\ Proposition
   \ref{propo:moddef},  Proposition \ref{propo:leftdef}, and
   Proposition \ref{prop:leftrightdef}) which will be relevant to our construction in \S \ref{sec:twHG}.

\subsubsection{Hopf algebra 2-cocycles}\label{sec:twists-hopf}

Let $H$ be a Hopf algebra and recall that $H\ot H$ is canonically
a coalgebra with coproduct $\Delta_{H\ot H}(h\ot k)=\one{h}\ot
\one{k}\ot \two{h}\ot \two {k}$ and counit $\varepsilon_{H\otimes
  H}(h\otimes k)=\varepsilon(h)\varepsilon(k)$, for all $ h,k\in H$. In particular, we can
consider the convolution product of $\bbK$-linear maps $H\ot H\to \bbK$.
\begin{defi}
A $\bbK$-linear map $\cot:H \otimes H \ra \bbK$ is called a convolution
invertible, unital 2-cocycle on $H$, or simply a \textbf{2-cocycle}, provided $\cot$ is convolution invertible,
{unital}, i.e.\
$\co{h}{\1}= \varepsilon(h) = \co{\1}{h}$, for all  $ h\in H$, and satisfies the
 $2$-cocycle condition
\be\label{lcocycle}
\co{\one{g}}{\one{h}} \co{\two{g}\two{h}}{k} =  \co{\one{h}}{\one{k}} \co{g}{\two{h}\two{k}}~,
\ee
for all $g,h, k \in H$. 
\end{defi}

The following lemma is easily proven. The stated equalities will be used for computations in  the next sections.
\begin{lem}\label{lem:formula}
Let   $\cot:H \otimes H \ra \bbK$ be a convolution invertible $\bbK$-linear map, 
with inverse denoted by $\bar\gamma : H\otimes H\to\bbK$. Then the
following are equivalent:
\begin{enumerate}[(i)]
\item
$\cot$ satisfies \eqref{lcocycle}
\item\label{ii}
$\coin{\one{g}\one{h}}{k} \coin{\two{g}}{\two{h}}=  \coin{g}{\one{h}\one{k}} \coin{\two{h}}{\two{k}}\,,\;$ for all $g,h, k \in H$
\item\label{iii}
$
\co{\one{g}\one{h}}{\one{k}} \coin{\two{g}}{\two{h}\two{k}} =
\coin{g}{\one{h}} \co{\two{h}}{k}\,,\;$ for all $g,h, k \in H$
\item\label{iv}
$
\co{\one{g}}{\one{h}\one{k}} \coin{\two{g}\two{h}}{\two{k}}=  \co{g}{\two{h}} \coin{\one{h}}{k}\,,\;$
for all $g,h, k \in H$
\end{enumerate}
\end{lem}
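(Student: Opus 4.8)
The plan is to lift all four identities to equalities in the convolution algebra $\Hom(H^{\ot 3},\bbK)$ and reduce the equivalences to elementary algebra there. Since $H$ is a bialgebra, $H^{\ot 3}$ is a coalgebra and $(\Hom(H^{\ot 3},\bbK),*)$ is an associative unital algebra with unit $\varepsilon\ot\varepsilon\ot\varepsilon$. Two structural facts will do all the work. First, precomposition with a coalgebra morphism is a unital algebra homomorphism of convolution algebras, hence preserves $*$-inverses; as $m:H\ot H\to H$ is a coalgebra morphism (equivalently, $\Delta$ is multiplicative), the maps $m\ot\id,\,\id\ot m:H^{\ot 3}\to H^{\ot 2}$ are coalgebra morphisms and therefore $\big(\gamma\circ(m\ot\id)\big)^{-1}=\bar\gamma\circ(m\ot\id)$ and $\big(\gamma\circ(\id\ot m)\big)^{-1}=\bar\gamma\circ(\id\ot m)$. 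Second, convolution factorizes over tensor products of coalgebras, $(f_1\ot f_2)*(g_1\ot g_2)=(f_1*g_1)\ot(f_2*g_2)$, which together with $\varepsilon*\varepsilon=\varepsilon$ and $\gamma*\bar\gamma=\bar\gamma*\gamma=\varepsilon_{H\ot H}$ gives $(\gamma\ot\varepsilon)^{-1}=\bar\gamma\ot\varepsilon$ and $(\varepsilon\ot\gamma)^{-1}=\varepsilon\ot\bar\gamma$.

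Next I would rewrite each condition as such an equality. Expanding the componentwise coproduct of $H^{\ot 3}$ and collapsing the slots that carry $\varepsilon$ shows that (i) is equivalent to
\[
(\gamma\ot\varepsilon)*\big(\gamma\circ(m\ot\id)\big)=(\varepsilon\ot\gamma)*\big(\gamma\circ(\id\ot m)\big),
\]
that (iii) is equivalent to
\[
\big(\gamma\circ(m\ot\id)\big)*\big(\bar\gamma\circ(\id\ot m)\big)=(\bar\gamma\ot\varepsilon)*(\varepsilon\ot\gamma),
\]
that (iv) is equivalent to
\[
\big(\gamma\circ(\id\ot m)\big)*\big(\bar\gamma\circ(m\ot\id)\big)=(\varepsilon\ot\bar\gamma)*(\gamma\ot\varepsilon),
\]
and that (ii) is equivalent to
\[
\big(\bar\gamma\circ(m\ot\id)\big)*(\bar\gamma\ot\varepsilon)=\big(\bar\gamma\circ(\id\ot m)\big)*(\varepsilon\ot\bar\gamma).
\]
Each of these translations is a one-line check.

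With these in hand the equivalences are immediate. Isolating $\gamma\circ(m\ot\id)$ in the first identity by left $*$-multiplication with $\bar\gamma\ot\varepsilon=(\gamma\ot\varepsilon)^{-1}$ and then right $*$-multiplication with $\bar\gamma\circ(\id\ot m)=\big(\gamma\circ(\id\ot m)\big)^{-1}$ turns it into the identity for (iii); since multiplication by invertibles is a bijection, the steps reverse, giving (i)$\Leftrightarrow$(iii). Isolating $\gamma\circ(\id\ot m)$ instead, by left multiplication with $\varepsilon\ot\bar\gamma$ and right multiplication with $\bar\gamma\circ(m\ot\id)$, yields the identity for (iv), so (i)$\Leftrightarrow$(iv). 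Finally, applying the convolution inverse to both sides of the first identity and using $(X*Y)^{-1}=Y^{-1}*X^{-1}$ with the inverse formulas above produces verbatim the identity for (ii), so (i)$\Leftrightarrow$(ii).

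I expect the only delicate point to be bookkeeping rather than mathematics: correctly establishing the inverse formulas for the elementary maps and matching the map-level identities to their Sweedler forms in the statement. Conceptually there is no obstacle, since once the two structural facts are in place, associativity and invertibility in $\Hom(H^{\ot 3},\bbK)$ force the result. A coordinate alternative, if one prefers to avoid the categorical packaging, is to derive (iii) and (iv) by convolving the cocycle identity (i) with $\bar\gamma$ in the two appropriate tensor slots and simplifying with $\gamma*\bar\gamma=\bar\gamma*\gamma=\varepsilon_{H\ot H}$, coassociativity and counitality, and to obtain (ii) from (i) by performing such a convolution twice; this is the same argument written out in components.
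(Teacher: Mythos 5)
Your proof is correct. The paper itself gives no proof of this lemma (it is dismissed with ``easily proven''), so there is nothing to compare against line by line; your convolution-algebra packaging is a clean and entirely valid way to make ``easily'' precise. I checked the key ingredients: $m\ot\id$ and $\id\ot m$ are indeed coalgebra maps $H^{\ot 3}\to H^{\ot 2}$ because $H$ is a bialgebra, so precomposition preserves convolution inverses; the factorization $(f_1\ot f_2)*(g_1\ot g_2)=(f_1*g_1)\ot(f_2*g_2)$ holds for the componentwise coproduct on $H^{\ot 3}$; and each of the four Sweedler identities translates correctly into the map-level identity you state (e.g.\ $(\gamma\ot\varepsilon)*(\gamma\circ(m\ot\id))$ evaluated on $g\ot h\ot k$ collapses via counitality to $\co{\one{g}}{\one{h}}\co{\two{g}\two{h}}{k}$). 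Isolating $\gamma\circ(m\ot\id)$, isolating $\gamma\circ(\id\ot m)$, and inverting both sides of the relation $(\gamma\ot\varepsilon)*(\gamma\circ(m\ot\id))=(\varepsilon\ot\gamma)*(\gamma\circ(\id\ot m))$ then give (iii), (iv) and (ii) respectively, with all steps reversible since one only ever multiplies by invertible elements. The coordinate alternative you sketch at the end --- convolving the cocycle identity with $\bar\gamma$ in the appropriate slots --- is presumably what the authors had in mind, but your version has the advantage of making the reversibility of every step, and hence the full equivalence rather than just one-way implications, completely transparent.
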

\sk

{}Given a 2-cocycle $\cot$, with the help of  $(iii)$ and $(iv)$, it is
possible to prove that the maps
\begin{flalign}\label{uxS}
u_\cot:  H\longrightarrow \bbK ~, ~~ h\longmapsto \co{\one{h}}{S(\two{h})}  ~,\\
\bar{u}_\cot: H\longrightarrow \bbK~,~~ h \longmapsto \coin{S(\one{h})}{\two{h}} ~, \nn
\end{flalign}
are the convolution inverse of each other.

\begin{prop}\label{prop:co}
Let $\cot: H\otimes H\to \bbK$ be a  2-cocycle. Then 
\be\label{hopf-twist}
m_{\cot} (h \ot k):= h \mt k:= \co{\one{h}}{\one{k}} \,\two{h}\two{k}\, \coin{\three{h}}{\three{k}}~,
\ee
for all $h,k\in H$, 
defines a new associative product on (the $\bbK$-module underlying) $H$. The resulting algebra 
$\hg:=(H,m_\cot,\1_H)$ is a Hopf algebra when endowed with the unchanged coproduct $\Delta$ and 
counit $\varepsilon$ and with the new antipode  $S_\cot:= u_\cot *S
*\bar{u}_\cot$. We call $H_\cot$ the twisted Hopf algebra
of $H$ by $\cot$. 
\end{prop}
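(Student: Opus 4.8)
The plan is to verify, in order, that $m_\cot$ is unital and associative (so that $\hg$ is an algebra), that the unchanged $\Delta$ and $\varepsilon$ remain algebra morphisms for $m_\cot$ (so that $\hg$ is a bialgebra), and finally that $S_\cot:=u_\cot*S*\bar{u}_\cot$ is a two-sided antipode for this bialgebra. Throughout I would work in Sweedler notation and freely move the scalar factors $\co{\cdot}{\cdot},\coin{\cdot}{\cdot}\in\bbK$ past everything.

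Unitality is immediate: since $\Delta(\1_H)=\1_H\ot\1_H$ and $\cot,\bar\cot$ are unital, one gets $\1_H\mt k=\co{\1}{\one{k}}\,\two{k}\,\coin{\1}{\three{k}}=\varepsilon(\one{k})\,\two{k}\,\varepsilon(\three{k})=k$, and symmetrically $h\mt\1_H=h$. For associativity I would expand both $(h\mt k)\mt l$ and $h\mt(k\mt l)$. In each the $\cot$-free factor is the same triple product of consecutive coproduct components of $h,k,l$; what remains are two $\cot$-factors standing to the left of this product and two $\bar\cot$-factors standing to its right. The two left $\cot$-factors of $(h\mt k)\mt l$ are converted into those of $h\mt(k\mt l)$ by the $2$-cocycle condition \eqref{lcocycle} (applied with the surviving argument playing the role of the ``middle'' element), while the two right $\bar\cot$-factors are matched by the equivalent identity for $\bar\cot$, namely item (ii) of Lemma~\ref{lem:formula}; coassociativity then lines up all Sweedler indices. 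This is the first of the two laborious steps.

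Next I would show $\hg$ is a bialgebra, i.e.\ that $\Delta$ and $\varepsilon$ are morphisms for $m_\cot$ (the tensor product algebra $\hg\ot\hg$ using $m_\cot$ in each leg). Expanding $\Delta(h\mt k)$ and $\Delta(h)\mt\Delta(k)$, the only discrepancy is a pair of adjacent factors $\bar\cot\,\cot$ evaluated on consecutive coproduct components; since $\bar\cot$ is the two-sided convolution inverse of $\cot$ in $\mathrm{Hom}(H\ot H,\bbK)$, this pair collapses to $\varepsilon\ot\varepsilon$ and the two expressions coincide. The same property gives $\varepsilon(h\mt k)=\co{\one{h}}{\one{k}}\,\varepsilon(\two{h})\varepsilon(\two{k})\,\coin{\three{h}}{\three{k}}=(\cot*\bar\cot)(h\ot k)=\varepsilon(h)\varepsilon(k)$. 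Note that, unlike associativity, this step uses only invertibility of $\cot$ and not the cocycle condition.

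Finally I would verify the antipode axioms $S_\cot(\one{h})\mt\two{h}=\varepsilon(h)\1_H=\one{h}\mt S_\cot(\two{h})$ for $S_\cot$ as in the statement, substituting the explicit $u_\cot,\bar{u}_\cot$ of \eqref{uxS} and using that $S$ is an anti-algebra and anti-coalgebra map. The product $\mt$ then generates extra $\cot$- and $\bar\cot$-factors whose arguments involve $S$; identities (iii) and (iv) of Lemma~\ref{lem:formula} are exactly what is needed to rewrite these so that they recombine with $u_\cot$ and $\bar{u}_\cot$, and the already-recorded fact that $u_\cot$ and $\bar{u}_\cot$ are mutually convolution inverse then telescopes the whole expression down to $\varepsilon(h)\1_H$. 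I expect this last verification to be the main obstacle: it is the longest Sweedler computation and the one in which the precise interplay between \eqref{uxS}, the antipode relation for $S$, and Lemma~\ref{lem:formula} must be tracked carefully. Associativity is a milder instance of the same difficulty, whereas the bialgebra compatibility is essentially formal.
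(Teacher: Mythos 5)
Your outline is correct and is exactly the standard argument: the paper itself omits this proof as classical (referring to Doi, to \cite[\S 10.2]{KS} and to \cite[\S 2.3]{Majid}), and what you describe --- the cocycle condition \eqref{lcocycle} together with Lemma \ref{lem:formula}~(ii) for associativity, convolution invertibility of $\cot$ alone for the bialgebra axioms, and identities (iii), (iv) of Lemma \ref{lem:formula} together with the mutual convolution inversity of $u_\cot$ and $\bar{u}_\cot$ for the antipode --- is precisely how those references proceed. The only caveat is that you have given a plan rather than the computations themselves; the antipode verification, which you rightly single out as the delicate step, still has to be written out in full Sweedler notation.
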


\begin{rem}
The twisted Hopf algebra $\hg$ can be `untwisted' by using the convolution inverse
$\bar\cot : H\otimes H\to\bbK$;
indeed, $\bar\cot$ is a 2-cocycle for $\hg$ and the twisted Hopf
algebra of  $\hg$ by $\bar\cot$ 
is isomorphic to $H$ via the identity map (see Corollary \ref{cor:untwist}). 
\end{rem}

\subsubsection{\label{sec:rightHcomod}Twisting of right $H$-comodules}

The deformation of a Hopf algebra $H$ by a 2-cocycle $\cot: H\otimes H\to\bbK$
 affects also the category ${\cal M}^H$ of right $H$-comodules. 
Indeed, if $V\in{\cal M}^H$  with coaction $\delta^V:V \ra V \ot H$, then $V$ with the
same coaction, but now thought of as a map with values in $V \ot H_\gamma$, is a right $\hg$-comodule.  
This is evident from the fact that the comodule condition (cf.\ \eqref{eqn:Hcomodule}) only involves the coalgebra structure of $H$, and  
$\hg$ coincides with $H$ as a coalgebra. When thinking of $V$ as an object in $\mathcal{M}^{\hg}$ we will denote it by 
$V_\cot$ and the coaction by $\delta^{V_\cot} : V_\cot \to V_\cot \otimes \hg$. 
Moreover, any morphism $\psi : V\to W $ in $ {\cal M}^H$
can be thought as a morphism $\psi  : V_\cot\to W_\cot$ in ${\cal M}^{H_\cot}$;
indeed, $H$-equivariance of $\psi: V\to W $ implies
$H_\cot$-equivariance of $\psi  : V_\cot\to W_\cot$
since by construction the right $H$-coaction in $V$ agrees with the right $H_\cot$-coaction
in $V_\cot$.
Hence we have a functor 
\begin{equation}\label{functGamma}
\Gamma : {\cal M}^H \to{\cal M}^{H_\cot}~,
\end{equation}
defined by $\Gamma(V):=V_\cot$ and $\Gamma(\psi):=\psi :V_\cot\to W_\cot$.
Furthermore this functor $\Gamma$ induces an equivalence of categories 
because we can use the convolution inverse $\bar\cot$ in order to
twist back $H_\cot$ to $(H_\cot)_{\bar{\cot}}=H$ and $V_\cot$ to
$(V_\cot)_{\bar{\cot}}=V$.

\sk
The equivalence between the
categories ${\cal M}^H$ and ${\cal M}^{H_\cot}$ extends to their monoidal structure.
We denote by 
$(\mathcal{M}^{\hg},\ot^\cot)$ the monoidal category corresponding to
the Hopf algebra $H_\cot$. 
Explicitly, for all objects  $V_\cot,W_\cot\in  \mathcal{M}^{\hg}$
(with coactions  $\delta^{V_\cot} : V_\cot \to V_\cot \otimes \hg$ and $\delta^{W_\cot} : W_\cot \to W_\cot \otimes\hg$), the right 
$H^\cot$-coaction on $V_\cot\ot^\cot W_\cot $,  according to \eqref{deltaVW}, is
given by 
\begin{eqnarray}\label{deltaVWcot}
\delta^{V_\cot \ot^\cot W_\cot} :V_\cot\otimes^\cot W_\cot &
                                                        \longrightarrow & V_\cot \otimes^\cot W_\cot\otimes H_\cot~,\\
 v\otimes^\cot w & \longmapsto & \zero{v}\otimes^\cot \zero{w} \otimes \one{v}\mt\one{w} ~,\nn
\end{eqnarray}

\begin{thm}\label{thm:funct}
The functor $\Gamma : {\cal M}^H \to{\cal M}^{H_\cot}$ induces an equivalence
between the  monoidal categories $(\mathcal{M}^H, \ot)$ and $(\mathcal{M}^{\hg},
\ot^\cot)$. The natural isomorphism   $\varphi : \otimes^\cot \circ
(\Gamma\times\Gamma)\Rightarrow \Gamma\circ \otimes$ is given by the $\mathcal{M}^{\hg}$-isomorphisms
\begin{eqnarray}\label{nt}
\varphi_{V,W}: V_\cot \ot^\cot W_\cot &\longrightarrow&  (V \ot W)_\cot  ~,
\\
v \ot^\cot w &\longmapsto &  \zero{v} \ot \zero{w} ~\coin{\one{v}}{\one{w}} ~,\nn
\end{eqnarray}
for all objects $V,W\in {\cal M}^H$.
\end{thm}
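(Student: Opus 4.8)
The equivalence of the underlying categories is already in hand: twisting back by the convolution inverse $\bar\cot$ sends $\hg$ to $H$ and each $V_\cot$ to $V$, so $\Gamma$ is an equivalence whose quasi-inverse is the analogous functor built from $\bar\cot$. What remains is therefore to promote $\Gamma$ to a strong monoidal functor, i.e.\ to show that the maps $\varphi_{V,W}$ of \eqref{nt} assemble into a monoidal natural isomorphism $\otimes^\cot\circ(\Gamma\times\Gamma)\Rightarrow\Gamma\circ\otimes$. The plan is to verify, in order: (a) each $\varphi_{V,W}$ is a morphism in $\mathcal{M}^{\hg}$; (b) each $\varphi_{V,W}$ is invertible; (c) the family $(\varphi_{V,W})$ is natural in $V$ and $W$; (d) the associativity and unit coherence axioms of a strong monoidal functor hold. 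Since a strong monoidal functor that is an equivalence of categories is automatically a monoidal equivalence, these four steps prove the theorem.

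For (a) I would evaluate both $\delta^{(V\ot W)_\cot}\circ\varphi_{V,W}$ and $(\varphi_{V,W}\ot\id)\circ\delta^{V_\cot\ot^\cot W_\cot}$ on a generic element $v\ot^\cot w$ in Sweedler notation, using \eqref{deltaVWcot} and expanding the twisted product through \eqref{hopf-twist}. On the second composite this produces, on two consecutive coaction legs, the combination $\coin{\one{a}}{\one{b}}\,\co{\two{a}}{\two{b}}$, which collapses to $\varepsilon(a)\varepsilon(b)$ by convolution invertibility of $\cot$; the comodule counit axiom \eqref{eqn:Hcomodule} then matches it to the first composite. Notably this step uses only convolution invertibility of $\cot$, not yet the cocycle condition. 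For (b) I would propose the explicit inverse $\bar\varphi_{V,W}\colon v\ot w\mapsto \zero{v}\ot^\cot \zero{w}\,\co{\one{v}}{\one{w}}$ and check $\varphi_{V,W}\circ\bar\varphi_{V,W}=\id$ and $\bar\varphi_{V,W}\circ\varphi_{V,W}=\id$ by the same convolution-inverse identity together with the counit axiom; its $\hg$-equivariance is then automatic, being the inverse of an equivariant isomorphism.

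Step (c) is formal: for comodule morphisms $f\colon V_1\to V_2$ and $g\colon W_1\to W_2$ both routes around the naturality square send $v\ot^\cot w$ to $f(\zero{v})\ot g(\zero{w})\,\coin{\one{v}}{\one{w}}$, using only that $f$ and $g$ intertwine the coactions. The unit part of (d) is equally direct: on the unit object $\bbK$, with coaction $\eta_H$, the leg contributed by $\bbK$ is $\1_H$, so $\coin{\1_H}{\one{v}}=\varepsilon(\one{v})$ by unitality of $\bar\cot$, whence $\varphi_{\bbK,V}$ and $\varphi_{V,\bbK}$ reduce to the left and right unitors.

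The substantive point is the associativity coherence, and this is the step I expect to be the main obstacle. As all associators are trivial, it reduces to the identity $\varphi_{U,V\ot W}\circ(\id\ot^\cot\varphi_{V,W})=\varphi_{U\ot V,W}\circ(\varphi_{U,V}\ot^\cot\id)$ of maps $U_\cot\ot^\cot V_\cot\ot^\cot W_\cot\to(U\ot V\ot W)_\cot$. Evaluating both sides on $u\ot^\cot v\ot^\cot w$ and reindexing by coassociativity, the common comodule part $\zero{u}\ot\zero{v}\ot\zero{w}$ is multiplied on the left-hand side by the scalar $\coin{\one{u}}{\one{v}\,\one{w}}\,\coin{\two{v}}{\two{w}}$ and on the right-hand side by $\coin{\one{u}\,\one{v}}{\one{w}}\,\coin{\two{u}}{\two{v}}$. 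These two scalars are equal precisely by property (ii) of Lemma \ref{lem:formula}, the $\bar\cot$-form of the 2-cocycle condition, with its entries $g,h,k$ specialised to the relevant coaction legs of $u,v,w$. This is exactly where the full cocycle identity enters — convolution invertibility and unitality alone do not suffice — so the crux of the proof is to match the two bracketings of the $\bar\cot$-factors through the cocycle condition.
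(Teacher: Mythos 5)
Your proposal is correct and follows essentially the same route as the paper: the same explicit maps $\varphi_{V,W}$ with the same Sweedler computation showing $\hg$-equivariance (which, as you note, needs only convolution invertibility), the same inverse $v\ot w\mapsto \zero{v}\ot^\cot\zero{w}\,\co{\one{v}}{\one{w}}$, and the quasi-inverse monoidal functor built from $\bar\cot$. The only difference is one of completeness: the paper stops after the equivariance check and the remark on untwisting, whereas you additionally spell out naturality, the unit constraint, and the associativity coherence — correctly locating the latter as the one place where the full $2$-cocycle condition (Lemma \ref{lem:formula}~(ii)) is genuinely needed, a point the paper leaves implicit.
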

\begin{proof}
The invertibility of $\varphi_{V,W}$ follows immediately from the invertibility of the cocycle $\cot$.  
 The fact that it  is a morphism in the category $\mathcal{M}^{\hg}$ is easily shown as follows:
\begin{eqnarray*}
(\varphi_{V,W} \ot \id) \left(\delta^{V_\cot \ot^\cot W_\cot} (v \ot^\cot w) \right)&=&
\zero{v} \ot \zero{w} \,\coin{\one{v}}{\one{w}}\ot \co{\two{v}}{\two{w}}  \three{v}\three{w} \coin{\four{v}}{\four{w}}
\\
&=&\zero{v} \ot \zero{w} \ot {\one{v}}{\one{w}} \, \coin{\two{v}}{\two{w}} 
\\
&=& \delta^{(V \ot W)_\cot} (\zero{v} \ot \zero{w}) \,
\coin{\one{v}}{\one{w}}
\\
 &=& \delta^{(V \ot W)_\cot} \left( \varphi_{V,W} (v \ot^\cot w)\right)~,
\end{eqnarray*}
where the coaction $\delta^{(V \ot W)_\cot}$ is given by
$
\delta^{(V \ot W)_\cot}: v \ot w \longmapsto \zero{v} \ot \zero{w} \ot {\one{v}} {\one{w}}
$ (cf.\  \eqref{deltaVW}). Hence  $(\Gamma,\varphi):
(\mathcal{M}^H, \ot)\to  (\mathcal{M}^{\hg}, \ot^\cot)$ is a monoidal functor.

The monoidal categories are equivalent (actually they are isomorphic) because $\bar\cot$ twists back
$H_\cot$ to $H$ and $V_\cot$ to $V$  so that the monoidal functor $(\Gamma, \varphi)$ has
an inverse $(\overline{\Gamma}, \overline\varphi)$, where 
$\overline{\Gamma}: {\cal M}^{H_\cot}\to {\cal M}^{H}$ is the inverse of the functor $\Gamma$
 and $\overline{\varphi}_{V_\cot,W_\cot}: (V_\cot)_{\bar\cot}\otimes (W_\cot)_{\bar\cot}\to
(V_\cot\otimes^\cot W_\cot)_{\bar{\cot}}\,$, 
$\,v \ot w \mapsto   \zero{v} \ot^\cot \zero{w} ~\cot({\one{v}}\otimes {\one{w}}) $.
\end{proof}

If $V\in{\cal M}^H$ carries additional structures, i.e.\ if it is an object
in ${\cal A}^H$, ${}_{A}{\cal M}^H$, ${\cal M}_{A}{}^H$ or ${}_{A}{\cal M}_{A}{}^H$ (with $A\in{\cal A}^H $),
then these additional structures are also deformed by the $2$-cocycle $\cot: H\otimes H\to \bbK$.
Let us illustrate this for the category ${\cal A}^H$ of right $H$-comodule algebras:
Recall that an object $A\in {\cal A}^H$ is an object $A\in{\cal M}^H$ together
with two ${\cal M}^H$-morphisms, $m: A\otimes A\to A$ and $\eta : \bbK\to A$, which 
satisfy the axioms of an algebra product and unit. Using the functor $\Gamma$ of Theorem \ref{thm:funct},
we can assign to this data the object $\Gamma(A) = A_\cot\in{\cal M}^{H_\cot}$ and the
two ${\cal M}^{H_\cot}$-morphisms $\Gamma(m) : \Gamma(A\otimes A)\to \Gamma(A)$
and $\Gamma(\eta) : \Gamma(\bbK) \to \Gamma(A)$. The deformed algebra
structure $m_\cot , \eta_\cot$ on
$A_\cot\in {\cal A}^{H_\cot}$ is now defined by using the components
$\varphi_{\text{--},\text{--}}$ (cf.\ (\ref{nt})) of the natural
isomorphism $\varphi$, 
and the commutative diagrams
\begin{flalign}
\xymatrix{
\ar[d]_-{\varphi_{A,A}} A_\cot \otimes^\cot A_\cot \ar[rr]^-{m_\cot} && A_\cot && \bbK \ar[d]_-{\simeq} \ar[rr]^-{\eta_\cot} && A_\cot\\
 (A\otimes A)_\cot\ar[rru]_-{\Gamma(m)} &&  && \Gamma(\bbK)\ar[rru]_-{\Gamma(\eta)} &&
}
\end{flalign}
in the category ${\cal M}^{H_\cot}$. The deformed product $m_\cot$ is
associative due to the $2$-cocycle condition of $\cot$, and $\eta_\cot$ is the unit for $m_\cot$ since $\cot$ is unital.
Explicitly we have that $\eta_\cot = \eta$ and the deformed product reads as
 \be\label{rmod-twist} 
 m_\cot : A_\cot \otimes^\cot A_\cot \longrightarrow A_\cot ~,~~a\otimes^\cot a^\prime \longmapsto \zero{a} \zero{a'} \,\coin{\one{a}}{\one{a'}} =: a \mtco a'~.
 \ee
This construction provides us with a functor $\Gamma : {\cal A}^H \to
{\cal A}^{H_\cot}$; indeed it can be easily
checked that $\Gamma(\psi) :=\psi : A_\cot \to A^\prime_\cot$ is a morphism in ${\cal A}^{H_\cot}$
for any ${\cal A}^H$-morphism $\psi : A\to A^\prime$. Using again the convolution inverse
$\bar\cot$ of $\cot$ we can twist back $A_\cot$ to $A$.
In summary we have obtained
\begin{prop}\label{propo:algdef}
Given a 2-cocycle $\cot: H\otimes H\to \bbK$ the functor $\Gamma : {\cal A}^H\to {\cal A}^{H_\cot}$
 induces an equivalence of categories. 
\end{prop}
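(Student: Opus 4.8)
The plan is to produce an explicit quasi-inverse for $\Gamma : \mathcal{A}^H \to \mathcal{A}^{\hg}$ by untwisting with the convolution inverse $\bar\cot$, exactly as was done for the underlying comodule categories in Theorem~\ref{thm:funct}. Conceptually the statement is an instance of the general principle that a strong monoidal equivalence restricts to an equivalence between the categories of monoid (algebra) objects: since $(\Gamma,\varphi)$ is a monoidal equivalence by Theorem~\ref{thm:funct}, and $\mathcal{A}^H$, $\mathcal{A}^{\hg}$ are precisely the categories of algebra objects in $(\mathcal{M}^H,\otimes)$ and $(\mathcal{M}^{\hg},\otimes^\cot)$, the claim follows. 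I would, however, make this concrete, in keeping with the explicit constructions above.

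First I would note that the functor $\Gamma$ is already in place: the preceding discussion shows that $\Gamma(A)=A_\cot$, with product $\mtco$ (see \eqref{rmod-twist}) and unit $\eta_\cot=\eta$, is an object of $\mathcal{A}^{\hg}$ — associativity following from the $2$-cocycle condition and unitality from that of $\cot$ — and that $\Gamma(\psi)=\psi$ is an $\mathcal{A}^{\hg}$-morphism. It therefore only remains to exhibit an inverse. By the Remark following Proposition~\ref{prop:co}, the map $\bar\cot$ is a $2$-cocycle on $\hg$ whose own convolution inverse is $\cot$, and twisting $\hg$ by it returns $(\hg)_{\bar\cot}=H$. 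Applying the very same construction to $\bar\cot$ on $\hg$ yields a functor $\overline{\Gamma}:\mathcal{A}^{\hg}\to\mathcal{A}^{(\hg)_{\bar\cot}}=\mathcal{A}^H$. On underlying comodules $\Gamma$ and $\overline{\Gamma}$ are mutually inverse by Theorem~\ref{thm:funct}, so the single remaining point is that the twice-deformed product coincides with the original one.

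For $A\in\mathcal{A}^H$ and $a,a'\in A$, the product on $(A_\cot)_{\bar\cot}$ is obtained by twisting $\mtco$ with the convolution inverse $\cot$ of $\bar\cot$, so that, using coassociativity of the coaction,
\begin{align*}
a \bullet_{\bar\cot} a' &= \big(\zero{a}\mtco\zero{a'}\big)\,\co{\one{a}}{\one{a'}}\\
&= \zero{a}\,\zero{a'}\,\coin{\one{a}}{\one{a'}}\,\co{\two{a}}{\two{a'}}
= \zero{a}\,\zero{a'}\,\varepsilon(\one{a})\,\varepsilon(\one{a'}) = a\,a'~,
\end{align*}
where the collapse of the two cocycle factors uses $\bar\cot*\cot=\varepsilon\otimes\varepsilon$ (the counit of the coalgebra $H\otimes H$) and the final step the comodule counit axiom. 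The symmetric computation gives $\Gamma\circ\overline{\Gamma}=\id_{\mathcal{A}^{\hg}}$, so $\Gamma$ is in fact an isomorphism of categories, hence in particular an equivalence.

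I expect the only genuine content to be this double-twist identity; everything else is either already established in Theorem~\ref{thm:funct} and the construction of $\Gamma$, or purely formal. The one point demanding care is the bookkeeping of which cocycle — and which of its convolution inverses — enters at each twisting step, together with the correct ordering of the iterated Sweedler legs so that $\bar\cot*\cot$ is recognized on $H\otimes H$; no step poses a real obstacle.
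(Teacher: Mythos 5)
Your proposal is correct and follows the same route as the paper: the paper defines the deformed product via the monoidal functor $(\Gamma,\varphi)$, notes that associativity and unitality follow from the $2$-cocycle conditions, and then observes that ``using again the convolution inverse $\bar\cot$ of $\cot$ we can twist back $A_\cot$ to $A$.'' Your explicit double-twist computation $a\bullet_{\bar\cot}a'=aa'$, collapsing $\bar\cot*\cot=\varepsilon\otimes\varepsilon$ via coassociativity of the coaction, is exactly the verification the paper leaves implicit, and your identification of the result as an isomorphism of categories matches the parenthetical remark in the proof of Theorem \ref{thm:funct}.
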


By a similar construction one obtains the functors (all denoted by the same symbol)
$\Gamma : {}_A{\cal M}^H\to {}_{A_\cot}{\cal M}^{H_\cot}$,
$\Gamma : {\cal M}_{A}{}^H\to {\cal M}{}_{A_\cot}{}^{H_\cot}$
and $\Gamma : {}_A{\cal M}_{A}{}^H\to {}_{A_\cot}{\cal M}_{A_\cot}{}^{H_\cot}$.
Explicitly, the deformed left $A_\cot$-actions are given by
\begin{eqnarray}\label{av}
\tra_{V_\cot}:  \pg \ot^\cot V_\cot &\longrightarrow& V_\cot \, ,\\
 a\ot^\cot v\; &\longmapsto & (\zero{a} \tra_V \zero{v}) \coin{\one{a}}{\one{v}}~,\nn
\end{eqnarray}
while the deformed right $A_\cot$-actions read as
\begin{eqnarray}\label{rAmod} 
\trl_{V_\cot}:    V_\cot \ot^\cot \pg&\longrightarrow& V_\cot ~ ,\\
 v\ot^\cot a\; &\longmapsto &(\zero{v} \trl_V \zero{a}) \, \coin{\one{v}}{\one{a}}\nn~.
\end{eqnarray}
The $A_\cot$-module and $A_\cot$-bimodule properties follow again from the $2$-cocycle condition and unitality 
of $\cot$.
Moreover the bifunctors $\otimes$ described in the Remarks \ref{rem:modulecategory}, \ref{rem:modulecategory2}
and \ref{rem:bifunctorAMAH} on module categories are preserved, i.e.,
the ${\cal M}^{H_\cot}$-isomorphisms
(\ref{nt}), that in the context of Theorem \ref{thm:funct} define the
natural isomorphism  $\varphi : \otimes^\cot \circ
(\Gamma\times\Gamma)\Rightarrow \Gamma\circ \otimes$, now are
respectively $ {}_{A_\cot}{\cal M}^{H_\cot}$, ${\cal
  M}{}_{A_\cot}{}^{H_\cot}$ and $ {}_{A_\cot}{\cal
  M}_{A_\cot}{}^{H_\cot}$-isomorphisms; they define the natural isomorphism $\varphi : \otimes^\cot \circ (\Gamma\times\Gamma)\Rightarrow \Gamma\circ \otimes$, where here $\Gamma$ is any of the three functors described above, and where $\ot^\cot$ is the tensor product corresponding to the Hopf
algebra $H_\cot$.
\sk

{}For example we here prove that, for all $V\in {}_{A}\mathcal{M}^H$
and $W \in\mathcal{M}^H$, the $\mathcal{M}^H$-isomorphism $\varphi_{V,W}: V_\cot \ot^\cot W_\cot \ra  (V \ot W)_\cot  $  given in \eqref{nt}
is an  $ {}_{A_\cot}{\cal M}^{H_\cot}$-isomorphism.  On the one hand, from definition \eqref{av} we have
\begin{eqnarray}
\varphi_{V,W} \left(a \tra_{V_\cot \ot^\cot W_\cot} (v \ot^\cot w) \right) &=&  
\varphi_{V,W} \left( (a\tra_{V_\cot} v) \ot^\cot w) \right) 
\\
&=& \nn \varphi_{V,W}   \left( (\zero{a} \tra_{V} \zero{v})   \ot^\cot {w}\right) ~\coin{\one{a}}{\one{v}}
\\
&=& \nn (\zero{a} \tra_V \zero{v}) \ot \zero{w}~ \coin{\one{a}\one{v}}{\one{w}} \coin{\two{a}}{\two{v}}
\end{eqnarray}
where in the third line we used \eqref{eqn:modHcov}. 
On the other hand, we have
\begin{eqnarray}
a \tra_{(V \ot W)_\cot} \left( \varphi_{V,W} (v \ot^\cot w) \right)
&=&  a \tra_{(V \ot W)_\cot}  (\zero{v} \ot \zero{w}) \coin{\one{v}}{\one{w}}  
\\ &=& \nn
(\zero{a} \tra_V \zero{v}) \ot \zero{w}~ \coin{\one{a}}{\one{v}\one{w}} \coin{\two{v}}{\two{w}}~.
\end{eqnarray}
These two expressions coincide because of the 2-cocycle condition (cf.\ (\ref{ii}) in Lemma \ref{lem:formula}).
\sk

In summary, we have obtained
\begin{prop}\label{propo:moddef}
Given a $2$-cocycle $\cot: H\otimes H\to \bbK$ the following functors
induce equivalences of categories:
\begin{itemize}
\item[(i)] $\Gamma :  {}_A{\cal M}{}^H\to {}_{A_\cot}{\cal M}^{H_\cot}$; 
the left $A_\cot$-actions are defined by (\ref{av}).
\item[(ii)] $\Gamma : {\cal M}_{A}{}^H\to {\cal M}{}_{A_\cot}{}^{H_\cot}$; the right $A_\cot$-actions 
are defined by (\ref{rAmod}).
\item[(iii)] $\Gamma : {}_A{\cal M}_{A}{}^H\to {}_{A_\cot}{\cal M}_{A_\cot}{}^{H_\cot}$;
the $A_\cot$-bimodule structures are defined by (\ref{av}) and  (\ref{rAmod}).
\end{itemize}
In all three cases we have that the maps in (\ref{nt}) are isomorphisms  in the
corresponding categories ($ {}_{A_\cot}{\cal M}^{H_\cot}$, $ {\cal
  M}{}_{A_\cot}{}^{H_\cot}$ and $ {}_{A_\cot}{\cal
  M}_{A_\cot}{}^{H_\cot}$ respectively); they are
the components of the natural isomorphism
 $\varphi : \otimes^\cot \circ (\Gamma\times\Gamma)\Rightarrow \Gamma\circ \otimes$,
 where the bifunctors $\otimes$ are defined in the Remarks \ref{rem:modulecategory}, \ref{rem:modulecategory2}
 and \ref{rem:bifunctorAMAH}. 

In particular {\it (i)} and {\it (ii)} induce
 the following equivalences of ${\cal M}^H$- and ${\cal M}^{H_\gamma}$-module categories:  $({}_A{\cal M}^H,\ot)\simeq ({}_{A_\cot}{\cal M}^{H_\cot},\ot^\cot)$ and 
$({\cal M}_A{}^H,\ot)\simeq ({\cal M}_{A_\cot}{}^{H_\cot},\ot^\cot)$.
\end{prop}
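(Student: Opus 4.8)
The plan is to run the three cases in parallel, using as templates the comodule equivalence of Theorem \ref{thm:funct} and the algebra equivalence of Proposition \ref{propo:algdef}, with the $A$-actions now playing the role that the product $m$ played for $\mathcal{A}^H$. On objects $\Gamma$ leaves the underlying $\bbK$-module and its coaction unchanged, reinterpreting the coaction as an $H_\cot$-coaction exactly as in Theorem \ref{thm:funct}, and it deforms the actions into the $A_\cot$-actions (\ref{av}) and (\ref{rAmod}); on morphisms $\Gamma(\psi):=\psi$. The cleanest way to see that $\Gamma(V)$ lies in the target category is to note that the deformed left action (\ref{av}) is nothing but the composite $\Gamma(\tra_V)\circ\varphi_{A,V}$ of two $\mathcal{M}^{H_\cot}$-morphisms (and similarly for (\ref{rAmod}) via $\varphi_{V,A}$); being a composite of comodule morphisms, the deformed action is automatically a morphism in $\mathcal{M}^{H_\cot}$, so the relative Hopf module compatibility (\ref{eqn:modHcov}) (resp.\ (\ref{eqn:rightmodHcov})) holds with the twisted product $\mt$ on $H_\cot$ without any extra computation.

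What does require the $2$-cocycle condition is the verification that the deformed actions satisfy the module axioms. For the left action one checks $a\tra_{V_\cot}(a'\tra_{V_\cot}v)=(a\mtco a')\tra_{V_\cot}v$ and $\1_A\tra_{V_\cot}v=v$ by a direct Sweedler computation that runs exactly parallel to the proof of associativity and unitality of the deformed product $\mtco$ in (\ref{rmod-twist}), using form (\ref{ii}) of Lemma \ref{lem:formula} and unitality of $\cot$; conceptually this is just the statement that $(\Gamma,\varphi)$ is a monoidal functor, so the coherence carried by $\varphi$ is precisely the cocycle identity. The right action is handled by the mirror identities, and in the bimodule case one additionally checks that the two deformed actions commute, which follows from commutativity of the undeformed actions together with the fact that the two cocycle factors are attached to opposite tensor legs. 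This Sweedler-level bookkeeping is the only genuinely new computation and is the step I expect to be the main obstacle, mainly in keeping the bimodule case organized.

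Functoriality is then immediate: for an $\mathcal{M}^H$-morphism $\psi$ the equivariance argument of Theorem \ref{thm:funct} gives $H_\cot$-equivariance of $\psi:V_\cot\to W_\cot$, and $A_\cot$-linearity holds because (\ref{av})/(\ref{rAmod}) are assembled solely from the original actions, the coactions and $\bar\cot$, all of which $\psi$ intertwines. To get the equivalence I would exhibit the quasi-inverse by twisting back with $\bar\cot$: since $\bar\cot$ is a $2$-cocycle on $H_\cot$ with $(H_\cot)_{\bar\cot}=H$ and $(V_\cot)_{\bar\cot}=V$, and the deformed actions invert accordingly, the functor $\overline{\Gamma}$ satisfies $\overline{\Gamma}\circ\Gamma=\id$ and $\Gamma\circ\overline{\Gamma}=\id$, just as in Proposition \ref{propo:algdef}.

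Finally, for the statement about (\ref{nt}) I would observe that the computation already displayed in the text proving $\varphi_{V,W}$ to be an ${}_{A_\cot}\mathcal{M}^{H_\cot}$-morphism (left-module case) transcribes verbatim to the right-module and bimodule cases, replacing (\ref{eqn:modHcov}) by (\ref{eqn:rightmodHcov}) and using the relevant equivalent form of the cocycle condition from Lemma \ref{lem:formula}; invertibility of $\varphi_{V,W}$ is inherited from invertibility of $\cot$ as in Theorem \ref{thm:funct}. Naturality of $\varphi$ against a pair $(f,g)$ reduces, since $\Gamma$ is the identity on underlying maps, to the commutation of $v\ot^\cot w\mapsto\zero{v}\ot\zero{w}\,\coin{\one{v}}{\one{w}}$ with $f\ot g$, which holds by $H$-equivariance of $f$ and $g$. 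The two module-category equivalences in the last clause are then read off from cases (i) and (ii) together with the compatibility of $\varphi$ with the bifunctors of Remarks \ref{rem:modulecategory} and \ref{rem:modulecategory2}.
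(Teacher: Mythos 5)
Your proposal is correct and follows essentially the same route as the paper: the deformed actions are the composites $\Gamma(\tra_V)\circ\varphi_{A,V}$ (resp.\ $\Gamma(\trl_V)\circ\varphi_{V,A}$) so that $H_\cot$-compatibility is automatic, the module axioms reduce to the $2$-cocycle identities of Lemma \ref{lem:formula} and unitality of $\cot$, the quasi-inverse is obtained by twisting back with $\bar\cot$, and the $A_\cot$-linearity of $\varphi_{V,W}$ is exactly the displayed Sweedler computation using \eqref{eqn:modHcov} and form \eqref{ii} of the cocycle condition. The only slight imprecision is the suggestion that the commutation of the two deformed actions in the bimodule case is mere positional bookkeeping---it in fact uses identity \eqref{ii} of Lemma \ref{lem:formula} once more---but this is already covered by your appeal to the coherence of the monoidal functor $(\Gamma,\varphi)$.
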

\sk

We finish this subsection by studying the twisting of the 
category ${\cal C}^H$ of right $H$-\textit{comodule coalgebras}.
An object $C\in {\cal C}^H$ is an object $C\in{\cal M}^H$ together
with two ${\cal M}^H$-morphisms $\Delta_C: C\to C\otimes C$
(coproduct) and $\varepsilon_C : C\to \bbK$ (counit), i.e.\
\be
\delta^{C\otimes C}\circ \Delta_C =(\Delta_C\otimes \id)\circ \delta^C~,\quad
\delta^{\bbK}\circ \varepsilon_C=(\varepsilon_C\otimes \id)\circ \delta^C \; ,
\ee
which satisfy the axioms of a coalgebra. Morphisms in ${\cal C}^H$ 
are $H$-comodule maps which are also coalgebra maps (i.e., preserve coproducts and counits).
Given now a 2-cocycle $\cot :H\otimes H\to \bbK$, we can use the functor
$\Gamma : {\cal M}^H\to{\cal M}^{H_\cot}$ in order to assign to an object
$C\in {\cal C}^H$ (with coproduct $\Delta_C$ and counit $\varepsilon_C$)
the object $C_\cot \in{\cal C}^{H_\cot}$ with coproduct ${\Delta_C}_\cot$ and
counit ${\varepsilon_C}_\cot$ defined by the commutative diagrams
\begin{flalign}
\xymatrix{
\ar[drr]_-{\Gamma(\Delta)}C_\cot \ar[rr]^-{{\Delta_C}_{\cot}} && C_\cot \otimes^\cot C_\cot\ar[d]^-{\varphi_{C,C}} && \ar[drr]_-{\Gamma(\varepsilon)} C_\cot \ar[rr]^-{{\varepsilon_C}_\cot} && \bbK\ar[d]^-{\simeq}\\
&& (C\otimes C)_\cot && && \Gamma(\bbK)
}
\end{flalign}
in the category ${\cal M}^{H_\cot}$.
Notice that ${\varepsilon_C}_\cot = \varepsilon_C$ and that the
deformed coproduct explicitly reads as
\be\label{cc-twist} 
 {\Delta_C}_\cot : C_\cot\longrightarrow C_\cot\otimes^\cot C_\cot ~,~~c \longmapsto  \zero{(\one{c})} \ot^\cot \zero{(\two{c})} \, \cot \left(\one{(\one{c})} \ot \one{(\two{c})}\right) ~.
\ee 
It is easy to check that $C_\cot$ is an object in ${\cal C}^{H_\cot}$ and that the assignment
$\Gamma : {\cal C}^H \to {\cal C}^{H_\cot}$ is a functor (as before, $\Gamma$ acts as the identity on morphisms).
In summary, we have obtained
\begin{prop}\label{propo:coalgdef}
Given a 2-cocycle $\cot:H\otimes H\to\bbK$ the functor $\Gamma : {\cal C}^H\to{\cal C}^{H_\cot}$
induces an equivalence of categories.
\end{prop}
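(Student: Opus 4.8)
The plan is to follow verbatim the strategy used for Propositions \ref{propo:algdef} and \ref{propo:moddef}. The functor $\Gamma : {\cal C}^H \to {\cal C}^{H_\cot}$ has already been constructed in the discussion preceding the statement: on objects it is fixed by the deformed coproduct \eqref{cc-twist} and the unchanged counit, and on morphisms it acts as the identity on the underlying $\bbK$-linear maps. What remains is only to exhibit an inverse functor. For this I would invoke the Remark following Proposition \ref{prop:co}: the convolution inverse $\bar\cot$ is a $2$-cocycle on $H_\cot$ and the twisted Hopf algebra $(H_\cot)_{\bar\cot}$ equals $H$ via the identity. Running the identical construction with $H_\cot$ and $\bar\cot$ in place of $H$ and $\cot$ produces a functor $\overline\Gamma : {\cal C}^{H_\cot} \to {\cal C}^H$. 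Since both $\Gamma$ and $\overline\Gamma$ leave the underlying $\bbK$-module, comodule structure, and morphisms untouched, it suffices to check that the two coproduct deformations are mutually inverse, that is, $\overline\Gamma\circ\Gamma = \id_{{\cal C}^H}$ and $\Gamma\circ\overline\Gamma = \id_{{\cal C}^{H_\cot}}$ on the nose; this in fact yields an isomorphism, not merely an equivalence, of categories.

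A cleaner route, more in keeping with the categorical viewpoint of the paper, is the following. A right $H$-comodule coalgebra is precisely a comonoid in the monoidal category $({\cal M}^H,\otimes)$, since the requirement that $\Delta_C$ and $\varepsilon_C$ be ${\cal M}^H$-morphisms obeying the coalgebra axioms is exactly the definition of a comonoid. By Theorem \ref{thm:funct}, $(\Gamma,\varphi)$ is a strong monoidal equivalence, with inverse $(\overline\Gamma,\overline\varphi)$; being strong, it is in particular comonoidal with structure given by $\varphi^{-1}$, and therefore transports comonoids to comonoids and induces an equivalence between the respective categories of comonoids. Concretely the transported coproduct on $\Gamma(C)$ is $\varphi_{C,C}^{-1}\circ\Gamma(\Delta_C)$, which is exactly the deformed coproduct ${\Delta_C}_\cot$ fixed by the commutative diagram in the text. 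Hence ${\cal C}^H\simeq{\cal C}^{H_\cot}$ follows with no further computation from the monoidal equivalence already in hand.

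The only genuinely computational point, needed if one prefers the first route over invoking the monoidal machinery, is to verify that twisting ${\Delta_C}_\cot$ back by $\bar\cot$ returns $\Delta_C$. This is the comodule-coalgebra analogue of Corollary \ref{cor:untwist}: substituting \eqref{cc-twist} into the $\bar\cot$-deformation of the coproduct, the nested coactions reorganize by coassociativity of $\delta^C$ and the factor $\cot(\cdots)\,\bar\cot(\cdots)$ collapses through $\cot\ast\bar\cot=\varepsilon\otimes\varepsilon$ and the cocycle identities of Lemma \ref{lem:formula}, leaving $\zero{(\one{c})}\otimes\zero{(\two{c})}\,\varepsilon(\one{(\one{c})})\varepsilon(\one{(\two{c})})=\Delta_C(c)$. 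I expect this bookkeeping, rather than any conceptual difficulty, to be the main (and only) obstacle; it is entirely parallel to the untwisting argument already used for the product in Proposition \ref{propo:algdef}, which is precisely why the categorical route is preferable, as it bypasses the computation altogether.
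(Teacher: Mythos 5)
Your proposal is correct and follows essentially the same route as the paper, which defines ${\Delta_C}_\cot=\varphi_{C,C}^{-1}\circ\Gamma(\Delta_C)$ via the monoidal structure maps, leaves the verification that $C_\cot\in{\cal C}^{H_\cot}$ as an easy check, and obtains the inverse functor by untwisting with $\bar\cot$ exactly as in Proposition \ref{propo:algdef}. Your observation that comodule coalgebras are comonoids in $({\cal M}^H,\otimes)$, so that the strong monoidal equivalence of Theorem \ref{thm:funct} transports them automatically, is just the explicit categorical articulation of what the paper's commutative diagrams already encode, and it correctly subsumes the untwisting computation.
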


\begin{ex}\label{Hcomco}
The right $H$-comodule $\underline{H}$ is a comodule {\it coalgebra} with
coproduct and counit canonically inherited from the Hopf algebra $H$, i.e.,
$\Delta_{\underline{H}}=\Delta$ and
$\varepsilon_{\underline{H}}=\varepsilon$. For ease of notation we
will omit the indices and denote by $\delta^{\underline{H}}$, $\Delta$,
$\varepsilon$ the comodule coalgebra structure of ${\underline{H}}$.
Cocycle deformations of $\underline{H}$ will be relevant in \S \ref{sec:def_sg}.
\end{ex}

\subsubsection{\label{secleftKcomod}Twisting of left $K$-comodules}
Of course, similar twist deformation constructions as in \S \ref{sec:rightHcomod} are available
for {\it left} Hopf algebra comodules rather than right ones.
We briefly collect the corresponding formulae as they will be needed in \S \ref{sec:twHG}.
As we later consider also the case where two (in general different) Hopf algebras coact from respectively the
left and the right, we  denote the Hopf algebra which coacts from the left by $K$.
\sk

Let $K$ be a Hopf algebra. A left $K$-comodule is a $\bbK$-module $V$
together with a $\bbK$-linear map $\rho^V : V\to K\otimes V$ (called a left $K$-coaction)
which satisfies
\be \label{eqn:Hcomoduleleft}
(\Delta \ot \id )\circ \da^V=(\id\otimes \da^V)\circ \da^V ~,\quad
(\varepsilon \ot \id) \circ \da^V =\id~. 
\ee
The  Sweedler notation for the left $K$-coaction is $\da^V(v) = \mone{v} \ot \mzero{v}$ (sum understood),
 with the  $K$-comodule properties (\ref{eqn:Hcomoduleleft}) reading as, for all $ v\in V$
\begin{eqnarray}
\one{(\mone{v})}\otimes \two{(\mone{v})} \otimes \mzero{v} &=& \mone{v} \otimes \mone{(\mzero{v})}\otimes \mzero{(\mzero{v})} =: \mtwo{v} \ot\mone{v} \ot \mzero{v}  ~,\\
\varepsilon (\mone{v}) \, \mzero{v}  &=& v~.\nn
\end{eqnarray}
We denote by ${}^K{\cal M}$ the category of left $K$-comodules; 
the morphisms in ${}^K{\cal M}$ are $\bbK$-linear maps that preserve the left $K$-coactions,
i.e.\ a $\bbK$-linear map $\psi : V\to W$ is a morphism in ${}^K{\cal M}$
provided that
\begin{flalign}
\rho^W \circ \psi = (\id\otimes \psi)\circ \rho^V~.
\end{flalign}
Notice that ${}^K{\cal M}$ is a  monoidal category with bifunctor
$\otimes : {}^K{\cal M}\times {}^K{\cal M} \to {}^K{\cal M}$ defined by equipping the 
tensor product (of $\bbK$-modules) $V\otimes W$ with the tensor product coaction
\begin{flalign}\label{eqn:lefttensor}
\rho^{V\otimes W} : V\otimes W &\longrightarrow K\otimes V\otimes W~,\\
\nn v\otimes w&\longmapsto \mone{v}\mone{w}\otimes\mzero{v}\otimes \mzero{w}~.
\end{flalign} 
The tensor product of morphisms is again $f\otimes g: V\otimes W\to V^\prime\otimes W^\prime\,,~v\otimes w\mapsto f(v)\otimes g(w)$
and the unit object in ${}^K{\cal M}$ is $\bbK$ together with the left $K$-coaction 
$\rho^\bbK  := \eta_K : \bbK \to K\otimes\bbK \simeq K$ given by the unit in $K$.
\sk

Let $\sigma: K\otimes K\to\bbK$ be a 2-cocycle on $K$ (we use the symbol $\sigma$ 
in order to distinguish it from 2-cocycles on the Hopf algebra $H$).
Proposition \ref{prop:co} provides us with a deformed Hopf algebra $K_\sigma$.
We can further construct a functor $\Sigma : {}^K{\cal M}\to {}^{K_\sigma}{\cal M}$
by assigning to an object $V\in {}^K{\cal M}$ the object
$\Sigma(V) := {}_{\sigma} V\in {}^{K_\sigma}{\cal M}$, where  ${}_{\sigma} V$ has the same underlying $\bbK$-module structure
of $V$ and left $K_\sigma$-coaction $\rho^{{}_\sigma V}  : {}_\sigma V\to
K_\sigma\otimes {}_\sigma V$ that, as a $\bbK$-linear map, is given
by the left $K$-coaction $\rho^V : V\to K\otimes V$. On morphisms
$\psi :  V\to W$ we have $\Sigma(\psi) :=\psi : {}_\sigma V\to {}_\sigma W$.
Analogously to Theorem \ref{thm:funct} we have
\begin{thm}\label{thm:functleft}
The functor $\Sigma : {}^K{\cal M} \to {}^{K_\sigma}{\cal M}$ induces an equivalence
between the monoidal categories $({}^{K}\mathcal{M}, \ot)$ and $({}^{K_\sigma}\mathcal{M},
\,{{}^{\sigma\!\!\:}\ot}\,)$. 
The natural isomorphism   $\varphi^\ell : {}^{\sigma\!\!\:}\otimes \circ\/
(\Gamma\times\Gamma)\Rightarrow \Gamma\circ \otimes\,$ is given by the ${}^{K_\sigma}\!\mathcal{M}$-isomorphisms
\begin{eqnarray}\label{nt-left} 
\col_{V,W}: {}_\sg V \,{^{\sigma}\ot}\,{}_\sg W &\longrightarrow&  {}_\sg(V \ot W)  ~,
\\
v \,{^{\sigma\!\!\:}\ot}\, w &\longmapsto &  \sig{\mone{v}}{\mone{w} }~\mzero{v} \ot \mzero{w}  ~,\nn
\end{eqnarray}
for all objects $V,W\in {}^K{\cal M}$.
\end{thm}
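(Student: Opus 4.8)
The plan is to mirror the proof of Theorem \ref{thm:funct}, systematically replacing right $H$-coactions $\delta$ by left $K$-coactions $\rho$ and the cocycle $\cot$ by $\sigma$. First I would exhibit the inverse of $\col_{V,W}$: since $\sigma$ is convolution invertible with inverse $\bar\sigma$, the candidate
\[
\col_{V,W}^{-1}:{}_\sg(V\ot W)\longrightarrow {}_\sg V\,{}^\sigma\!\ot\,{}_\sg W~,\qquad v\ot w\longmapsto \sigin{\mone{v}}{\mone{w}}\;\mzero{v}\,{}^\sigma\!\ot\,\mzero{w}~,
\]
is a two-sided inverse; both composites collapse to the identity by the convolution-inverse relations $\sigma*\bar\sigma=\varepsilon\ot\varepsilon=\bar\sigma*\sigma$ together with left coassociativity. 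This already gives bijectivity of $\col_{V,W}$.

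The main step is to verify that $\col_{V,W}$ is a morphism in ${}^{K_\sigma}\!\mathcal{M}$, i.e.\ that it intertwines the left $K_\sigma$-coactions. On the source the coaction is the left analogue of \eqref{deltaVWcot}, namely $v\,{}^\sigma\!\ot\,w\mapsto(\mone{v}\mts\mone{w})\ot\mzero{v}\,{}^\sigma\!\ot\,\mzero{w}$, where $\mts$ denotes the twisted product of $K_\sigma$ from Proposition \ref{prop:co}; on the target it is the untwisted tensor coaction \eqref{eqn:lefttensor}. Expanding $\mone{v}\mts\mone{w}$ through \eqref{hopf-twist}, applying $\id\ot\col_{V,W}$, and using left coassociativity to align the Sweedler legs, one obtains
\[
\sig{\mfour{v}}{\mfour{w}}\;\mthree{v}\mthree{w}\;\sigin{\mtwo{v}}{\mtwo{w}}\,\sig{\mone{v}}{\mone{w}}\ot\mzero{v}\ot\mzero{w}~.
\]
Here the two consecutive factors $\sigin{\mtwo{v}}{\mtwo{w}}\,\sig{\mone{v}}{\mone{w}}$ collapse by the convolution-inverse property $\bar\sigma*\sigma=\varepsilon\ot\varepsilon$, exactly as the factor $\coin{\one{v}}{\one{w}}\co{\two{v}}{\two{w}}$ did in the proof of Theorem \ref{thm:funct}, leaving $\sig{\mtwo{v}}{\mtwo{w}}\,\mone{v}\mone{w}\ot\mzero{v}\ot\mzero{w}$, which is precisely $\rho^{{}_\sg(V\ot W)}\big(\col_{V,W}(v\,{}^\sigma\!\ot\,w)\big)$. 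Hence $\col_{V,W}$ is a ${}^{K_\sigma}\!\mathcal{M}$-isomorphism. Naturality in $V$ and $W$ is immediate, since for ${}^K\!\mathcal{M}$-morphisms $f,g$ both $\col$ and $f\ot g$ are built solely from the coactions, which $f,g$ intertwine.

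Finally I would check that $(\Sigma,\col)$ satisfies the monoidal-functor coherences. With trivial associators, the associativity constraint reduces to
\[
\col_{U\ot V,W}\circ(\col_{U,V}\,{}^\sigma\!\ot\,\id_W)=\col_{U,V\ot W}\circ(\id_U\,{}^\sigma\!\ot\,\col_{V,W})~,
\]
which unwinds to the $2$-cocycle condition \eqref{lcocycle} for $\sigma$ (the very identity that makes $\mts$ associative), while the unit coherence follows from unitality of $\sigma$. The equivalence of monoidal categories then follows as in Theorem \ref{thm:funct}: $\bar\sigma$ is a $2$-cocycle on $K_\sigma$ that twists it back to $K$ and ${}_\sg V$ back to $V$, so $(\Sigma,\col)$ admits an inverse monoidal functor $(\overline\Sigma,\overline{\col})$ with $\overline{\col}_{{}_\sg V,{}_\sg W}(v\ot w)=\sigin{\mone{v}}{\mone{w}}\,\mzero{v}\,{}^\sigma\!\ot\,\mzero{w}$. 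The only genuine computational content is the equivariance check; I expect the main (though routine) obstacle to be the careful bookkeeping of the nested left Sweedler legs, since the order of both the cocycle arguments and the tensor legs is reversed relative to the right-handed case and must be tracked precisely in order to land on the convolution-inverse cancellation rather than on the full cocycle identity.
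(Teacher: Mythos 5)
Your proposal is correct and follows exactly the route the paper intends: Theorem \ref{thm:functleft} is stated in the paper without proof as the left-handed analogue of Theorem \ref{thm:funct}, and your argument is precisely that mirror image, with the key equivariance computation correctly reducing to the convolution-inverse cancellation $\bar\sigma*\sigma=\varepsilon\ot\varepsilon$ and the untwisting by $\bar\sigma$ supplying the inverse monoidal functor. The Sweedler bookkeeping in your displayed expression $\sig{\mfour{v}}{\mfour{w}}\,\mthree{v}\mthree{w}\,\sigin{\mtwo{v}}{\mtwo{w}}\,\sig{\mone{v}}{\mone{w}}\ot\mzero{v}\ot\mzero{w}$ is consistent with the paper's left-comodule conventions, so no gap remains.
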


The category ${}^K{\cal A}$ of left $K$-comodule algebras and the categories 
${}^K{}_{A}{\cal M}$, ${}^K{\cal M}_{A}$ and ${}^K{}_{A}{\cal M}_{A}$
of relative Hopf modules are defined analogously to the case where the
Hopf algebra coacts from the  right. As in Remark
\ref{rem:modulecategory} and in Remark \ref{rem:modulecategory2}
${}^K{}_{A}{\cal M}$ and ${}^K{\cal M}_{A}$ are respectively right and left
 module categories over the monoidal category  ${}^K{\cal M}$.
As in  Proposition \ref{propo:algdef} and Proposition
\ref{propo:moddef} we obtain
\begin{prop}\label{propo:leftdef}
Let $\sigma: K\otimes K\to \bbK$ be a $2$-cocycle on $K$. 
Then the monoidal functor $(\Sigma,\varphi^\ell) : ( {}^K{\cal M},\otimes)\to ({}^{K_\sigma}{\cal M} ,\,{{}^{\sigma\!\!\:}\otimes}\,)$
leads to the following functors, which induce equivalences of categories:
\begin{itemize}
\item[(i)] $\Sigma : {}^K{\cal A}\to {}^{K_\sigma}{\cal A}$; the
  deformed products are defined by
\begin{flalign}
{}_\sigma m : {}_\sigma A\,{^{\sigma}\otimes}\,{}_\sigma A &\longrightarrow {}_\sigma A~,\\
\nn a\,{{}^{\sigma\!\!\:}\otimes}\, a^\prime & \longmapsto \sig{\mone{a}}{\mone{a'}} \, \mzero{a} \mzero{a'} =: a \,{{}_\sigma\bullet}\, a' ~.
\end{flalign}
\item[(ii)] $\Sigma : {}^K{}_{A}{\cal M} \to {}^{K_\sigma}{}_{{}_\sigma A}{\cal M}$; the left ${}_\sigma A$-actions are defined by
\begin{flalign}\label{eqn:leftleft}
\triangleright_{{}_\sigma V} : {}_\sigma A\, {{}^{\sigma\!\!\:} \otimes}\,{}_\sigma V &\longrightarrow {}_\sigma V~,\\
\nn a\,{{}^{\sigma\!\!\:}\otimes}\, v&\longmapsto \sig{\mone{a}}{\mone{v}}\,\mzero{a}\tra_V \mzero{v}~.
\end{flalign}
\item[(iii)]  $\Sigma : {}^K{\cal M}_{A} \to {}^{K_\sigma}{\cal M}_{{}_\sigma A}$; the right ${}_\sigma A$-actions are defined by
\begin{flalign}\label{eqn:leftright}
\triangleleft_{{}_\sigma V} : {}_\sigma V\, {{}^{\sigma\!\!\:} \otimes}\,{}_\sigma A &\longrightarrow {}_\sigma V~,\\
\nn v\,{{}^{\sigma\!\!\:}\otimes}\, a&\longmapsto \sig{\mone{v}}{\mone{a}}\,\mzero{v}\trl_V \mzero{a}~.
\end{flalign}
\item[(iv)] $\Sigma : {}^K{}_{A}{\cal M}_{A} \to {}^{K_\sigma}{}_{{}_\sigma A}{\cal M}_{{}_\sigma A}$; the ${}_\sigma A$-bimodule
structures are defined by (\ref{eqn:leftleft}) and (\ref{eqn:leftright}).
\end{itemize}
In the cases (ii), (iii) and (iv) we have that the maps in 
(\ref{nt-left}) are isomorphisms  in the
corresponding categories ($  {}^{K_\sigma}{}_{{}_\sigma A}{\cal M}$, $ {}^{K_\sigma}{\cal M}_{{}_\sigma A}$ and $  {}^{K_\sigma}{}_{{}_\sigma A}{\cal M}_{{}_\sigma A}$ respectively); 
they  are the components of the natural isomorphism
 $\varphi^{\ell} : \,{{}^{\sigma\!\!\:}\otimes}\, \circ (\Sigma\times\Sigma)\Rightarrow \Sigma\circ \otimes$,
 where the bifunctors $\otimes$ are the left comodule analogues of those
  in Remarks \ref{rem:modulecategory}, \ref{rem:modulecategory2}
  and \ref{rem:bifunctorAMAH}. 

In particular (ii) and (iii)  induce
 the following equivalences of $^K{\!\cal M}$- and $^{K_\sigma}{\!\cal M}$-module categories:
 $(^K{}_A{\cal M},\ot)\simeq (^{K_\sigma}{\!}_{{}_\sigma A}{\cal M},\,{}^{\sigma\!\!\:}\ot)$ and 
$(^K{\cal M}_A,\ot)\simeq (^{K_\sigma}{\!\cal M}_{{}_\sigma A},{}^{\sigma\!\!\:}\ot)$.
\end{prop}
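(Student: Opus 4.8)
The plan is to run, in mirror image, the arguments behind Proposition \ref{propo:algdef} and Proposition \ref{propo:moddef}, transporting every structure along the monoidal equivalence $(\Sigma,\col)$ supplied by Theorem \ref{thm:functleft}. The conceptual point is that each of the four kinds of object at issue is defined purely from the monoidal structure of $({}^K{\cal M},\otimes)$: an object of ${}^K{\cal A}$ is a monoid (algebra object), an object of ${}^K{}_A{\cal M}$ is a left module object over the monoid $A$, an object of ${}^K{\cal M}_A$ a right module object, and an object of ${}^K{}_A{\cal M}_A$ a bimodule object. Since $(\Sigma,\col)$ is monoidal it carries monoids to monoids and modules to modules, and being an equivalence it induces equivalences on the associated categories of structured objects; the deformed structures are read off concretely by precomposing the image structure maps with the components $\col_{-,-}$ of the natural isomorphism.

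First I would treat (i). Given $A\in{}^K{\cal A}$ with product $m$ and unit $\eta$, both morphisms in ${}^K{\cal M}$, I define
\[ {}_\sigma m := \Sigma(m)\circ\col_{A,A} : {}_\sigma A\,{}^{\sigma}\!\otimes\,{}_\sigma A \longrightarrow {}_\sigma A \]
and leave the unit unchanged; unwinding \eqref{nt-left} together with the tensor coaction \eqref{eqn:lefttensor} reproduces the stated formula $a\,{}_\sigma\!\bullet\,a'=\sig{\mone{a}}{\mone{a'}}\,\mzero{a}\mzero{a'}$. Associativity is the left-handed analogue of the argument establishing associativity of $m_\cot$ near \eqref{rmod-twist} and reduces to the $2$-cocycle condition \eqref{lcocycle} for $\sigma$, while unitality of $\sigma$ makes $\eta$ a unit. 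Cases (ii)--(iv) are identical in spirit: I set $\tra_{{}_\sigma V}:=\Sigma(\tra_V)\circ\col_{A,V}$ and $\trl_{{}_\sigma V}:=\Sigma(\trl_V)\circ\col_{V,A}$, using both for (iv); expanding $\col$ recovers \eqref{eqn:leftleft} and \eqref{eqn:leftright}, and the module and bimodule axioms again follow from the cocycle condition and unitality of $\sigma$.

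The one computation that genuinely has to be carried out is that, for $V$ a (left, right, or bi)module object and $W\in{}^K{\cal M}$, each component $\col_{V,W}$ is a morphism in the relevant enriched category (${}^{K_\sigma}{}_{{}_\sigma A}{\cal M}$, ${}^{K_\sigma}{\cal M}_{{}_\sigma A}$, or ${}^{K_\sigma}{}_{{}_\sigma A}{\cal M}_{{}_\sigma A}$), so that the $\col_{V,W}$ assemble into the natural isomorphism $\col:{}^{\sigma}\!\otimes\circ(\Sigma\times\Sigma)\Rightarrow\Sigma\circ\otimes$ on these module categories. This is the mirror image of the explicit verification carried out for $\varphi_{V,W}$ just before Proposition \ref{propo:moddef}: expanding both $\col_{V,W}\big(a\tra(v\,{}^{\sigma}\!\otimes\,w)\big)$ and $a\tra\big(\col_{V,W}(v\,{}^{\sigma}\!\otimes\,w)\big)$ by means of \eqref{eqn:leftleft}, the tensor coaction \eqref{eqn:lefttensor}, and the defining compatibility of the relative Hopf module $V$, the coincidence of the two sides is an instance of the $2$-cocycle condition \eqref{lcocycle} for $\sigma$ in one of the equivalent forms of Lemma \ref{lem:formula}. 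I expect this to be the main (though still routine) step, since it is the only place where the cocycle identity for $\sigma$ is really used; everything else is bookkeeping in the left-handed Sweedler notation $\rho^V(v)=\mone{v}\otimes\mzero{v}$.

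Finally, functoriality of each $\Sigma$ is immediate, since $\Sigma$ acts as the identity on the underlying $\bbK$-linear maps and a morphism of the structured objects is in particular a ${}^K{\cal M}$-morphism, hence automatically compatible with the transported structures. The equivalences of categories follow by twisting back with the convolution inverse $\bar\sigma$, which is a $2$-cocycle for $K_\sigma$ and untwists $K_\sigma$ to $K$, exactly as in the proof of Theorem \ref{thm:functleft}; this produces quasi-inverse functors built from $\bar\sigma$. The two module-category equivalences in the last sentence of the statement are then automatic: $(\Sigma,\col)$ is a monoidal equivalence and the ${}^K{\cal M}$-actions on ${}^K{}_A{\cal M}$ and ${}^K{\cal M}_A$ are preserved by $\col$ by construction, so $(\Sigma,\col)$ also constitutes an equivalence of module categories over $({}^K{\cal M},\otimes)\simeq({}^{K_\sigma}{\cal M},{}^{\sigma}\!\otimes)$.
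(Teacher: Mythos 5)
Your proposal is correct and follows essentially the same route as the paper, which itself gives no separate proof but states that the result is obtained "as in" Propositions \ref{propo:algdef} and \ref{propo:moddef}, i.e.\ by transporting the algebra/module structures along the monoidal functor $(\Sigma,\varphi^\ell)$, reading off the deformed structure maps from the components $\varphi^\ell_{-,-}$, and reducing the associativity/module axioms and the morphism property of $\varphi^\ell_{V,W}$ to the $2$-cocycle condition for $\sigma$, exactly mirroring the explicit right-comodule computation carried out before Proposition \ref{propo:moddef}. The untwisting by $\bar\sigma$ to get quasi-inverses is likewise the paper's argument from Theorem \ref{thm:functleft}.
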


\subsubsection{\label{secKHbicomod}Twisting of $(K,H)$-bicomodules}
Let $K$ and $H$ be two (in general different) Hopf algebras.
As our last scenario we consider the situation where we have 
$\bbK$-modules $V$ together with a left $K$-coaction $\rho^V : V\to K\otimes V$
and a right $H$-coaction $\delta^V : V\to V\otimes H$ which are compatible in the sense 
of $(K,H)$-bicomodules, i.e.,\
\be\label{compatib}
(\da^V \ot \id)\circ \delta^V = (\id \ot \delta^V ) \circ \da^V ~.
\ee
Evaluated on an element $v \in V$ this condition reads
\be\label{hyp:cov}
\mone{(\zero{v})} \ot  \mzero{(\zero{v})} \ot \one{v} =
\mone{v}\ot \zero{(\mzero{v})} \ot \one{(\mzero{v})} =: \mone{v}\otimes \zero{v}\otimes  \one{v}~.
\ee
We denote by $\KMH$ the category of $(K,H)$-bicomodules, where
$\KMH$-morphisms are $\bbK$-linear maps that are both ${\cal M}^{H}$-comodule and ${}^K{\!\cal M}$-comodule morphisms.
It is a monoidal category; the tensor product 
of $V,W\in{}^K{\!\cal M}{}^{H}$ is the object $V\otimes W\in{}^K{\!\cal M}{}^{H}$
with left $K$-comodule structure $\rho^{V\otimes W}$ given in (\ref{eqn:lefttensor})
and right $H$-comodule structure $\delta^{V\otimes W}$ given in  (\ref{deltaVW}).
Notice that $\delta^{V\otimes W}$ and $\rho^{V\otimes W}$ are compatible
in the sense of (\ref{compatib}), 
\begin{eqnarray}
\nn( \rho^{V \ot W} \ot \id )\circ  \delta^{V \ot W} (v \ot w) &=&
\mone{(\zero{v})}\mone{(\zero{w})} \ot \mzero{(\zero{v})}\ot \mzero{(\zero{w})} \ot \one{v}\one{w}\\
\nn &=& \mone{v}\mone{w}\otimes\zero{v}\otimes\zero{w}\otimes \one{v}\one{w}
\\\nn &=& \mone{v}\mone{w} \ot  \zero{(\mzero{v})}\ot \zero{(\mzero{w})} \ot
\one{(\mzero{v})}\one{(\mzero{w})} 
\\ \nn&=& \mone{v}\mone{w} \ot \delta^{V \ot W} \left( \mzero{v}\ot \mzero{w}\right)
\\
&=& ( \id \ot \delta^{V \ot W})\circ  \rho^{V \ot W} (v \ot w) ~,
\end{eqnarray}
for all $v \in V$ and $w \in W$, where in the second and third passage
we have used that both $V$ and $W$ are objects in ${}^K\!\mathcal{M}^H$ and so their coactions 
 satisfy the compatibility condition \eqref{hyp:cov}. 
 \sk
 
Given a 2-cocycle $\sigma : K\otimes K\to\bbK$ and a 2-cocycle $\cot :H\otimes H\to \bbK$,
we have by \S \ref{secleftKcomod} and \S \ref{sec:rightHcomod}
the monoidal functors $(\Sigma,\varphi^\ell) : ( {}^{K}{\cal M}^H , \otimes) \to ({}^{K_\sigma}{\cal M}^H , \,{{}^{\sigma\!\!\:} \otimes}\,)$
and $(\Gamma ,\varphi) :  ( {}^{K}{\cal M}^H , \otimes) \to ({}^{K}{\cal M}^{H_\cot} , \otimes^\cot)$.
We therefore can construct two monoidal functors
\begin{eqnarray}\label{eqn:leftrightquant}
(\Sigma,\varphi^\ell) \circ (\Gamma , \varphi) &:& ( {}^{K}{\!\cal M}^H , \otimes) \longrightarrow ({}^{K_\sigma}{\!\cal M}^{H_\cot} , \,{{}^{\sigma\!\!\:} \otimes^\cot}\,)~,\nn \\
  (\Gamma,\varphi) \circ (\Sigma , \varphi^\ell) &:& ( {}^{K}{\!\cal M}^H , \otimes) \longrightarrow ({}^{K_\sigma}{\!\cal M}^{H_\cot} , \,{{}^{\sigma\!\!\:} \otimes^\cot}\,)~.
\end{eqnarray}
\begin{prop}\label{monfuneq}
The two monoidal functors in (\ref{eqn:leftrightquant}) are equal.
\end{prop}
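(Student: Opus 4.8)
The plan is to prove the equality of two \emph{strong} monoidal functors, and for this I would check separately that (a) their underlying functors coincide and (b) their monoidal coherence natural isomorphisms coincide. The unit constraints need no separate argument: twisting fixes the unit object $\bbK$, so in both cases the unit coherence is the identity $\bbK\to\bbK$ and the two trivially agree.

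First I would dispose of (a). Both $\Gamma$ and $\Sigma$ leave the underlying $\bbK$-module and the underlying $\bbK$-linear coactions untouched, act as the identity on morphisms, and merely reinterpret the right $H$-coaction (resp.\ the left $K$-coaction) as an $H_\cot$-coaction (resp.\ a $K_\sigma$-coaction). Hence for every $V\in\KMH$ both $\Sigma(\Gamma(V))$ and $\Gamma(\Sigma(V))$ are literally the same object of ${}^{K_\sigma}{\!\cal M}^{H_\cot}$, namely $V$ with left coaction $\da^V$ and right coaction $\delta^V$ read into the twisted Hopf algebras; the bicomodule compatibility \eqref{hyp:cov} survives the twisting because it is an identity of $\bbK$-linear maps that $\Gamma$ and $\Sigma$ do not alter. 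Since both composites are the identity on morphisms, the underlying functors $\Sigma\circ\Gamma$ and $\Gamma\circ\Sigma$ are equal.

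Next I would establish (b) by computing the two coherence maps via the composition law for monoidal functors, under which $(G,\mu^G)\circ(F,\mu^F)$ carries the coherence $G(\mu^F_{V,W})\circ\mu^G_{FV,FW}$. For $(\Sigma,\varphi^\ell)\circ(\Gamma,\varphi)$ the coherence at $(V,W)$ is $\Sigma(\varphi_{V,W})\circ\col_{\Gamma V,\Gamma W}$; evaluating first \eqref{nt-left} (with the left $K$-coaction of $V_\cot$, which equals $\da^V$) and then \eqref{nt} (with the right $H$-coaction of $V$, which equals $\delta^V$), and using that $\Sigma$ is the identity on $\varphi_{V,W}$, it sends
\[
v \,{{}^{\sigma\!\!\:}\otimes^\cot}\, w \longmapsto \sig{\mone{v}}{\mone{w}}\; \zero{(\mzero{v})}\ot \zero{(\mzero{w})}\; \coin{\one{(\mzero{v})}}{\one{(\mzero{w})}}~.
\]
For $(\Gamma,\varphi)\circ(\Sigma,\varphi^\ell)$ the coherence is $\Gamma(\col_{V,W})\circ\varphi_{\Sigma V,\Sigma W}$; evaluating first \eqref{nt} and then \eqref{nt-left}, it sends
\[
v \,{{}^{\sigma\!\!\:}\otimes^\cot}\, w \longmapsto \coin{\one{v}}{\one{w}}\; \sig{\mone{(\zero{v})}}{\mone{(\zero{w})}}\; \mzero{(\zero{v})}\ot \mzero{(\zero{w})}~.
\]
I would then invoke \eqref{hyp:cov} to rewrite both $\mone{v}\ot\zero{(\mzero{v})}\ot\one{(\mzero{v})}$ and $\mone{(\zero{v})}\ot\mzero{(\zero{v})}\ot\one{v}$ as $\mone{v}\ot\zero{v}\ot\one{v}$ (and likewise for $w$), whereupon both expressions reduce to
\[
v \,{{}^{\sigma\!\!\:}\otimes^\cot}\, w \longmapsto \sig{\mone{v}}{\mone{w}}\; \coin{\one{v}}{\one{w}}\; \zero{v}\ot \zero{w}~,
\]
the scalars $\sig{\cdot}{\cdot},\coin{\cdot}{\cdot}\in\bbK$ commuting freely.

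The only genuinely delicate point is the bookkeeping of the iterated coactions and the repeated use of the compatibility \eqref{hyp:cov} to collapse them into the symmetric combined Sweedler notation $\mone{v}\ot\zero{v}\ot\one{v}$; this is where I expect to spend the most care. No real obstacle arises, however, since once the two coherences are brought to the displayed form the comparison is immediate, $\sigma$ and $\bar\gamma$ producing commuting scalars in the ground ring $\bbK$.
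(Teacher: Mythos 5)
Your proposal is correct and follows essentially the same route as the paper: the underlying functors agree because both act as the identity on objects and morphisms, and the coherence isomorphisms agree by the explicit computation with $\varphi$ and $\varphi^\ell$, the key step being the bicomodule compatibility \eqref{hyp:cov} used to collapse the iterated coactions (the paper phrases this as the commutativity of the square \eqref{commdiagSG}, whose two composite paths are exactly your two coherence maps). Your extra remark on the unit constraints is a harmless trivial addition.
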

\begin{proof}
As functors, $\Sigma\circ \Gamma$ is equal to $\Gamma\circ \Sigma$ as
both functors act as the identity on objects and on morphisms.
Thus, we just have to prove that the diagram

\begin{flalign}\label{commdiagSG}
\xymatrix{
\ar[d]_-{\varphi^\ell_{V_\cot,W_\cot}}{}_\sigma V_{\cot} \,{{{}^{\sigma\!\!\:}}\otimes^\cot}\,{}_\sigma W_\cot 
\ar[rr]^-{\varphi_{{}_\sigma V,{}_\sigma W}} && 
({}_\sigma V \,{{}^{\sigma\!\!\:} \otimes}\, {}_\sigma W)_\cot \ar[d]^-{\Gamma(\varphi^\ell_{V,W})}\\
{}_\sigma (V_\cot\otimes^\cot W_\cot)\ar[rr]^-{\Sigma(\varphi_{V,W})}&&{}_\sigma(V\otimes W)_\cot
}
\end{flalign}
in ${}^{K_\sigma}{\!\cal M}^{H_\cot}$ commutes, for all objects $V,W\in{}^K{\!\cal M}^{H}$;
indeed,
\begin{flalign*}
\varphi^{\ell}_{V,W} \big( \varphi_{{}_\sigma V,{}_\sigma W} (v \,{{}^{\sigma\!\!\:}\otimes^\cot}\, w)\big) &=\varphi^\ell_{V,W}(\zero{v}\,{{}^{\sigma\!\!\:}\otimes}\, \zero{w})~\bar\cot\left(\one{v}\otimes\one{w}\right)\\
&= \sig{\mone{(\zero{v})}}{\mone{(\zero{w})}}~\mzero{(\zero{v})}\otimes\mzero{(\zero{w})} ~\bar\cot\left(\one{v}\otimes\one{w}\right)\\
&=\sig{\mone{v}}{\mone{w}}~\zero{(\mzero{v})}\otimes\zero{(\mzero{w})} ~\bar\cot\left(\one{(\mzero{v})}\otimes\one{(\mzero{w})}\right)\\
&=\sig{\mone{v}}{\mone{w}}~\varphi_{V,W}(\mzero{v}\otimes^\cot \mzero{w})\\
&=\varphi_{V,W}\big(\varphi_{V_\cot,W_\cot}^\ell (v\,{{}^{\sigma\!\!\:}\otimes^\cot}\, w)\big)~,
\end{flalign*}
for all $v\in V$ and $w\in W$. In the third equality we have used the bicomodule property \eqref{hyp:cov} for $V$ and $W$.
\end{proof}
In short the above proposition states that it does not matter if we first deform by $\sigma$ and then by $\cot$
or if we first deform by $\cot$ and then by $\sigma$. 

\sk
Let us now consider the category ${}^K{\!\cal A}{}^{H}$
of $(K,H)$-bicomodule algebras, where objects and morphisms are  in
${}^K{\!\cal A}{}^{H}$ if they are in  ${\cal A}{}^{H}$,
${}^K{\!\cal A}$ and ${}^K{\!\cal M}{}^{H}$.
For
$A\in {}^K{\!\cal A}{}^{H}$, we further consider the categories of relative Hopf modules  ${}^K{\!}_{A}{\cal M}^{H}$,
${}^K{\!\cal M}_{A}{}^{H}$ and ${}^K{\!}_{A}{\cal M}_{A}{}^H$, where
by definition
objects and morphisms are in ${}^K{\!}_{A}{\cal M}_{A}{}^H$ if they
are in ${\!}_{A}{\cal M}_{A}{}^H$,  ${}^K{\!}_{A}{\cal M}_{A}$ and
${}^K{\!{\cal M}}{}^H$ (the categories ${}^K{\!}_{A}{\cal
  M}^{H}$ and
${}^K{\!\cal M}_{A}{}^{H}$ are similarly defined).
In complete analogy to Propositions \ref{propo:algdef}, \ref{propo:moddef}
and \ref{propo:leftdef} and because of the compatibility between the $K$-
and the $H$-coactions we obtain the following
\begin{prop}\label{prop:leftrightdef}
Let $\sigma: K\otimes K\to \bbK$ and $\cot : H\otimes H\to\bbK$ be two  $2$-cocycles.
Then the  monoidal functor $(\Gamma,\varphi) \circ (\Sigma,\varphi^\ell) = 
(\Sigma,\varphi^\ell)\circ (\Gamma,\varphi): {}^K{\cal M}^{H}\to {}^{K_\sigma}{\cal M}^{H_\cot}$
leads to the following functors, which induce equivalences of categories.
\begin{itemize}
\item[(i)] $\Gamma\circ \Sigma = \Sigma\circ \Gamma : {}^K{\!\cal
    A}{}^H\to {}^{K_\sigma}{\!\cal A}{}^{H_\cot}$; the deformed products are defined by
\begin{flalign}
{}_\sigma m_\cot : {}_\sigma A_\cot \,{^{\sigma\!\!\:}\otimes^\cot}\,{}_\sigma A_\cot &\longrightarrow {}_\sigma A_\cot~,\\
\nn a\,{{}^{\sigma\!\!\:}\otimes^\cot}\, a^\prime & \longmapsto \sig{\mone{a}}{\mone{a'}} \, \zero{a} \zero{a'}
~\bar\cot\left(\one{a}\otimes \one{a'}\right) =: a \,{{}_\sigma\bullet_\cot}\, a' ~.
\end{flalign}
\item[(ii)] $\Gamma\circ \Sigma = \Sigma\circ \Gamma : {}^K{\!}_{A}{\cal M}{}^{H} \to {}^{K_\sigma}{\!}_{{}_\sigma A_\cot}{\cal M}{}^{H_\cot}$; the left ${}_\sigma A_\cot$-actions 
are defined by
\begin{flalign}\label{eqn:leftleftbi}
\triangleright_{{}_\sigma V_\cot} : {}_\sigma A_\cot\, {{}^{\sigma\!\!\:} \otimes^\cot}\,{}_\sigma V_\cot &\longrightarrow {}_\sigma V_\cot~,\\
\nn a\,{{}^{\sigma\!\!\:}\otimes^\cot}\, v&\longmapsto \sig{\mone{a}}{\mone{v}}\,\zero{a}\tra_V\zero{v}~\bar\cot\left(\one{a}\otimes \one{v}\right)~.
\end{flalign}
\item[(iii)]  $\Gamma\circ \Sigma = \Sigma\circ \Gamma : {}^K{\cal M}_{A}{}^H \to {}^{K_\sigma}{\!\cal M}_{{}_\sigma A_\cot}{}^{H_\cot}$; 
the right ${}_\sigma A_\cot$-actions are defined by
\begin{flalign}\label{eqn:leftrightbi}
\triangleleft_{{}_\sigma V_\cot} : {}_\sigma V_\cot\, {{}^{\sigma\!\!\:} \otimes^\cot}\,{}_\sigma A_\cot &\longrightarrow {}_\sigma V_\cot~,\\
\nn v\,{{}^{\sigma\!\!\:}\otimes^\cot}\, a&\longmapsto \sig{\mone{v}}{\mone{a}}\,\zero{v}\trl_V \zero{a}~\bar\cot\left(\one{v}\otimes \one{a}\right)~.
\end{flalign}
\item[(iv)] $\Gamma\circ \Sigma = \Sigma\circ\Gamma : {}^K{\!}_{A}{\cal M}_{A}{}^H \to {}^{K_\sigma}{\!}_{{}_\sigma A_\cot}{\cal M}_{{}_\sigma A_\cot}{}^{H_\cot}$; 
the ${}_\sigma A_\cot$-bimodule
structures are defined by (\ref{eqn:leftleftbi}) and (\ref{eqn:leftrightbi}).
\end{itemize}
In the cases (ii), (iii) and (iv),  
$\Gamma(\varphi^\ell_{V,W})\circ \varphi_{{}_\sigma V,{}_\sigma W}$
are isomorphisms  in the
corresponding categories ($ {}^{K_\sigma}{\!}_{{}_\sigma A_\cot}{\cal
  M}{}^{H_\cot}$, $ {}^{K_\sigma}{\!\cal M}_{{}_\sigma
  A_\cot}{}^{H_\cot}$ and $  {}^{K_\sigma}{\!}_{{}_\sigma A_\cot}{\cal
  M}_{{}_\sigma A_\cot}{}^{H_\cot}$ respectively); 
they are the components of the natural isomorphism
 $\Gamma(\varphi^\ell)\circ \varphi 
  : \,{{}^{\sigma\!\!\:}\otimes^\cot}\, \circ (\Gamma\circ \Sigma\times\Gamma \circ \Sigma)\Rightarrow \Gamma\circ \Sigma\circ \otimes$,
 where the bifunctors $\otimes$ are the bicomodule analogues of those
  in the Remarks \ref{rem:modulecategory}, \ref{rem:modulecategory2}
  and \ref{rem:bifunctorAMAH}.

In particular {\it (ii)} and {\it (iii)} induce the following equivalences of
 ${}^K{\!\cal M}{}^H$- and  ${}^{K_\sigma}{\!\cal
  M}{}^{H_\gamma}$-module categories:  $({}^K{\!}_A{\cal
  M}^H,\ot)\simeq ({}^{K_\sigma}{\!}_{{}_\sigma A_\cot}{\cal
  M}^{H_\cot}, {}^{\sigma\!\!\:}\ot^\cot)$ and 
$({}^K{\!\cal M}_A{}^H,\ot)\simeq ({}^{K_\sigma}{\!\cal M}_{{}_\sigma
  A_\cot}{}^{H_\cot}, {}^{\sigma\!\!\:}\ot^\cot)$.

\end{prop}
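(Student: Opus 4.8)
The plan is to bootstrap the statement entirely from the single-cocycle results of Propositions \ref{propo:algdef}, \ref{propo:moddef} and \ref{propo:leftdef}, treating the double deformation as a composition of two commuting twists. First I would apply Proposition \ref{propo:moddef} (together with Proposition \ref{propo:algdef} for the comodule-algebra case) to the underlying right $H$-comodule structures, obtaining the functor $\Gamma$ from each of ${}^K{\!\cal A}{}^H$, ${}^K{\!}_A{\cal M}^H$, ${}^K{\!\cal M}_A{}^H$ and ${}^K{\!}_A{\cal M}_A{}^H$ into its $H_\cot$-twisted counterpart. At this stage the left $K$-coaction is merely carried along unchanged; the point to verify is that the $\cot$-deformed products and $A_\cot$-actions remain $K$-equivariant, so that $\Gamma$ actually lands in the bicomodule categories. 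This is where the compatibility condition \eqref{hyp:cov} enters, since it lets the $H$-Sweedler legs be manipulated independently of the $K$-Sweedler legs. I would then apply Proposition \ref{propo:leftdef} to the resulting left $K$-comodule structures to produce the functors into the $K_\sigma$-twisted categories. Because each single twist is an equivalence (inverted by $\bar\cot$, respectively $\bar\sigma$), the composite is an equivalence; and Proposition \ref{monfuneq} guarantees that the order of the two twists is immaterial, which is exactly the assertion $\Gamma\circ\Sigma=\Sigma\circ\Gamma$.

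For the explicit deformed structures, I would simply compose the two single-twist formulas. The $\cot$-deformed product \eqref{rmod-twist} reads $a\mtco a'=\zero{a}\zero{a'}\,\coin{\one{a}}{\one{a'}}$; inserting this into the $\sigma$-deformed product of Proposition \ref{propo:leftdef}(i) and using \eqref{hyp:cov} to identify the $K$-coaction on the result with its action on the legs $\mone{a},\mone{a'}$ should yield the stated formula $a\,{{}_\sigma\bullet_\cot}\,a'=\sig{\mone{a}}{\mone{a'}}\,\zero{a}\zero{a'}\,\coin{\one{a}}{\one{a'}}$. The action formulas \eqref{eqn:leftleftbi}--\eqref{eqn:leftrightbi} would be obtained identically by composing \eqref{av} with \eqref{eqn:leftleft} and \eqref{rAmod} with \eqref{eqn:leftright}. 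The associativity and module axioms for these doubly-deformed operations need not be re-derived: they are inherited from the single-twist propositions, since each twist separately preserves the relevant structure and the two commute.

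Finally, for the natural isomorphism I would take its components to be the composites $\Gamma(\col_{V,W})\circ\varphi_{{}_\sigma V,{}_\sigma W}$, which are precisely the coherence morphisms of the composite monoidal functor $(\Gamma,\varphi)\circ(\Sigma,\varphi^\ell)$. That they are isomorphisms and natural follows formally from the corresponding properties of $\varphi$ and $\col$ established in Propositions \ref{propo:moddef} and \ref{propo:leftdef}, together with functoriality of $\Gamma$. I expect the main obstacle to be verifying that these composites are simultaneously ${}^{K_\sigma}{\!\cal M}$- and ${\cal M}^{H_\cot}$-morphisms and that they intertwine the doubly-deformed $A_\cot$-actions: this is the relative-Hopf-module upgrade of the commuting square \eqref{commdiagSG} appearing in Proposition \ref{monfuneq}, and carrying it out cleanly amounts to a careful bookkeeping of the four Sweedler legs, repeatedly invoking \eqref{hyp:cov} to slide the $K$-legs past the $H$-legs. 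No genuinely new identity beyond \eqref{hyp:cov} and the two $2$-cocycle conditions should be required.
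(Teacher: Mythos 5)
Your proposal is correct and follows essentially the same route as the paper: the paper obtains Proposition \ref{prop:leftrightdef} precisely by composing the single-cocycle functors of Propositions \ref{propo:algdef}, \ref{propo:moddef} and \ref{propo:leftdef}, invoking the bicomodule compatibility \eqref{hyp:cov} to see that each deformation preserves the other coaction, and using Proposition \ref{monfuneq} for the order-independence and for the coherence morphisms $\Gamma(\varphi^\ell_{V,W})\circ\varphi_{{}_\sigma V,{}_\sigma W}$. Your sketch is in fact slightly more explicit than the paper's own justification, which is stated as following "in complete analogy" from the earlier results.
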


\section{Twisting of Hopf-Galois extensions}\label{sec:twHG}
Suppose $B=A^{coH}\subseteq A$ is an $H$-Hopf-Galois extension with \textit{total space} $A$, 
\textit{base space} $B$ and \textit{structure Hopf algebra} $H$ (see Definition \ref{def:hg}).
We are interested in studying how  the invertibility of the canonical map $\can$
 behaves  under deformations via 2-cocycles. We are in particular interested in deforming classical principal bundles (cf.\ Example \ref{ex:principalbundle})
in order to obtain {\it noncommutative principal bundles} or {\it quantum
principal bundles}, i.e., principal
comodule algebras (cf.\ Definition \ref{def:pHcomodalg}) obtained via
deformation of principal bundles. 
Let us observe that if we consider a 2-cocycle
on the structure Hopf algebra $H$ and use it to twist the data $(A,B,H)$, the deformation of the base space $B$
turns out to be trivial as a direct consequence of the triviality of the right $H$-coaction on coinvariants.
In the language of noncommutative principal bundles this means that by twisting a classical principal bundle with a 2-cocycle on $H$, we 
only have the possibility to obtain a noncommutative principal bundle with a classical (i.e.\ not deformed) base space.
In order to obtain a more general theory, which also allows for
deformations of the base space, we shall also consider the case of $A$ carrying an external 
symmetry (described by a second Hopf algebra $K$) that is compatible with the right $H$-comodule structure. 
Indeed, by assuming the total space $A$ to be a $(K,H)$-bicomodule algebra, we can use a 
2-cocycle on $K$ to induce a deformation of $A$, which in general also
deforms the subalgebra $B$ of $H$-coinvariants.

Notice that it would be also possible to develop this theory by assuming the existence of an action of a  Hopf algebra 
$\U$ (dual to $K$) and use a twist $\F \in \U \ot \U$, rather than a
2-cocycle on $K$, to implement the
deformation (see the discussion in Appendix \ref{app:twists}).   Nevertheless, we shall use here 
the language of coactions as usual in the literature on Hopf-Galois extensions. 

\begin{ex}
In the setting of Example \ref{ex:principalbundle}, a natural choice for the Hopf algebra 
$\U$ is the universal enveloping algebra of the Lie algebra of $G$-equivariant vector fields on $P$,
i.e.\ the Lie algebra of derivations of $A=C^\infty(P)$ which commute with the right $G$-action.
The Hopf algebra $\U$ describes the infinitesimal automorphisms of the principal $G$-bundle
$\pi:P\to M$.  A natural choice for the Hopf algebra $K$ would be the Hopf algebra of functions
on a finite-dimensional Lie subgroup of the group of automorphisms $\phi:P \ra P$ of the bundle.
\end{ex}
\sk

We therefore consider the following three scenarios:
\begin{enumerate}
\item[\S\ref{sec:def_sg}] A deformation based on a $2$-cocycle on the structure Hopf algebra $H$. Here the total space $A$ and the structure Hopf algebra
$H$ are twisted, while the base space $B$ is undeformed.

\item[\S\ref{sec:def_es}] A deformation based  on a $2$-cocycle on an external Hopf algebra $K$ of symmetries  of  $A$ with $H$-equivariant 
coaction. Here the total space $A$ and the base $B$ are twisted, while the structure Hopf algebra $H$ is undeformed.

\item[\S\ref{sec:combidef}] The  combination of the previous two cases. Here the total space $A$, 
the structure Hopf algebra $H$ and the base space $B$ are all twisted in a compatible way.
 \end{enumerate}

In all these cases we shall show that
Hopf-Galois extensions and  principal comodule algebras are respectively
deformed into  Hopf-Galois extensions and  principal comodule algebras.
In particular principal bundles are deformed into noncommutative
principal bundles.
Our proof relies on relating the canonical map of the
twisted bundle with the canonical map  of the original bundle via a
commutative diagram in the appropriate category.
\sk

\subsection{Deformation via a 2-cocycle on the structure Hopf algebra $H$}\label{sec:def_sg}

Let $\cot:H\otimes H\to\bbK$ be a  2-cocycle on $H$ which we use to deform 
 $H$ into a new Hopf algebra $\hg$ with the same co-structures and unit, but different product
 and antipode given in Proposition \ref{prop:co}. 
 Using the techniques from \S \ref{sec:rightHcomod}, we can deform $A\in {\cal A}^H$
 into $A_\cot\in {\cal A}^{H_\cot}$ by introducing the twisted product \eqref{rmod-twist}. 
 As we have already observed above, the algebra structure of 
the subalgebra of $H$-coinvariants $B \subseteq A$ does not change
under our present class of 2-cocycle deformations, since the coaction of $H$ on the elements of $B$ is trivial.
 In other words, the subalgebra of coinvariants
 $B_\cot=A_\cot^{coH_\cot}$ of $\pg$ 
 is isomorphic (via the identity map) to $B=A^{coH}$ as an algebra (see \eqref{rmod-twist}). 
\sk

We shall relate the twisted canonical map $\chi_\cot: \pg \ot_B^\cot \pg \ra  \pg \ot^\cot  \underline{\hg}$ with
 the original one $\chi: A \ot_B A \ra  A \ot  \underline{H}$ by understanding both as morphisms
 in the category ${}_{\pg}{\mathcal{M}}_{\pg}{}^{\hg}$. Our strategy is as follows:
First, we notice that applying the functor
 $\Gamma: {}_{A}\mathcal{M}_A{}^{H} \ra  {}_{\pg}\mathcal{M}_{\pg}{}^{\hg}$ from Proposition \ref{propo:moddef} (iii)
on the original canonical map $\chi$  (which is a morphism in ${}_{A}\mathcal{M}_A{}^{H} $, cf.\ Proposition \ref{prop_canMorph}),
 we obtain the morphism $\Gamma(\chi): (A \ot_B A)_\cot \to (A \ot  \underline{H})_\cot$ in $_{\pg}\mathcal{M}_{\pg}{}^{\hg}$.
Next, we relate the two morphisms $\Gamma(\chi)$ and $\chi_\cot$ in
$_{\pg}\mathcal{M}_{\pg}{}^{\hg}$ via the natural transformation
$\varphi_{\text{--},\text{--}}$ (cf.\ (\ref{nt})) and an isomorphism $\Q$ introduced in Theorem \ref{prop:mapQ} below 
after a few technical lemmas.  The role of the isomorphism $\Q$ is to relate the 
two twist deformations of $H$ into right $H_\cot$-comodule coalgebras:
${\underline{H}}_\cot$ and
${\underline{H_\cot}}$; recall Example \ref{Hcomco}. While 
${\underline{H}}_\cot$ is the deformation of the $H$-comodule
coalgebra $\underline{H}_{}$, in ${\underline{H_\cot}}$ we  first
deform the Hopf algebra $H$ to $H_\cot$ and then regard it as an  
$H_\cot$-comodule coalgebra. The isomorphism $\Q$ is related to
the natural isomorphism proving the equivalence of the categories of
Hopf algebra modules and twisted Hopf algebra 
modules as closed categories, cf.\ Appendix \ref{appC}.
\sk

Let us now discuss the construction in detail. 
By Proposition \ref{prop_canMorph} we have that 
the canonical map $\chi: A \ot_B A \ra  A \ot  \underline{H}$ is a morphism in ${}_{A}\mathcal{M}_A{}^{H}$.
Applying the functor $\Gamma: {}_{A}\mathcal{M}_A {}^H\ra  {}_{\pg}\mathcal{M}_{\pg}{}^{\hg}$ 
from Proposition \ref{propo:moddef} (iii)
we obtain the ${}_{\pg}\mathcal{M}_{\pg}{}^{\hg}$-morphism $\Gamma(\chi)=\chi : (A\ot_B A)_\cot\to (A\ot \underline{H})_\cot$,
where $(A\ot_B A)_\cot=\Gamma(A \ot_B A )$ and $(A\ot
\underline{H})_\cot=\Gamma(A \ot  \underline{H})$ are objects in
${}_{\pg}\mathcal{M}_{\pg}{}^{\hg}$, while $A \ot_B A $ and $ A \ot
\underline{H}$ are in  ${}_{A}\mathcal{M}_A{}^H$.
Explicitly, the left $\pg$-action on $(A \ot_B A)_\cot $ reads
\begin{eqnarray}
\tra_{(A \ot_B A)_\cot }: \pg \ot^\cot (A \ot_B A)_\cot &\longrightarrow& (A \ot_B A)_\cot ~, \\
 c \ot^\cot (a \ot_B a') &\longmapsto & \zero{c} \zero{a} \ot_B \zero{a'} \,\coin{\one{c}}{\one{a}\one{a'}} \nn~.
\end{eqnarray}
The right $\hg$-coaction on $(A \ot_B A)_\cot$ is given by the right
$H$-coaction on $A\otimes_B A$, and the right $\pg$-action reads
\begin{eqnarray}
\trl_{(A \ot_B A)_\cot }:  (A \ot_B A)_\cot \ot^\cot \pg&\longrightarrow& (A \ot_B A)_\cot ~, \\
(a\otimes_B a^\prime)\otimes^\cot c &\longmapsto &   \zero{a} \ot_B \zero{a'}  \zero{c} 
\, \coin{\one{a}\one{a'}}{\one{c}}~.\nn
\end{eqnarray}
Analogously, on $(A \ot \underline{H})_\cot $ the left $\pg$-action reads
\begin{eqnarray}
\tra_{(A \ot \underline{H})_\cot }: \pg \ot^\cot (A \ot \underline{H})_\cot &\longrightarrow& (A \ot \underline{H})_\cot~, \\
  c \ot^\cot (a \ot h) &\longmapsto & \zero{c} \zero{a} \ot \two{h}\, \coin{\one{c}}{\one{a}S(\one{h})\three{h}}~,\nn
\end{eqnarray}
 the right $\hg$-coaction on $(A \ot \underline{H})_\cot$ is
 given by the right $H$-coaction on $A\otimes\underline{H}\,$, and the
 right $\pg$-action reads
\begin{eqnarray}\label{rightAHgamma} 
\trl_{(A \ot \underline{H})_\cot }:  (A \ot \underline{H})_\cot \ot^\cot \pg&\longrightarrow& (A \ot \underline{H})_\cot~, \\
 (a\ot h)\ot^\cot c&\longmapsto & \zero{a} \zero{c} \ot \two{h}\one{c} 
\, \coin{\one{a}S(\one{h})\three{h}}{\two{c}}~.\nn
\end{eqnarray}
\sk

We proceed with the second step and introduce the isomorphism $\Q$ mentioned above relating  
the two deformations of  $H$ when thought of as a Hopf algebra or as a right $H$-comodule coalgebra 
$\underline{H}$.  We first need the following technical lemmas. 
Recall the definition of $u_\cot$ and its convolution inverse from \eqref{uxS}.
\begin{lem}\label{lemma2co}
Every 2-cocycle $\cot:H\otimes H\to \bbK$ satisfies
\be\label{simpl0}
u_\cot(\one{h})\bar\cot(S(\two{h})\otimes k)=\cot(\one{h}\otimes
S(\two{h})k)~,
\ee 
\be\label{simpl}
\bar{u}_\cot (\one{h})\co{\two{h}}{{k}} = \coin{S(\one{h})}{\two{h}{k}}~,
\ee
\be\label{abc}
\co{g\one{h}}{S(\two{h})k} = \coin{\one{g}}{\one{h}} ~u_\cot(\two{h}) ~\coin{S(\three{h})}{\one{k}} ~
\co{\two{g}}{\two{k}}~,
\ee
for all $g,h,k\in H$.
 \end{lem}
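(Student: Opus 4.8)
The plan is to prove the three identities \eqref{simpl0}, \eqref{simpl} and \eqref{abc} in order, using the characterizations of the 2-cocycle condition collected in Lemma \ref{lem:formula} together with the definitions \eqref{uxS} of $u_\cot$ and $\bar u_\cot$. First I would establish \eqref{simpl0} and \eqref{simpl}, which express that the maps $u_\cot$ and $\bar u_\cot$ interact with $\cot$ and $\bar\cot$ in a controlled way; these are the two building blocks. For \eqref{simpl0}, I would start from the right-hand side $\co{\one{h}}{S(\two{h})k}$, and expand $S(\two{h})k$ using coassociativity so that the second slot becomes $S(\three{h})\one{k}$-type terms, then apply the cocycle identity \eqref{iv} of Lemma \ref{lem:formula} (with the antipode supplying a $\varepsilon$ via $\one{h}S(\two{h})=\varepsilon(h)\1$) to factor out $u_\cot(\one{h})=\co{\one{h}}{S(\two{h})}$ and leave $\bar\cot(S(\two{h})\otimes k)$. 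The identity \eqref{simpl} is the mirror statement and I would prove it symmetrically, starting from $\coin{S(\one{h})}{\two{h}k}$ and using part \eqref{ii} or \eqref{iii} of Lemma \ref{lem:formula} to split off $\bar u_\cot(\one{h})=\coin{S(\one{h})}{\two{h}}$.

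Next I would derive \eqref{abc}, which is the genuinely composite identity and the one actually needed in the sequel. The natural approach is to begin with the left-hand side $\co{g\one{h}}{S(\two{h})k}$ and apply the cocycle condition in the form \eqref{lcocycle} (or its equivalent \eqref{iii}) to the product $g\one{h}$ in the first tensor slot; this should split the expression into a piece of the form $\coin{\one{g}}{\one{h}}$ times a factor involving only $h$ and $k$. I would then recognize the remaining $h,k$-dependent factor as exactly the combination that \eqref{simpl0} and \eqref{simpl} let me rewrite: one inserts $u_\cot(\two{h})\,\coin{S(\three{h})}{\one{k}}$ in place of a term $\cot\!\left(\two{h}\otimes S(\three{h})\one{k}\right)$ and reassembles $\co{\two{g}}{\two{k}}$ from what is left. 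In effect \eqref{abc} is obtained by feeding \eqref{simpl0} (or \eqref{simpl}) into the expansion and bookkeeping the Sweedler indices carefully.

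The main obstacle I anticipate is purely combinatorial rather than conceptual: keeping the Sweedler leg-numbering straight through the repeated use of coassociativity, since \eqref{abc} already carries $g$ with two legs, $h$ with three legs and $k$ with two, and each application of a cocycle identity or of $S$ introduces further splittings that must be reindexed consistently. The standard way to control this is to expand everything to a fixed high number of Sweedler legs at the outset, apply the antipode axioms $\one{x}S(\two{x})=\varepsilon(x)\1=S(\one{x})\two{x}$ only at the very end to collapse redundant legs, and check that each invocation of Lemma \ref{lem:formula} is matched to the correct equivalent form \eqref{ii}--\eqref{iv}. I do not expect any step to require an idea beyond the cocycle condition and the definitions of $u_\cot,\bar u_\cot$; the content is that \eqref{simpl0}, \eqref{simpl} are consequences of unitality plus \eqref{lcocycle}, and that \eqref{abc} is their combination.
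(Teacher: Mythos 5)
Your proposal is correct and follows essentially the same route as the paper: \eqref{simpl0} and \eqref{simpl} are obtained from parts \eqref{iv} and \eqref{iii} of Lemma \ref{lem:formula} together with the antipode axioms, and \eqref{abc} is obtained by using the cocycle condition to split the product $g\one{h}$ in the first slot, collapsing the redundant antipode legs, and then substituting \eqref{simpl0} for the resulting factor $\co{\two{h}}{S(\three{h})\one{k}}$. There is no gap.
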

\begin{proof}
Recalling the definition of $u_\cot$ and Lemma \ref{lem:formula}
\eqref{iv} we have (\ref{simpl0}). Similarly from Lemma \ref{lem:formula}
\eqref{iii} we have  (\ref{simpl}). From Lemma \ref{lem:formula}  \eqref{iii}  we also
obtain (recall $\bar\cot\ast\cot=\varepsilon\otimes\varepsilon$)
\begin{eqnarray*}
\co{gh}{k}= \coin{\one{g}}{\one{h}} \co{\two{h}}{\one{k}} \co{\two{g}}{\three{h}\two{k}}~, 
\end{eqnarray*}
for all $g,h,k \in H$. 
Use of this identity and of (\ref{simpl0}) proves (\ref{abc}):
\begin{eqnarray*}
\co{g\one{h}}{S(\two{h})k} &=&  \coin{\one{g}}{\one{h}} \co{\two{h}}{S(h_{\scriptscriptstyle{(5)}})\one{k}} \co{\two{g}}{\three{h}S(h_{\scriptscriptstyle{(4)}})\two{k}}
\\
&=&  \coin{\one{g}}{\one{h}} \co{\two{h}}{S(h_{\scriptscriptstyle{(3)}})\one{k}} \co{\two{g}}{\two{k}}
\\
&=& 
\coin{\one{g}}{\one{h}} ~u_\cot(\two{h}) ~\coin{S(\three{h})}{\one{k}} ~
\co{\two{g}}{\two{k}}~.
\end{eqnarray*}
\end{proof}

\begin{lem}
Let $\underline{H}$ be the right $H$-comodule coalgebra 
with adjoint coaction $\delta^{\underline{H}} = \mathrm{Ad}: \underline{H}\to\underline{H}\otimes H\,,~ 
h \mapsto \two{h}\otimes S(\one{h})\,\three{h}$.
Then the twisted right $\hg$-comodule coalgebra $\underline{H}_\cot$  has coproduct 
\be\label{cop-ad}
\Delta_\cot (h) = \three{h} \ot^\cot  h_{\scriptscriptstyle{(7)}} \,
\coin{S(\two{h})}{\four{h}} ~
u_\cot (h_{\scriptscriptstyle{(5)}}) ~
\coin{S(h_{\scriptscriptstyle{(6)}})}{h_{\scriptscriptstyle{(8)}}}
~
\co{S(h_{\scriptscriptstyle{(1)}})}{h_{\scriptscriptstyle{(9)}}}~.
\ee
\end{lem}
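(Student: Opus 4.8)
The plan is to read off $\Delta_\cot$ directly from the general formula \eqref{cc-twist} for the twisted coproduct of a comodule coalgebra, specialised to the object $\underline{H}$ of Example \ref{Hcomco} (whose coproduct and counit are the Hopf algebra maps $\Delta,\varepsilon$ and whose coaction is the adjoint one), and then to rewrite the single cocycle factor that appears by means of the identities collected in Lemma \ref{lemma2co}. Since by Proposition \ref{propo:coalgdef} the object $\underline{H}_\cot$ is already a right $\hg$-comodule coalgebra, the only task is to make its coproduct explicit.

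First I would substitute the adjoint coaction $\delta^{\underline{H}}(x)=\two{x}\ot S(\one{x})\three{x}$ into \eqref{cc-twist}. Reading off the comodule structure, the $\zero{(\,\cdot\,)}$-parts are the middle legs and the $\one{(\,\cdot\,)}$-parts are of the form $S(\text{first})\cdot(\text{third})$; applying this to each of the two legs $\one{h},\two{h}$ of $\Delta_{\underline{H}}(h)=\Delta(h)$ and expanding $h$ into six factors yields the intermediate expression $\Delta_\cot(h)=\two{h}\ot^\cot\five{h}\,\co{S(\one{h})\three{h}}{S(\four{h})h_{\scriptscriptstyle{(6)}}}$. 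This is a routine substitution.

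The substantive step is to simplify the cocycle factor $\co{S(\one{h})\three{h}}{S(\four{h})h_{\scriptscriptstyle{(6)}}}$ using identity \eqref{abc}. I would match its left-hand side $\co{g\,\one{n}}{S(\two{n})k}$ by taking $g=S(\one{h})$, $\one{n}=\three{h}$, $\two{n}=\four{h}$ and $k=h_{\scriptscriptstyle{(6)}}$, i.e. by letting $n$ be the element whose coproduct is $\three{h}\ot\four{h}$. The right-hand side of \eqref{abc} then produces a $u_\cot$ together with two $\bar\cot$'s and, through the double coproduct of $g=S(\one{h})$, the triple coproduct of $n$, and the double coproduct of $k$, expands the coproduct of $h$ from six to nine tensor factors (the two surviving legs $\two{h},\five{h}$ accounting for the remaining two). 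A straightforward renaming of the nine Sweedler indices then reproduces \eqref{cop-ad}: with $\one{g}=S(\two{h})$ and $\two{g}=S(\one{h})$ one obtains the outer entries $\coin{S(\two{h})}{\four{h}}$ and $\co{S(\one{h})}{h_{\scriptscriptstyle{(9)}}}$, the factor $u_\cot(\two{n})$ becomes $u_\cot(h_{\scriptscriptstyle{(5)}})$, the factor $\coin{S(\three{n})}{\one{k}}$ becomes $\coin{S(h_{\scriptscriptstyle{(6)}})}{h_{\scriptscriptstyle{(8)}}}$, and the surviving legs $\two{h},\five{h}$ become the two tensor slots $\three{h}$ and $h_{\scriptscriptstyle{(7)}}$.

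The main obstacle is the Sweedler-index bookkeeping in this last step, and in particular the legitimacy of treating $\three{h}\ot\four{h}$ as $\one{n}\ot\two{n}$ for a single element $n$: these are the third leg of $\one{h}$ and the first leg of $\two{h}$, hence adjacent factors of $h$ whose merger is exactly the middle factor of the five-fold coproduct $\Delta^{(4)}(h)$, so coassociativity licenses passing to the triple coproduct $\one{n}\ot\two{n}\ot\three{n}$ required on the right-hand side of \eqref{abc}. One must also track the antipode-induced order reversal $\Delta(S(\one{h}))=S(\two{h})\ot S(\one{h})$ when splitting $g$. Everything else — the substitution in the first step and the final renaming of the nine factors — is mechanical.
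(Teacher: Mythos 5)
Your proposal is correct and follows exactly the paper's own (much terser) proof: specialise the general twisted-coproduct formula \eqref{cc-twist} to the adjoint coaction to get $\two{h}\ot^\cot\five{h}\,\co{S(\one{h})\three{h}}{S(\four{h})h_{\scriptscriptstyle{(6)}}}$, then apply identity \eqref{abc}. Your careful justification of the Sweedler bookkeeping (merging $\three{h}\ot\four{h}$ into a single element via coassociativity and tracking $\Delta(S(\one{h}))=S(\two{h})\ot S(\one{h})$) fills in precisely the details the paper leaves implicit.
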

\begin{proof}
By the general theory, the twisted right $\hg$-comodule coalgebra $\underline{H}_\cot$  has coproduct given  by
$\Delta_\cot (h) = \two{h} \ot^\cot  h_{\scriptscriptstyle{(5)}} 
\co{S(\one{h})\three{h}}{S(\four{h})h_{\scriptscriptstyle{(6)}} } $
(see  \eqref{cc-twist}), then
\eqref{cop-ad} follows by applying \eqref{abc} above.
\end{proof}

On the other hand, the twisted Hopf algebra $\hg$  
can be considered as a right $\hg$-comodule coalgebra, denoted by $\underline{\hg}$, 
via the $\hg$-adjoint coaction 
\be\label{Ad-cot}
\delta^{\underline{\hg}} = \mathrm{Ad}_{\cot}: \underline{\hg} \longrightarrow \underline{\hg}\otimes \hg~,~~
h \longmapsto \two{h}\otimes S_\cot (\one{h}) \mt \three{h} ~ 
\ee
and the coproduct $\Delta : \underline{\hg}\to \underline{\hg}\otimes^\cot \underline{\hg}\,,~h \mapsto \one{h}\otimes^\cot \two{h}$.

\begin{thm}\label{prop:mapQ}
The $\bbK$-linear map 
\begin{eqnarray}\label{mapQ}
\Q : \underline{\hg} \longrightarrow \underline{H}_\cot ~,~~
h \longmapsto \three{h} \, u_\cot(\one{h}) \, \coin{S(\two{h})}{\four{h}}
\end{eqnarray}
is an isomorphism of right $\hg$-comodule coalgebras, with inverse
\be\label{mapQinv}
\Q^{-1} : \underline{H}_\cot \longrightarrow \underline{\hg} ~,~~
h \longmapsto \three{h} \, \bar{u}_\cot (\two{h}) \, \co{S(\one{h})}{\four{h}}~.
\ee
\end{thm}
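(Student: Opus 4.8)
The statement asserts that the explicit map $\Q$ in \eqref{mapQ} is an isomorphism in the category of right $\hg$-comodule coalgebras, with inverse \eqref{mapQinv}. The plan is to establish three facts: that $\Q$ is bijective with the stated inverse, that $\Q$ intertwines the two right $\hg$-coactions (the coaction $\mathrm{Ad}_\cot$ of \eqref{Ad-cot} on $\underline{\hg}$ and the adjoint coaction $\mathrm{Ad}$, now valued in $\underline{H}_\cot\ot\hg$, on $\underline{H}_\cot$), and that $\Q$ intertwines the coproducts (the unchanged $\Delta$ on $\underline{\hg}$ and $\Delta_\cot$ of \eqref{cop-ad} on $\underline{H}_\cot$) and the counits. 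Once these are shown, $\Q^{-1}$ is automatically a morphism of $\hg$-comodule coalgebras, being the inverse of a bijective one. The running toolkit will be Lemma \ref{lem:formula} (especially (\ref{iii}) and (\ref{iv})), the relations \eqref{simpl0}, \eqref{simpl}, \eqref{abc} of Lemma \ref{lemma2co}, and, above all, the fact from \eqref{uxS} that $u_\cot$ and $\bar u_\cot$ are mutually convolution inverse, $u_\cot * \bar u_\cot = \varepsilon = \bar u_\cot * u_\cot$. A useful preliminary reduction is to apply \eqref{simpl0} to contract the $u_\cot$ and $\bar\cot$ factors in \eqref{mapQ}, rewriting $\Q$ in the compact form $\Q(h)=\three{h}\,\co{\one{h}}{S(\two{h})\four{h}}$, which shortens every subsequent manipulation.

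For bijectivity I would compute $\Q^{-1}\circ\Q$ and $\Q\circ\Q^{-1}$ directly. Expanding $\Q\circ\Q^{-1}$ on an iterated coproduct of $h$ produces an expression carrying one $u_\cot$, one $\bar u_\cot$, one $\cot$ and one $\bar\cot$ factor; the $u_\cot$ and $\bar u_\cot$ sit on adjacent coproduct legs, so collapsing the $\bar u_\cot * u_\cot=\varepsilon$ convolution leaves a product of one $\cot$- and one $\bar\cot$-factor with antipodes, of the shape $\three{h}\,\coin{S(\two{h})}{\four{h}}\co{S(\one{h})}{\five{h}}$, which telescopes to $h$ by the cocycle condition (on grouplike elements the two factors have equal arguments and cancel outright, which is the mechanism governing the general case). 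The counit is immediate and serves as a warm-up: $\varepsilon(\Q(h))=u_\cot(\one{h})\,\coin{S(\two{h})}{\three{h}}=u_\cot(\one{h})\,\bar u_\cot(\two{h})=\varepsilon(h)$, using the definition of $\bar u_\cot$ in \eqref{uxS} and $u_\cot*\bar u_\cot=\varepsilon$.

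The comodule-morphism property is where the twisted structures genuinely enter. Here I would compute $\mathrm{Ad}(\Q(h))$, recalling that the coaction on $\underline{H}_\cot$ is still the \emph{ordinary} adjoint coaction $\mathrm{Ad}(x)=\two{x}\ot S(\one{x})\three{x}$ (only reinterpreted as a map into $\underline{H}_\cot\ot\hg$), and compare it with $(\Q\ot\id)\,\mathrm{Ad}_\cot(h)$, where $\mathrm{Ad}_\cot(h)=\two{h}\ot S_\cot(\one{h})\mt\three{h}$ with $S_\cot=u_\cot*S*\bar u_\cot$ and $\mt$ the twisted product \eqref{hopf-twist}. Inserting the expansions of $S_\cot$ and of $\mt$ turns the right-hand side into a sum of $\cot$- and $\bar\cot$-factors multiplying an antipode-laden product, and the relation \eqref{abc}, which is precisely tailored to rewrite terms of the form $\co{g\,\one{h}}{S(\two{h})k}$, is the key device that brings the two sides into agreement after repeated use of Lemma \ref{lem:formula}.

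The main obstacle is the coalgebra-morphism identity $\Delta_\cot\circ\Q=(\Q\ot^\cot\Q)\circ\Delta$, because the coproduct \eqref{cop-ad} of $\underline{H}_\cot$ carries a nine-fold iterated coproduct together with four cocycle factors and a $u_\cot$. My plan is to feed the compact form of $\Q$ into both sides, so that $(\Q\ot^\cot\Q)\Delta(h)=\Q(\one{h})\ot^\cot\Q(\two{h})$ is a product of two four-leg expressions, and then prove the resulting identity by the same mechanical principles used above: telescope each antipode pair $S(\cdot)(\cdot)$ against the intervening coproduct leg down to a counit, and collapse every adjacent $u_\cot*\bar u_\cot$ (or $\bar u_\cot*u_\cot$) into $\varepsilon$, peeling the expression apart in stages via Lemma \ref{lem:formula}\,(\ref{iii})--(\ref{iv}). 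Rather than confronting \eqref{cop-ad} head on, it is cleaner to work instead with the more primitive form $\Delta_\cot(h)=\two{h}\ot^\cot h_{\scriptscriptstyle{(5)}}\,\co{S(\one{h})\three{h}}{S(\four{h})h_{\scriptscriptstyle{(6)}}}$ from the preceding lemma (obtained from \eqref{cc-twist}), deferring the use of \eqref{abc} to the very end; this keeps the number of cocycle factors minimal at each stage, and is the route I would follow to keep the bookkeeping under control.
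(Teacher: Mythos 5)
Your proposal is correct and follows essentially the same route as the paper: the same three-way decomposition (bijectivity, $\hg$-comodule morphism via expanding $S_\cot=u_\cot*S*\bar{u}_\cot$ and the twisted product and then invoking \eqref{abc}, coalgebra morphism), driven by the same identities from Lemma \ref{lem:formula} and Lemma \ref{lemma2co} and the convolution-invertibility of $u_\cot$. The only organizational difference is that the paper checks the coproduct compatibility by computing $(\Q\ot^\cot\Q)\circ\Delta\circ\Q^{-1}$ and comparing the result with \eqref{cop-ad}, whereas you verify $\Delta_\cot\circ\Q=(\Q\ot^\cot\Q)\circ\Delta$ head-on from the primitive form of $\Delta_\cot$; your compact rewriting $\Q(h)=\three{h}\,\co{\one{h}}{S(\two{h})\four{h}}$ is exactly the form the paper itself derives from \eqref{simpl0} and uses later, in the proof of Lemma \ref{lem:diagr-can}.
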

\begin{proof}
It is easy to prove by a direct calculation that $\Q^{-1}$ is the inverse of $\Q$. 
We now show that $\Q$ is a right $\hg$-comodule morphism, for all $h \in \underline{\hg}$,
\begin{flalign*}
&(\Q \ot \id) (\mathrm{Ad}_\cot (h) )=
\\
&\qquad 
= \Q(\two{h}) \ot S_\cot (\one{h}) \mt \three{h}
\\
&\qquad=
\Q(\four{h}) \ot u_\cot(\one{h}) S (\two{h}) \bar{u}_\cot(\three{h}) \mt h_{\scriptscriptstyle{(5)}}
\\
&\qquad=
u_\cot (h_{\scriptscriptstyle{(6)}}) h_{\scriptscriptstyle{(8)}} \coin{S(h_{\scriptscriptstyle{(7)}})}{h_{\scriptscriptstyle{(9)}}} \ot u_\cot(\one{h}) \bar{u}_\cot (h_{\scriptscriptstyle{(5)}}) \co{S(\four{h})}{h_{\scriptscriptstyle{(10)}}} S(\three{h}) h_{\scriptscriptstyle{(11)}}
\coin{S(\two{h})}{h_{\scriptscriptstyle{(12)}}}
\\
&\qquad=
h_{\scriptscriptstyle{(6)}} \coin{S(h_{\scriptscriptstyle{(5)}})}{h_{\scriptscriptstyle{(7)}}} \ot u_\cot(\one{h})  \co{S(\four{h})}{h_{\scriptscriptstyle{(8)}}} S(\three{h}) h_{\scriptscriptstyle{(9)}}
\coin{S(\two{h})}{h_{\scriptscriptstyle{(10)}}}
\\
&\qquad=
u_\cot(\one{h}) h_{\scriptscriptstyle{(4)}}  \ot S(\three{h}) {h_{\scriptscriptstyle{(5)}}} 
\coin{S(\two{h})}{h_{\scriptscriptstyle{(6)}}} =\mathrm{Ad}(\Q(h)) ~,
\end{flalign*}
where in the fourth passage we used
${u}_\cot(h_{\scriptscriptstyle{(6)}})\bar u_\cot(h_{\scriptscriptstyle{(5)}})=\varepsilon(h_{\scriptscriptstyle{(5)}})$, and
in the fifth $h_{\scriptscriptstyle{(6)}}\bar{\cot} (S(h_{\scriptscriptstyle{(5)}})\otimes
h_{\scriptscriptstyle{(7)}})\cot(S(\four{h})\otimes h_{\scriptscriptstyle{(8)}})=h_{\scriptscriptstyle{(5)}}\varepsilon(\four{h})\varepsilon(h_{\scriptscriptstyle{(6)}})$.
Next, we prove that $\Q$ is a coalgebra morphism, i.e.\ $(\Q \ot^\cot \Q)\circ \Delta= \Delta_\cot \circ \Q$,
\begin{eqnarray*}
(\Q \ot^\cot \Q) \circ \Delta \circ \Q^{-1} (h) &=& \bar{u}_\cot(\two{h}) \Q(\three{h}) \ot^\cot \Q(\four{h}) \co{S(\one{h})}{h_{\scriptscriptstyle{(5)}}}
\\
&=& \bar{u}_\cot(\two{h}) u_\cot(\three{h}) \coin{S(h_{\scriptscriptstyle{(4)}})}{h_{\scriptscriptstyle{(6)}}} h_{\scriptscriptstyle{(5)}} \ot^\cot \Q(h_{\scriptscriptstyle{(7)}})
\co{S(\one{h})}{h_{\scriptscriptstyle{(8)}}}
\\
&=& \coin{S(h_{\scriptscriptstyle{(2)}})}{h_{\scriptscriptstyle{(4)}}} h_{\scriptscriptstyle{(3)}} \ot^\cot \Q(h_{\scriptscriptstyle{(5)}})
\co{S(\one{h})}{h_{\scriptscriptstyle{(6)}}}
\\
&=&
\coin{S(\two{h})}{\four{h}}
 \three{h} \ot^\cot  h_{\scriptscriptstyle{(7)}} 
u_\cot(h_{\scriptscriptstyle{(5)}})
\coin{S(h_{\scriptscriptstyle{(6)}})}{h_{\scriptscriptstyle{(8)}}}
\co{S(h_{\scriptscriptstyle{(1)}})}{h_{\scriptscriptstyle{(9)}}} 
\\ &=&\Delta_\cot (h)~,
\end{eqnarray*}
for all $h\in \underline{H}_\cot $.
The last equality follows from comparison with \eqref{cop-ad}.
\end{proof}

\begin{rem} 
If we dualize this picture by considering a dually paired Hopf
algebra $H'$ (and dual modules) then the right  $H$-adjoint coaction
dualizes into the right $H'$-adjoint action,
$\zeta\blacktriangleleft \xi=S(\one{\xi})\zeta\two{\xi}$ for all
$\zeta,\xi \in H'$. If we further consider a
 mirror construction by using left  adjoint actions rather than right ones, 
 then the analogue of the isomorphism $\Q$ is the isomorphism $D$ studied in  \cite{NCG2} and more in
 general in \cite{AS}. Explicitly, as explained in Appendix \ref{appC},
 the isomorphism $\Q$ is dual to the isomorphism $D$ relative to the Hopf
 algebra ${H'^{op}}^{\,cop}$ with opposite product and coproduct;
 this latter is a component of a
 natural transformation determining the equivalence of the closed monoidal categories of
 left  ${H'^{op}}^{\,cop}$-modules and  left  ${(H_{\gamma}')^{op}}^{\,cop}$-modules.
\end{rem}

After these preliminaries we can now relate  the twisted canonical map $\chi_\cot$
with the original one $\chi$. 
\begin{thm}\label{theo:diagr-can-pre}
Let $H$ be a Hopf algebra and $A$ an $H$-comodule algebra.
Consider the algebra extension $B = A^{coH}\subseteq A$ and the associated
canonical map  $\can : A \otimes_B A \longrightarrow  A \ot H$.
Given a $2$-cocycle $\cot : H\otimes H\to\bbK$ the diagram
\begin{flalign}\label{diagr-can-pre}
\xymatrix{
\ar[dd]_-{\varphi_{A,A}} A_\cot \otimes_B^\cot A_\cot \ar[rr]^-{\chi_\cot} && A_\cot \otimes^\cot \underline{\hg}\ar[d]^-{\id\otimes^\cot \Q}\\
&& A_\cot \otimes^\cot \underline{H}_\cot \ar[d]^-{\varphi_{A,\underline{H}}}\\
(A\otimes_B A)_\cot \ar[rr]^-{\Gamma(\chi)} && (A\otimes \underline{H})_\cot
}
\end{flalign}
in  ${}_{A_\cot}{\cal M}_{A_\cot}{}^{H_\cot}$ commutes.
\end{thm}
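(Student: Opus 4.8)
The plan is to reduce the statement to an identity of underlying $\bbK$-linear maps and then verify it on generators $a\otimes_B^\cot a'$. First I would record that each arrow in \eqref{diagr-can-pre} is a well-defined morphism in ${}_{A_\cot}\mathcal{M}_{A_\cot}{}^{H_\cot}$, so that the diagram genuinely lives in this category: the map $\chi_\cot$ is the canonical map of the twisted Hopf--Galois data $(A_\cot,B_\cot=B,H_\cot)$, hence a morphism by Proposition \ref{prop_canMorph} applied to the comodule algebra $A_\cot$; $\Gamma(\chi)$ is the image of the morphism $\chi$ under the functor of Proposition \ref{propo:moddef}(iii); $\varphi_{A,A}$ and $\varphi_{A,\underline H}$ are components of the natural isomorphism $\varphi$; and $\id\otimes^\cot\Q$ is a morphism because $\Q$ is an $\hg$-comodule coalgebra isomorphism (Theorem \ref{prop:mapQ}) and $A_\cot\otimes^\cot(-)$ is the bifunctor of the relevant module-category structure. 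Commutativity is then the assertion that the two composites agree as $\bbK$-linear maps, which I would check by evaluation on $a\otimes_B^\cot a'$.

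Next I would compute the two paths explicitly. For the lower path, $\varphi_{A,A}(a\otimes_B^\cot a')=\zero a\otimes_B\zero{a'}\,\coin{\one a}{\one{a'}}$ by \eqref{nt}; applying $\Gamma(\chi)=\chi$, so that $\zero a\otimes_B\zero{a'}\mapsto \zero a(\zero{a'})_{(0)}\otimes(\zero{a'})_{(1)}$, and reindexing by comodule coassociativity, yields the compact expression $\zero a\zero{a'}\otimes\one{a'}\,\coin{\one a}{\two{a'}}$. For the upper path, the twisted product \eqref{rmod-twist} gives $\chi_\cot(a\otimes_B^\cot a')=\zero a\zero{a'}\,\coin{\one a}{\one{a'}}\otimes^\cot\two{a'}$; then $\id\otimes^\cot\Q$, via \eqref{mapQ}, expands $\two{a'}$ fourfold into $\four{a'}\,u_\cot(\two{a'})\,\coin{S(\three{a'})}{\five{a'}}$; and finally $\varphi_{A,\underline H}$, via \eqref{nt}, coacts by $\delta^A$ on the $A$-factor and by the adjoint coaction \eqref{adj} on the $\underline H$-factor, producing a further $\bar\cot$-factor of the two resulting $H$-legs together with a threefold antipode splitting.

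The heart of the argument --- and the step I expect to be the main obstacle --- is the reconciliation of these two expressions. I would recommend transporting the simpler lower-path result \emph{forward} through $\varphi_{A,\underline H}^{-1}$ (given by $\overline\varphi$ in Theorem \ref{thm:funct}) and verifying that it equals $(\id\otimes^\cot\Q)\circ\chi_\cot$, rather than collapsing the more ramified upper-path expression backward. In either direction the reconciliation is a bookkeeping exercise in Sweedler indices: the iterated coactions, the fourfold coproduct concealed in $\Q$, and the threefold split of the adjoint coaction together generate a long product of $\cot$-, $\bar\cot$-, $u_\cot$- and antipode-factors which must be collapsed using the cocycle identities of Lemma \ref{lem:formula}, the identities \eqref{simpl0}--\eqref{abc} of Lemma \ref{lemma2co}, the convolution-inverse relation between $u_\cot$ and $\bar u_\cot$ from \eqref{uxS}, and the antipode and counit axioms.

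The conceptual linchpin is identity \eqref{abc}, equivalently the explicit coproduct \eqref{cop-ad} of $\underline H_\cot$: this is precisely the relation that made $\Q$ intertwine the two comodule-coalgebra structures in Theorem \ref{prop:mapQ}, and it is what causes the $u_\cot$- and $\bar\cot$-factors coming from $\Q$ to cancel against those produced by the adjoint coaction and the $\varphi$-maps, leaving exactly the lower-path expression $\zero a\zero{a'}\otimes\one{a'}\,\coin{\one a}{\two{a'}}$ and thereby establishing commutativity.
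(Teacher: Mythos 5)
Your strategy for the commutativity itself coincides with the paper's: both arguments compute $(\id\ot^\cot\Q)\circ\chi_\cot$ directly, transport the lower path through $\varphi_{A,\underline{H}}^{-1}$ (using the explicit formula $\varphi_{A,\underline{H}}^{-1}(a\ot h)=\zero{a}\ot^\cot\two{h}\,\co{\one{a}}{S(\one{h})\three{h}}$), and collapse the resulting string of $\cot$-, $\bar\cot$- and $u_\cot$-factors by means of \eqref{abc}. That part of the proposal is sound, including the choice of direction and the identification of \eqref{abc} as the linchpin.

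There is, however, a genuine gap in your first paragraph, where you claim that $\id\ot^\cot\Q$ is a morphism in ${}_{A_\cot}{\cal M}_{A_\cot}{}^{H_\cot}$ because $\Q$ is an $\hg$-comodule coalgebra isomorphism and $A_\cot\ot^\cot(-)$ is the relevant module-category bifunctor. That bifunctor (Remark \ref{rem:modulecategory}, twisted as in Proposition \ref{propo:moddef}) only equips $A_\cot\ot^\cot\underline{H}_\cot$ with a \emph{left} $A_\cot$-action and an $H_\cot$-coaction; it provides no right $A_\cot$-action. The right $A$-action \eqref{trl_ot} on $A\ot\underline{H}$ multiplies the second tensor factor by $\one{c}$, so to make $A_\cot\ot^\cot\underline{H}_\cot$ an object of ${}_{A_\cot}{\cal M}_{A_\cot}{}^{H_\cot}$ one must first put an \emph{algebra} structure on $\underline{H}_\cot$ together with a coaction of it on $A_\cot$ --- and the comodule coalgebra structure of $\underline{H}_\cot$ supplied by Theorem \ref{prop:mapQ} does not give one. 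The paper handles this in \S\ref{sec:proof}: it transports the Hopf algebra structure of $\hg$ through $\Q$ (diagram \eqref{multHtilde}), defines the right action \eqref{2rightac} with the resulting product $m_{\underline{H}_\cot}$, and then proves by a lengthy Sweedler computation (Lemma \ref{lem:diagr-can}, again resting on \eqref{simpl0}, \eqref{simpl} and \eqref{abc}) that both $\id\ot^\cot\Q$ and $\varphi_{A,\underline{H}}$ respect it. Without this step your argument only establishes commutativity in ${}_{A_\cot}{\cal M}{}^{H_\cot}$, not in ${}_{A_\cot}{\cal M}_{A_\cot}{}^{H_\cot}$ as the theorem asserts. (A minor further omission: one should also note that $\varphi_{A,A}$ descends to the quotient $A_\cot\ot_B^\cot A_\cot\to(A\ot_B A)_\cot$, so that the left vertical arrow is well defined.)
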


\begin{proof}
A straightforward computation shows that   $\varphi_{A,A} : A_\cot \otimes^\cot A_\cot \to (A\otimes A)_\cot$
descends to a well-defined isomorphism on the quotients $\varphi_{A,A} : A_\cot \otimes_B^\cot A_\cot \to (A\otimes_B A)_\cot$, i.e.\
the left vertical arrow is well defined. 
We prove that the diagram \eqref{diagr-can-pre}  commutes.
We obtain for the composition $(\id\ot^\cot \Q) \circ \can_\cot$ the following expression
\begin{eqnarray*}
(\id \ot^\cot \Q) \big( \can_\cot (a \ot_B^\cot a') \big) &=& 
\zero{a} \zero{a'} \ot^\cot \Q (\two{a'}) \coin{\one{a}}{\one{a'}}
\\
&=& 
\zero{a} \zero{a'} \ot^\cot \four{a'} u_\cot(\two{a'}) \coin{S(a'_{\scriptscriptstyle{(3)}})}{a'_{\scriptscriptstyle{(5)}}}  \coin{\one{a}}{\one{a'}} ~.
\end{eqnarray*}
On the other hand, from \eqref{nt} and  \eqref{adj} we have 
$$
\varphi_{A,\underline{H}}^{-1}(a \ot h) =
\zero{a} \ot^\cot \two{h} \co{\one{a}}{S(\one{h})\three{h}} ~,
$$
so that for the composition $\varphi^{-1}_{A,\underline{H}}\circ \Gamma(\can)
\circ \varphi_{A,A}$ we obtain (recalling that $\Gamma(\can)=\can$)
\begin{flalign*}
&\varphi_{A,\underline{H}}^{-1} \left(\Gamma(\can) (\varphi_{A,A} (a \ot^\cot_B a'))\right) \\
 &\qquad = 
\varphi_{A,\underline{H}}^{-1}  (\zero{a} \zero{a'} \ot \one{a'}) \,\coin{\one{a}}{\two{a'}}
\\
&\qquad =
\zero{a} \zero{a'} \ot^\cot \three{a'}  \co{\one{a}\one{a'}}{S(a'_{\scriptscriptstyle{(2)}})a'_{\scriptscriptstyle{(4)}}} \coin{\two{a}}{a'_{\scriptscriptstyle{(5)}}}  
\\
&\qquad =
\zero{a} \zero{a'} \ot^\cot \four{a'}  \coin{\one{a}}{\one{a'}} u_\cot(\two{a'}) \coin{S(\three{a'})}{a'_{\scriptscriptstyle{(5)}}} \co{\two{a}}{a'_{\scriptscriptstyle{(6)}}}  \coin{\three{a}}{a'_{\scriptscriptstyle{(7)}}}~,
\end{flalign*}
where we have used \eqref{abc}. The last two terms simplify, giving the desired identity.

{}From the properties of the canonical map (Proposition
\ref{prop_canMorph}) and from  Proposition \ref{propo:moddef} it
immediately follows that all arrows in the diagram are ${}_{A_\cot}{\cal M}^{H_\cot}$-morphisms. 
In
\S\ref{sec:proof} below we introduce a right 
 $A_\cot$-module structure on $A_\cot\otimes^\cot \underline{H}_\cot$
 such that $A_\cot\otimes^\cot \underline{H}_\cot$ is an object in ${}_{A_\cot}{\cal
   M}_{A_\cot}{}^{H_\cot}$,
and  show that all arrows in the diagram  are also morphisms in ${}_{A_\cot}{\cal
   M}_{A_\cot}{}^{H_\cot}$.
\end{proof}

\begin{cor}\label{cor-can}
$B=A^{coH}\subseteq A$ is an $H$-Hopf-Galois extension if and only if $B \simeq \pg^{coH_\cot} \subseteq \pg$ is 
an $\hg$-Hopf-Galois extension. Moreover, $B\subseteq A$ is cleft if and only if $B \subseteq \pg$ is cleft.
\end{cor}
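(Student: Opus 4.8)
The plan is to extract both statements from machinery already in place: the commutative diagram \eqref{diagr-can-pre} of Theorem \ref{theo:diagr-can-pre} handles the Hopf-Galois equivalence, and the fact that $\Gamma$ is an equivalence of categories (Proposition \ref{propo:moddef}) handles cleftness.

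For the first assertion I would argue that all three maps appearing in \eqref{diagr-can-pre} other than the horizontal ones are isomorphisms. Indeed $\varphi_{A,A}$ and $\varphi_{A,\underline{H}}$ are components of the natural isomorphism $\varphi$ of Proposition \ref{propo:moddef} (the former descending to the relative tensor products, as recorded in the proof of Theorem \ref{theo:diagr-can-pre}), and $\id\ot^\cot\Q$ is invertible since $\Q$ is an isomorphism by Theorem \ref{prop:mapQ}. Commutativity of the diagram then reads
\[
\varphi_{A,\underline{H}}\circ(\id\ot^\cot\Q)\circ\can_\cot=\Gamma(\can)\circ\varphi_{A,A},
\]
so that $\can_\cot$ is bijective if and only if $\Gamma(\can)$ is. Because $\Gamma$ acts as the identity on underlying $\bbK$-modules and $\bbK$-linear maps, $\Gamma(\can)$ and $\can$ are the same set-theoretic map, hence $\Gamma(\can)$ is bijective exactly when $\can$ is. Recalling that $\pg^{coH_\cot}\simeq B$ via the identity (the $H$-coaction on $B$ being trivial), I conclude that $B\subseteq A$ is $H$-Hopf-Galois, i.e. $\can$ is bijective, precisely when $\can_\cot$ is bijective, i.e. precisely when $B\simeq\pg^{coH_\cot}\subseteq\pg$ is $\hg$-Hopf-Galois.

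For the cleft statement I would use the normal basis characterization and Proposition \ref{propo:moddef}(i) applied to the comodule algebra $B$ (with its trivial coaction), for which $\bg=B$. The one computation to perform is that twisting does nothing to the trivial model $B\ot H$ of Example \ref{ex:trivialprincipalbundle}: using triviality of the coaction on $B$ together with unitality of $\bar\cot$, one checks that the deformed left $B$-action on $(B\ot H)_\cot$ is again $m_B\ot\id$ and that its $\hg$-coaction is still $\id\ot\Delta$, whence $\Gamma(B\ot H)=(B\ot H)_\cot=B\ot\hg$ as objects of ${}_B\mathcal{M}^{\hg}$. Consequently, if $\theta\colon B\ot H\to A$ is an isomorphism of left $B$-modules and right $H$-comodules witnessing the normal basis property of $A$, then $\Gamma(\theta)\colon B\ot\hg\to\pg$ is such an isomorphism for $\pg$, so $B\subseteq\pg$ is cleft; the converse follows by twisting back with $\bar\cot$, which returns $\pg$ to $A$.

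The main obstacle I anticipate is bookkeeping rather than conceptual. Two points need care: verifying that $\varphi_{A,A}$ truly descends to an isomorphism $\pg\ot_B^\cot\pg\to(A\ot_B A)_\cot$ on the relative tensor products, and, for the cleft part, matching the deformed left $\bg$-action on $\pg$ with the restriction of the twisted product $\mtco$ to coinvariants. Both reduce to unitality of $\cot$ and $\bar\cot$ and to triviality of the coaction on $B$, so no genuinely hard estimate arises; the substantive content is entirely carried by Theorem \ref{theo:diagr-can-pre}, Theorem \ref{prop:mapQ}, and the category equivalence of Proposition \ref{propo:moddef}.
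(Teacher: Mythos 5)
Your proposal is correct and follows essentially the same route as the paper: the Hopf--Galois equivalence is read off from the invertibility of $\varphi_{A,A}$, $\varphi_{A,\underline{H}}$ and $\id\ot^\cot\Q$ in diagram \eqref{diagr-can-pre}, and cleftness is transported via the normal basis characterization and the functor $\Gamma$ of Proposition \ref{propo:moddef}. Your direct identification $(B\ot H)_\cot=B\ot\hg$ is just the paper's observation that $\varphi_{B,H}=\id$ (owing to the trivial coaction on $B$) made upfront rather than after composing $\Gamma(\theta)$ with $\varphi_{B,H}$.
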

\begin{proof}
The main statement follows from the invertibility of the morphisms $\varphi_{A,\underline{H}}$,  $\varphi_{A,A}$ and $\Q$
in  diagram (\ref{diagr-can-pre}).
For the  statement about cleftness, recall, from the end of \S \ref{sec:HG},
that the Hopf-Galois extension $B \subseteq A$ is cleft if and only if
there exists an isomorphism $\theta:B \ot H \ra A$ of left $B$-modules and right $H$-comodules, 
where here $B \ot H$ is the object in $_B\mathcal{M}^H$ with left $B$-action given by $m_B \ot \id$ 
and right $H$-coaction given by $\id \ot \Delta$.  
Now, due to the functor $\Gamma: {}_{B}\mathcal{M}^H \ra  {}_{B_\cot}\mathcal{M}^{\hg}$ from Proposition \ref{propo:moddef} (i),
the $_B\mathcal{M}^H$-isomorphism $\theta$ defines the $_{B_\cot}\mathcal{M}^{\hg}$-isomorphism 
$\Gamma(\theta) : (B\otimes H)_\cot \to A_\cot$,
which composed with the $_{B_\cot}\mathcal{M}^{\hg}$-isomorphism $\varphi_{B,H}$ defines 
the $_{B_\cot}\mathcal{M}^{\hg}$-isomorphism $\theta_\cot : B_\cot\otimes^\cot H_\cot\to A_\cot$; explicitly,
\begin{flalign*}
\xymatrix{
\ar[d]_{\varphi_{B,H}} B_\cot \otimes^\cot H_\cot \ar[rr]^-{\theta_\cot} && A_\cot\\
(B\otimes H)_\cot\ar[rru]_-{\Gamma(\theta)}
}
\end{flalign*}
\end{proof}
Notice that since on $B=A^{coH}\subseteq A$ the $H$-coaction is
trivial,  it follows that $\varphi_{B,H}=\id$ and as $\bbK$-linear
maps $\theta_\cot=\theta$.

\begin{rem}
Montgomery and Schneider in 
\cite[Th. 5.3]{MS05} prove the above corollary by using that as 
$\bbK$-modules $A\ot_B A=A_\cot\ot_B A_\cot$ and
$A\ot H=A_\cot\ot H_\cot$, and showing that the canonical map $\can$ is the composition of $\can_\cot$ with
an invertible map. The proof is not within the natural categorical
setting of twists of Hopf-Galois extensions that we consider.
\end{rem}

Recalling from Definition \ref{def:pHcomodalg} the notion of principal $H$-comodule algebra
it is easy to show that deformations by $2$-cocycles $\cot:H\otimes H\to\bbK$ preserve this structure.
\begin{cor}\label{cor-pcomodalg}
$A$ is a principal $H$-comodule algebra if and only if $A_\cot$ is a principal $H_\cot$-comodule algebra.
\end{cor}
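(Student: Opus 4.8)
The plan is to deduce both defining properties of a principal comodule algebra (Definition \ref{def:pHcomodalg}) from their undeformed counterparts by transporting them along the equivalence $\Gamma$. The Hopf--Galois condition is already settled: by Corollary \ref{cor-can}, $B=A^{coH}\subseteq A$ is an $H$-Hopf--Galois extension if and only if $B\simeq \pg^{coH_\cot}\subseteq \pg$ is an $\hg$-Hopf--Galois extension. It therefore remains only to show that equivariant projectivity of $A$ as a left $B$-module is preserved by the twist.

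First I would record the simplification coming from the triviality of the $H$-coaction on $B$. Since $\delta^A(b)=b\ot\1_H$ for all $b\in B$, the twisted product (\ref{rmod-twist}) restricted to $B$ is unchanged, so $\bg=B$ as algebras; moreover, by unitality of the convolution inverse cocycle ($\coin{\1}{h}=\varepsilon(h)$), the component $\varphi_{B,A}$ of the natural isomorphism (\ref{nt}) is the identity map, so that $\bg\ot^\cot\pg$ and $(B\ot A)_\cot$ coincide as objects of ${}_{\bg}\mathcal{M}^{\hg}$. In particular the left $B$-action on $\pg$ obtained from (\ref{av}) reduces to the ordinary restricted multiplication $b\tra v=bv$, and the restricted twisted product $m_\cot:\bg\ot^\cot\pg\to\pg$ is nothing but $\Gamma(m)$.

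Next, suppose $A$ is principal, with a left $B$-module and right $H$-comodule section $s:A\to B\ot A$ of the restricted product $m:B\ot A\to A$; both $s$ and $m$ are morphisms in ${}_B\mathcal{M}^H$ (here $B\ot A$ carries the trivial $H$-coaction on the first factor) and they satisfy $m\circ s=\id_A$ as an identity of ${}_B\mathcal{M}^H$-morphisms. Applying the functor $\Gamma:{}_B\mathcal{M}^H\to{}_{\bg}\mathcal{M}^{\hg}$ of Proposition \ref{propo:moddef} (i) and using functoriality, I obtain ${}_{\bg}\mathcal{M}^{\hg}$-morphisms $\Gamma(s):\pg\to(B\ot A)_\cot$ and $\Gamma(m):(B\ot A)_\cot\to\pg$ with $\Gamma(m)\circ\Gamma(s)=\Gamma(\id_A)=\id_{\pg}$. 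Setting $s_\cot:=\Gamma(s)$, viewed through $\varphi_{B,A}=\id$ as a map $\pg\to\bg\ot^\cot\pg$, gives a section of $m_\cot=\Gamma(m)$ in ${}_{\bg}\mathcal{M}^{\hg}$. Hence $\pg$ is equivariantly projective over $\bg=B$ and, together with Corollary \ref{cor-can}, is a principal $\hg$-comodule algebra. The converse follows at once by twisting back with the convolution inverse $\bar\cot$, since $\Gamma$ is an equivalence whose inverse is the deformation realizing $(\hg)_{\bar\cot}=H$.

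Because $\Gamma$ and the identification $\varphi_{B,A}=\id$ do all the work, I do not expect any genuine obstacle: the argument is essentially formal once one observes that $\varphi_{B,A}$ trivializes and that $m\circ s=\id_A$ is an equation between ${}_B\mathcal{M}^H$-morphisms, so that functoriality of $\Gamma$ applies verbatim. The only side point I would flag is that $\hg$ again has bijective antipode, so that Definition \ref{def:pHcomodalg} indeed applies to the twisted data; this is routine, since $S_\cot=u_\cot\ast S\ast\bar u_\cot$ is assembled from the invertible $S$ and the mutually convolution-inverse maps $u_\cot,\bar u_\cot$ of (\ref{uxS}).
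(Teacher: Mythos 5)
Your proposal is correct and follows essentially the same route as the paper: the Hopf--Galois part is delegated to Corollary \ref{cor-can}, the section is transported by setting $s_\cot=\varphi_{B,A}^{-1}\circ\Gamma(s)$ (which, as you note, equals $\Gamma(s)$ since the triviality of the $H$-coaction on $B$ forces $\varphi_{B,A}=\id$), functoriality gives $m_\cot\circ s_\cot=\Gamma(m\circ s)=\id$, and the converse is obtained by untwisting with $\bar\cot$. Your extra observations — that $\varphi_{B,A}$ trivializes and that $S_\cot=u_\cot\ast S\ast\bar u_\cot$ remains bijective so Definition \ref{def:pHcomodalg} applies to the twisted data — are correct refinements of the same argument rather than a different proof.
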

\begin{proof}
The ${}_{B_\cot}{\cal M}^{H_\cot}$-morphism $m_\cot : B_\cot\otimes^\cot A_\cot \to A_\cot$
is related to the ${}_B{\cal M}^H$-morphism $m: B\otimes A\to A$ via 
$m_\cot = \Gamma(m)\circ \varphi_{B,A}$.
Given now a ${}_B{\cal M}^H$-morphism $s: A\to B\otimes A$
that is a section of $m$, we define the ${}_{B_\cot}{\cal M}^{H_\cot}$-morphism $s_\cot := \varphi_{B,A}^{-1}\circ \Gamma(s)
: A_\cot \to B_\cot\otimes^\cot A_\cot$. We obtain the commutative diagram
\begin{flalign*}
\xymatrix{
\ar[drr]_-{\Gamma(s)}A_\cot \ar[rr]^-{s_\cot}
\ar@/^2pc/[rrrr]^-{\id=\Gamma(s\circ m)}  &&\ar[d]_-{\varphi_{B,A}} B_\cot\otimes^\cot A_\cot \ar[rr]^-{m_\cot} && A_\cot\\
&& (B\otimes A)_\cot \ar[rru]_-{\Gamma(m)}&& 
}
\end{flalign*}
in the category ${}_{B_\cot}{\cal M}^{H_\cot}$, from which it follows that $s_\cot$ is a section of $m_\cot$.
The reverse implication 
follows using the convolution inverse
$\bar\cot$ of $\cot$ that twists back $A_\cot$ to $A$ and $(B\ot
A)_\cot$ to $B\ot A$, so that, given the section $s_\cot$ of $m_\cot$, the section of $m$ is
$\overline\Gamma(\varphi_{B,A}\circ s_\cot)=\varphi_{B,A}\circ s_\cot:
A\to B\ot A$.
\end{proof}

\addtocontents{toc}{\protect\setcounter{tocdepth}{1}}
\subsubsection{
Completion of the proof of Theorem \ref{theo:diagr-can-pre} \label{sec:proof}
(the right $A_\cot$-module structure on $A_\cot\otimes^\cot \underline{H}_\cot$)}

\addtocontents{toc}{\protect\setcounter{tocdepth}{2}}
We here complete the proof of  Theorem \ref{theo:diagr-can-pre}, i.e.,
we show that the diagram (\ref{diagr-can-pre}) is a diagram in the category
${}_{A_\cot}{\cal M}_{A_\cot}{}^{H_\cot}$ (not just in
${}_{A_\cot}{\cal M}{}^{H_\cot}$). This is the case if
all morphisms in (\ref{diagr-can-pre}) are  in ${}_{A_\cot}{\cal M}_{A_\cot}{}^{H_\cot}$. By Proposition \ref{prop_canMorph} it is clear that the 
morphism  $\chi_\cot$ is in  ${}_{A_\cot}{\cal M}_{A_\cot}{}^{H_\cot}$, and using  Proposition \ref{propo:moddef} (iii)
we observe that also $\varphi_{A,A}$ and $\Gamma(\chi)$ are morphisms
in ${}_{A_\cot}{\cal   M}_{A_\cot}{}^{H_\cot}$.
In order to prove that the remaining morphisms $\id\otimes^\cot \Q$ and ${\varphi_{A,\underline{H}}}$
in the right column in (\ref{diagr-can-pre}) are morphisms in
${}_{A_\cot}{\cal   M}_{A_\cot}{}^{H_\cot}$,  we just have to introduce a right $A_\cot$-module structure on
$A_\cot\otimes^\cot \underline{H}_\cot$ and prove that these morphisms
are morphisms of right $A_\cot$-modules (Lemma
\ref{lem:diagr-can}). Indeed, since they are also ${}_{A_\cot}{\cal M}{}^{H_\cot}$-morphisms and furthermore they are bijective, then
$A_\cot\otimes^\cot \underline{H}_\cot$ is an object in
${}_{A_\cot}{\cal M}_{A_\cot}{}^{H_\cot}$ because it is  the  target  of
the first (or equivalently because it is the source of the second), and 
 $\id\otimes^\cot \Q$ and ${\varphi_{A,\underline{H}}}$ are then
 ${}_{A_\cot}{\cal M}_{A_\cot}{}^{H_\cot}$-isomorphisms. \sk 

We are therefore left to introduce  a right $A_\cot$-module structure on
$A_\cot\otimes^\cot \underline{H}_\cot$ and prove that 
$\id\otimes^\cot \Q$ and ${\varphi_{A,\underline{H}}}$
are right $A_\cot$-modules morphisms (Lemma \ref{lem:diagr-can}). To
this aim let us recall that the right $A$-action on $A\otimes \underline{H}$ is given by 
$(a\otimes h)\triangleleft_{A\otimes\underline{H}} c = a\zero{c}\otimes h\one{c}$, for all
$a,c\in A$ and $h\in \underline{H}$ (cf.\ (\ref{trl_ot})).
Applying Proposition \ref{propo:moddef} (iii), we observe that the right $A_\cot$-module structure on
$(A\otimes\underline{H})_\cot$ is given by
\begin{flalign}\label{3rightac}
(a\otimes h)\triangleleft_{(A\otimes\underline{H})_\cot} c = \zero{a} \zero{c}\otimes \two{h}\one{c} ~\coin{\one{a} S(\one{h})\three{h}}{\two{c}}~,
\end{flalign}
for all $a\in A$, $h\in\underline{H}$ and $c\in A_\cot$. Again by (\ref{trl_ot}), the right $A_\cot$-module structure
on $A_\cot \otimes^\cot \underline{\hg}$ reads
\begin{flalign}\label{1rightac}
(a\otimes^\cot h)\triangleleft_{A_\cot \otimes^\cot \underline{\hg}} c = a\mtco \zero{c} \otimes^\cot h\mt \one{c}~,
\end{flalign}
for all $a,c\in A_\cot$ and $h\in \underline{\hg}$.
The right $A_\cot$-module structure on $A_\cot \otimes^\cot \underline{H}_\cot$
is induced by the Hopf algebra structure on $\underline{H}_\cot$ that is inherited from
the Hopf algebra structure on $H_\cot$ and the isomorphism $\Q$ of Theorem \ref{prop:mapQ}.
Explicitly, we have the following Corollary of Theorem \ref{prop:mapQ}:
\begin{cor} 
The right $\hg$-comodule coalgebra isomorphism $\Q:\underline{\hg} \to
\underline{H}_\cot $ defines the $\bbK$-linear isomorphism $\Q:{\hg} \to
\underline{H}_\cot $ (with slight abuse of notation we use the same
letter $\Q$) that  induces on  $\underline{H}_\cot$ a Hopf algebra structure from the
Hopf algebra structure on $\hg$. Explicitly, 
we define the product $m_{\underline{H}_\cot}$ on $\underline{H}_\cot $ via the commutative diagram 
of $\bbK$-linear maps
\be\label{multHtilde}
\xymatrix{
 \ar[d]_-{\Q^{-1} \ot \Q^{-1}} \underline{H}_\cot\otimes \underline{H}_\cot \ar[rr]^-{m_{\underline{H}_\cot}} && \underline{H}_\cot \\
 \hg \otimes \hg \ar[rr]^-{m_\cot} &&  \hg \ar[u]_-{\Q}
}
\ee
and the antipode $S_{\underline{H}_\cot }$ on $\underline{H}_\cot$ via the commutative diagram of $\bbK$-linear maps
\be
\xymatrix{
 \ar[d]_{\Q^{-1}} \underline{H}_\cot \ar[rr]^-{S_{ \underline{H}_\cot}}&&  \underline{H}_\cot\\
\hg \ar[rr]^-{S_\cot}  && \hg\ar[u]_-{\Q}
}
\ee
By construction, $H_\cot$ and $\underline{H}_\cot $ are isomorphic Hopf algebras via the isomorphism $\Q$.
\end{cor}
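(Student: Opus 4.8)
The plan is to read the statement purely as a transport-of-structure assertion. Since $\underline{\hg}$ and $\hg$ have the same underlying $\bbK$-module, the comodule-coalgebra isomorphism $\Q$ of Theorem \ref{prop:mapQ}, forgotten down to a $\bbK$-linear map, is a bijection $\hg \to \underline{H}_\cot$ with two-sided inverse $\Q^{-1}$ given in \eqref{mapQinv}. First I would use only this $\bbK$-module isomorphism to push forward the entire Hopf algebra structure of $\hg$ onto $\underline{H}_\cot$: the product $\Q\circ m_\cot\circ(\Q^{-1}\ot\Q^{-1})$, the unit $\Q\circ\eta$, the coproduct $(\Q\ot\Q)\circ\Delta\circ\Q^{-1}$, the counit $\varepsilon\circ\Q^{-1}$, and the antipode $\Q\circ S_\cot\circ\Q^{-1}$. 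The product and the antipode are exactly the maps defined by the two commutative diagrams in the statement.

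By the general principle of transport of structure along a $\bbK$-linear bijection, these pushed-forward operations automatically satisfy every Hopf algebra axiom — associativity, coassociativity, the unit and counit laws, bialgebra compatibility, and the antipode identity — since each such identity for $\underline{H}_\cot$ is obtained from the corresponding identity for $\hg$ by conjugating with $\Q^{\pm1}$ and inserting $\Q^{-1}\circ\Q=\id$. Hence $\Q$ is tautologically an isomorphism of Hopf algebras from $\hg$ onto $\underline{H}_\cot$ equipped with this induced structure, which is the final assertion $\hg\simeq\underline{H}_\cot$.

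The single point that requires genuine input, rather than being a purely formal consequence, is that the coalgebra half of this induced structure must coincide with the comodule-coalgebra structure $(\Delta_\cot,\varepsilon)$ that $\underline{H}_\cot$ already carries by \eqref{cop-ad}; otherwise $\underline{H}_\cot$ would carry two competing coalgebra structures and the statement would be ill-posed. But this is precisely what Theorem \ref{prop:mapQ} proved: $\Q$ is there shown to be a morphism of comodule coalgebras, so $(\Q\ot^\cot\Q)\circ\Delta=\Delta_\cot\circ\Q$ and $\varepsilon\circ\Q=\varepsilon$. The first equality says exactly that the transported coproduct $(\Q\ot\Q)\circ\Delta\circ\Q^{-1}$ equals $\Delta_\cot$ (note that $\ot^\cot$ and $\ot$ agree as $\bbK$-modules, so this is an identity of $\bbK$-linear maps), and the second that the transported counit equals $\varepsilon$. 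I therefore anticipate no real obstacle: all the substantive computation lives inside Theorem \ref{prop:mapQ}, and the corollary is the formal observation that a coalgebra isomorphism which is simultaneously a $\bbK$-module isomorphism permits transport of the remaining algebra and antipode structure, thereby upgrading $\Q$ to a Hopf algebra isomorphism.
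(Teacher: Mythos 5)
Your proposal is correct and matches the paper's treatment: the paper states this corollary without a separate proof, regarding it exactly as you do — a transport of structure along the $\bbK$-linear bijection $\Q$, with the only substantive input being that the transported coproduct and counit agree with $(\Delta_\cot,\varepsilon)$, which is precisely the coalgebra-morphism part of Theorem \ref{prop:mapQ}. Nothing is missing.
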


As a simple consequence of this corollary, every right $H_\cot$-comodule is also a right $\underline{H}_\cot$-comodule;
just use the isomorphism $\Q$ between the Hopf algebras $H_\cot$ and $\underline{H}_\cot$
in order to turn a right $H_\cot$-comodule structure into a right $\underline{H}_\cot$-comodule structure.
In particular, we have that the right $H_\cot$-comodule algebra $\pg$ is a 
right $\underline{H}_\cot$-comodule algebra with coaction given by
\begin{eqnarray}
\underline{\delta}^{\pg}:=(\id \ot \Q)\circ \delta^{\pg} : \pg &\longrightarrow &\pg \ot \underline{H}_\cot~,
\\
a &\longmapsto& \zero{a} \ot \Q(\one{a}) ~.\nn
\end{eqnarray}
Using this right $\underline{H}_\cot$-comodule structure on $A_\cot$ we canonically define the right $A_\cot$-module structure on $A_\cot\otimes^\cot \underline{H}_\cot$ by (cf. (\ref{trl_ot})),
\begin{flalign}\label{2rightac} (a \ot^\cot h) \trl_{\pg \ot^\cot  \underline{H}_\cot} c= a \mtco \zero{c} \ot^\cot m_{\underline{H}_\cot}(h \ot \Q(\one{c}))~.
\end{flalign}

\begin{lem}\label{lem:diagr-can}
The vertical arrows $\id\otimes^\cot \Q$ and
$\varphi_{A,\underline{H}}$  in diagram (\ref{diagr-can-pre}) are
right ${A_\cot}$-module morphisms.
\end{lem}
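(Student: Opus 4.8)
The plan is to check the two right-$A_\cot$-module identities one at a time, using that the action (\ref{2rightac}) on $A_\cot\otimes^\cot\underline{H}_\cot$ has been defined through $\Q$ precisely so that the upper arrow becomes tautological. For $\id\otimes^\cot\Q$ I would evaluate both sides on $a\otimes^\cot h\in A_\cot\otimes^\cot\underline{\hg}$ and $c\in A_\cot$. Using the action (\ref{1rightac}) on the source, $(\id\otimes^\cot\Q)\big((a\otimes^\cot h)\triangleleft c\big)=a\mtco\zero{c}\otimes^\cot\Q(h\mt\one{c})$, whereas acting after applying $\id\otimes^\cot\Q$ gives, by (\ref{2rightac}), $a\mtco\zero{c}\otimes^\cot m_{\underline{H}_\cot}\!\big(\Q(h)\otimes\Q(\one{c})\big)$. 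These coincide at once because the defining diagram (\ref{multHtilde}) reads $m_{\underline{H}_\cot}\circ(\Q\otimes\Q)=\Q\circ m_\cot$, so that $m_{\underline{H}_\cot}(\Q(h)\otimes\Q(\one{c}))=\Q(h\mt\one{c})$. Hence the first arrow is a right-module morphism with no cocycle computation at all.

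The genuine work is for $\varphi_{A,\underline{H}}$. Since $\id\otimes^\cot\Q$ is already a right-module isomorphism, $\varphi_{A,\underline{H}}$ is a right-module map if and only if the composite $(\id\otimes^\cot\Q^{-1})\circ\varphi_{A,\underline{H}}^{-1}\colon(A\otimes\underline{H})_\cot\to A_\cot\otimes^\cot\underline{\hg}$ is; this reformulation is convenient because the target now carries the simpler action (\ref{1rightac}), written through $\mt$ rather than the intricate product $m_{\underline{H}_\cot}$. I would use the explicit inverse $\varphi_{A,\underline{H}}^{-1}(a\otimes h)=\zero{a}\otimes^\cot\two{h}\,\co{\one{a}}{S(\one{h})\three{h}}$ recorded in the proof of Theorem \ref{theo:diagr-can-pre}, together with formula (\ref{mapQinv}) for $\Q^{-1}$, so that the composite sends $a\otimes h\mapsto\zero{a}\otimes^\cot\Q^{-1}(\two{h})\,\co{\one{a}}{S(\one{h})\three{h}}$. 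Then I would expand $(a\otimes h)\triangleleft c$ via the source action (\ref{3rightac}) and apply the composite, and separately apply the composite first and act by $c$ via (\ref{1rightac}); both sides collapse to the form $\zero{a}\mtco\zero{c}\otimes^\cot(\cdots)$ weighted by several factors of $\cot,\bar\cot$ and $u_\cot,\bar u_\cot$.

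The main obstacle is reconciling these scalar weights, and this is exactly where the technical input enters: the cocycle condition in its four equivalent forms (Lemma \ref{lem:formula}), the identities (\ref{simpl0})--(\ref{abc}) of Lemma \ref{lemma2co}, and the antipode/$u_\cot$ relations exploited in Theorem \ref{prop:mapQ}. The delicate part is the Sweedler bookkeeping: the adjoint coaction (\ref{adj}) already splits $h$ into three legs, and these proliferate under the iterated coproducts hidden in $\Q^{-1}$ and in (\ref{3rightac}); I expect the clean route is to collapse the telescoping identities $u_\cot\ast\bar u_\cot=\varepsilon$ and $S(\one{h})\two{h}=\varepsilon(h)\1$ as early as possible, just as in the proof that $\Q$ is an $\hg$-comodule morphism. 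Once $\varphi_{A,\underline{H}}$ is established to be a right-module map, I can conclude: both $\id\otimes^\cot\Q$ and $\varphi_{A,\underline{H}}$ are bijective ${}_{A_\cot}\mathcal{M}^{H_\cot}$-morphisms that are in addition right-module maps, so $A_\cot\otimes^\cot\underline{H}_\cot$ acquires a bona fide ${}_{A_\cot}\mathcal{M}_{A_\cot}{}^{H_\cot}$-structure (as the image of the bimodule object $A_\cot\otimes^\cot\underline{\hg}$), and diagram (\ref{diagr-can-pre}) is a diagram in ${}_{A_\cot}\mathcal{M}_{A_\cot}{}^{H_\cot}$, completing the proof of Theorem \ref{theo:diagr-can-pre}.
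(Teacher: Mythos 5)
Your treatment of $\id\otimes^\cot\Q$ is exactly the paper's: since the right action (\ref{2rightac}) on $A_\cot\otimes^\cot\underline{H}_\cot$ is defined by transporting the $\hg$-structure along $\Q$, the intertwining property is immediate from the diagram (\ref{multHtilde}), with no cocycle computation. For $\varphi_{A,\underline{H}}$ your route is equivalent to the paper's rather than genuinely different: you propose to check that $(\id\otimes^\cot\Q^{-1})\circ\varphi_{A,\underline{H}}^{-1}$ intertwines the action (\ref{3rightac}) with the simpler action (\ref{1rightac}), while the paper compares the action (\ref{2rightac}) directly with the pullback of (\ref{3rightac}); but since the paper's very first step is to expand $m_{\underline{H}_\cot}(h\otimes\Q(\one{c}))$ as $\Q\big(\Q^{-1}(h)\mt\one{c}\big)$, it is in effect already working in $\underline{\hg}$, and the two computations differ only by where the overall factor of $\Q$ sits. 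The one substantive shortfall is that the verification itself --- which is the entire content of this lemma --- is announced but not carried out: the paper's proof is precisely the page of Sweedler bookkeeping that reduces both sides to the common expression (\ref{lhs}), using (\ref{simpl0}), (\ref{simpl}), (\ref{abc}) and Lemma \ref{lem:formula} (\ref{iii})--(\ref{iv}) in the order you anticipate. Your toolkit is the right one and the strategy closes, but as written the proposal establishes the first arrow only and defers the second to a computation it does not perform.
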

\begin{proof} Using (\ref{1rightac}), (\ref{2rightac}) and
  (\ref{multHtilde}) we immediately obtain that $\id\otimes^\cot \Q$
is a morphism of right  $A_\cot$-modules:
\be
\xymatrix{
\ar[d]_-{\id \ot^\cot \Q \,\ot \,id} \pg\ot^\cot \underline{\hg} \ot \pg \ar[rr]^-{\trl_{\pg\ot^\cot \underline{\hg}}} &&  
\pg\ot^\cot \underline{\hg} \ar[d]^-{\id \ot^\cot \Q}
\\
\pg \ot^\cot  \underline{H}_\cot \ot \pg  \ar[rr]^-{\trl_{\pg \ot^\cot  \underline{H}_\cot}}&& \pg \ot^\cot  \underline{H}_\cot
}
\ee
We now show that $\varphi_{A,\underline{H}}:\pg \ot^\cot  \underline{H}_\cot
\ra (A\otimes \underline{H})_\cot$ is a morphism of right
$\pg$-modules. (This is automatic in the case of Hopf-Galois extension
because of the invertibility of all maps in the commutative diagram 
\eqref{diagr-can-pre}).

The structure of right $\pg$-modules of  $\pg \ot^\cot  \underline{H}_\cot$
was given just above in \eqref{2rightac}. We compute it explicitly by
using the expression of the product in $\underline{H}_\cot$ and
of the map $\Q^{-1}$:
\begin{eqnarray*}
(a \ot h) \trl_{\pg \ot^\cot  \underline{H}_\cot} c&=& a \mtco \zero{c} \ot 
m_{\underline{H}_\cot}(h \ot \Q(\one{c})) 
\\  && \!\!\!\!\!\!\!\!\!\!\!\!\!\! \hspace{-2cm} = 
a \mtco \zero{c} \ot  \Q(\four{h}\two{c}) \co{\three{h}}{\one{c}}\coin{\five{h}}{\three{c}} 
\bar{u}_\cot (\two{h}) \co{S(\one{h})}{h_{\scriptscriptstyle{(6)}}}
\\ &&\!\!\!\!\!\!\!\!\!\!\!\!\!\! \hspace{-2cm} =
a \mtco \zero{c} \ot  h_{\scriptscriptstyle{(6)}} \four{c} 
\co{\four{h}\two{c}}{S(\three{c})S(\five{h})h_{\scriptscriptstyle{(7)}}\five{c}}
\co{\three{h}}{\one{c}} \bar{u}_\cot (\two{h}) 
\coin{h_{\scriptscriptstyle{(8)}}}{{c}_{\scriptscriptstyle{(6)}}} 
\co{S(\one{h})}{h_{\scriptscriptstyle{(9)}}}
\end{eqnarray*}
where in the last passage we used the expression, 
$\Q(g)=\three{g}\cot(\one{g}\ot S(\two{g})\four{g})$, for all $g\in
H$, that immediately
follows from the definition of $\Q$ using (\ref{simpl0}).
Now we apply (\ref{simpl}) in the form 
$
\co{\three{h}}{\one{c}} \bar{u}_\cot (\two{h})= \coin{S(\two{h})}{\three{h}\one{c}}
$ and then, by applying Lemma \ref{lem:formula}  \eqref{iii} to the
resulting term  
$$\co{\four{h}\two{c}}{S(\three{c})S(\five{h})h_{\scriptscriptstyle{(7)}}\five{c}}\coin{S(\two{h})}{\three{h}\one{c}}$$
the above expression becomes
\begin{eqnarray*}
&&\!\!\!\!\!\!\!\!\!\!a \mtco \zero{c} \ot  h_{\scriptscriptstyle{(6)}} \five{c} 
\co{\one{c}}{S(\four{c})S(\five{h})h_{\scriptscriptstyle{(7)}}{c}_{\scriptscriptstyle{(6)}}}
\coin{S(\two{h})}{\three{h}\two{c}S(\three{c})S(\four{h}) h_{\scriptscriptstyle{(8)}}
c_{\scriptscriptstyle{(7)}}}
 \co{S(\one{h})}{h_{\scriptscriptstyle{(10)}}}\\
&&\qquad\qquad \qquad\qquad \qquad\qquad \qquad\qquad \qquad\qquad
   \qquad\qquad \qquad\qquad \qquad\qquad ~~~~~\,~\coin{h_{\scriptscriptstyle{(9)}}}{{c}_{\scriptscriptstyle{(8)}}} 
\\ &&\!\!\!\!\!\!\!\!\!\!=
a \mtco \zero{c} \ot  h_{\scriptscriptstyle{(4)}} \three{c} 
\co{\one{c}}{S(\two{c})S(\three{h}) h_{\scriptscriptstyle{(5)}}{c}_{\scriptscriptstyle{(4)}}}
\coin{S(\two{h})}{h_{\scriptscriptstyle{(6)}}\five{c}}
 \co{S(\one{h})}{h_{\scriptscriptstyle{(8)}}}
\coin{h_{\scriptscriptstyle{(7)}}}{{c}_{\scriptscriptstyle{(6)}}} 
\\ &&\!\!\!\!\!\!\!\!\!\!=
a \mtco \zero{c} \ot  h_{\scriptscriptstyle{(5)}} \three{c} 
\co{\one{c}}{S(\two{c})S(\four{h}) h_{\scriptscriptstyle{(6)}}{c}_{\scriptscriptstyle{(4)}}}
\coin{S(\three{h})}{h_{\scriptscriptstyle{(7)}}\five{c}}
 \co{S(\two{h})}{h_{\scriptscriptstyle{(8)}}{c}_{\scriptscriptstyle{(6)}}}
\coin{S(\one{h})h_{\scriptscriptstyle{(9)}}}{{c}_{\scriptscriptstyle{(7)}}} 
\end{eqnarray*}
where in the last passage we have used the cocycle condition \eqref{iv}.
Finally by simplifying  the convolution product term $\coin{S(\three{h})}{h_{\scriptscriptstyle{(7)}}\five{c}}
 \co{S(\two{h})}{h_{\scriptscriptstyle{(8)}}{c}_{\scriptscriptstyle{(6)}}}
$
we obtain
\be\label{lhs}
(a \ot h) \trl_{\pg \ot^\cot  \underline{H}_\cot} c=
a \mtco \zero{c} \ot  h_{\scriptscriptstyle{(3)}} \three{c} 
\co{\one{c}}{S(\two{c})S(\two{h}) h_{\scriptscriptstyle{(4)}}{c}_{\scriptscriptstyle{(4)}}}
\coin{S(\one{h})h_{\scriptscriptstyle{(5)}}}{{c}_{\scriptscriptstyle{(5)}}} ~.
\ee
The invertible map $\varphi_{A,\underline{H}}$ is a right
$\pg$-modules isomorphism if this expression coincides with 
$\varphi_{A,\underline{H}}^{-1}( (\varphi_{A,\underline{H}}(a \ot
h))\trl_{(A \ot \underline{H})_\cot } c)$.
Recalling the right $\pg$-module structure of $(A\otimes
\underline{H})_\cot$, explicitly  given in
\eqref{3rightac},
 we have
\begin{eqnarray*}
&&\!\!\!\!\!\!\!\!\!\!\!\varphi_{A,\underline{H}}^{-1}\left( (\varphi_{A,\underline{H}}(a \ot h))\trl_{(A \ot \underline{H})_\cot } c\right) \,=\,
\coin{\two{a}}{S(\one{h})\five{h}} \varphi_{A,\underline{H}}^{-1}\left(  \zero{a}\zero{c} \ot \three{h}\one{c}
\right) \coin{\one{a}S(\two{h})\four{h}}{\two{c}}
\\ &&\qquad \qquad \!\!\!= 
\coin{\three{a}}{S(\one{h}){h}_{\scriptscriptstyle{(7)}}} 
\co{\one{a}\one{c}}{S(\two{c})S(\three{h})\five{h}\four{c}}
 \zero{a}\zero{c} \ot \four{h}\three{c}
 \coin{\two{a}S(\two{h}){h}_{\scriptscriptstyle{(6)}}}{\five{c}}~.
\end{eqnarray*}
By using \eqref{abc} to expand the term $\co{\one{a}\one{c}}{S(\two{c})S(\three{h})\five{h}\four{c}}$, the above expression becomes
\begin{eqnarray*}
&& \!\!\!\!\!\!\!\!\!\!\zero{a}\zero{c} \coin{\one{a}}{\one{c}}\ot \five{h} \four{c} 
u_\cot(\two{c})
\coin{S(\three{c})}{S(\four{h}){h}_{\scriptscriptstyle{(6)}}\five{c}} 
\co{\two{a}}{S(\three{h}) h_{\scriptscriptstyle{(7)}} c_{\scriptscriptstyle{(6)}}}
\coin{\three{a}S(\two{h}){h}_{\scriptscriptstyle{(8)}}}{c_{\scriptscriptstyle{(7)}}} 
\\ && \qquad
\qquad \qquad \qquad \qquad \qquad \qquad \qquad \qquad
      \qquad\qquad\qquad \qquad \qquad \qquad \qquad \!\!\!\!  \coin{\four{a}}{S(\one{h}){h}_{\scriptscriptstyle{(9)}}} 
\\[.2em] && \quad \!\!\!\!\!\!= \zero{a}\zero{c} \coin{\one{a}}{\one{c}}\ot \five{h}
      \four{c} 
u_\cot(\two{c})
\coin{S(\three{c})}{S(\four{h}){h}_{\scriptscriptstyle{(6)}}\five{c}} 
\coin{S(\three{h}) h_{\scriptscriptstyle{(7)}}}{ c_{\scriptscriptstyle{(6)}}}
\co{\two{a}}{S(\two{h}){h}_{\scriptscriptstyle{(8)}}}
\\ && \qquad
\qquad \qquad \qquad \qquad \qquad \qquad \qquad \qquad \qquad
      \qquad\qquad \qquad \qquad \qquad \qquad \!\!\!\! \coin{\three{a}}{S(\one{h}){h}_{\scriptscriptstyle{(9)}}}
\\[.2em] && \quad\!\!\!\!\!\! =  \zero{a}\zero{c} \coin{\one{a}}{\one{c}}\ot \three{h} \four{c} 
u_\cot(\two{c})
\coin{S(\three{c})}{S(\two{h}){h}_{\scriptscriptstyle{(4)}}\five{c}} 
\coin{S(\one{h}) h_{\scriptscriptstyle{(5)}}}{ c_{\scriptscriptstyle{(6)}}}
\\ && \quad \!\!\!\!\!\! =  {a} \mtco \zero{c}\ot \three{h} \three{c} 
u_\cot(\one{c})
\coin{S(\two{c})}{S(\two{h}){h}_{\scriptscriptstyle{(4)}}\four{c}} 
\coin{S(\one{h}) h_{\scriptscriptstyle{(5)}}}{ c_{\scriptscriptstyle{(5)}}}
\end{eqnarray*}
where in the second step we have used Lemma \ref{lem:formula} \eqref{iv}. 
Finally, by using (\ref{simpl0}) the expression of $\varphi_{A,\underline{H}}^{-1}( (\varphi_{A,\underline{H}}(a \ot h))\trl_{(A \ot \underline{H})_\cot } c)$ coincides with \eqref{lhs} above.
  \end{proof}

\subsection{Deformation via a 2-cocycle based on an external symmetry $K$}\label{sec:def_es}

In this section we first define the notion of external symmetry (with
Hopf algebra $K$) of an $H$-Hopf-Galois extension, and study the
corresponding category. Then we deform this extension with a 2-cocycle on $K$.

Consider a Lie group $L$ acting via diffeomorphisms on both the
total manifold and the base manifold of a bundle $P\to M$, these actions being
compatible with the bundle projection (hence $L$ acts via automorphisms of
$P\to M$).  We say that $L$ is an external
symmetry of $P\to M$. Considering algebras rather than manifolds (cf.\
Example \ref{ex:principalbundle}), we term a Hopf algebra $K$ an external symmetry of
the extension $B\subset A$, if $A$ is a (left) $K$-comodule algebra with $B$
a $K$-subcomodule algebra. 
If we consider principal $G$-bundles $P\to M$ then
we also require $G$-equivariance of the $L$-action on the total
manifold leading to algebras $A$ that are $(K,H)$-bicomodules
algebras, whose category is denoted  ${}^K{\cal A}^H$ and
defined in \S \ref{secKHbicomod} before Proposition \ref{prop:leftrightdef}.

We are thus led to term a Hopf algebra $K$ an external symmetry of 
an $H$-Hopf Galois extension $B=A^{coH}\subseteq A$, 
if  $A\in {}^K{\cal A}^H$ and  if  $B=A^{coH}$ is a $K$-subcomodule.
It immediately follows that $B=A^{coH}$ is a $(K,H)$-subbicomodule algebra.

The requirement that $B=A^{coH}$ is a
$K$-subcomodule of $A$
holds automatically true in particular if $K$ is a flat module. We recall that $K$ is a flat
$\bbK$-module  if any short exact sequence of
$\bbK$-modules $0\to U\stackrel{i}{\to} V\stackrel{j}{\to} W\to 0$
implies the short exact sequence of $\bbK$-modules $0\to K\otimes U\stackrel{\id_K\ot i}{\longrightarrow}K\ot V\stackrel{\id_K\ot j}{\longrightarrow}  K\ot W\to 0$. 
 In particular all modules are flat if $\bbK$ is a
  field or the ring of formal power series with coefficients in a
field.

\begin{prop}\label{flat} Let $H$ and $K$ be Hopf algebras, 
let $K$  be  flat as $\bbK$-module, and  let $A\in {}^K{\cal A}^H$;  then $B=A^{coH}$ is a $K$-subcomodule algebra.
\end{prop}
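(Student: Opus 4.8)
The plan is to realise the coinvariant subalgebra $B=A^{coH}$ as a kernel and then to pass the left $K$-coaction through this kernel by flatness. First I would observe that $B$ is precisely the equalizer of the two $\bbK$-linear maps $\delta^A:A\to A\ot H$ and $\iota:A\to A\ot H$, $a\mapsto a\ot\1_H$; indeed $b\in B$ if and only if $(\delta^A-\iota)(b)=0$. Hence there is a left-exact sequence of $\bbK$-modules
\begin{equation*}
0\longrightarrow B\longrightarrow A\xrightarrow{\,\delta^A-\iota\,} A\ot H~.
\end{equation*}

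Next I would invoke the hypothesis that $K$ is flat. Tensoring the sequence above on the left by $K$ preserves exactness, so
\begin{equation*}
0\longrightarrow K\ot B\longrightarrow K\ot A\xrightarrow{\,\id_K\ot(\delta^A-\iota)\,} K\ot A\ot H
\end{equation*}
is again (left-)exact, which identifies $K\ot B$ with the submodule $\ker\!\big(\id_K\ot(\delta^A-\iota)\big)\subseteq K\ot A$. It therefore suffices to show that, for every $b\in B$, the element $\rho^A(b)=\mone{b}\ot\mzero{b}\in K\ot A$ is annihilated by $\id_K\ot(\delta^A-\iota)$. This gives $\rho^A(B)\subseteq K\ot B$, i.e.\ that $B$ is a left $K$-subcomodule; since $B$ is already a subalgebra of $A$ on which the (algebra) coaction $\rho^A$ restricts, it is a $K$-subcomodule algebra.

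The remaining verification is a one-line Sweedler computation using the bicomodule compatibility \eqref{compatib}. For $b\in B$ I would compute, on the one hand,
\begin{equation*}
(\id_K\ot\delta^A)\big(\rho^A(b)\big)=(\rho^A\ot\id_H)\big(\delta^A(b)\big)=(\rho^A\ot\id_H)(b\ot\1_H)=\rho^A(b)\ot\1_H~,
\end{equation*}
where the first equality is exactly \eqref{compatib} and the second uses $b\in A^{coH}$; on the other hand, directly from the definition of $\iota$,
\begin{equation*}
(\id_K\ot\iota)\big(\rho^A(b)\big)=\mone{b}\ot\mzero{b}\ot\1_H=\rho^A(b)\ot\1_H~.
\end{equation*}
These coincide, whence $\big(\id_K\ot(\delta^A-\iota)\big)\big(\rho^A(b)\big)=0$ and the claim follows. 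I do not expect any genuine difficulty in this last step: it is immediate once the bicomodule axiom is used. The only point that really requires care—and the reason the flatness assumption enters—is the second step, namely that $K\ot B$ be the \emph{honest} kernel of $\id_K\ot(\delta^A-\iota)$ inside $K\ot A$, rather than merely a submodule mapping into it; this is exactly what exactness of the functor $K\ot(-)$ guarantees, and it is where flatness of $K$ is indispensable.
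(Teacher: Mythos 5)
Your proof is correct and follows essentially the same route as the paper's: both realise $B$ as the kernel of $\delta^A-(\,\cdot\,\ot\1_H)$, use flatness of $K$ to identify $K\ot B$ with the kernel of the tensored map, and then verify via the bicomodule compatibility \eqref{compatib} that $\rho^A(B)$ lands in that kernel. The only cosmetic difference is that the paper phrases the first step as a short exact sequence onto the image rather than a left-exact sequence; the content is identical.
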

\begin{proof}
The short exact sequence
\(
0\longrightarrow A^{coH}\stackrel{i}{-\!\!\!-\!\!\!-\!\!\!-\!\!\!-\!\!\!-\!\!\!\longrightarrow} A\stackrel{\delta^A-\id_A\ot \eta_H}{-\!\!\!-\!\!\!-\!\!\!-\!\!\!-\!\!\!-\!\!\!\longrightarrow}
\mathrm{Im}{(\delta^A-\id_A\ot \eta_H)}\longrightarrow 0
\), where $\id_A\ot \eta_H: A\ot \bbK\simeq A\to A\otimes H\,,~ a\mapsto a\otimes 1_H$,
defines the algebra of $H$-coinvariants $A^{coH}$. If $K$ is flat we have the associated short exact sequence 
\[
0\longrightarrow K\ot
A^{coH}\stackrel{\id_K\otimes\, i}{-\!\!\!-\!\!\!-\!\!\!-\!\!\!-\!\!\!-\!\!\!\longrightarrow}
K\ot A\stackrel{\id_K\otimes(\delta^A-\id_A\ot \eta_H)}{-\!\!\!-\!\!\!-\!\!\!-\!\!\!-\!\!\!-\!\!\!-\!\!\!-\!\!\!-\!\!\!-\!\!\!-\!\!\!\longrightarrow}
K\ot \mathrm{Im}(\delta^A-\id_A\ot \eta_H)\longrightarrow 0~.
\]
Now the compatibility between the $H$- and $K$-coactions $\delta^A$
and $\rho^A$ (cf.\ (\ref{compatib})) implies that, for all $b\in A^{coH}$,
we have $(\id_K\ot
\delta^A)[\rho^A(b)]=(\rho^A\ot\id_H)[\delta^A(b)]=\rho^A(b)\ot
\1_H=(\id_K\ot\id_A\ot\eta_H)[\rho^A(b)]$ and therefore
$\rho^A(A^{coH})\in \ker[\id_K\ot (
\delta^A-\id_A\ot\eta_H)]=K\ot A^{coH}$,
where the last equality is due to the exact sequence.
This proves that $B=A^{coH}$ is a $K$-subcomodule of $A$, and hence a
$K$-subcomodule algebra.
\end{proof}

Consider now an object $A$ in ${}^K{\cal A}^H$, with right
$H$-coaction denoted by $\delta^A : A\to A\otimes H\,,~a\mapsto \zero{a}\otimes \one{a}$
and left $K$-coaction by $\rho^A: A\to K\otimes A\,,~a\mapsto \mone{a}\otimes\mzero{a}$. 
We trivially have $\underline{H}\in {}^K{\cal A}^H$ with the
$K$-coaction $\rho^{\underline{H}}: \underline{H}\to K\ot \underline{H}\,, ~h\mapsto 1_K\ot h$.
Since the category of $(K,H)$-bicomodules ${}^K{\cal M}^H$ is a
monoidal category and $A,\underline{H}$ are in particular objects in
${}^K{\cal M}^H $, then $A\otimes A$ and $A\otimes\underline{H}$ are
objects  in ${}^K{\cal M}^H $. Moreover $A\otimes A$ and $A\otimes\underline{H}\in {}^K{}_{A}{\cal M}_{A}{}^H$
since the left and the right $A$-actions are $K$-comodule morphisms,
indeed we easily prove commutativity of the diagrams:
\be\label{KAHAdiag}
\xymatrix{
\ar[d]_-{\tra_{A\otimes \underline{H}}} A\ot A\otimes \underline{H}
\ar[rr]^-{\rho^{A\ot A \ot
      \underline{H}}} &&\ar[d]^-{\id\,\ot\,\tra_{A\ot \underline{H}}
    } K\ot A \ot A\otimes \underline{H}\\
A\ot \underline{H} \ar[rr]^-{\rho^{A\ot \underline{H}}}&& K\ot A\otimes \underline{H}
}~~~~~~~~
\xymatrix{
\ar[d]_-{\trl_{A\otimes \underline{H}}} A\otimes \underline{H}\ot A \ar[rr]^-{\rho^{A \ot
      \underline{H}\ot A}} &&\ar[d]^-{\id\,\ot\,\trl_{A\ot \underline{H}}
    } K\ot A \otimes \underline{H}\otimes A\\
A\ot \underline{H} \ar[rr]^-{\rho^{A\ot \underline{H}}}&& K\ot A\otimes \underline{H}
}~
\ee
and of the corresponding ones for $A\ot A$ (the proof that $A\ot
\underline{H}$ and $A\ot A\in  {}^K{}_{A}{\cal M}$ and that $A\ot
A\in  {}^K{}{\cal M}_{A}$ can be also seen to follow from the property that ${}^K{}_{A}{\cal M}$ and ${}^K{\cal M}_{A}$ are respectively right and left module categories over the monoidal category  ${}^K{\cal M}$).

Furthermore, since $B$ is a $K$-subcomodule then it is
easy to see that the $K$-comodule structure of $A\otimes A$ is induced
on the quotient $A\ot_BA$, that is therefore an object in the relative
Hopf module category ${}^K{}_{A}{\cal M}_{A}{}^H$.
We have thus proven the following

\begin{prop}
Let $H$ and $K$ be Hopf algebras, $A\in {}^K{\cal A}^H$ and
$B=A^{coH}$ be a $K$-subcomodule. Then $A\ot_B A$ and $A\ot
\underline{H}$ are objects in  ${}^K{}_{A}{\cal M}_{A}{}^H$.
\end{prop}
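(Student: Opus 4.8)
The plan is to verify, for each of the two objects, the three conditions that define membership in ${}^K{}_A{\cal M}_A{}^H$: that it is a $(K,H)$-bicomodule (i.e.\ an object of ${}^K{\cal M}^H$), that its $A$-bimodule structure is compatible with the right $H$-coaction (membership in ${}_A{\cal M}_A{}^H$), and that the same $A$-bimodule structure is compatible with the left $K$-coaction (membership in ${}^K{}_A{\cal M}_A$). A large part of this has already been assembled in the paragraphs preceding the statement, so the proof is mostly a matter of collecting these facts and treating one genuinely new point for the quotient.

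For $A\otimes\underline{H}$ I would argue as follows. Compatibility of the $A$-bimodule structure with the right $H$-coaction was established before Proposition~\ref{prop_canMorph}, so $A\otimes\underline{H}\in{}_A{\cal M}_A{}^H$ is already known. Since ${}^K{\cal M}^H$ is monoidal and both $A$ and $\underline{H}$ belong to it (the latter carrying the trivial $K$-coaction $h\mapsto 1_K\otimes h$), the tensor product $A\otimes\underline{H}$ is automatically a $(K,H)$-bicomodule. The remaining point is compatibility of the $A$-bimodule structure with the left $K$-coaction, and this is precisely the commutativity of the two diagrams in \eqref{KAHAdiag}; it follows by a direct check using that $\rho^A$ is an algebra morphism, or more conceptually from the fact that ${}^K{}_A{\cal M}$ and ${}^K{\cal M}_A$ are module categories over ${}^K{\cal M}$.

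For $A\otimes_B A$ the strategy is first to note that $A\otimes A\in{}^K{}_A{\cal M}_A{}^H$ by the very same reasoning (it lies in ${}_A{\cal M}_A{}^H$ by Lemmas~\ref{lem:ot} and~\ref{lem:otl}, it is a bicomodule by monoidality, and the $K$-coaction is compatible with the $A$-actions via the analogues of \eqref{KAHAdiag}), and then to pass to the quotient. The $A$-bimodule structure descends to $A\otimes_B A$ by the standard construction, and the right $H$-coaction descends because $B=A^{coH}$, as already observed before Proposition~\ref{prop_canMorph}. The one genuinely new and delicate step — and the main obstacle — is to show that the left $K$-coaction $\rho^{A\otimes A}$ descends to $A\otimes_B A$; this is exactly where the hypothesis that $B$ is a $K$-subcomodule is used.

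To settle that step I would check that, for $b\in B$, the elements $\rho^{A\otimes A}(ab\otimes a')$ and $\rho^{A\otimes A}(a\otimes ba')$ agree in $K\otimes(A\otimes_B A)$. Since $\rho^A$ is an algebra morphism one has $\mone{(ab)}\otimes\mzero{(ab)}=\mone{a}\mone{b}\otimes\mzero{a}\mzero{b}$, so by \eqref{eqn:lefttensor} the two images read $\mone{a}\mone{b}\mone{a'}\otimes\mzero{a}\mzero{b}\otimes\mzero{a'}$ and $\mone{a}\mone{b}\mone{a'}\otimes\mzero{a}\otimes\mzero{b}\mzero{a'}$ respectively; because $B$ is a $K$-subcomodule one has $\mzero{b}\in B$, and hence the factor $\mzero{b}$ may be moved across the tensor product over $B$, identifying the two expressions. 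Once $\rho^{A\otimes A}$ is seen to be well defined on the quotient, its compatibility with the right $H$-coaction and with the descended $A$-bimodule structure is inherited from $A\otimes A$, so no further computation is needed and $A\otimes_B A\in{}^K{}_A{\cal M}_A{}^H$ follows.
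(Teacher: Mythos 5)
Your proposal is correct and follows essentially the same route as the paper: monoidality of ${}^K{\cal M}^H$ for the bicomodule structure, the diagrams \eqref{KAHAdiag} (or the module-category viewpoint) for compatibility of the $A$-actions with the $K$-coaction, and descent of $\rho^{A\otimes A}$ to $A\otimes_B A$ using that $B$ is a $K$-subcomodule. The only difference is that you spell out explicitly the quotient computation $\mone{a}\mone{b}\mone{a'}\otimes\mzero{a}\mzero{b}\otimes\mzero{a'}=\mone{a}\mone{b}\mone{a'}\otimes\mzero{a}\otimes\mzero{b}\mzero{a'}$ in $K\otimes(A\otimes_B A)$, which the paper leaves as ``easy to see''.
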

Explicitly the $K$-coactions on $A\ot_BA$ and on $A\ot \underline{H}$ read
\begin{flalign}
\rho^{A\otimes_B A} : A\otimes_B A \longrightarrow K\otimes(A\otimes_B A)~,~~
a\otimes_B c \longmapsto \mone{a}\mone{c}\otimes(\mzero{a}\otimes_B \mzero{c})~
\end{flalign}
and
\begin{flalign}
\rho^{A\otimes \underline{H}} : A\otimes\underline{H}\longrightarrow K\otimes A\otimes\underline{H}~,~~
a\otimes h\longmapsto \mone{a}\otimes \mzero{a}\otimes h~.
\end{flalign}
The canonical map preserves this additional structure:
\begin{prop}\label{canmapKmorph}
If $A\in {}^K{\cal A}^H$ and $B=A^{coH}$ is a $K$-subcomodule,
then the canonical map $\chi: A\otimes_B A\to A\otimes\underline{H}$ is a morphism
in ${}^{K}{}_{A}{\cal M}_{A}{}^{H}$.
\end{prop}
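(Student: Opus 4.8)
The plan is to observe that essentially all the work has already been done in Proposition~\ref{prop_canMorph}, which shows that $\can$ is a morphism in ${}_{A}{\cal M}_{A}{}^{H}$, i.e.\ that it is simultaneously a morphism of right $H$-comodules and of $A$-bimodules. Since a morphism in ${}^{K}{}_{A}{\cal M}_{A}{}^{H}$ is by definition a $\bbK$-linear map that is at once a morphism in ${}_{A}{\cal M}_{A}{}^{H}$ and in ${}^{K}{}_{A}{\cal M}_{A}$, the only property left to verify is that $\can$ is a morphism of left $K$-comodules, namely $\rho^{A\ot\underline{H}}\circ\can = (\id_K\ot\can)\circ\rho^{A\ot_BA}$. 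The source $A\ot_B A$ indeed carries a left $K$-coaction: this is exactly where the hypothesis that $B$ is a $K$-subcomodule enters, guaranteeing that $\rho^{A\ot A}$ descends to the quotient, as recorded just before the statement.

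First I would write out both composites on a generic element $a\ot_B a'$. Using the formula for $\rho^{A\ot_BA}$ and then the definition \eqref{can} of the canonical map,
\[
(\id_K\ot\can)\big(\rho^{A\ot_BA}(a\ot_B a')\big) = \mone{a}\mone{a'}\ot\mzero{a}\,\zero{(\mzero{a'})}\ot\one{(\mzero{a'})}~.
\]
For the other composite I apply $\can$ first, obtaining $a\,\zero{a'}\ot\one{a'}$, and then $\rho^{A\ot\underline{H}}$. The key step is to evaluate the left $K$-coaction on the product $a\,\zero{a'}$: since $A\in{}^{K}{\cal A}^{H}$ is a left $K$-comodule algebra, $\rho^A$ is an algebra morphism, so $\rho^A(a\,\zero{a'})=\rho^A(a)\,\rho^A(\zero{a'})=\mone{a}\mone{(\zero{a'})}\ot\mzero{a}\,\mzero{(\zero{a'})}$, whence
\[
\rho^{A\ot\underline{H}}\big(\can(a\ot_B a')\big)=\mone{a}\mone{(\zero{a'})}\ot\mzero{a}\,\mzero{(\zero{a'})}\ot\one{a'}~.
\]

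The two expressions are then identified by the $(K,H)$-bicomodule compatibility \eqref{hyp:cov} applied to $a'$, in the form $\mone{(\zero{a'})}\ot\mzero{(\zero{a'})}\ot\one{a'}=\mone{a'}\ot\zero{(\mzero{a'})}\ot\one{(\mzero{a'})}$; substituting this into the last display (the legs indexed by $a$ simply ride along) reproduces the first one, completing the proof. I do not expect a genuine obstacle here: the computation is short, and its entire content is the bicomodule compatibility. The only point that needs care is purely notational, namely that the three Sweedler legs coming from $a'$ land in different tensor factors --- $\mone{(\zero{a'})}$ in the $K$-leg, $\mzero{(\zero{a'})}$ in the $A$-leg, and $\one{a'}$ in the $\underline{H}$-leg --- so that \eqref{hyp:cov} is precisely the identity required to re-synchronize them.
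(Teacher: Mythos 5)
Your proposal is correct and follows essentially the same route as the paper's proof: reduce to Proposition \ref{prop_canMorph} for the ${}_{A}{\cal M}_{A}{}^{H}$-part, note that the $K$-subcomodule hypothesis on $B$ makes $\rho^{A\ot_B A}$ well defined, and then verify left $K$-colinearity of $\can$ by expanding both composites, using that $\rho^A$ is an algebra map and invoking the bicomodule compatibility \eqref{hyp:cov} on $a'$ to match the two expressions. The computation and the point where each hypothesis enters coincide with the paper's argument.
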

\begin{proof}
Recalling from Proposition \ref{prop_canMorph} that the canonical map $\chi$
is a morphisms in  ${}_A{\cal M}_A{}^H$, we just have to show that it preserves the left $K$-coactions,
i.e.\  $\rho^{A \ot \underline{H}}\circ \chi= (\id \ot \chi)\circ
\rho^{A \ot_B A}$. This indeed holds true:
\begin{flalign*}
\rho^{A \ot \underline{H}}\big(\chi(a \ot_B c)\big)&= \mone{(a\zero{c})} \ot \mzero{(a\zero{c})}  \ot \one{c}\\
&=\mone{a}\mone{(\zero{c})} \ot \mzero{a}\mzero{(\zero{c})}  \ot \one{c}\\
&= \mone{a}\mone{c}\ot \zero{a}\zero{c}\ot \one{c}\\
&= \mone{a} \mone{c} \ot \chi\left( \mzero{a}\ot_B \mzero{c}\right) \\
&= (\id \ot \chi)\big(\rho^{A \ot_B A}(a \ot_B c)\big)~,
\end{flalign*}
where in the third and fourth equality we have used the equivariance condition \eqref{hyp:cov}. 
\end{proof}

Let us now consider a $2$-cocycle $\sigma:K\otimes K\to\bbK$  on $K$. 
We deform according to Proposition \ref{prop:co} the Hopf algebra $K$
into the Hopf algebra $K_\sigma$. Using the machinery
of \S \ref{secleftKcomod} and \S \ref{secKHbicomod} we can also deform the $(K,H)$-bicomodule algebra $A$ into the $(K_\sigma,H)$-bicomodule algebra ${}_\sigma A\in {}^{K_\sigma}{\cal A}^{H}$
(choose in Proposition \ref{prop:leftrightdef} the trivial 2-cocycle
$\cot(h\otimes h') = \varepsilon(h)\,\varepsilon(h')$ on $H$). If
$B=A^{coH}$ is a $K$-comodule then it is a $(K,H)$-bicomodule algebra and is
as well deformed into the $(K_\sigma,H)$-bicomodule algebra ${}_\sigma
B:={}_\sigma(A^{coH})=({}_\sigma A)^{coH} \in{}^{K_\sigma}{\cal A}^H$. 
 As a consequence we have the twisted canonical
map ${}_\sigma \chi : {}_\sigma A \, {{}^{\sigma\!\!\:}\ot_{{}_\sigma B}}\, {}_\sigma A \to {}_\sigma A \,{{}^{\sigma}\otimes} \, \underline{H}\,$,
which by Proposition \ref{canmapKmorph} is a morphism in ${}^{K_\sigma}{}_{{}_\sigma A}{\cal M}_{{}_\sigma A}{}^{H}$.
\sk

The following theorem relates the twisted canonical map ${}_\sigma\chi$ with the original canonical map $\chi$.
\begin{thm}\label{Th:diagr-can2} Let $A\in {}^K{\cal A}^H$ and $B=A^{coH}$ a $K$-subcomodule.
Given a $2$-cocycle $\sigma : K\otimes K\to \bbK$ the diagram
\begin{equation}\label{cd}
\xymatrix{
 \pgls \, {{}^{\sigma\!\!\:}\ot_{{}_\sigma B}}\, \pgls  \ar[rr]^-{{}_\sigma\chi} \ar[d]_{\col_{A,A}}  && \pgls \,{{}^{\sigma\!\!\:}\ot}\, \underline{H} \ar[d]^{\col_{A,\underline{H}} } 
\\
{}_\sg(A \ot_B A) \ar[rr]^{\Sigma(\chi)} && {}_\sg(A \ot \underline{H})
 }
\end{equation}
in ${}^{K_\sigma}{}_{{}_\sigma A}{\cal M}_{{}_\sigma A}{}^{H}$
commutes.
\end{thm}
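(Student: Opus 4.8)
The plan is to establish commutativity of \eqref{cd} first as a diagram of $\bbK$-linear maps, and then to note that each of its four arrows is a morphism in ${}^{K_\sigma}{}_{{}_\sigma A}{\cal M}_{{}_\sigma A}{}^{H}$, so that the diagram commutes in that category. As a preliminary, entirely parallel to the corresponding step in Theorem \ref{theo:diagr-can-pre}, one checks that $\col_{A,A}$ respects the ${}_\sigma B$-balancing and hence descends to the left vertical arrow $\col_{A,A} : \pgls \,{{}^{\sigma\!\!\:}\ot_{{}_\sigma B}}\, \pgls \to {}_\sg(A\ot_B A)$; this uses only that $B=A^{coH}$ is a $K$-subcomodule.

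The crux is a short computation that is markedly simpler than its counterpart in \S\ref{sec:def_sg}, for the following structural reason: here the canonical map $\chi$ is already $K$-equivariant (Proposition \ref{canmapKmorph}) and its target $\underline{H}$ carries the \emph{trivial} left $K$-coaction $\rho^{\underline{H}}(h)=1_K\ot h$, so no analogue of the isomorphism $\Q$ is required. In particular, by unitality of $\sigma$ and the counit axiom the right vertical arrow is essentially the identity:
\begin{equation*}
\col_{A,\underline{H}}(a \,{{}^{\sigma\!\!\:}\ot}\, h) = \sig{\mone{a}}{1_K}\,\mzero{a}\ot h = \varepsilon(\mone{a})\,\mzero{a}\ot h = a\ot h~.
\end{equation*}
Tracing $a \,{{}^{\sigma\!\!\:}\ot_{{}_\sigma B}}\, a'$ along the top-then-right path, using the deformed product ${{}_\sigma\bullet}$ of ${}_\sigma A$ and the undeformed right $H$-coaction, yields
\begin{equation*}
\col_{A,\underline{H}}\big({}_\sigma\chi(a \,{{}^{\sigma\!\!\:}\ot_{{}_\sigma B}}\, a')\big) = \sig{\mone{a}}{\mone{(\zero{a'})}}\,\mzero{a}\,\mzero{(\zero{a'})}\ot \one{a'}~,
\end{equation*}
while the left-then-bottom path, applying $\col_{A,A}$ and then $\Sigma(\chi)=\chi$, gives
\begin{equation*}
\chi\big(\col_{A,A}(a \,{{}^{\sigma\!\!\:}\ot_{{}_\sigma B}}\, a')\big) = \sig{\mone{a}}{\mone{a'}}\,\mzero{a}\,\zero{(\mzero{a'})}\ot \one{(\mzero{a'})}~.
\end{equation*}
These two expressions agree by a single application of the $(K,H)$-bicomodule compatibility \eqref{hyp:cov}, which lets one interchange the order in which the left $K$- and right $H$-coactions act on $a'$; this is the only nontrivial input.

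It remains to record that all four arrows are morphisms in ${}^{K_\sigma}{}_{{}_\sigma A}{\cal M}_{{}_\sigma A}{}^{H}$. The map ${}_\sigma\chi$ is the canonical map of the twisted extension ${}_\sigma B\subseteq {}_\sigma A$, hence a morphism there by Proposition \ref{canmapKmorph} applied to ${}_\sigma A\in {}^{K_\sigma}{\cal A}^{H}$; similarly $\Sigma(\chi)=\chi$ is such a morphism since $\Sigma$ is a functor and $\chi$ is a ${}^{K}{}_{A}{\cal M}_{A}{}^{H}$-morphism. Finally, $\col_{A,A}$ and $\col_{A,\underline{H}}$ are isomorphisms in this category by Proposition \ref{prop:leftrightdef}~(iv), specialized to the trivial $2$-cocycle $\cot=\varepsilon\ot\varepsilon$ on $H$ (for which $\Gamma$ is the identity and $\Gamma\circ\Sigma=\Sigma$, so that the natural isomorphism $\Gamma(\varphi^\ell)\circ\varphi$ reduces to $\varphi^\ell=\col$). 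Since \eqref{cd} commutes as $\bbK$-linear maps and every arrow is a morphism, it commutes in ${}^{K_\sigma}{}_{{}_\sigma A}{\cal M}_{{}_\sigma A}{}^{H}$. The step I would verify most carefully is the bookkeeping of Sweedler indices in the two path computations, where \eqref{hyp:cov} must be threaded consistently; everything else is formal once the triviality of $\col_{A,\underline{H}}$ is noticed.
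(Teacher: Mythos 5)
Your proposal is correct and follows essentially the same route as the paper's proof: the key observations in both are that $\col_{A,\underline{H}}$ reduces to the identity because $\underline{H}$ carries the trivial left $K$-coaction and $\sigma$ is unital, and that the two paths around the square agree by a single application of the bicomodule compatibility \eqref{hyp:cov} to $a'$. The surrounding bookkeeping (descent of $\col_{A,A}$ to the quotient, all arrows being morphisms via Proposition \ref{canmapKmorph} and the twisting functors) likewise matches the paper.
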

\begin{proof}
First we notice that the left vertical arrow is the induction to
the quotient of the isomorphism $\col_{A,A} : \pgls \, {{}^{\sigma\!\!\:}\ot}\,
\pgls\longrightarrow {}_\sg(A \ot A)$ defined in (\ref{nt-left}); it is well defined 
 thanks to the cocycle condition (\ref{lcocycle}) for $\sigma$.
Next let us observe that $\col_{A,\underline{H}}$ is  the identity; indeed, 
since $\underline{H}$ is equipped with the trivial left $K$-coaction 
$h \mapsto \1_K \ot h$ and $\sg$ is unital, we have
$$
\col_{A,\underline{H}} (a \,{{}^{\sigma\!\!\:}\ot}\, h)= \sig{\mone{a}}{\mone{h}} \mzero{a} \ot \mzero{h}=
\sig{\mone{a}}{\1_K} \mzero{a} \ot {h} = a \ot h~,
$$
for all $a\in{}_\sigma A$ and $h\in \underline{H}$.
Furthermore, it  is clear by Proposition \ref{canmapKmorph} and 
\S \ref{secKHbicomod} that all arrows are morphisms in ${}^{K_\sigma}{}_{{}_\sigma A}{\cal M}_{{}_\sigma A}{}^{H}$,
so it remains to prove the commutativity of the diagram:
\begin{flalign*}
\chi \big(\col_{A,A} (a \, {{}^{\sigma\!\!\:}\ot_{{}_\sigma B}}\, a') \big) &= \sig{\mone{a}}{\mone{a'}} ~ \chi(\mzero{a} \ot_B \mzero{a'}) \\
&=\sig{\mone{a}}{\mone{a'}} ~ \mzero{a}  \zero{(\mzero{a'})} \ot
\one{(\mzero{a'})}
\\
&= \sig{\mone{a}}{\mone{(\zero{a'})}} ~ \mzero{a}  \mzero{(\zero{a'})} \ot
\one{{a'}}\\
&=
a \, {\mtcols}\, \zero{a'}  \ot
\one{{a'}}\\
&= {}_\sigma \chi(a \, {{}^{\sigma\!\!\:}\ot_{{}_\sigma B}}\, a')~,
\end{flalign*}
for all $a,a^\prime\in {}_\sigma A$.
\end{proof}

\begin{cor}\label{cor:diagr-can2}
$B \subseteq A$ is an $H$-Hopf-Galois extension if and only if $\bgls
\subseteq \pgls$ is an $H$-Hopf-Galois extension.
\end{cor}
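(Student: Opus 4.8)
The plan is to deduce the statement directly from the commutative diagram (\ref{cd}) of Theorem \ref{Th:diagr-can2}, in exact parallel with the way Corollary \ref{cor-can} was deduced from diagram (\ref{diagr-can-pre}). The starting observation is that, since $B=A^{coH}$ is assumed to be a $K$-subcomodule, the coinvariants of the twisted comodule algebra agree with the twist of the coinvariants, that is ${}_\sigma B = {}_\sigma(A^{coH}) = ({}_\sigma A)^{coH}$, as recorded before Theorem \ref{Th:diagr-can2}. Hence the twisted canonical map ${}_\sigma\chi : \pgls\,{{}^{\sigma\!\!\:}\ot_{{}_\sigma B}}\,\pgls \to \pgls\,{{}^{\sigma\!\!\:}\ot}\,\underline{H}$ really is the canonical map of the extension $\bgls\subseteq\pgls$, so that $\bgls\subseteq\pgls$ is an $H$-Hopf-Galois extension precisely when ${}_\sigma\chi$ is bijective.

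First I would record that the two vertical arrows in (\ref{cd}) are isomorphisms. The arrow $\col_{A,A}$ is an isomorphism by Theorem \ref{thm:functleft} (its invertibility follows from that of $\sigma$, and it descends to a well-defined map on the quotient by $\bgls$ thanks to the cocycle condition, as already noted in the proof of Theorem \ref{Th:diagr-can2}), while $\col_{A,\underline{H}}$ is in fact the identity because $\underline{H}$ carries the trivial left $K$-coaction. Next, since the functor $\Sigma$ of Proposition \ref{propo:leftdef} acts as the identity on morphisms, the bottom arrow $\Sigma(\chi)$ is literally the same $\bbK$-linear map as $\chi$; consequently $\Sigma(\chi)$ is bijective if and only if $\chi$ is bijective. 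Equivalently, one may invoke that $\Sigma$ is an equivalence of categories, with quasi-inverse given by twisting back via the convolution inverse $\bar\sigma$, so that it preserves and reflects isomorphisms.

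With these facts in hand the conclusion is immediate: by commutativity of (\ref{cd}) we have ${}_\sigma\chi = \col_{A,\underline{H}}^{-1}\circ \Sigma(\chi)\circ \col_{A,A}$, a composite of the isomorphisms $\col_{A,A}$ and $\col_{A,\underline{H}}^{-1}$ with $\Sigma(\chi)$. Therefore ${}_\sigma\chi$ is bijective if and only if $\Sigma(\chi)$, equivalently $\chi$, is bijective, which is exactly the asserted equivalence. The only point requiring genuine care is the identification of coinvariants ${}_\sigma B = ({}_\sigma A)^{coH}$, needed so that the left-hand object of the diagram is the correct domain of the twisted canonical map; but this is already supplied by the hypothesis that $B$ is a $K$-subcomodule together with the fact that a cocycle on $K$ leaves the right $H$-coaction undeformed. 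Since all of the substantive work has been carried out in establishing the commutativity of (\ref{cd}) in Theorem \ref{Th:diagr-can2}, I do not expect any further obstacle at this stage.
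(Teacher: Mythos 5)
Your proposal is correct and follows essentially the same route as the paper: the corollary is deduced from the commutativity of diagram (\ref{cd}), the invertibility of the vertical arrows $\col_{A,A}$ and $\col_{A,\underline{H}}$ (the latter being the identity), and the fact that $\Sigma(\chi)$ coincides with $\chi$ as a $\bbK$-linear map. Your additional remarks on the identification ${}_\sigma B = ({}_\sigma A)^{coH}$ and on why $\col_{A,A}$ descends to the quotient are points the paper already settles before and within the proof of Theorem \ref{Th:diagr-can2}, so no new argument is needed.
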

\begin{proof}
The statement follows from the invertibility of the morphisms
$\varphi_{A,\underline{H}}$ and $\varphi_{A,A}$
in diagram (\ref{cd}).
\end{proof}

In order to prove that twist deformations of
principal $H$-comodule algebras are principal $H$-comodule
algebras we need the following
\begin{prop}\label{LQ}
Let $B\in  {}^K{\cal A}^H$,  $V,W\in {^K}_B {\cal M}^H$ and 
$_BHom^H(V,W)$ be the $\bbK$-module of $\bbK$-linear maps $V\to W$ that
are left $B$-module maps and right $H$-comodule maps.
Let $\sigma : K\otimes K\to \bbK$ be a $2$-cocycle on $K$, then
there is a $\bbK$-module  isomorphism
\begin{eqnarray}
\fS : {}_BHom^H(V,W)&\longrightarrow& {}_{_\sigma B} Hom^H({}_\sigma V, {}_\sigma W)\\
s&\longmapsto &\fS(s)\nn
\end{eqnarray}
defined by, for all $v\in {}_\sigma V$,
\begin{flalign}\label{fS}
\fS(s)(v)=\sigma\big(\mtwo{v}\ot
S(\mone{v})\,\mone{s(\zero{v})}\big)\,\zero{s(\zero{v})}~,
\end{flalign}
with inverse 
$
\fS^{-1} : {}_{_\sigma B}Hom^H({}_\sigma
V,{}_\sigma W) \longrightarrow
{}_BHom^H(V,W)$, $ \tilde s\mapsto \fS^{-1}( \tilde s)\,$ given by, for all $v\in V$,
\begin{flalign}\label{invfS}
\fS^{-1}(\tilde s)(v)=\sigma\big(S(\mtwo{v})\ot
\mone{\tilde s(\zero{v})}\big)\,\bar{u}_\sigma(\mone{v})\zero{\tilde
  s(\zero{v})}~.
\end{flalign}
On $K$-comodule maps we have $\fS=\Sigma$. 

\end{prop}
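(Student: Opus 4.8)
The plan is to verify directly the three assertions packaged in the statement: that $\fS(s)$ really lands in ${}_{{}_\sigma B}Hom^H({}_\sigma V,{}_\sigma W)$, that the map $\fS^{-1}$ of \eqref{invfS} is a two-sided inverse, and that $\fS$ restricts to $\Sigma$ on $K$-comodule maps. That $\fS$ is $\bbK$-linear is immediate from $\bbK$-linearity of $\sigma$, so the content is well-definedness, bijectivity, and the compatibility with $\Sigma$. The well-definedness splits into right $H$-colinearity and left ${}_\sigma B$-linearity of $\fS(s)$, which I would treat separately.

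First I would establish right $H$-colinearity of $\fS(s)$, the milder requirement, since the right $H$-coaction is untouched by the twist and the scalar $\sigma$-factor in \eqref{fS} involves only $K$. Writing $z=s(\mzero{v})$ and applying $\delta^W$ to $\fS(s)(v)$, the $H$-coaction hits $\mzero{z}$ while $\mone{z}$ sits inside $\sigma$; using the bicomodule compatibility \eqref{hyp:cov} for $W$ I commute the left $K$-coaction past the right $H$-coaction, and the right $H$-colinearity of $s$ then turns $\zero{z}$ into $s(\zero{(\mzero{v})})$ and $\one{z}$ into $\one{(\mzero{v})}$. A second application of \eqref{hyp:cov}, now for $V$, moves the iterated left $K$-coaction entirely past the right $H$-coaction of $v$, producing exactly $\fS(s)(\zero{v})\otimes\one{v}$. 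No cocycle identity is needed in this step.

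The crux is the left ${}_\sigma B$-linearity, namely $\fS(s)(b\triangleright_{{}_\sigma V}v)=b\triangleright_{{}_\sigma W}\fS(s)(v)$ with the twisted action \eqref{eqn:leftleft}, $b\triangleright_{{}_\sigma V}v=\sig{\mone{b}}{\mone{v}}\,\mzero{b}\tra_V\mzero{v}$. On the left-hand side I would expand $\fS(s)$ on $\mzero{b}\tra_V\mzero{v}$, use that the left $B$-action on $V$ is a left $K$-comodule morphism to split the coaction of $\mzero{b}\tra_V\mzero{v}$ into the coactions of $\mzero{b}$ and $\mzero{v}$, and then the left $B$-linearity of $s$ to pull $\mzero{b}$ out as $\mzero{b}\tra_W s(\cdots)$. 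On the right-hand side I would first compute the untwisted left $K$-coaction of $\fS(s)(v)$, which carries one extra $\sigma$-factor and bumps up by one the coaction applied to $s(\mzero{v})$, and then apply \eqref{eqn:leftleft} for $W$. Both sides thus carry two $\sigma$-factors and several antipode legs, and comparing them reduces the claim to the $2$-cocycle condition for $\sigma$ (Lemma \ref{lem:formula}) together with the antipode axioms and the definition \eqref{uxS} of $u_\sigma$. This repackaging of the $\sigma$-factors is where all the work lies, and I expect it to be the main obstacle.

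Finally, for bijectivity I would check $\fS^{-1}\circ\fS=\id$ and $\fS\circ\fS^{-1}=\id$ by direct substitution, the $\sigma$- and $\bar\sigma$-factors collapsing through the cocycle relations of Lemma \ref{lem:formula} together with $u_\sigma\ast\bar u_\sigma=\varepsilon=\bar u_\sigma\ast u_\sigma$ from \eqref{uxS}; equivalently one recognises $\fS^{-1}$ as the analogue of \eqref{fS} built from the inverse cocycle $\bar\sigma$, which untwists ${}_\sigma V,{}_\sigma W$ back to $V,W$. The identification $\fS=\Sigma$ on $K$-colinear maps is clean: if $s$ is a $K$-comodule morphism then $\mone{s(\mzero{v})}\otimes\mzero{s(\mzero{v})}=\mone{(\mzero{v})}\otimes s(\mzero{(\mzero{v})})$, so rewriting the three left $K$-legs of $v$ via coassociativity puts the $\sigma$-argument of \eqref{fS} in the form $\one{x}\otimes S(\two{x})\,\three{x}$ with $x=\mone{v}$; since $\one{x}\otimes S(\two{x})\,\three{x}=x\otimes\1$ and $\sigma(\,\cdot\,\otimes\1)=\varepsilon$, this factor equals $\varepsilon(\mone{v})$, whence $\fS(s)(v)=\varepsilon(\mone{v})\,s(\mzero{v})=s(v)=\Sigma(s)(v)$. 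As a conceptual cross-check one may note, as the paper flags for $\Q$ and $D$, that $\fS$ is the hom-space component of the natural isomorphism witnessing that $\Sigma$ is an equivalence of closed monoidal categories (cf.\ Appendix \ref{appC}).
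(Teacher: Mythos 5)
Your proposal is correct and follows essentially the same route as the paper's proof: direct verification of right $H$-colinearity from the bicomodule compatibility, left ${}_\sigma B$-linearity as the main computation reducing to the $2$-cocycle identities of Lemma \ref{lemma2co} (derived from Lemma \ref{lem:formula}) together with the $K$-colinearity of the actions and $B$-linearity of $s$, a direct check that \eqref{invfS} is the inverse, and the immediate collapse of the $\sigma$-factor to $\varepsilon(\mone{v})$ when $s$ is $K$-colinear. No gaps.
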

\begin{proof}
Use  of property (\ref{simpl}) for the 2-cocycle $\sigma$ shows that 
an expression equivalent to  (\ref{fS}) is 
$\fS(s)(v)=u_\sigma(\mtwo{v})\bar\sigma({S(\mone{v})}\ot
\mone{s(\zero{v})})\,\zero{s(\zero{v})}$. Then it is easy to directly
check that (\ref{invfS}) defines the inverse of $\fS$.
The $H$-comodule property of $\fS(s)$ is a straightforward
consequence of the $H$-comodule property of $s$ and of the
compatibility between the $H$- and $K$-coactions. 
We now show that $\fS(s)$ is a left ${}_\sigma B$-linear map:
for all $b\in {}_\sigma B$, $v\in {}_\sigma V$,
\begin{flalign*}
\fS(s)(b  {\mtcols}\, v)
  &=\sigma(\mone{b}\otimes\mone{v})\,\fS(s)(\zero{b}\zero{v})\\
&=\sigma(\mfour{b}\otimes\mthree{v})\,\sigma\big(\mthree{b}\mtwo{v}\otimes S(\mtwo{b}\mone{v})\mone{b}\mone{s(\zero{v})}\big)\,\zero{b}\zero{s(\zero{v})}\\
&=\sigma(\mtwo{b}\otimes\mthree{v})\,\sigma\big(\mone{b}\mtwo{v}\otimes 
S(\mone{v})\,\mone{s(\zero{v})}\big)\,\zero{b}\zero{s(\zero{v})}\\
&=\sigma(\mthree{b}\otimes\mfour{v})\,\bar{\sigma}(\mtwo{b}\otimes\mthree{v})\,\sigma\big(\mtwo{v}\otimes 
S(\mone{v})\,\mtwo{s(\zero{v})}\big)\,\sigma(\mone{b}\otimes
\mone{s(\zero{v})})\,\zero{b}\zero{s(\zero{v})}\\
&=\sigma(\mone{b}\otimes
\mone{s(\zero{v})})\,\sigma\big(\mtwo{v}\otimes 
S(\mone{v})\,\mtwo{s(\zero{v})}\big)\,\zero{b}\zero{s(\zero{v})}\\
&=b  {\mtcols}\, \big(\sigma\big(\mtwo{v}\otimes 
S(\mone{v})\,\mone{s(\zero{v})}\big)\,
\zero{s(\zero{v})}\Big)\\
&=b  {\mtcols}\,\fS(s)(v)
\end{flalign*}
where in the second equality we used left $B$-linearity of $s$, and in the
fourth Lemma \ref{lemma2co}. Finally  if $s$ is a
$K$-comodule map then we immediately see that $\fS(s)=\Sigma(s)$.
\end{proof}

\begin{rem}
Consider a Hopf algebra ${\cal V}$ dually paired to $K$.  
If $V,W$ are left $K$-comodules, then they are right
${\cal V}$-modules and left ${\cal V}^{op}$-modules; let  $\triangleright_V$
 and $\triangleright_W$ be the corresponding ${\cal V}^{op}$-actions,
cf.\ Appendix \ref{dualconst}  (recall that ${\cal V}^{op}$ is the Hopf algebra with opposite
product, inverse antipode and same coproduct and counit as ${\cal V}$). The
set of $\bbK$-linear maps  $Hom_{\bbK}(V,W)$ is
canonically a left ${\cal V}^{op}$-module with the adjoint action
${\cal V}^{op}\otimes Hom_{\bbK}(V,W)\to Hom_{\bbK}(V,W)$, $(\nu,s)\mapsto
{\nu}\blacktriangleright^{cop}s:=\two{\nu}\triangleright_W\circ\, s\circ
S^{{\cal V}^{op}}(\one{\nu})\,\triangleright_V\,$. 

A twist
${\cal G}$ on ${\cal V}$ defines a 2-cocycle
$\sigma$ on $K$ (cf.\ (\ref{tw2coc}) in Appendix  \ref{dualconst})  and a twist ${\cal
G}^{op}={\cal G}^{-1}$ on ${\cal V}^{op}$. 
In this case  ${}_\sigma V, {}_\sigma W$ are right ${\cal V}^{\cal G}$-modules and henceforth left ${\cal V}^{{\cal
    G}^{\:op}}=({\cal V}^{op})^{{\cal
    G}^{op}}\!$-modules and so is $Hom_\bbK({}_\sigma V, {}_\sigma W)$ with
the ${\cal V}^{{\cal G}^{\:op}}\!$-adjoint action; also
${}_\sigma(Hom_\bbK( V,  W))$ is a left ${\cal V}^{{\cal
    G}^{\:op}}\!$-module. As proven in \cite{AS} Theorem 4.8, these last two are isomorphic
left ${{\cal V}^{{\cal G}^{\,op}}}$-modules via $D_{{\cal  G}^{op}}^{cop}: {}_\sigma(Hom_\bbK( V,
W))\to Hom_\bbK({}_\sigma V, {}_\sigma W)$. 
This map is the same as that in Proposition \ref{LQ} (with $B$ and $H$ trivial).
Explicitly $$D_{{\cal  G}^{op}}^{cop}(s) =\two{\tg_\alpha} \triangleright_W\circ\, s\circ
\tg^\alpha  S(\one{\tg_\alpha})\, \triangleright_V =\fS(s)\,,$$ where 
${\cal G}=\tg^\alpha\otimes \tg_\alpha$, and  the product, coproduct and
antipode are those of ${\cal V}$. We refer to \cite{AS} for further
properties of this left deformation map.

The categorical viewpoint is also instructive. We first define
the functor $hom: {({}^{\,{\cal V}^{op}}{\!\!\cal M})}^{op}\times
{}^{{\cal V}^{op}}{\!\!\cal M}\to {}^{{\cal V}^{op}}{\!\!\cal M}$, that on objects is
given by $hom(V,W)=Hom_\bbK(V,
W)$ (the ${\cal V}^{op}$-module of $\bbK$-linear maps $V\to W$),  while on
morphisms $V\stackrel{\;f}{\leftarrow}X$,
$W\stackrel{g\,}{\rightarrow} Y$  it is given by 
$hom(f,g)(L)=g\circ L\circ f\in hom(X,Y)$ for all
$\bbK$-linear maps $L\in hom(V,W)$. The functor $hom$ is an internal
hom functor in the monoidal category $({}^{{\,\cal V}^{op}}{\!\!\cal M},
\ot)$ because, for all $V$, the functor $hom(V, -)$ is right
adjoint to the tensor product functor $V\ot -\,$; a proof is in \cite{BSS}
\S 2.5,  where the quasi-Hopf algebra $H$ there is the Hopf algebra
${{\cal V}^{op}}^{\, cop}$ (i.e.,
${{\cal V}^{op}}$ with opposite coproduct and antipode). Thus $hom$
defines a closed monoidal category structure on
 $( {}^{{\,\cal V}^{op}}{\!\!\cal M}, \ot)$. Similarly we have the closed monoidal category $({}^{{\,\cal V}^{{\cal G}^{\,op}}}{\!\!\!\cal M}, {}^{\sigma\:\!\!}\ot, hom_\sigma)$.
Then, in this framework, the left ${}{{\cal V}^{{\cal G}^{\,op}}}$-module isomorphisms  $\fS: {}_\sigma(Hom_\bbK( V,
W))\to Hom_\bbK({}_\sigma V, {}_\sigma W)$, that could be denoted
$\fS_{_{V,W}}$, are the components of a natural isomorphism $\Sigma\circ
hom\Rightarrow hom_\sigma\circ (\Sigma^{op}\times \Sigma)$ between 
functors from the category ${({}^{{\,\cal V}^{op}}{\!\!\cal M})}^{op}\times
{}^{{\cal V}^{op}}{\!\!\cal M}$ to the category ${}^{{\cal V}^{{\cal G}^{\,op}}}{\!\!\!\cal M}$. 
Via this natural isomorphism ${}^{{\cal V}^{op}}{\!\!\cal M}$ and ${}^{{\cal V}^{{\cal G}^{\,op}}}{\!\!\!\cal M}$ are equivalent as closed monoidal categories.
\end{rem}

\begin{cor}\label{cor2:diagr-can2}
$A$ is a principal $H$-comodule algebra if and only if ${}_\sigma A$ is a principal $H$-comodule algebra.
\end{cor}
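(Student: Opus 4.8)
The plan is to establish this as the external-symmetry analogue of Corollary~\ref{cor-pcomodalg}, following the two-step pattern forced by Definition~\ref{def:pHcomodalg}. The Hopf-Galois half is already available: by Corollary~\ref{cor:diagr-can2}, $B=A^{coH}\subseteq A$ is an $H$-Hopf-Galois extension precisely when $\bgls\subseteq\pgls$ is. What remains is to transport the equivariant projectivity, i.e.\ the existence of a section of the restricted product. The essential new difficulty, absent in Corollary~\ref{cor-pcomodalg}, is that such a section $s:A\to B\otimes A$ is only a left $B$-module and right $H$-comodule map and is \emph{not} in general $K$-equivariant, so the functor $\Sigma$ cannot be applied to it. This is exactly what Proposition~\ref{LQ} remedies: the isomorphism $\fS:{}_BHom^H(A,B\otimes A)\to{}_{_\sigma B}Hom^H(\pgls,{}_\sigma(B\otimes A))$ furnishes the correct deformation of the non-equivariant map $s$.

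Concretely, I would fix a section $s\in{}_BHom^H(A,B\otimes A)$ of the restricted product $m:B\otimes A\to A$, so that $m\circ s=\id_A$, and set
\[
{}_\sigma s:=\col_{B,A}^{-1}\circ\fS(s):\pgls\longrightarrow\bgls\,{}^{\sigma\!\!\:}\otimes\,\pgls .
\]
Since $\fS(s)$ is left $\bgls$-linear and right $H$-comodule by Proposition~\ref{LQ}, and $\col_{B,A}$ is an isomorphism in the left relative Hopf module category (Theorem~\ref{thm:functleft} and Proposition~\ref{propo:leftdef}), the composite ${}_\sigma s$ is again left $\bgls$-linear and right $H$-comodule. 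On the other hand $m$, being the restriction of the $K$-comodule-algebra product, \emph{is} $K$-equivariant, so the last clause of Proposition~\ref{LQ} gives $\fS(m)=\Sigma(m)$, and the deformed product factors as ${}_\sigma m=\Sigma(m)\circ\col_{B,A}$ (the left-handed analogue of $m_\cot=\Gamma(m)\circ\varphi_{B,A}$ used in Corollary~\ref{cor-pcomodalg}). Hence
\[
{}_\sigma m\circ{}_\sigma s=\Sigma(m)\circ\col_{B,A}\circ\col_{B,A}^{-1}\circ\fS(s)=\Sigma(m)\circ\fS(s).
\]

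The heart of the argument — and the one step I expect to demand genuine care — is the identity $\Sigma(g)\circ\fS(s)=\fS(g\circ s)$ for every $K$-equivariant $g$. I would prove it directly from the explicit formula~\eqref{fS}: $K$-equivariance of $g$ means $\rho(g(x))=\mone{x}\otimes g(\mzero{x})$, so in \eqref{fS} applied to $g\circ s$ the left $K$-coaction legs of $(g\circ s)(\mzero{v})$ are those of $s(\mzero{v})$ with $g$ applied to the $(0)$-leg; as the cocycle value is a scalar it passes through the $\bbK$-linear map $g$, yielding $\fS(g\circ s)=g\circ\fS(s)=\Sigma(g)\circ\fS(s)$. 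Taking $g=m$ then gives ${}_\sigma m\circ{}_\sigma s=\fS(m\circ s)=\fS(\id_A)=\id_{\pgls}$, the last equality again by the $K$-equivariant clause of Proposition~\ref{LQ}; thus ${}_\sigma s$ is the desired section and $\pgls$ is a principal $H$-comodule algebra. For the converse I would run the computation backwards: given a section ${}_\sigma s$ of ${}_\sigma m$, put $s:=\fS^{-1}(\col_{B,A}\circ{}_\sigma s)$, whereupon the same identity together with injectivity of $\fS$ forces $m\circ s=\id_A$. Combined with Corollary~\ref{cor:diagr-can2}, this yields the claimed equivalence.
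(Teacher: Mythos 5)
Your proposal is correct and follows essentially the same route as the paper: the paper likewise sets ${}_\sigma s:=(\varphi^\ell_{B,A})^{-1}\circ\fS(s)$, uses the factorization ${}_\sigma m=\Sigma(m)\circ\varphi^\ell_{B,A}$, and verifies ${}_\sigma m\circ{}_\sigma s=\id$ by the same computation you package as the naturality identity $\fS(g\circ s)=\Sigma(g)\circ\fS(s)$ for $K$-equivariant $g$ (applied with $g=m$, together with $\fS(\id_A)=\Sigma(\id_A)=\id$). Your handling of the converse via $\fS^{-1}$ matches the paper's implicit symmetry argument.
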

\begin{proof}
The proof is similar to that of Corollary \ref{cor-pcomodalg} with the caveat that since
$s$ is not a $K$-comodule map we have to consider its deformation via
the map $\fS$. Hence we consider the commutative diagram
\begin{flalign*}
\xymatrix{
\ar[drr]_-{\fS(s)}{}_\sigma A \ar[rr]^-{{}_\sigma s}
&&\ar[d]_-{\varphi^\ell_{B,A}}
{}_\sigma B \,{{}^{\sigma\!\!\:}\ot} \: {}_\sigma A \ar[rr]^-{{}_\sigma m} &&
{}_\sigma A\\
&& {}_\sigma (B\otimes A) \ar[rru]_-{\Sigma(m)}&& 
}
\end{flalign*}
where by definition ${}_\sigma s:=\fS(s)\circ ({\varphi^\ell_{B,A}})^{-1}$.
Left ${}_\sigma B$-linearity and right $H$-colinearity of $\fS(s)$ and of $\varphi^\ell_{B,A}$ imply
 that also  ${}_\sigma s$ is a map in ${}_{{}_\sigma B}{\cal
  M}^H$. Furthermore it  is a section of the (restricted) product
${}_\sigma m: {}_\sigma B {}^{\sigma\!\!\:}\ot {}_\sigma A\to {}_\sigma A$ since, for all $a\in {}_\sigma A$
\begin{flalign*}
({}_\sigma m\circ {}_\sigma s)(a)&=(\Sigma(m)\circ \fS(s)) (a)\\
&=\sigma\big(\mtwo{a}\ot S(\mone{a})\,\mone{s(\zero{a})}\big)\,m(\zero{s(\zero{a})})\\
&=\sigma\big(\mtwo{a}\ot S(\mone{a})\,\mone{m(s(\zero{a}))}\big)\,\zero{m({s(\zero{a})})}\\
&=\sigma\big(\mtwo{a}\ot S(\mone{a})\,\mone{\zero{a}}\big)\,\zero{\zero{a}}\\&=a
\end{flalign*}
where in the third equality we used that $m$ is a $K$-comodule map,
and in the fourth that $m\circ s=\id_A$.
\end{proof}

\begin{rem}\label{cleft?} 
Concerning cleftness of the extension ${}_\sigma B\simeq {}_\sigma
A^{coH}\subset {}_\sigma A$, if $\theta: B\ot H\to A$ is an
isomorphisms of left $B$-modules, left $K$-comodules and right
$H$-comodules then, as in Corollary \ref{cor-can}, $\Sigma(\theta):
{}_\sigma B\ot H\to {}_\sigma A$ is an
isomorphisms of left ${}_\sigma B$-modules, left ${}_\sigma K$-comodules and right
$H$-comodules, with inverse $\Sigma(\theta^{-1})$. In general however
$\theta: B\ot H\to A$ is not a left $K$-comodule map, then we can consider
$\fS(\theta): {}_\sigma B\ot H\to {}_\sigma A$ that is a map in
${}_{{}_\sigma B}{\cal M}^H$; if this map is invertible then cleftness
of $A^{coH}\subset A$ implies cleftness of ${}_\sigma
A^{coH}\subset {}_\sigma A$. In the context of formal deformation
quantization this is always the case, and considering also $\fS^{-1}$ we obtain 
that $A^{coH}\subset  A$ is cleft if and only if ${}_\sigma
A^{coH}\subset {}_\sigma A$ is cleft.
\end{rem}
\sk

\begin{ex}[The instanton bundle on the isospectral sphere $S^4_\theta$ \cite{gw, gs}.]\label{exCL}
The classical $SU(2)$-Hopf bundle $\pi:  S^7 \ra S^4$ over the four-sphere $S^4$ can be described in different ways.  We take here a pure algebraic approach well suited for the application of the deformation  theory developed above. 

Let $\mathcal{O}(\IR^8)$ be the commutative  $*$-algebra over
$\bbK=\mathbb{C}$ generated by elements
$\{z_i,~z_i^*, ~i=1,\dots ,4\}$. Let 
$A:= \mathcal{O}(S^7)$ be the  algebra of coordinate functions
 on the $7$-sphere $S^7$ obtained as the quotient of
 $\mathcal{O}(\IR^8)$ by the two-sided $*$-ideal generated by the
 element $\sum z^*_i z_i-1$.
Let $H:=\mathcal{O}(SU(2))$ be the 
Hopf algebra of coordinate functions on $SU(2)$ realized as the $*$-algebra generated by commuting elements 
$\{w_i,~w_i^*, ~i=1,2\}$ with $\sum w^*_i w_i=1$ and standard Hopf algebra structure induced from the group structure of $SU(2)$.

The classical principal action of $SU(2)$ on $S^7$ can be described at the algebraic level by the data of the following
right coaction of $\mathcal{O}(SU(2))$ on  
$\mathcal{O}(S^7)$: 
\begin{eqnarray}\label{princ-coactSU2}
\delta^{\mathcal{O}(S^7)}:\quad  \mathcal{O}(S^7) \quad  &\longrightarrow & 
\mathcal{O}(S^7) \ot \mathcal{O}(SU(2))
\\ \nn
\, u
 &\longmapsto&
u
\overset{.}{\otimes}
\begin{pmatrix}
 w_1 & -w_2^*
\vspace{2pt}
\\
w_2 & w_1^*\end{pmatrix} \quad , \qquad 
u:=
\begin{pmatrix}
z_1& z_2 & z_3& z_4
\vspace{2pt}
\\
-z_2^*  &
  z_1^* &
  -z_4^*
& z_3^*
\end{pmatrix}^t 
\end{eqnarray}
where $\overset{.}{\otimes}$  stands for the composition of $\ot$ with the matrix multiplication. The 
 map $\delta^{\mathcal{O}(S^7)}$ defined  above on the algebra generators, and extended to the whole $\mathcal{O}(S^7)$  as a $*$-algebra morphism, structures $\mathcal{O}(S^7)$ as a right $\mathcal{O}(SU(2))$-comodule algebra.  As expected, the 
 subalgebra $ B:=\mathcal{O}(S^7)^{co(\mathcal{O}(SU(2)))}\subset\mathcal{O}(S^7)$ of coinvariants 
under the coaction $\delta^{\mathcal{O}(S^7)}$ 
can be identified with the algebra of coordinate  functions  on the 
 4-sphere $S^4$. Indeed  the entries of the matrix
\be\label{proj-inst}
\textsf{P}:=uu^*= \frac{1}{2}
\begin{pmatrix}
1+x & 0 & \alpha & - \beta^*
\\
0 & 1+x & \beta & \alpha*
\\
\alpha^* & \beta^* & 1-x & 0
\\
-\beta & \alpha & 0 & 1-x 
\end{pmatrix},
\ee
 where 
\be\label{4sphere-coinv}
\alpha:= 2(z_1 z_3^* + z^*_2 z_4)~, \quad 
\beta:= 2(z_2 z_3^* - z^*_1 z_4)~, \quad 
x:= z_1 z_1^* + z_2 z_2^* - z_3 z_3^* -z_4 z_4^* ~, \quad 
\ee 
(and their $*$-conjugated $\alpha^*, \beta^*$,  with $x^*=x$),  form a set of generators for  $B$ and from the $7$-sphere 
relation $\sum z_i^* z_i=1$ 
it follows that they satisfy 
$$
\alpha^* \alpha + \beta^* \beta + x^2=1 . 
$$
Thus the subalgebra $B$ of coinvariants is isomorphic to the algebra 
$\mathcal{O}(S^4)$ of coordinate  functions  on  $S^4$. The algebra 
inclusion $\mathcal{O}(S^4) \hookrightarrow \mathcal{O}(S^7)$ dualizes
the Hopf map $\pi:S^7 \ra S^4$.
The algebra  $\mathcal{O}(S^7)$ is a non cleft Hopf-Galois extension
of $\mathcal{O}(S^4)$. Moreover, since $\mathcal{O}(SU(2))$ is
cosemisimple and has a bijective antipode, then $\mathcal{O}(S^7)$  is a 
principal comodule algebra (recall the last paragraph
of \S \ref{sec:HG}).\\
 
We now apply the theory developed above and deform this extension of commutative algebras  by using a symmetry of the classical Hopf bundle. 
Let $K:=\mathcal{O} (\mathbb{T}^2) $ be the  commutative $*$-Hopf algebra of functions 
on the 2-torus  $\mathbb{T}^2 $ with generators $t_j,~t_j^*=t_j^{-1}$, $j=1,2$  and co-structures $\Delta(t_i)=t_i \ot t_i$, $\varepsilon(t_i)=1$, $S(t_i)=t_i^{-1}=t_i^*$. Let $\sigma$ be
the exponential 2-cocycle on $K$ 
which is determined by its value on the generators:
\be\label{cocycleT2}
\sig{t_j}{t_k}= \exp(i \pi \,\Theta_{jk}) ~~,\quad \Theta= \frac{1}{2}\begin{pmatrix} 0 & \theta 
\\
- \theta & 0 \end{pmatrix}  ~~,\quad \theta \in \mathbb{R}~
\ee
and  extended to the whole algebra by requiring 
$\sig{ab}{c}=\sig{a}{\one{c}}\sig{b}{\two{c}}$ and 
 $\sig{a}{bc}=\sig{\one{a}}{c}\sig{\two{a}}{b}$, for all $a,b,c \in \mathcal{O} (\mathbb{T}^n)$.
There is a left coaction of $\mathcal{O} (\mathbb{T}^2) $ on the  algebra $\mathcal{O}(S^7)$: it is
 given on the generators as 
\be\label{coazioneT-S7}
\da^{\mathcal{O}(S^7)}:  \mathcal{O}(S^7) \longrightarrow \mathcal{O} (\mathbb{T}^2)  \ot \mathcal{O}(S^7) ~,
\quad z_i \longmapsto \tau_i \ot z_i ~,
\ee
where $(\tau_i):=(t_1,t_1^*,t_2,t^*_2)$, 
 and it is extended to the whole of $\mathcal{O}(S^7)$ as a $*$-algebra homomorphism. 
It is easy to prove that the two coactions 
$\delta^{\mathcal{O}(S^7)}$ and $\da^{\mathcal{O}(S^7)}$ satisfy the compatibility condition \eqref{compatib},
hence they structure $\mathcal{O}(S^7)$ as a
$(\mathcal{O}(\mathbb{T}^2),\mathcal{O}(SU(2)))$-bicomodule algebra;
furthermore $\mathcal{O}(S^4)$ is a
$(\mathcal{O}(\mathbb{T}^2),\mathcal{O}(SU(2)))$-subbicomodule algebra
as can be easily checked on its
generators, or indirectly inferred from Proposition \ref{flat} (since
vector spaces are flat). Explicitly the $\mathcal{O} (\mathbb{T}^2) $-coaction reads 
\be\label{coazioneT-S4}
\alpha \longmapsto t_1 t_2^* \ot \alpha~,  \quad 
\beta \longmapsto t_1^* t_2^* \ot \beta ~, \quad 
x \longmapsto 1 \ot x ~.
\ee
We can therefore apply the theory of deformation  by 2-cocycles to both the comodule algebras $\mathcal{O}(S^7)$ and $\mathcal{O}(S^4)$ (recall \S \ref{secKHbicomod} and the discussion above Theorem \ref{Th:diagr-can2}). The resulting noncommutative algebras, denoted respectively by $\mathcal{O}(S^7_\theta)$ and $\mathcal{O}(S^4_\theta)$, are two representatives of the class of  $\theta$-spheres in \cite{cl}.
In particular, the classical Hopf fibration $\mathcal{O}(S^4) \hookrightarrow \mathcal{O}(S^7)$ described above deforms to  a quantum Hopf bundle  on $\mathcal{O}(S^4_\theta)\simeq \mathcal{O}(S^7_\theta)^{coH}$ with undeformed  structure 
Hopf algebra $H=\mathcal{O}(SU(2))$. Indeed, from Corollary \ref{cor2:diagr-can2}, we further obtain
\begin{prop}\label{propLS}
The algebra $\mathcal{O}(S^7_\theta)$ is a  principal $\mathcal{O}(SU(2))$-comodule algebra. 
\end{prop}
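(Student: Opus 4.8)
The plan is to apply Corollary \ref{cor2:diagr-can2} directly, with the external symmetry Hopf algebra $K = \mathcal{O}(\mathbb{T}^2)$ and the $2$-cocycle $\sigma$ of \eqref{cocycleT2}. The essential input is that the \emph{undeformed} extension $\mathcal{O}(S^4) \subseteq \mathcal{O}(S^7)$ is already a principal $\mathcal{O}(SU(2))$-comodule algebra; once this is in place, the twist by $\sigma$ on $K$ preserves principality by that corollary, and the structure Hopf algebra $H = \mathcal{O}(SU(2))$ is left undeformed because $\sigma$ acts only on $K$.

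First I would record that the hypotheses of \S\ref{sec:def_es} are met. The two coactions $\delta^{\mathcal{O}(S^7)}$ and $\da^{\mathcal{O}(S^7)}$ satisfy the compatibility \eqref{compatib}, so $\mathcal{O}(S^7) \in {}^K\mathcal{A}^H$, and $\mathcal{O}(S^4) = \mathcal{O}(S^7)^{coH}$ is a $K$-subcomodule algebra (checked on the generators \eqref{4sphere-coinv}, or inferred from Proposition \ref{flat} since over the field $\mathbb{C}$ every module is flat). This places us exactly in the setting in which Corollary \ref{cor2:diagr-can2} applies.

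Next I would establish principality of $\mathcal{O}(S^7)$ in the classical case. As recalled just above the statement, $\mathcal{O}(SU(2))$ is cosemisimple and has a bijective antipode; by the characterization of faithfully flat extensions (\cite{Sch}, cf.\ the last paragraph of \S\ref{sec:HG}), it then suffices to verify surjectivity of the canonical map $\can : \mathcal{O}(S^7) \otimes_{\mathcal{O}(S^4)} \mathcal{O}(S^7) \to \mathcal{O}(S^7) \otimes \underline{H}$. This is the dualization of the classical assertion that the map $(p,g) \mapsto (p, p\,g)$ of Example \ref{ex:principalbundle} is onto, i.e.\ that the $SU(2)$-action on the fibers of $S^7 \to S^4$ is transitive; concretely the canonical map is even bijective, as the Hopf bundle $S^7 \to S^4$ is a genuine principal bundle. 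Hence $\mathcal{O}(S^7)$ is a principal $\mathcal{O}(SU(2))$-comodule algebra in the sense of Definition \ref{def:pHcomodalg}.

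Finally, applying Corollary \ref{cor2:diagr-can2} with the cocycle $\sigma$ yields that ${}_\sigma \mathcal{O}(S^7) = \mathcal{O}(S^7_\theta)$ is again a principal $\mathcal{O}(SU(2))$-comodule algebra, its $H$-coinvariant subalgebra being ${}_\sigma\mathcal{O}(S^4) = \mathcal{O}(S^4_\theta)$. I do not expect a genuine obstacle here: the real content sits in Corollary \ref{cor2:diagr-can2} (resting in turn on Theorem \ref{Th:diagr-can2} and Proposition \ref{LQ}, where the equivariantly projective section is transported along the non-$K$-equivariant deformation map $\fS$), while the only thing to check by hand is the classical principality, which is immediate from the geometry of the Hopf bundle.
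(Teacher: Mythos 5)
Your proposal is correct and follows essentially the same route as the paper: the paper establishes that $\mathcal{O}(S^4)\subseteq\mathcal{O}(S^7)$ is a principal $\mathcal{O}(SU(2))$-comodule algebra via cosemisimplicity and bijectivity of the antipode (the criterion recalled at the end of \S\ref{sec:HG}), checks the $(K,H)$-bicomodule hypotheses exactly as you do, and then deduces the Proposition by a direct appeal to Corollary \ref{cor2:diagr-can2}. Your write-up merely makes explicit the steps the paper leaves as a one-line deduction.
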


The noncommutative bundle so obtained is the quantum Hopf bundle on the  Connes-Landi
 sphere $\mathcal{O}(S^4_\theta)$  that was originally constructed in \cite{gw}, and further studied  
 in the context of 2-cocycles deformation in \cite{gs}.  The principality of the algebra inclusion 
$\mathcal{O}(S^4_\theta) \subseteq
\mathcal{O}(S^7_\theta)$
was first proven in \cite[\S 5]{gw} by explicit
construction of the inverse of the canonical map. Proposition
\ref{propLS} follows instead  as a straightforward result of  the
general theory developed in the present section (out of the principality of the underlying classical bundle).
\end{ex}

\subsection{\label{sec:combidef}Combination of deformations}
We now consider the combination of the previous two constructions. 
This leads to Hopf-Galois extensions in which the structure Hopf
algebra,  total space and base space are all deformed. 
\sk

As before, we let $H$ and $K$ be Hopf algebras and $A\in{}^K{\cal A}^H$ a $(K,H)$-bicomodule
algebra, with $B=A^{coH}$ a $K$-subcomodule.
Let $\sigma: K\otimes K\to \bbK$ and $\cot : H\otimes H\to \bbK$ be $2$-cocycles
and denote by $K_\sigma$ and $H_\cot$ the twisted Hopf algebras and by
${}_\sigma A_\cot := {}_\sigma(A_\cot) = ({}_\sigma A)_\cot\in{}^{K_\sigma}{\cal A}^{H_\cot}$ the deformed
$(K_\sigma,H_\cot)$-bicomodule algebra, see \S \ref{secKHbicomod}. We also have the deformed
$(K_\sigma,H_\cot)$-bicomodule algebra ${}_\sigma B := {}_\sigma B_\cot\subseteq
{}_\sigma A_\cot$ of $H_\cot$-coinvariants in ${}_\sigma A_\cot$.
The canonical map ${}_\sigma \chi_{\cot} : {}_\sigma A_\cot \, {{{}^{\sigma\!\!\:}\otimes^\cot}_{\!\!\!{}_\sigma B}}\, {}_\sigma A_\cot \to
{}_\sigma A_\cot \,{^{\sigma}\otimes^\cot}\, \underline{H_\cot}$
 is a ${}^{K_\sigma}{}_{{}_\sigma A_\cot}{\cal M}_{{}_\sigma   
   A_\cot}{}^{H_\cot}$-morphism because of  Proposition \ref{canmapKmorph}.
There are two equivalent ways to relate it to the canonical map $\chi: A \otimes_ B A
 \to A\otimes \underline{H}$. We can apply the functor $\Sigma$ to the
 commutative
 diagram (\ref{diagr-can-pre}) of Theorem \ref{theo:diagr-can-pre} and
 then top the resulting diagram with the analogue of the commutative diagram
 (\ref{cd}) of Theorem \ref{Th:diagr-can2}, or we can first apply the
 functor $\Gamma$ to (\ref{cd}) and then top it with the analogue of
 (\ref{diagr-can-pre}). 
 
\begin{thm}\label{Th:diagr-can3}
Given two $2$-cocycles $\sigma: K\otimes K\to \bbK$ and $\cot: H\otimes H\to \bbK$
the diagrams 
\begin{equation}
\xymatrix{
{}_\sg{(\pg)} \, {{{}^{\sigma\!\!\:}\otimes^\cot}_{\!\!\!{}_\sigma B}}\, {}_\sg (\pg) 
\ar[d]_{\col_{A_\cot,A_\cot}} 
\ar[rr]^-{{}_\sigma \chi_{\cot}} && {}_\sg(\pg) \,{{}^{\sigma\!\!\:}\ot^\cot}\, \underline{\hg} 
\ar[d]^{\col_{A_\cot,\underline{H_\cot}}}
\\
\ar[dd]_-{\Sigma(\varphi_{A,A})} 
 {}_\sigma(\pg \otimes_{B}^\cot \pg) \ar[rr]^-{\Sigma(\can_\cot)} &&  
 {}_\sigma(\pg\ot^\cot \underline{\hg} ) 
 \ar[d]^-{\Sigma(\id \ot^\cot \Q)}\\
 && {}_\sigma(\pg \ot^\cot  \underline{H}_\cot) 
 \ar[d]^-{\Sigma(\varphi_{A,\underline{H}})}\\
{}_\sigma((A\otimes_B A)_\cot)  \ar[rr]^-{\Sigma(\Gamma(\can))}  && {}_\sigma((A\otimes \underline{H})_\cot)
} 
\qquad
\xymatrix{
\ar[dd]_-{\varphi_{\pgls,\pgls}} 
 {(\pgls)}_\cot \, {{{}^{\sigma\!\!\:}\otimes^\cot}_{\!\!\!{}_\sigma B}}\, (\pgls)_\cot 
 \ar[rr]^-{{}_\sigma\can_{\cot}} && 
  (\pgls)_\cot\,{{}^{\sigma\!\!\:} \ot^\cot}\, \underline{\hg}
  \ar[d]^-{\id \,{^{\sigma\!\!\:}\ot^\cot}\, \Q}\\
 && 
 (\pgls)_\cot \,{^{\sigma\!\!\:} \ot^\cot}\,  \underline{H}_\cot
 \ar[d]^-{\varphi_{A,\underline{H}}}
\\
(\pgls \, {{{}^{\sigma\!\!\:}\otimes}_{{}_\sigma B}}\, \pgls)_\cot 
\ar[rr]^-{\Gamma({}_\sigma\can)} \ar[d]_{\Gamma(\col_{A,A})}  && 
(\pgls\,{{}^{\sigma\!\!\:}\otimes}\, \underline{H})_\cot 
\ar[d]^{\Gamma(\col_{A,\underline{H}})}
\\
(_\sigma (A \ot_B A))_\cot  \ar[rr]^{\Gamma(\Sigma(\chi))} && (_\sigma (A \ot \underline{H}) )_\cot
}
\end{equation}
in ${}^{K_\sigma}{}_{{}_\sigma A_\cot}{\cal M}_{{}_\sigma
  A_\cot}{}^{H_\cot}$ commute and have the same external square
diagram. Moreover:\\
(i)  $B \subseteq A$ is an $H$-Hopf-Galois extension if and only if ${}_\sigma B \subseteq {}_\sg \pg$ is an $\hg$-Hopf-Galois
 extension.  \\
(ii) $A$ is a principal $H$-comodule algebra if and only if ${}_\sigma A_\cot$ is a principal $H_\cot$-comodule 
 algebra.
\end{thm}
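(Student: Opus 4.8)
The plan is to assemble each diagram by stacking the two single-cocycle diagrams already established and then to read off (i) and (ii) from the invertibility of the structural isomorphisms. For the left diagram I would regard the commutative diagram~(\ref{diagr-can-pre}) of Theorem~\ref{theo:diagr-can-pre}, built for the $\hg$-extension $B_\cot\subseteq\pg$, as a diagram in ${}^K{}_{\pg}{\cal M}_{\pg}{}^{\hg}$ (the $\underline{H}$-type factors carrying the trivial $K$-coaction), and apply the functor $\Sigma$ of Proposition~\ref{prop:leftrightdef} to produce its lower three rows, with arrows $\Sigma(\varphi_{A,A})$, $\Sigma(\Gamma(\can))$, $\Sigma(\id\otimes^\cot\Q)$, $\Sigma(\varphi_{A,\underline{H}})$. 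I would then cap this block, along the row carrying $\Sigma(\can_\cot)$, with the instance of diagram~(\ref{cd}) of Theorem~\ref{Th:diagr-can2} for the $\sigma$-deformation of $B_\cot\subseteq\pg$, whose vertical arrows are the isomorphisms $\col_{\pg,\pg}$ and $\col_{\pg,\underline{\hg}}$. The right diagram is built symmetrically: apply $\Gamma$ to the instance of~(\ref{cd}) for $B\subseteq A$ (giving the lower two rows, with $\Gamma(\col_{A,A})$, $\Gamma(\Sigma(\can))$, $\Gamma(\col_{A,\underline{H}})$) and cap it, along the row carrying $\Gamma({}_\sigma\can)$, with the instance of~(\ref{diagr-can-pre}) for the $\gamma$-deformation of $\pgls$. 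Commutativity of each diagram is then automatic: the capping square commutes by the cited theorem, the remaining block is the image of a commutative diagram under a functor, and the two pieces are glued along a common edge.

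To see that the two outer squares coincide I would first invoke Proposition~\ref{monfuneq}: since $\Sigma\circ\Gamma$ and $\Gamma\circ\Sigma$ agree on objects and morphisms, the four corners and the horizontal arrows ${}_\sigma\can_\cot$ (top) and $\Sigma(\Gamma(\can))=\Gamma(\Sigma(\can))$ (bottom) of the two squares are literally identical. It then remains to match the outer vertical composites. For the left column this is exactly the naturality square~(\ref{commdiagSG}) evaluated at $V=W=A$ (descended to the quotient over $B$), which identifies $\Sigma(\varphi_{A,A})\circ\col_{\pg,\pg}$ with $\Gamma(\col_{A,A})\circ\varphi_{\pgls,\pgls}$. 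For the right column one uses the same naturality together with the facts that $\underline{H}$ (hence $\underline{\hg}$ and $\underline{H}_\cot$) carries the trivial $K$-coaction, so that $\col_{A,\underline{H}}$ and $\col_{\pg,\underline{\hg}}$ are identities, and that the isomorphism $\Q$ of Theorem~\ref{prop:mapQ} involves only the $H$-data and is $K$-colinear for the trivial coaction, whence it commutes past $\Sigma$.

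Part~(i) is then immediate. Every vertical arrow in either diagram is one of the isomorphisms $\varphi_{A,A}$, $\varphi_{A,\underline{H}}$, $\col_{A,A}$, $\col_{\pg,\pg}$, $\Q$, invertible by Theorem~\ref{thm:funct}, Theorem~\ref{thm:functleft} and Theorem~\ref{prop:mapQ}; hence the outer vertical composites are isomorphisms in ${}^{K_\sigma}{}_{{}_\sigma A_\cot}{\cal M}_{{}_\sigma A_\cot}{}^{H_\cot}$. The commuting outer square therefore shows that ${}_\sigma\can_\cot$ is bijective precisely when $\can$ is, that is, ${}_\sigma B\subseteq{}_\sigma\pg$ is $\hg$-Hopf--Galois if and only if $B\subseteq A$ is $H$-Hopf--Galois.

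For part~(ii) I would merge the section constructions of Corollary~\ref{cor-pcomodalg} and Corollary~\ref{cor2:diagr-can2}. Starting from a left $B$-module, right $H$-comodule section $s:A\to B\otimes A$ of $m:B\otimes A\to A$, I would first $\gamma$-deform as in Corollary~\ref{cor-pcomodalg}, obtaining the ${}_{B_\cot}{\cal M}^{\hg}$-section $s_\cot=\varphi_{B,A}^{-1}\circ\Gamma(s)$ of $m_\cot$. Since $s$ (and hence $s_\cot$) is in general \emph{not} $K$-colinear, I would then $\sigma$-deform $s_\cot$ using the isomorphism $\fS$ of Proposition~\ref{LQ} rather than the functor $\Sigma$, and, exactly as in Corollary~\ref{cor2:diagr-can2}, combine it with the natural isomorphism $\col$ to produce a section of ${}_\sigma m_\cot$ in ${}_{{}_\sigma B_\cot}{\cal M}^{\hg}$; the reverse implication follows by twisting back with $\bar\sigma$ and $\bar\cot$. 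The main obstacle is precisely this last step: because $s$ is only left $B$-linear and right $H$-colinear and genuinely fails to be $K$-equivariant, one cannot transport it by $\Sigma$, and one must check that the $\gamma$-deformation (carried out functorially by $\Gamma$) and the $\sigma$-deformation (carried out by the non-functorial map $\fS$) are mutually compatible, i.e.\ that $\fS$ sends the right $\hg$-colinear, left $B_\cot$-linear section $s_\cot$ to a right $\hg$-colinear, left ${}_\sigma B_\cot$-linear section splitting ${}_\sigma m_\cot$. Since the formula for $\fS$ involves only the $K$-coactions and $\sigma$, which are untouched by the $\gamma$-deformation, this compatibility is the evident merge of the two corollary computations, using left $B$-linearity of $s$, the $K$-colinearity of $m$, and Lemma~\ref{lemma2co}.
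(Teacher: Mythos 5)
Your proposal is correct and follows essentially the same route as the paper: each diagram is obtained by stacking the appropriate instance of Theorem \ref{Th:diagr-can2}'s diagram (\ref{cd}) (resp. Theorem \ref{theo:diagr-can-pre}'s diagram (\ref{diagr-can-pre})) on top of the functorial image of the other, the outer squares are identified via Proposition \ref{monfuneq} together with the triviality of $\varphi^\ell$ on the $\underline{H}$-factors and of $\Sigma$ on morphisms, and (i), (ii) follow by composing the single-cocycle corollaries. Your extra care in part (ii) about combining the functorial $\gamma$-deformation with the non-functorial $\fS$-deformation is sound but amounts to simply applying Corollary \ref{cor2:diagr-can2} to the $(K,H_\cot)$-bicomodule algebra $A_\cot$, which is what the paper implicitly does.
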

\begin{proof}
Commutativity follows from commutativity of the internal diagrams,
statements (i) and (ii) also immediately follow combining the analogue
statements for each of the 2-cocycles $\gamma$ and $\sigma$. The
equality of the external square diagrams follows from diagram
(\ref{commdiagSG}) of Proposition \ref{monfuneq} applied to the
left vertical arrows, and from
$\col_{A_\cot,\underline{H_\cot}}=\Gamma({\varphi^\ell_{A,\underline{H}}})=\id$
as well as the triviality of the functor $\Sigma$ on morphisms. 
\end{proof}

\begin{ex}[Formal deformation quantization]\label{FDQ}
Let $G$ be a Lie group, $M$ a manifold and 
$\pi : P\to M$ a principal $G$-bundle over $M$ with right $G$-action denoted by $r_P : P\times G\to P$.
Then, by Example \ref{ex:principalbundle},
we have a Fr{\'e}chet $H=C^\infty(G)$-Hopf-Galois extension
$B=C^\infty(M)\simeq A^{coH}\subseteq A=C^\infty(P)$ with $\bbK=\mathbb{C}$.
 Let us further assume that there exists another Lie group $\LL$ acting
 on the $G$-principal bundle $P\to M$, i.e.\ that there are left $\LL$-actions $l_P : \LL \times P\to P$ and $l_M : \LL \times M\to M$,
such that the diagrams
\begin{flalign}\label{eqn:diagleftaction}
\xymatrix{
 \ar[d]_-{\id\times \pi}\LL\times P \ar[rr]^-{l_P} && P\ar[d]^-{\pi}&&\ar[d]_-{\id\times r_P^{}}\LL \times P \times G \ar[rr]^-{l_P \times \id} && P\times G\ar[d]^-{r_P^{}}\\
\LL \times M \ar[rr]^-{l_M}&& M && \LL \times P \ar[rr]^-{l_P} && P 
}
\end{flalign}
commute. For example $\LL$ may be a finite-dimensional 
Lie subgroup of the automorphism group of the bundle, which comes
with a  canonical left action on $P$ and $M$. 
Due to the left $\LL$-actions on $P$ and $M$ we obtain a Fr{\'e}chet left $K = C^\infty(\LL)$-comodule
structure on $A$ and $B$, which is compatible with the right $H$-coaction on $A$ and 
the canonical map because of the diagrams in (\ref{eqn:diagleftaction}), i.e.\ $A =C^\infty(P)$
is a Fr{\'e}chet $(K=C^\infty(\LL),H=C^\infty(G))$-bicomodule algebra.

In order to deform this example into a noncommutative Hopf-Galois
extension, in the  context of formal power series in a deformation 
parameter $\hbar$, we consider the formal power series extension
of the $\mathbb{C}$-modules $H$, $A$, $B$ and $K$, denoted as usual
$H[[\hbar]]$, $A[[\hbar]]$, $B[[\hbar]]$ and $K[[\hbar]]$. The natural
topology on these $\mathbb{C}[[\hbar]]$-modules is a combination of the original Fr{\'e}chet topology in each order of $\hbar$
together with the $\hbar$-adic topology, see e.g.\ \cite[Chapter XVI]{Kassel}.
The canonical map induces a continuous 
$\mathbb{C}[[\hbar]]$-linear isomorphism (denoted with abuse of notation by the same symbol)
\begin{flalign}
\chi : A[[\hbar]]\,\widehat{\otimes}_{B[[\hbar]]}\, A[[\hbar]] \simeq C^\infty(P\times_M P)[[\hbar]] 
\longrightarrow A[[\hbar]] \,\widehat{\otimes}\,\underline{H}[[\hbar]]\simeq C^\infty(P\times G)[[\hbar]]~,
\end{flalign}
where now $\widehat{\otimes}$ denotes the completion of the algebraic tensor product with respect to the 
natural topologies described above. Hence we have obtained a topological
$H[[\hbar]]$-Hopf-Galois extension $B[[\hbar]] \simeq A[[\hbar]]^{coH[[\hbar]]}\subseteq A[[\hbar]]$.

Notice that for $G$ a Lie group we have a (in general degenerate) pairing between
the universal enveloping algebra $U(\mathfrak{g})$ of its Lie algebra
$\mathfrak{g}$ and $C^\infty(G)$; it is determined by evaluating at
the unit element $e\in G$ left invariant vector fields on functions.
Explicitly, $\langle \,\cdot\,,
\,\cdot\,\rangle : U(\mathfrak{g})\times C^\infty(G) \to \mathbb{C}$
is defined by extending 
\begin{flalign}
\langle 1,h\rangle := h(e)~~,\quad \langle v  , h  \rangle :=
\frac{d}{dt} h\big(r_P^{} (e,\exp(-t v))\big)\big\vert_{t=0}~,
\end{flalign}
for all $h\in C^\infty(G)$ and $v\in \mathfrak{g}$, to all
$U(\mathfrak{g})$ via linearity and requiring $\langle \xi \xi'  , h  \rangle =\langle \xi  ,
\one{h}  \rangle \/\langle \xi'  , \two{h}  \rangle$ for all $\xi,\xi'\in U(\mathfrak{g})$.
The $\bbK$-linear maps $\langle v , \,\cdot\,\rangle : C^\infty(G) \to \bbK$
are continuous and since the coproduct $\Delta: C^\infty(G) =H \to H\,\widehat{\ot}\,
H\simeq C^\infty(G\times G)$ is continuous also the
$\bbK$-linear maps $\langle \xi , \,\cdot\,\rangle : C^\infty(G) \to \bbK$
are continuous for all $\xi\in U(\mathfrak{g})$.
(These maps are actually the Lie derivative along $\xi $,  $L_{\xi}:
C^\infty(G)\to C^\infty(G)$, composed with the  counit in
$C^\infty(G)$; where $L_v$ is the Lie derivative along the left
invariant vector field defined by $v\in \mathfrak{g}$, and $L$ is extended to all
$U(\mathfrak{g})$ by $L_{\xi\xi'}=L_\xi\circ L_{\xi'}$).
Because of this pairing we can assign to a twist $\F =
\f^\alpha\,\widehat{\otimes}\, \f_\alpha \in 
U(\mathfrak{g})[[\hbar]]\,\widehat{\ot}\,U(\mathfrak{g})[[\hbar]]\simeq
(U(\mathfrak{g}) \otimes U(\mathfrak{g}))[[\hbar]]$ a continuous $2$-cocycle
$\gamma :  H[[\hbar]]\,\widehat{\otimes}\,H[[\hbar]] \to\bbK[[\hbar]]$
by defining $\gamma(h\otimes k) = \langle\f^\alpha,h\rangle \, \langle \f_\alpha, k\rangle$
on the dense subset $H[[\hbar]]\otimes H[[\hbar]] \subseteq H[[\hbar]]\,\widehat{\otimes}\,H[[\hbar]] $
and extending it by continuity. (See Appendix \ref{app:twists} for more on the duality between twists 
and $2$-cocycles). Similarly we may consider a twist ${\cal G} \in
U(\mathfrak{\fL})[[\hbar]]\,\widehat{\ot}\, U(\mathfrak{\fL})[[\hbar]]  \simeq
(U(\mathfrak{\fL})\otimes U(\mathfrak{\fL}))[[\hbar]] $, where $\fL$
is the Lie algebra of $L$,
and define a continuous $2$-cocycle $\sigma  : K[[\hbar]]\,\widehat{\otimes}\,K[[\hbar]] \to\bbK[[\hbar]]$.

We now twist the $\mathbb{C}[[h]]$-modules $H[[\hbar]]$, $A[[\hbar]]$, $B[[\hbar]]$ and
$K[[\hbar]]$  as described in general in Section \ref{sec:twists}, and
obtain a noncommutative topological $H[[\hbar]]_\gamma$-Hopf-Galois extension
${}_\sigma B[[\hbar]] \simeq {}_{\sigma}A[[\hbar]]_\gamma^{coH[[\hbar]]_\gamma} \subseteq {}_{\sigma}A[[\hbar]]_\gamma$.
(The canonical map ${}_\sigma\chi_{\gamma}$ is a continuous isomorphism,
 since $\chi$ and all vertical arrows in the diagrams in Theorem
 \ref{Th:diagr-can3} are continuous isomorphisms). Recalling Remark \ref{cleft?},
the Hopf-Galois extension $A[[\hbar]]^{coH[[\hbar]]} \subseteq
A[[\hbar]]$ is cleft if and only if ${}_{\sigma}A[[\hbar]]_\gamma^{coH[[\hbar]]_\gamma} \subseteq
{}_{\sigma}A[[\hbar]]_\gamma$ is cleft. 
\end{ex}

\section{Applications}\label{appsect4}
We apply the theory so far developed first to the study of
deformations of quantum homogeneous spaces in \S \ref{sec:qhom},
including the explicit example of the even $\theta$-spheres $S^{2n}_\theta$ in \S
\ref{S2n}, and then to the study of deformations
of sheaves of Hopf Galois extensions in \S \ref{sec:sheaves},
providing the example of the Hopf bundle over $S^4_\theta$ as a twisted sheaf in \S
\ref{S4astwistedsheaf}.

\subsection{Twisting quantum homogeneous spaces associated with quantum subgroups}\label{sec:qhom}

The theory of twists, in particular the combination of  deformations developed in \S  \ref{sec:combidef}, can be used to  study 
 deformations of bundles over quantum homogeneous
spaces arising from Hopf algebra projections. This is the subject of the present subsection.

Recall that given a Hopf algebra $G$, a quantum subgroup of $G$ is a  Hopf algebra $H$ together with  
a surjective bialgebra (and thus Hopf algebra) homomorphism $\pi: G \ra H$. The restriction via 
$\pi$ of the coproduct
of $G$
\be\label{rrc}
\delta^G : = (\id \ot \pi)\circ \Delta: G \longrightarrow G \ot H~
\ee
induces on $G$ the  structure of a right $H$-comodule algebra. The subalgebra 
$B:=G^{coH}\subseteq G$ of coinvariants is called a  {\it quantum homogeneous $G$-space.}  
When the associated canonical map  
 \begin{flalign}
 \chi: G \ot_B G \longrightarrow G \ot H, \quad g \ot_B g' \longmapsto g\one{g'} \ot \pi(\two{g'})
 \end{flalign}
is bijective, i.e.\ $B\subseteq G$ is a Hopf Galois-extension, we call
$G$ a {\it quantum principal bundle
over the quantum homogeneous space} $B$.  (See e.g.\ \cite[\S 11.6.2]{KS}, \cite[\S 5.1]{BM93}). 
\\

Given a quantum principal bundle $B=G^{coH}\subseteq G$ 
over a quantum homogeneous space $B$  and a $2$-cocycle $\gamma$ on
$H$ we can consider two different constructions:
\begin{itemize}\item[-]
On the one hand we can
lift the $2$-cocycle $\gamma$ on $H$  to a 2-cocycle $\tilde{\gamma}$ on $G$ (see Lemma \ref{lem:coc-by-pb} below) and thus apply the theory of 2-cocycle
deformations for Hopf algebras (\S \ref{sec:twists-hopf})
to deform both $G$ and $H$ into new 
Hopf algebras $G_{\widetilde{\gamma}}$ and $H_\gamma$. It turns out that the
condition for $H$ to be a quantum subgroup of $G$ is preserved under deformation, 
i.e.\  $H_\gamma$ is a quantum subgroup of $G_{\widetilde{\gamma}}$, and thus there is an 
 associated   twisted quantum homogeneous space
 $B_{\widetilde{\gamma}}$. 
\item[-]
On the other hand, we can direct the attention to  the algebra
inclusion $B=G^{coH}\subseteq G$ as a Hopf-Galois extension, and
twist it. In this case,  we forget the Hopf algebra structure of $G$
and use $\gamma$ to deform $G$ just as  an object in $\A^{H}$, as in
\S\ref{sec:def_sg}. Denote by $G_{\gamma}$ the resulting  comodule
algebra.
\end{itemize}
These two deformations $G_{\widetilde{\gamma}}$ and $G_{\gamma}$ of $G$ do not coincide. In particular,  $G_{\gamma}$ is not in general a Hopf algebra and thus the base space of the twisted bundle is no longer a quantum homogeneous space of the total space. 
Nevertheless the second construction can be reconciled with the first
one by applying a further twist deformation and thus considering 
a combination of deformations as in \S \ref{sec:combidef}. As a
corollary of this second approach we obtain that $B_{\widetilde{\gamma}}\subseteq
  G_{\widetilde{\gamma}}$ is a Hopf-Galois extension.
Indeed we show below that,  given a $2$-cocycle $\gamma$ on $H$,
quantum principal bundles $B=G^{coH}\subseteq G$ 
over quantum homogeneous spaces $B$  deform into new quantum principal bundles over  new quantum homogeneous spaces. 
\bigskip

We proceed by first showing that given a  $2$-cocycle $\gamma$ on $H$
we can twist both the Hopf algebras $H$ and $G$ is such a way to still
have  a quantum homogeneous space. 
\begin{lem}\label{lem:coc-by-pb}
Let $\gamma: H \ot H \ra \bbK$ be a $2$-cocycle on $H$. Then
\begin{flalign}
\widetilde{\gamma}: G \ot G \longrightarrow \bbK~,~~ g\otimes g'\longmapsto \co{\pi(g)}{\pi(g')}
\end{flalign}
is a $2$-cocycle on $G$.
\end{lem}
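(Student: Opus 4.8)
The plan is to verify the three defining properties of a $2$-cocycle on $G$ — convolution invertibility, unitality, and the cocycle identity \eqref{lcocycle} — by transporting the corresponding properties of $\cot$ along the projection $\pi$. The organising observation is that $\pi : G\to H$ is a morphism of bialgebras, hence in particular a coalgebra morphism; consequently $\pi\ot\pi : G\ot G\to H\ot H$ is a morphism of coalgebras, and precomposition with it,
\[
(\pi\ot\pi)^\ast : \Hom(H\ot H,\bbK)\longrightarrow \Hom(G\ot G,\bbK)~,\quad f\longmapsto f\circ(\pi\ot\pi)~,
\]
is a unital homomorphism of convolution algebras. Since $\widetilde\cot=\cot\circ(\pi\ot\pi)$ by definition, this single fact does most of the work.

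First I would record convolution invertibility. As $(\pi\ot\pi)^\ast$ is a unital algebra homomorphism and $\cot$ is convolution invertible with inverse $\bar\cot$, the map $\bar\cot\circ(\pi\ot\pi)$ is a convolution inverse of $\widetilde\cot$. Concretely, for $g\ot g'\in G\ot G$ one uses that $\pi$ is a coalgebra map, so that $\pi(\one{g})\ot\pi(\two{g})=\one{\pi(g)}\ot\two{\pi(g)}$ (and likewise for $g'$), giving
\[
\big(\widetilde\cot\ast(\bar\cot\circ(\pi\ot\pi))\big)(g\ot g')=(\cot\ast\bar\cot)\big(\pi(g)\ot\pi(g')\big)=\varepsilon(\pi(g))\,\varepsilon(\pi(g'))=\varepsilon(g)\,\varepsilon(g')~,
\]
the last equality because $\varepsilon\circ\pi=\varepsilon$; the opposite composition is identical. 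Unitality is even more direct: using $\pi(\1)=\1$ and unitality of $\cot$, one has $\cog{g}{\1}=\co{\pi(g)}{\1}=\varepsilon(\pi(g))=\varepsilon(g)$, and symmetrically $\cog{\1}{g}=\varepsilon(g)$.

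The cocycle identity is the only step where I would also need that $\pi$ is an \emph{algebra} morphism, since \eqref{lcocycle} contains the internal products $\two{g}\two{h}$ and $\two{h}\two{k}$ in the arguments of $\cot$. For $g,h,k\in G$ I would expand the left-hand side of \eqref{lcocycle} written for $\widetilde\cot$, pushing $\pi$ through these products via $\pi(\two{g}\two{h})=\pi(\two{g})\pi(\two{h})$ and through the coproducts via the coalgebra-morphism property, to reach
\[
\co{\one{\pi(g)}}{\one{\pi(h)}}\,\co{\two{\pi(g)}\two{\pi(h)}}{\pi(k)}~.
\]
This is exactly the left-hand side of \eqref{lcocycle} for $\cot$ evaluated on $\pi(g),\pi(h),\pi(k)\in H$; invoking that identity and running the same substitutions backwards produces the right-hand side for $\widetilde\cot$, as desired.

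I do not expect a genuine obstacle here: the argument is a transport of structure along the bialgebra map $\pi$, and the only point deserving attention is the bookkeeping in the cocycle identity, where one must use the algebra-morphism property of $\pi$ (not merely its coalgebra-morphism property) to commute $\pi$ past the internal products. A cleaner, essentially calculation-free formulation would phrase everything categorically: because $\pi$ is a bialgebra map, $(\pi\ot\pi)^\ast$ respects both the convolution product and the cocycle condition, and therefore sends $2$-cocycles on $H$ to $2$-cocycles on $G$.
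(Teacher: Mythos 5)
Your proof is correct and follows essentially the same route as the paper's: unitality via $\varepsilon_H\circ\pi=\varepsilon_G$, the cocycle identity by commuting $\pi$ past coproducts (coalgebra-morphism property) and internal products (algebra-morphism property) to reduce to the identity for $\cot$ on $H$, and the convolution inverse $\bar\cot\circ(\pi\ot\pi)$. Your observation that precomposition with $\pi\ot\pi$ is a unital convolution-algebra homomorphism is a clean packaging of what the paper verifies by hand, but it is the same argument.
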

\begin{proof}
The proof relies on  the fact that $\gamma$ is a 
 2-cocycle and  $\pi$ is a bialgebra homomorphism, i.e.\ in particular
$\Delta_H \circ \pi= (\pi \ot \pi)\circ \Delta_G$ and $\varepsilon_H \circ\pi = \varepsilon_G.$  We have
$$
\cog{1}{g}= \co{1}{\pi(g)}= \varepsilon_H(\pi(g))= \varepsilon_G(g)~,
$$
and similarly $\cog{g}{1} = \varepsilon_G(g)$, for all $g\in G$. For the cocycle property \eqref{lcocycle} we compute 
\begin{eqnarray*}
\cog{\one{g}}{\one{h}} \cog{\two{g}\two{h}}{k}&= & 
\co{\pi(\one{g})}{\pi(\one{h})} \co{\pi(\two{g})\pi(\two{h})}{\pi(k)} \\
&=&  
\co{\one{\pi(g)}}{\one{\pi(h)}} \co{\two{\pi(g)}\two{\pi(h)}}{\pi(k)}
\\
&=&
\co{\one{\pi(h)}}{\one{\pi(k)}} \co{\pi(g)}{\two{\pi(h)}\two{\pi(k)}}~,
\end{eqnarray*}
for all $g,h,k \in G$,
and proceeding in a similar way one proves that 
$
\cog{\one{h}}{\one{k}} \cog{g}{\two{h}\two{k}}
$ has the same expression.
The convolution inverse of $\widetilde{\gamma}$ is 
$\coing{g}{h}= \coin{\pi(g)}{\pi(h)}$ as easily proven by using again the fact that $\pi$ intertwines the coproducts. 
\end{proof}

We can deform the algebra product and antipode in the Hopf algebra $G$, 
and $H$, by using the 2-cocycles $\widetilde{\gamma}$ and $\gamma$ respectively.
By Proposition \ref{prop:co} we obtain two new Hopf algebras which we denote by $G_{\widetilde{\gamma}}$ and $H_\gamma$. 
Their algebra products are given respectively by
\be\label{Gtilde}
g\cdot_{\tilde{\gamma}} g'= \co{\pi(\one{g})}{\pi( \one{g'})} \two{g}\two{g'} \coin{\pi(\three{g})}{\pi(\three{g'})} ~,
\ee
for all $g,g' \in G_{\widetilde{\gamma}}$,
and
\be
h\mt h'= \co{\one{h}}{\one{h'}} \two{h}\two{h'} \coin{\three{h}}{\three{h'}} ~,
\ee
for all $h,h' \in H_{{\gamma}}$.
The map $\pi : G\to H$ remains a Hopf algebra homomorphism with respect to the deformed Hopf algebra structures on $G_{\widetilde{\gamma}}$
and $H_\cot$:
\begin{lem}\label{lem:qhs}
The map 
\begin{flalign}
\pi_\gamma: G_{\tilde{\gamma}} \longrightarrow H_\gamma ~,~~ g\longmapsto \pi (g)
\end{flalign}
is a surjective bialgebra homomorphism.
\end{lem}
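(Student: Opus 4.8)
The plan is to exploit that $\pi_\gamma$ is, as a $\bbK$-linear map, nothing but the original projection $\pi$, and that twisting by a $2$-cocycle leaves the coalgebra structure and the unit untouched, deforming only the product and the antipode (Proposition~\ref{prop:co}). Consequently surjectivity of $\pi_\gamma$ is inherited verbatim from that of $\pi$, and preservation of coproduct and counit is immediate from the fact that $\pi:G\to H$ is already a coalgebra homomorphism: the identities $\Delta_H\circ\pi=(\pi\ot\pi)\circ\Delta_G$ and $\varepsilon_H\circ\pi=\varepsilon_G$ hold between the undeformed coalgebras, hence equally between $G_{\tilde\gamma}$ and $H_\gamma$, whose underlying coalgebras are unchanged. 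Likewise $\pi_\gamma(\1_G)=\pi(\1_G)=\1_H$ since the unit is not deformed. Thus the only point requiring genuine verification is that $\pi_\gamma$ intertwines the two deformed products.

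For this I would apply $\pi$ to the defining formula \eqref{Gtilde} of the product $\cdot_{\tilde\gamma}$ on $G_{\tilde\gamma}$, obtaining
\[
\pi(g\cdot_{\tilde\gamma}g')=\co{\pi(\one{g})}{\pi(\one{g'})}\,\pi(\two{g}\,\two{g'})\,\coin{\pi(\three{g})}{\pi(\three{g'})}~,
\]
where the scalar factors $\co{-}{-}$ and $\coin{-}{-}$ pull out freely. Here two properties of $\pi$ are used at once: multiplicativity, which turns the middle factor into $\pi(\two{g})\pi(\two{g'})$, and the threefold coproduct compatibility $(\pi\ot\pi\ot\pi)\circ\Delta_G^2=\Delta_H^2\circ\pi$, a consequence of $\pi$ being a coalgebra map together with coassociativity. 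Writing $\bar g:=\pi(g)$ and $\bar g':=\pi(g')$, the latter identifies $\pi(\one{g})\ot\pi(\two{g})\ot\pi(\three{g})$ with $\one{\bar g}\ot\two{\bar g}\ot\three{\bar g}$, so the right-hand side becomes $\co{\one{\bar g}}{\one{\bar g'}}\,\two{\bar g}\,\two{\bar g'}\,\coin{\three{\bar g}}{\three{\bar g'}}$, which is precisely $\bar g\mt\bar g'=\pi(g)\mt\pi(g')$ by the definition of the twisted product on $H_\gamma$. This establishes multiplicativity.

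Combining these observations, $\pi_\gamma$ preserves products, units, coproducts and counits and is surjective, hence it is a surjective bialgebra homomorphism. I do not anticipate any real obstacle: each step reduces to a property $\pi$ already enjoys as a bialgebra map, and the only bookkeeping is the correct pairing of the cocycle arguments after commuting $\pi$ through the iterated coproduct, which is arranged exactly so that $\widetilde\gamma$ on $G$ and $\gamma$ on $H$ enter in the combinations defining the respective twisted products.
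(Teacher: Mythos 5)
Your proposal is correct and follows the paper's proof essentially verbatim: the coalgebra structure and unit are untouched by twisting, so only multiplicativity needs checking, and that follows by pushing $\pi$ through the iterated coproduct via $(\pi\ot\pi\ot\pi)\circ\Delta_G^2=\Delta_H^2\circ\pi$ and matching the cocycle factors with the definition of $\mt$ on $H_\gamma$. Nothing to add.
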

\begin{proof}
Since the coproducts and counits of $G_{\widetilde{\gamma}}$ and $H_\gamma$ are not deformed by 
the twisting procedure  it is clear that $\pi_\gamma$ is still a coalgebra map.
We can easily check that $\pi_\gamma$  preserves also  the deformed algebra product:
\begin{eqnarray*}
\pi_\gamma (g \cdot_{\tilde{\gamma}} g') &=& 
\co{\pi(\one{g})}{\pi(\one{g'})} 
\pi(\two{g})\pi(\two{g'}) \coin{\pi(\three{g})}{\pi(\three{g'})}
\\
&=& 
\co{\one{\pi(g)}}{\one{\pi(g')}} 
\pi(\two{g})\pi(\two{g'}) \coin{\three{\pi(g)}}{\three{\pi(g')}}
= \pi_\gamma(g) \mt \pi_\gamma(g')~,
\end{eqnarray*}
for all $g,g'\in G_{\widetilde{\cot}}$.
\end{proof}
It follows that the twisting procedure deforms the quantum homogeneous space $B=G^{coH}$
into another quantum homogeneous space $B_{\widetilde{\gamma}}=G_{\widetilde{\gamma}}^{coH_\gamma}$,
 which  is isomorphic to $B$ only as a $\bbK$-module but not in general as an algebra.
\sk

On the other hand, given a $2$-cocycle $\gamma$ on $H$, we can deform $H$ into the Hopf algebra $H_\gamma$ as above, but consider $G$ simply as a right $H$-comodule algebra with coaction given in \eqref{rrc}
and twist its algebra product accordingly to \eqref{rmod-twist}. In this way we get an $H_\cot$-comodule {algebra}, 
$G_\gamma$, with product
\be\label{Gg}
g\mtco g'= \one{g} \one{g'} \coin{\pi(\two{g})}{\pi(\two{g'})} ~,
\ee
for all $g,g' \in G_\gamma$.
By Corollary \ref{cor-can}, the extension
$B= G_\gamma^{co H_\gamma} \subseteq G_\gamma$  is an
$H_\cot$-Hopf-Galois extension if and only if the original extension $B \subseteq G$ was $H$-Hopf-Galois.
However, as already remarked above, this twisted bundle has a total space which
 is just an algebra and the condition for $H_\gamma$   to be a quantum subgroup is lost, and so that of $B$ to be a quantum homogeneous space.
To resolve this problem let us consider $K=H$ as an external Hopf algebra of symmetries 
coacting from the left on $G$. The Hopf algebra $G$ is also a left  $H$-comodule algebra via
\be\label{lrc}
\rho^G : = (\pi \ot \id)\circ \Delta: G \longrightarrow H \ot G ~,~~ g \longmapsto \pi(\one{g}) \ot \two{g} ~.
\ee
Clearly, the left and right $H$-coactions $\rho^G$ and $\delta^G$ satisfy the compatibility condition \eqref{compatib},
hence they structure $G$ as an $(H,H)$-bicomodule. Assume $B$ is a subcomodule for the left $H$-coaction. We can therefore 
twist the product in $G$ accordingly to Proposition \ref{prop:leftrightdef} (i) (with the special choice
$\sigma = \cot : H\otimes H\to \bbK$) in order to get an $(H_\cot,H_\cot)$-bicomodule algebra ${}_\cot G_\cot$
with product
\be\label{gG}
g \,{{}_\gamma \bullet_\gamma}\, g'= \co{\pi(\one{g})}{\pi( \one{g'})} ~\two{g}\two{g'} ~\coin{\pi(\three{g})}{\pi(\three{g'})} ~,
\ee 
for all $g,g' \in {}_\gamma G_\gamma$.
Theorem \ref{Th:diagr-can3} then implies that ${}_\cot B := {}_\cot G_\gamma^{co H_\gamma}\subseteq
{}_\cot G_\cot$ is an $H_\cot$-Hopf-Galois extension if and only if 
$B = G^{coH}\subseteq G$ is a $H$-Hopf-Galois extension.

\begin{prop}
The algebra $_\gamma G_\gamma $ is isomorphic to the algebra  underlying the Hopf algebra
$G_{\widetilde{\gamma}}$ and hence inherits from it a Hopf algebra structure.  
The subalgebra of coinvariants $_\gamma B$ is isomorphic to the quantum homogeneous space $B_{\widetilde{\gamma}}$.
\end{prop}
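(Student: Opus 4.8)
The plan is to observe that the two algebra structures are literally defined by the same multiplication formula, so that the carrier identity map is the desired isomorphism. First I would make explicit the combined bicomodule coaction on $G$: with the left coaction $\rho^G$ of \eqref{lrc} and the right coaction $\delta^G$ of \eqref{rrc}, coassociativity of $\Delta$ together with $\pi$ being a coalgebra map gives, for all $g\in G$,
\[
\mone{g}\otimes\zero{g}\otimes\one{g}=\pi(\one{g})\otimes\two{g}\otimes\pi(\three{g}).
\]
Substituting this into the product $\,{}_\gamma\bullet_\gamma\,$ of Proposition \ref{prop:leftrightdef}~(i) with the special choice $\sigma=\cot=\gamma$, i.e.\ $g\,{}_\gamma\bullet_\gamma\,g'=\sig{\mone{g}}{\mone{g'}}\,\zero{g}\zero{g'}\,\coin{\one{g}}{\one{g'}}$, and recalling from Lemma \ref{lem:coc-by-pb} that $\cog{g}{g'}=\co{\pi(g)}{\pi(g')}$ and $\coing{g}{g'}=\coin{\pi(g)}{\pi(g')}$, the two-sided cocycle insertion collapses onto the single cocycle $\widetilde\gamma$ and one reads off exactly the expression \eqref{gG}. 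This is verbatim the twisted Hopf-algebra product $\cdot_{\widetilde{\gamma}}$ of \eqref{Gtilde}, obtained from Proposition \ref{prop:co} applied to $\widetilde\gamma$ on $G$. Hence $\id_G$ is an algebra isomorphism ${}_\gamma G_\gamma\to G_{\widetilde{\gamma}}$, and transporting the Hopf algebra structure of $G_{\widetilde{\gamma}}$ along it endows ${}_\gamma G_\gamma$ with a Hopf algebra structure, as claimed.

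For the statement on coinvariants I would argue that the relevant right $H_\gamma$-coactions are unchanged by the twisting. Indeed, twisting deforms products and antipodes but leaves coproducts and counits untouched (Proposition \ref{prop:co}), so $\pi_\gamma=\pi$ as a $\bbK$-linear map (Lemma \ref{lem:qhs}). Consequently the right $H_\gamma$-coaction $(\id\otimes\pi_\gamma)\circ\Delta$ defining $B_{\widetilde{\gamma}}=G_{\widetilde{\gamma}}^{coH_\gamma}$ coincides, as a $\bbK$-linear map, with the right $H_\gamma$-coaction $\delta^G=(\id\otimes\pi)\circ\Delta$ defining ${}_\gamma B={}_\gamma G_\gamma^{coH_\gamma}$. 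The two coinvariant subspaces therefore agree as subsets of $G$, and since the ambient products $\,{}_\gamma\bullet_\gamma\,$ and $\cdot_{\widetilde{\gamma}}$ have just been shown to coincide, the induced subalgebra structures agree as well. Thus ${}_\gamma B=B_{\widetilde{\gamma}}$, in particular they are isomorphic.

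There is no genuine obstacle here: the content is essentially bookkeeping. The one point demanding care — and the heart of the matter — is the identification of the combined left/right bicomodule coaction of $G$ with the iterated coproduct pushed through $\pi$ on both outer legs, which is precisely what makes the two-sided cocycle insertion of $\,{}_\gamma\bullet_\gamma\,$ reduce to the single cocycle $\widetilde\gamma=\gamma\circ(\pi\otimes\pi)$ twisting $G$ as a Hopf algebra. Once the Sweedler legs are matched, the coincidence of the products and of the coactions is immediate and both assertions follow. This is moreover consistent with Theorem \ref{Th:diagr-can3}, which already guarantees that ${}_\gamma B\subseteq{}_\gamma G_\gamma$ is an $H_\gamma$-Hopf-Galois extension.
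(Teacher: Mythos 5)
Your proof is correct and follows essentially the same route as the paper: both rest on the observation that the product \eqref{gG} of ${}_\gamma G_\gamma$ and the product \eqref{Gtilde} of $G_{\widetilde{\gamma}}$ are given by the identical formula, so the identity map is the isomorphism. The only (immaterial) difference concerns the coinvariants: the paper explicitly checks that the right cocycle insertion in $b\cdot_{\widetilde{\gamma}}b'$ trivializes on $H$-coinvariant elements, whereas you note that the coinvariant subsets and the ambient products coincide, which yields the same conclusion.
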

\begin{proof}
By comparing (\ref{gG}) with \eqref{Gtilde} we have that the algebras  $_\gamma G_\gamma$ and $G_{\widetilde{\gamma}}$ 
are isomorphic via the identity map.  For $b,b' \in B_{\widetilde{\gamma}}$
we have 
$$
b\cdot_{\widetilde{\gamma}} b'= \co{\pi(\one{b})}{\pi( \one{b'})} \two{b}\two{b'} \coin{\pi(\three{b})}{\pi(\three{b'})} =\co{\pi(\one{b})}{\pi( \one{b'})} \two{b}\two{b'} ~,
$$
because $B=B_{\widetilde{\gamma}}$ as $\bbK$-modules and hence $b,b'$ are right $H$-coinvariant.
Hence the result  ${}_\gamma B \simeq B_{\widetilde{\gamma}}$.
\end{proof}

As a direct consequence of Theorem \ref{Th:diagr-can3} we then obtain
that quantum principal bundles over quantum homogeneous spaces deform
into quantum principal bundles over quantum homogeneous spaces:

\begin{cor}\label{cor:qhs}
The extension $B_{\tilde{\gamma}} \subseteq G_{\tilde{\gamma}}$ of the 
quantum homogeneous space $B_{\tilde{\gamma}}$ is $H_\cot$-Hopf-Galois if and only 
if the extension $B\subseteq G$ of the quantum homogeneous space $B$ is $H$-Hopf-Galois.
\end{cor}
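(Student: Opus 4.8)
The plan is to deduce the statement directly from the combination-of-deformations result, Theorem \ref{Th:diagr-can3}, specialised to the present setting, and then to transport the conclusion along the isomorphism ${}_\gamma G_\gamma \simeq G_{\widetilde{\gamma}}$ established in the Proposition immediately preceding this corollary. First I would set $A = G$, take the external symmetry Hopf algebra to be $K = H$ acting from the left via $\rho^G = (\pi \ot \id)\circ\Delta$ as in \eqref{lrc}, and choose both $2$-cocycles equal to $\gamma$, i.e.\ $\sigma = \cot = \gamma$. The compatibility of $\rho^G$ with the right coaction $\delta^G = (\id\ot\pi)\circ\Delta$ noted in the text, together with the standing assumption that $B = G^{coH}$ is a left $H$-subcomodule, guarantees that $G\in{}^H{\cal A}^H$ with $B$ a $K$-subcomodule, so the hypotheses of Theorem \ref{Th:diagr-can3} are satisfied. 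Part (i) of that theorem then yields the equivalence
\[
B=G^{coH}\subseteq G \text{ is $H$-Hopf-Galois}
\iff
{}_\gamma B = {}_\gamma G_\gamma^{co H_\gamma}\subseteq {}_\gamma G_\gamma \text{ is $H_\gamma$-Hopf-Galois}.
\]

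Next I would reconcile the right-hand side with the extension $B_{\widetilde{\gamma}}\subseteq G_{\widetilde{\gamma}}$. By the Proposition preceding this corollary the identity map is an algebra isomorphism ${}_\gamma G_\gamma \simeq G_{\widetilde{\gamma}}$ (compare \eqref{gG} with \eqref{Gtilde}) restricting to ${}_\gamma B \simeq B_{\widetilde{\gamma}}$. The one point to check carefully is that this is an isomorphism of right $H_\gamma$-comodule algebras, so that the Hopf-Galois property genuinely transfers. This holds because the two right $H_\gamma$-coactions agree as $\bbK$-linear maps: on ${}_\gamma G_\gamma$ the twisting of \S\ref{secKHbicomod} leaves the coaction map unchanged, equal to $(\id\ot\pi)\circ\Delta$ with values merely relabelled in $G\ot H_\gamma$; while on $G_{\widetilde{\gamma}}$ the comodule-algebra structure is $(\id\ot\pi_\gamma)\circ\Delta_{G_{\widetilde{\gamma}}}$, and neither the coproduct nor the projection is deformed, since Lemma \ref{lem:qhs} gives $\pi_\gamma = \pi$, so it is again $(\id\ot\pi)\circ\Delta$. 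Consequently the two coinvariant subalgebras coincide and the two canonical maps are literally the same map, whence $B_{\widetilde{\gamma}}\subseteq G_{\widetilde{\gamma}}$ is $H_\gamma$-Hopf-Galois precisely when ${}_\gamma B\subseteq {}_\gamma G_\gamma$ is.

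Chaining the two equivalences gives the claim. The only genuinely non-formal step is the comodule-algebra identification of the second paragraph; everything else is a direct application of Theorem \ref{Th:diagr-can3} together with the bookkeeping of the twisted products \eqref{gG} and \eqref{Gtilde}. I expect no further obstacle, since the quantum-homogeneous-space structure on $B_{\widetilde{\gamma}} = G_{\widetilde{\gamma}}^{coH_\gamma}$ — that is, that $H_\gamma$ is a quantum subgroup of $G_{\widetilde{\gamma}}$ via $\pi_\gamma$ — was already secured by Lemma \ref{lem:qhs}.
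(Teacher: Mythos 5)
Your proposal is correct and follows essentially the same route as the paper: the discussion preceding the corollary already derives the equivalence for ${}_\gamma B\subseteq{}_\gamma G_\gamma$ from Theorem \ref{Th:diagr-can3} (with $A=G$, $K=H$, $\sigma=\cot=\gamma$) and then identifies ${}_\gamma G_\gamma$ with $G_{\widetilde{\gamma}}$ and ${}_\gamma B$ with $B_{\widetilde{\gamma}}$ via the Proposition immediately before the statement. Your explicit check that this identification is one of right $H_\gamma$-comodule algebras --- both coactions being $(\id\ot\pi)\circ\Delta$ since neither the coproduct nor $\pi$ is deformed, so the two canonical maps literally coincide --- is a worthwhile detail that the paper leaves implicit.
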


\subsubsection{The quantum homogeneous spaces $S^{2n}_\theta$ and their associated quantum principal bundles}\label{S2n} 
The $\theta$-spheres $S^{2n}_\theta$ were introduced in \cite{cl} as
noncommutative manifolds with the property that the Hochschild dimension equals the commutative dimension. They were shown to be
 homogeneous spaces of twisted deformations of
$SO(2n+1,\mathbb{R})$ in \cite{var}. Their geometry was further
studied in \cite{cdv}, see also \cite{ab}.
We here revisit their explicit construction and as a corollary of the
previous section conclude that the
Hopf algebra of noncommutative coordinate functions
$\mathcal{O}(SO_\theta(2n+1,\mathbb{R}))$ is a quantum principal bundle over
the quantum homogeneous space $\mathcal{O}(S^{2n}_\theta)$ of
noncommutative coordinate functions on the sphere.  We then immediately conclude that the
Hopf-Galois extension $\mathcal{O}(S^{2n}_\theta)\subset
\mathcal{O}(SO_\theta(2n+1,\mathbb{R}))$ is a principal 
comodule algebra. 
\\

We begin by introducing the algebra of coordinate functions on $SO(2n,
\IR)$, on $SO(2n+1, \IR)$ and on their quotient $S^{2n}$.
Let $\mathcal{O}(M(2n, \IR))$, $n \in \mathbb{N}$  be the  commutative
$*$-algebra over $\mathbb{C}$ with generators 
$a_{ij}, b_{ij}, a_{ij}^* = *(a_{ij}), b_{ij}^*=*(b_{ij})$, $i,j=1, \dots n$.     It is a bialgebra with coproduct and counit  given in matrix notation as
\begin{equation}
\Delta(M)=M \overset{.}{\otimes} M \quad , \quad \varepsilon(M)=\mathbbm{1} , \mbox{ for  }\quad M=(M_{IJ}):=\begin{pmatrix}
(a_{ij}) & (  b_{ij})
\\
(b_{ij}^*) & (a_{ij}^*)
\end{pmatrix} \quad,
\end{equation}
where $\overset{.}{\otimes}$ denotes the combination of tensor product and matrix multiplication, $\mathbbm{1}$ is the identity matrix and capital indices $I,J$ run from $1$ to $2n$.
The Hopf algebra of
coordinate functions on $SO(2n, \IR)$ is the quotient 
$\mathcal{O}(SO(2n, \IR))= \mathcal{O}(M(2n, \IR)){/{I_Q}} $
where $I_Q$ is the bialgebra ideal defined by   
\begin{equation}\label{idealQ}
I_Q= \langle\, M^t Q M -Q \; ; \; M Q M^t-Q\; ; \;
\det(M)-1
\,\rangle \; 
, \quad Q:=\begin{pmatrix}
0 &\mathbbm{1}_n 
\\
 \mathbbm{1}_n & 0
\end{pmatrix} =Q^t=Q^{-1}~.
\end{equation}
In matrix notation the $*$-structure in $\mathcal{O}(M(2n, \IR))$ is given by
$*(M)=QMQ$ so that  $I_Q$ is easily seen to be a $*$-ideal.  The  $*$-bialgebra $\mathcal{O}(SO(2n, \IR))$ is a $*$-Hopf algebra with antipode 
$
S(M):= Q M^tQ^{-1}$. Notice that in  $\mathcal{O}(SO(2n, \IR))$ we have $M^\dag M= \mathbbm{1} =M M^\dag$, where ${}^\dag$ indicates the composition of matrix transposition ${}^t$  and $*$-conjugation.\\

Similarly,  for the odd case let $\mathcal{O}(M(2n+1, \IR))$, $n \in \mathbb{N}$, be the  commutative $*$-bialgebra with generators 
$a_{ij}, b_{ij},  a_{ij}^*=*(a_{ij}), b_{ij}^*=*(b_{ij}), u_{i}, v_{i}, u_i^*=*(u_{i}), v_i^*=*(v_{i})$, $i,j=1, \dots n$, and $x=*(x)$. The coproduct and counit are given as 
\begin{equation}
\Delta(N)=N \overset{.}{\otimes} N \quad , \quad \varepsilon(N)=\mathbbm{1} , \mbox{ where  }
N:=\begin{pmatrix}
(a_{ij}) & (  b_{ij})  & (u_i)
\\
(b_{ij}^*) & (a_{ij}^*) & (u_i^*)
\\
(v_i)&  (v_i^*) & x 
\end{pmatrix}.
\end{equation}
The algebra of coordinate functions on $SO(2n+1, \IR)$  is the quotient $\mathcal{O}(SO(2n+1, \IR))= \mathcal{O}(M(2n+1,
\IR)){/{J_Q}} $  where $J_Q$ is the bialgebra $*$-ideal
\begin{equation}
J_Q= \langle N^t Q N -Q \; ; \; N Q N^t-Q \; ; \; 
\det(N)-1\rangle \; , \quad Q:=\begin{pmatrix}
0 & \mathbbm{1}_n &0
\\
\mathbbm{1}_n & 0 &0
\\
0 & 0 & 1
\end{pmatrix} .
\end{equation}
The $*$-structure can be written in terms of $Q$ as $*(N)=QNQ^{-1}$.
 The  $*$-bialgebra $\mathcal{O}(SO(2n+1, \IR))$ is a $*$-Hopf algebra with
 antipode $S(N)= Q N^tQ=N^\dag$. 
 
The (commutative) Hopf algebra $\mathcal{O}(SO(2n, \IR))$ is a quantum subgroup of $\mathcal{O}(SO(2n+1, \IR))$ with surjective Hopf algebra morphism
\be\label{pi}
\pi:\mathcal{O}(SO(2n+1, \IR)) \longrightarrow \mathcal{O}(SO(2n, \IR)) \; , \quad 
\begin{pmatrix}
(a_{ij}) & (  b_{ij})  & (u_i)
\\
(b_{ij}^*) & (a_{ij}^*) & (u_i^*)
\\
(v_i)&  (v_i^*) & x 
\end{pmatrix}
\longmapsto
\begin{pmatrix}
(a_{ij}) & (  b_{ij}) & 0
\\
(b_{ij}^*) & (a_{ij}^*) &0 
\\
0 &0 & 1
\end{pmatrix} .
\ee
Hence there is a natural right coaction of $\mathcal{O}(SO(2n, \IR))$ on
$\mathcal{O}(SO(2n+1, \IR))$, given by (cf.\  \eqref{rrc})
\begin{eqnarray*}
\delta:= (\id \ot \pi)\Delta: \mathcal{O}(SO(2n+1, \IR)) &\longrightarrow& \mathcal{O}(SO(2n+1, \IR)) \ot \mathcal{O}(SO(2n, \IR))~,
\\
N &\longmapsto& N \overset{.}{\otimes} \pi(N)~.
\end{eqnarray*}
The subalgebra $B\subset  \mathcal{O}(SO(2n+1, \IR))$ of coinvariants is generated by  the elements in the last column of the defining matrix $N$: $u_i, u_i^*$ and $x$. 
It  is isomorphic to the algebra of coordinate functions $\mathcal{O}(S^{2n})$
on the even sphere $S^{2n}\subset \IR^{2n+1}$, indeed from $N^\dag N=\mathbbm{1}$ we have that the generators of $B$ (rescaling the $u_i$'s by $1/\sqrt{2}$) satisfy the sphere equation
$\sum_{i=1}^{n} u_i^* u_i +x^2=1.$  

Finally, in this affine variety setting we can identify 
$\mathcal{O}(
SO(2n+1, \IR)\times SO(2n, \IR))$ with $\mathcal{O}(
SO(2n+1, \IR))\otimes \mathcal{O}(SO(2n, \IR))$, and 
$\mathcal{O}(
SO(2n+1, \IR)\times_{S^{2n}} SO(2n+1, \IR))$ with $\mathcal{O}(
SO(2n+1, \IR))\otimes_{\mathcal{O}(S^{2n})} \mathcal{O}(
SO(2n+1, \IR))$, hence principality of the $SO(2n,\IR)$-bundle 
$SO(2n+1, \IR)\to S^{2n}$ implies that the algebra extension $\mathcal{O}(S^{2n}) \subset \mathcal{O}(SO(2n+1, \IR))$ is Hopf Galois with $H=\mathcal{O}(SO(2n, \IR))$.
\\

Next we consider a 2-cocycle $\gamma$ on the quantum subgroup $\mathcal{O}(SO(2n, \IR))$, or rather on its maximal torus $\mathbb{T}^n$, and use it
to deform the quantum homogeneous space $\mathcal{O}(S^{2n})$ and the principal
fibration on it.
Let $\mathcal{O} (\mathbb{T}^n)$ be  the  commutative $*$-algebra of functions on the $n$-torus  with generators $t_j,~{t_j}^*=*(t_j)$ satisfying $t_j {t_j}^*=1= {t_j}^* t_j$ (no sum on $j$) for  $j=1,\dots n$. It is a  Hopf algebra with 
$$
\Delta(T)=T \overset{.}{\otimes} T \; , \; \varepsilon(T)=\mathbbm{1}\; , \; S(T)=T^* \; , \quad T:= \mathrm{diag}(t_1, \dots t_n,
t_1^* , \dots t_n^*).
$$
We consider the exponential  2-cocycle $\gamma$  on $\mathcal{O} (\mathbb{T}^n)$
defined on the generators $t_i$ by
\be\label{cocycle-exp}
\co{t_j}{t_k}= \exp\big(i \pi \theta_{jk}\big)  \quad ;\quad \theta_{jk}=- \theta_{kj} \in \mathbb{R}
\ee
and  extended to the whole algebra by requiring 
$\co{ab}{c}=\co{a}{\one{c}}\co{b}{\two{c}}$ and 
 $\co{a}{bc}=\co{\one{a}}{c}\co{\two{a}}{b}$, for all $a,b,c, \in \mathcal{O} (\mathbb{T}^n)$,
(cf.\ \ref{cocycleT2}). The Hopf algebra $\mathcal{O} (\mathbb{T}^n) $ is a quantum subgroup of $\mathcal{O}(SO(2n, \IR))$
with projection
\be
  M \mapsto T, \quad  \mbox{  i.e.,} ~~~
a_{ij}\mapsto \delta^i_j t_i \quad ; \quad {a}^*_{ij}\mapsto
\delta^i_j t^*_i \quad ; \quad b_{ij}\mapsto 0\quad ; \quad
b^*_{ij}\mapsto 0
\ee
and hence the 2-cocycle $\gamma$ lifts by pullback to a 2-cocycle on
$\mathcal{O}(SO(2n, \IR)) $ (see Lemma \ref{lem:coc-by-pb}), that we
still denote by $\gamma$. Now  to deform with $\gamma$ the Hopf
algebra $\mathcal{O}(SO(2n, \IR))$ into the noncommutative Hopf algebra
$\mathcal{O}(SO_\theta(2n, \IR))$. The twisted algebra product is given by
(cf.\ \eqref{hopf-twist})   
$$
M_{IJ} \mt M_{KL}= \co{T_I}{T_K} M_{IJ} M_{KL}\coin{T_J}{T_L} , \quad
I,J,K,L=1, \dots 2n.
$$
Since $\co{T_I}{T_K} = (\co{T_K}{T_I})^{-1}$ and similarly for  $\bar\gamma$, it follows  that
the generators in $\mathcal{O}(SO_\theta(2n, \IR))$  satisfy the commutation relations
$$
M_{IJ} \mt M_{KL}= \big(\co{T_I}{T_K}\big)^2 \left(\coin{T_J}{T_L} \right)^2 M_{KL} \mt M_{IJ},\quad I,J,K,L=1, \dots 2n .
$$
Explicitly, setting $\lambda_{IJ}:=(\gamma(T_I\otimes T_J))^2$, so that
$\lambda_{ij}= \exp(2i\pi \theta_{ij})$, and since
$\bar\gamma(T_J\otimes T_L)=\gamma(T_L\otimes T_J)$, they
read 
\begin{eqnarray}\label{thetaCR}
a_{ij} \mt a_{kl} = \lambda_{ik}\lambda_{lj} ~a_{kl}\mt a_{ij} &,  & 
a_{ij} \mt b^*_{kl} = \lambda_{ki}\lambda_{lj} ~b^*_{kl}\mt a_{ij} \nn
\\
a_{ij} \mt b_{kl} = \lambda_{ik}\lambda_{jl} ~b_{kl}\mt a_{ij} &,& 
a_{ij} \mt a^*_{kl} = \lambda_{ki}\lambda_{jl} ~a^*_{kl}\mt a_{ij} \nn
\\
b_{ij} \mt b_{kl} = \lambda_{ik}\lambda_{lj} ~b_{kl}\mt b_{ij} &,& 
b_{ij} \mt b^*_{kl} = \lambda_{ki}\lambda_{jl} ~b^*_{kl}\mt b_{ij} 
\end{eqnarray}
together with their $*$-conjugated.  
It is also not difficult to show the equivalence of the quotient
conditions (\ref{idealQ}) with the relations 
\begin{eqnarray}
\label{qidealQ}
M^t \mt Q \mt M =Q~,~~
 M \mt Q \mt M^t=Q~,~~
{\det}_{\theta}(M)=1
\end{eqnarray}
where the quantum determinant is defined by
\begin{eqnarray}\label{qdet}{\det}_{\theta}(M)=\sum_{\sigma \in {\cal P}_{2n}}(-1)^{|\sigma|\,}\big(\!\prod_{\mbox{${}^{~\;I<J}_{\,\sigma_I>\sigma_J}$}} \!\lambda_{\sigma_I\sigma_J}\big)\,
M_{1\sigma_1}\!\cdot_\gamma\ldots 
M_{2n\,\sigma_{2n}}~.
\end{eqnarray}
A quick way to prove the  orthogonality relations  is to observe
that the new  antipode, obtained according to Proposition \ref{prop:co},
remains undeformed (sum on $L,K,R,P$  indices understood)
\begin{eqnarray}
S_\gamma (M_{IJ})&=& u_\gamma(M_{IL})  S(M_{LK})  \bar{u}_\gamma(M_{KJ})
=\co{M_{IR}}{S(M_{RL})} S(M_{LK}) \coin{S(M_{KP})}{M_{PJ}} \nonumber\\&=&
\co{T_{I}}{T_{I}} S(M_{IJ}) \coin{T_{J}}{T_{J}} =S(M_{IJ}) ~,
\end{eqnarray}
so that the orthogonality relations are the Hopf algebra relations $m_\gamma\circ (S_\gamma
\otimes\id)\Delta(M)=\epsilon(M)$ and  
$m_\gamma\circ (\id
\otimes S_\gamma)\Delta(M)=\epsilon(M)$.
In order to obtain the quantum determinant
relation first use $\gamma(T_{\sigma_I}\otimes
T_{\sigma_J})=\gamma(T_{\sigma_J}\otimes
T_{\sigma_I})\lambda_{\sigma_I\sigma_J}$
to show that for each permutation
$\sigma$ we have the equality $\prod_{I<J, \sigma_I>\sigma_J} \lambda_{\sigma_I\sigma_J}=\prod_{I<J}\bar\gamma(T_I\otimes T_J)  \gamma(T_{\sigma_I}\otimes T_{\sigma_J})$. 
Next  expand the twisted products in (\ref{qdet}) in terms of the
commutative products using $\co{ab}{c}=\co{a}{\one{c}}\co{b}{\two{c}}$
as well as the equivalent relation $\bar\gamma(ab\otimes
c)=\bar\gamma(a\otimes \two{c})\bar\gamma(b\otimes\one{c})$  for all
$a,b,c\in  \mathcal{O} (SO(2n,\mathbb{R})) $ and notice that (\ref{qdet}) becomes the usual determinant of $M$.

From (\ref{thetaCR}) and (\ref{qidealQ}) we see that the twisted Hopf algebra $\mathcal{O}(SO_\theta(2n, \IR))$
can be described algebraically as the algebra over $\mathbb{C}$ freely generated by the matrix
entries $M_{IJ}$ modulo the ideal implementing the relations (\ref{qidealQ}).
The twisted  Hopf algebras $\mathcal{O}(SO_\theta(n, \IR))$ were studied in
\cite{pl96} (see also \cite{res})  and in \cite{cdv} as symmetries of $\theta$-planes and spheres.\\

We can lift the 2-cocycle from the quantum subgroup $\mathcal{O}(SO(2n, \IR))$
to the Hopf algebra $\mathcal{O}(SO(2n+1, \IR))$ by using the projection $\pi$
in \eqref{pi}  (or equivalently we can consider the torus
$\mathbb{T}^n$ embedded in $SO(2n+1)$). The resulting Hopf algebra is
denoted by $\mathcal{O}(SO_\theta(2n+1, \IR))$. It is the Hopf algebra over
$\mathbb{C}$ freely
generated by the matrix entries $N_{IJ}$ modulo the relations 
\begin{equation}\label{comN}
N_{IJ} \mt N_{KL}= \big(\co{T_I}{T_K}\big)^2 \left(\coin{T_J}{T_L}
\right)^2 N_{KL} \mt N_{IJ},\quad I,J,K,L=1, \dots 2n+1 ,
\end{equation}
where now $T:= \mathrm{diag}(t_1, \dots t_n,
t_1^* , \dots t_n^*,1),$ and
\begin{equation}\label{Nort}
N^t \mt Q \mt N =Q~,~
 N \mt Q \mt N^t=Q~,~
{\det}_{\theta}(N)=1~,
\end{equation}
where ${\det}_{\theta}(N)$ is defined as in (\ref{qdet}), just consider the
 permutation group ${\cal P}_{2n+1}$. 

As from Lemma \ref{lem:qhs}, 
the quantum homogeneous space $B=\mathcal{O}(S^{2n})$ is deformed into the
quantum homogeneous space of coinvariants of $\mathcal{O}(SO_\theta(2n+1, \IR))$ 
under the $\mathcal{O}(SO_\theta(2n, \IR))$-coaction. This is the subalgebra
$B_\theta=:\mathcal{O}(S_\theta^{2n})\subset \mathcal{O}(SO_\theta(2n+1, \IR))$ which is
generated by the elements $u_i, u^*_i $ and $x$ entering the last
column of the matrix $N$. Their commutation relations follow from \eqref{comN}
$$
u_i \mt u_j= \lambda_{ij}~ u_j \mt u_i \quad ; \quad
u_i^* \mt u^*_j= \lambda_{ij}~ u^*_j \mt u^*_i \quad;
 \quad u_i \mt u_j^*= \lambda_{ji} ~u_j^* \mt u_i  ~~,
$$
while the orthogonality conditions (\ref{Nort}) imply the sphere
relation $\sum_{i=1}^{n} u_i^* \mt u_i +x^2=1.$    
By Corollary \ref{cor:qhs} we conclude
\begin{prop}
The algebra extension $\mathcal{O}(S_\theta^{2n})\subset \mathcal{O}(SO_\theta(2n+1, \IR))$ of the quantum homogeneous space $\mathcal{O}(S_\theta^{2n})= \mathcal{O}(SO_\theta(2n+1, \IR))^{co\mathcal{O}(SO_\theta(2n, \IR))}$ is  Hopf-Galois.  
\end{prop}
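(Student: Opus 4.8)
The plan is to obtain the statement as a direct application of Corollary \ref{cor:qhs}, reducing it to the principality of the \emph{classical} (commutative) bundle. The one piece of genuinely geometric input is that the homogeneous fibration $SO(2n+1,\IR)\to S^{2n}\simeq SO(2n+1,\IR)/SO(2n,\IR)$ is a principal $SO(2n,\IR)$-bundle: the right multiplication of the closed subgroup $SO(2n,\IR)$ on $SO(2n+1,\IR)$ is free with quotient $S^{2n}$. Dualising --- using the affine identifications $\mathcal{O}(SO(2n+1,\IR)\times SO(2n,\IR))\simeq\mathcal{O}(SO(2n+1,\IR))\otimes\mathcal{O}(SO(2n,\IR))$ and $\mathcal{O}(SO(2n+1,\IR)\times_{S^{2n}}SO(2n+1,\IR))\simeq\mathcal{O}(SO(2n+1,\IR))\otimes_{\mathcal{O}(S^{2n})}\mathcal{O}(SO(2n+1,\IR))$ already recorded above --- this says exactly that $B=\mathcal{O}(S^{2n})\subseteq G=\mathcal{O}(SO(2n+1,\IR))$ is an $H=\mathcal{O}(SO(2n,\IR))$-Hopf-Galois extension. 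This is the undeformed extension whose twist we wish to control.

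First I would check that the example genuinely fits the setup of \S\ref{sec:qhom}. The surjection $\pi$ of \eqref{pi} makes $H=\mathcal{O}(SO(2n,\IR))$ a quantum subgroup of $G=\mathcal{O}(SO(2n+1,\IR))$, so $\delta^G=(\id\otimes\pi)\circ\Delta$ and $\rho^G=(\pi\otimes\id)\circ\Delta$ equip $G$ with an $(H,H)$-bicomodule algebra structure, the compatibility \eqref{compatib} being nothing but coassociativity of $\Delta$. The exponential $2$-cocycle \eqref{cocycle-exp} on the maximal torus $\mathcal{O}(\mathbb{T}^n)$ pulls back, through the torus projection and then through $\pi$, to a $2$-cocycle $\gamma$ on $H$ by Lemma \ref{lem:coc-by-pb}. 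Finally, because $\bbK=\mathbb{C}$ is a field every module is flat, so Proposition \ref{flat} guarantees that $B=G^{coH}$ is a subcomodule for the left $H$-coaction $\rho^G$ (alternatively one checks this directly on the generators $u_i,u_i^*,x$). Thus the standing hypothesis of \S\ref{sec:qhom} holds and the combined deformation of Theorem \ref{Th:diagr-can3} applies.

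With these checks done, Corollary \ref{cor:qhs} gives the equivalence that $B_{\widetilde{\gamma}}\subseteq G_{\widetilde{\gamma}}$ is $H_\gamma$-Hopf-Galois if and only if $B\subseteq G$ is $H$-Hopf-Galois, and the right-hand side holds by the classical principality just recalled, so the left-hand side holds as well. What then remains is purely a matter of matching notation: $G_{\widetilde{\gamma}}$ is the Hopf algebra $\mathcal{O}(SO_\theta(2n+1,\IR))$ presented by \eqref{comN} and \eqref{Nort}; by Lemma \ref{lem:qhs} the map $\pi_\gamma$ stays a Hopf surjection, so $B_{\widetilde{\gamma}}=G_{\widetilde{\gamma}}^{coH_\gamma}$ coincides as a $\bbK$-module with $B$, namely the span of the last column $u_i,u_i^*,x$ of $N$, and with the twisted product it is exactly $\mathcal{O}(S^{2n}_\theta)$ with the stated commutation and sphere relations. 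Identifying $B_{\widetilde{\gamma}}\subseteq G_{\widetilde{\gamma}}$ with $\mathcal{O}(S^{2n}_\theta)\subset\mathcal{O}(SO_\theta(2n+1,\IR))$ completes the argument.

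The main obstacle is conceptual rather than computational: essentially all the analytic content is packaged into Corollary \ref{cor:qhs} (and behind it Theorem \ref{Th:diagr-can3}), so the only point demanding care is that the two \emph{a priori} distinct deformations of $G$ in play --- the Hopf-algebra twist $G\rightsquigarrow G_{\widetilde{\gamma}}$ with product \eqref{Gtilde}, and the comodule-algebra twist $G\rightsquigarrow G_\gamma$ of \S\ref{sec:def_sg} --- be reconciled, as in \S\ref{sec:qhom}, by the bicomodule twist ${}_\gamma G_\gamma$ of \eqref{gG} obtained by taking the external symmetry $K=H$ with $\sigma=\gamma$. One must confirm that this is indeed the combined deformation to which Corollary \ref{cor:qhs} is applied, so that the external square relating the twisted and untwisted canonical maps is the one governing the geometric extension $\mathcal{O}(S^{2n})\subset\mathcal{O}(SO(2n+1,\IR))$.
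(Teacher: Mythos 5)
Your proposal is correct and takes essentially the same route as the paper: the paper establishes classical principality of $SO(2n+1,\IR)\to S^{2n}$ (hence that $\mathcal{O}(S^{2n})\subset\mathcal{O}(SO(2n+1,\IR))$ is Hopf-Galois via the affine identifications) and then invokes Corollary \ref{cor:qhs} directly. Your additional verifications --- the lift of the toral $2$-cocycle through Lemma \ref{lem:coc-by-pb}, the subcomodule condition via Proposition \ref{flat}, and the reconciliation of $G_{\widetilde\gamma}$ with ${}_\gamma G_\gamma$ --- are exactly the hypotheses the paper's general discussion in \S\ref{sec:qhom} already supplies, so you have simply made the paper's one-line proof explicit.
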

Invertibility of the antipode and injectivity of
$\mathcal{O}(SO_\theta(2n+1, \IR))$ as an $\mathcal{O}(SO_\theta(2n,
\IR))$-comodule imply that $\mathcal{O}(S_\theta^{2n})\subset
\mathcal{O}(SO_\theta(2n+1, \IR))$ is a principal comodule algebra.

\subsection{Twisting sheaves of Hopf-Galois extensions}\label{sec:sheaves} 
In classical geometry a  principal bundle over a topological space  $X$ can be given in 
terms of the local data of trivial product bundles over the open sets of a  covering of $X$ 
and a set of transition functions which specify how to glue the local trivial pieces into a 
(possibly non trivial) global one.  
A local-type approach to noncommutative  principal bundles was  given in \cite{pflaum} 
by using sheaf theoretical methods. A quantum principle bundle consists in the data of two sheaves    
of $\C$-algebras over a (classical) topological space together with a quantum group, 
playing the role of the structure group, and  a family of sheaf morphisms, 
satisfying some suitable conditions, as local trivializations.
The two sheaves of algebras have to be regarded as the quantum analogues of the 
sheaves of functions over the base and total space of a classical  fibration. The basic 
idea behind is that of considering a quantum space as a `quantum ringed space' 
$(M,\mathcal{O}_M)$, i.e.\ a topological space $M$ whose structure sheaf $\mathcal{O}_M$
is  a sheaf of (not necessarily commutative) algebras rather than of commutative rings.
\sk

A refinement of this sheaf theoretical approach to noncommutative bundles 
was proposed  in \cite{cp} in terms of sheaves of Hopf-Galois extensions. For 
simplicity let us here assume all algebras are over a  field.
\begin{defi}
\label{HGsh}
Let $X$ be a topological space and  $H$ a Hopf algebra. A sheaf $\sft$ of 
(not necessarily commutative) algebras over $X$ is said to be a sheaf of $H$-Hopf-Galois extensions  if: 
\begin{enumerate}[(i)]
\item $\sft$ is a sheaf of (say) right $H$-comodules algebras: for each open $U\subseteq X$, $\sft(U)$ is a right $H$-comodule algebra and 
for each  open $W \subseteq U$ the restriction map $r_{UW}:\sft(U) \ra \sft(W)$ 
is a morphism of $H$-comodule algebras;
\item for each open $U\subseteq X$, $\sft(U)^{coH}\subseteq \sft(U)$ is a $H$-Hopf-Galois extension. 
\end{enumerate}
A sheaf $\sft$ of $H$-Hopf-Galois extensions over a topological space $X$ is 
called locally cleft if there exists an open covering $\{U_i\}_{i\in I}$ of $X$ such 
that $\sft(U_i)$ is cleft, $\forall i\in I$.
\end{defi}
The sheaf $\sft$ and its subsheaf $\sft^{coH}: U \mapsto \sft(U)^{coH}$ play the
role of noncommutative analogues of the sheaf of functions on the total space, 
respectively base space, of the bundle.
Notice that condition (ii) is equivalent to requiring just the  algebra $\sft(X)$ 
to be an $H$-Hopf-Galois extension, indeed it was observed in \cite{cp} that 
the property of being  Hopf-Galois restricts locally: if on an open set $U$, the 
algebra extension $\sft(U)^{coH} \subseteq \sft(U)$ is Hopf-Galois, then 
$\sft(W)^{coH} \subseteq \sft (W)$ is  a Hopf-Galois extension for any  $W\subseteq U$. 
(This is the algebraic counterpart of the well-known classical fact that the restriction of a principal action  is still principal).

The notions of quantum principal bundle introduced in \cite{pflaum} and that 
of locally cleft sheaf of Hopf-Galois extensions are closely related: every locally 
cleft sheaf of Hopf-Galois extensions is a quantum principal bundle in the sense of 
\cite{pflaum}.  On the other hand,  a sufficient condition for a quantum principal 
bundle in the sense of \cite{pflaum} to be a sheaf of Hopf-Galois extensions 
(in fact, locally cleft) is that the restriction maps are surjective  (see \cite[\S 4]{cp}).
\sk

Since a (locally cleft) sheaf $\sft$ of  $H$-Hopf-Galois extensions is in particular 
a sheaf of $H$-comodule algebras, given a 2-cocycle 
in $H$ we can  apply the functor $\Gamma$ in \eqref{functGamma} 
and obtain a new sheaf $\sft_\cot$ over the same topological space 
$X$. The sheaf  $\sft_\cot$ is a sheaf of $H_\cot$-comodule algebras
and is defined by $\sft_\cot (U):=
\Gamma(\sft(U))=(\sft(U))_\gamma$, with  restriction 
maps given by  morphisms of $H_\cot$-comodule algebras 
$\Gamma(r_{UW})=r_{UW}:\sft_\cot(U) \ra \sft_\cot(W)$, for all $W\subset U$ open sets.
\sk

By Corollary \ref{cor-can}, we can conclude that  $\sft_\cot$ is a (locally cleft) sheaf of
$H_\cot$-Hopf-Galois extensions if and only if $\sft$ is  a sheaf of (locally cleft) $H$-Hopf-Galois 
extensions. The subsheaves  $\sft_\cot^{coH_\cot}$ and
$\sft^{coH}$ over $X$ coincide (i.e.\ they are isomorphic via the identity maps). 
\sk

Let now $K$ be another Hopf algebra;
we may assume the additional (restrictive) condition for the sheaf $\sft$ to be 
valued in  the category of $(K,H)$-bicomodule algebras, i.e.\ $\sft(U) \in {}^K\mathcal{A}^H$ for each open $U$ and the restriction maps are  morphisms of $(K,H)$-bicomodule algebras. In this case we can deform $\sft$ also  by using a 
2-cocycle $\sigma$ on the external Hopf algebra $K$, or even by using  both $\sigma$ 
on $K$ and $\gamma$ on $H$.  With the same reasoning as above, by using the  results 
obtained in \S \ref{sec:def_es} and \S \ref{sec:combidef}, the 
two sheaves  ${}_\sigma\sft$ and ${}_\sigma\sft_\cot$ obtained in this way are 
sheaves of Hopf-Galois extensions if and only if the original sheaf $\sft$ is. In 
general, the subsheaves ${}_\sigma\sft^{coH}$ and ${}_\sigma \sft_\cot^{coH_\cot}$ 
of coinvariants will not coincide with $\sft^{coH}$.
In the following
subsection we provide an example.


\subsubsection{\label{S4astwistedsheaf}The Hopf bundle over
  $S^4_\theta$ as a twisted sheaf}
We describe the Hopf bundle over $S^4_\theta$ of  Example \ref{exCL}  
as a twist deformation of a sheaf of $(K,H)$-bicomodules algebras over the classical 4-sphere 
$S^4$, where $H={\mathcal{O}}(SU(2))$ and $K={\mathcal O}(\mathbb{T}^2)$. The  algebras ${}_\sigma A=\mathcal{O}(S^7_\theta)$ and ${}_\sigma A^{coH}=\mathcal{O}(S^4_\theta)$ will be
replaced by a locally cleft sheaf  ${}_\sigma\sft$ of  $H$-Hopf-Galois extensions over $S^4$ and
its subsheaf ${}_\sigma\sft^{coH}$ of
coinvariant elements. 

We here outline a bottom up approach based 
on local transition functions on opens of $S^4$, a
complementary top down approach starting from the total space $S^7$  description of the
Hopf-Galois extension $\mathcal{O}(S^4)\subseteq\mathcal{O}(S^7)$ is presented in Appendix \ref{appB}.
\sk
As a first step we define the trivial Hopf-Galois extensions  
\begin{eqnarray}\label{HGtrivialsheaf} &&
\ocn
\subseteq 
 \ocn
\otimes H=:\sft(U_{\scriptscriptstyle{N}})\nn\\[.4em] 
&&\ocs
\subseteq
\ocs
\otimes H=:\sft(U_{\scriptscriptstyle{S}})\nn\\[.4em]
 &&\ocns
\subseteq 
 \ocns
\otimes H=:\sft(U_{\scriptscriptstyle{NS}})
\end{eqnarray} where $\ocn$ denotes the $*$-algebra generated by the $S^4$
coordinates $\alpha,
\beta, x$ and by $c_{\scriptscriptstyle{N}}^{\pm 1}$, with
$c_{\scriptscriptstyle{N}}^{2}=\frac{1}{2}(\1-x)$ (thus  the generator $x$  becomes redundant). Since $x\not=\1$ (i.e., $c_{\scriptscriptstyle{N}}\not=0$)
except in the north pole $N$, these coordinates generate the algebra
of coordinate functions on the open
$U_{\scriptscriptstyle{N}}:=S^4\backslash \{N\}\simeq \mathbb{R}^4$. 
 Similarly, the other
coordinate algebras are over  $U_{\scriptscriptstyle{S}}:=S^4 \backslash \{S\}$, with
$c_{\scriptscriptstyle{S}}^{2}=\frac{1}{2}(\1+x)$, and
$U_{\scriptscriptstyle{NS}}:=U_{\scriptscriptstyle{N}}\backslash \{S\}$
(cf.\ Appendix \ref{appsheafB}).
\sk
 Next we introduce the  restriction maps defining the sheaf $\sft$ of locally trivial Hopf-Galois
 extensions, and precisely the trivial restriction map 
\be\label{restrFN}
r^{\sft}_{\scriptscriptstyle{N,NS}}: 
\ocn
 \ot H
~\xhookrightarrow{i_{\scriptscriptstyle{N}}\ot \id}~
\ocns
\ot H~,
\ee
(where $i_{\scriptscriptstyle{N}}$ denotes the canonical injection)
and the nontrivial one (defined on the generators and extended as
$*$-algebra map),\footnote{This restriction map encodes the
  information on the transition function $g_{\scriptscriptstyle{NS}}$ characterizing the  two
  charts
  $U_{\scriptscriptstyle{N}}, U_{\scriptscriptstyle{S}}$  description
  of the  Hopf bundle $S^7\to S^4$. Indeed we have
   $g_{\scriptscriptstyle{NS}}: U_{\scriptscriptstyle{NS}}\to SU(2)~,~~ (\alpha,\beta,x)\mapsto 
\frac{1}{2} c_{\scriptscriptstyle{N}}^{-1}c_{\scriptscriptstyle{S}}^{-1}\begin{pmatrix}
 \alpha & -  \beta^*
\\[.4em]
\beta &  \alpha^* 
\end{pmatrix}
\,$, (we use the same notation for the coordinate functions and the point coordinates).}
\begin{align}\label{restrFS}
r^{\sft}_{\scriptscriptstyle{S,NS}}: 
\ocs
 \ot H &
~\longrightarrow ~
\ocns
\ot H~,\\
\1\otimes \begin{pmatrix}
 w_1 & -  w_2^*
\\[.4em]
w_2 &  w_1^* 
\end{pmatrix}&~\longmapsto~  \frac{1}{2} c_{\scriptscriptstyle{N}}^{-1}c_{\scriptscriptstyle{S}}^{-1}\begin{pmatrix}
 \alpha & -  \beta^*
\\[.4em]
\beta &  \alpha^* 
\end{pmatrix}\overset{.}{\otimes}\begin{pmatrix}
 w_1 & -  w_2^*
\\[.4em]
w_2 &  w_1^* 
\end{pmatrix} \nn \\
f\otimes \1 & ~\longmapsto~ i_{\scriptscriptstyle{S}}(f)\otimes \1 \nn
\end{align}
where $ i_{\scriptscriptstyle{S}}$ is the canonical injection 
$\ocs
\xhookrightarrow{i_{\scriptscriptstyle{S}}}
\ocns$.
It is straightforward to check that these restriction maps are
morphisms of $H$-comodule algebras. Since $\{\emptyset, U_{\scriptscriptstyle{N}},
U_{\scriptscriptstyle{S}}, U_{\scriptscriptstyle{NS}}\}$ is a basis
of the topology $\{\emptyset, U_{\scriptscriptstyle{N}},
U_{\scriptscriptstyle{S}}, U_{\scriptscriptstyle{NS}}, S^4\}$
the Hopf-Galois extensions in (\ref{HGtrivialsheaf}) and the
restriction maps (\ref{restrFN}), (\ref{restrFS}) uniquely define
the locally cleft sheaf $\sft$ on the  topology $\{\emptyset, U_{\scriptscriptstyle{N}},
U_{\scriptscriptstyle{S}}, U_{\scriptscriptstyle{NS}}, S^4\}$  (to
$\emptyset$ we assign the one element algebra, terminal object in the
category of algebras). 

In particular the
Hopf-Galois extension on the sphere $S^4$ is obtained as the
pull-back (in the category of $*$-algebras) 
\begin{equation}
\sft(S^4):=\{(a_{\scriptscriptstyle{N}},a_{\scriptscriptstyle{S}}) \in
  \sft(U_{\scriptscriptstyle{N}}) \times \sft(U_{\scriptscriptstyle{S}})~|~ r^{\sft}_{\scriptscriptstyle{N,NS}}(a_{\scriptscriptstyle{N}})=
r^{\sft}_{\scriptscriptstyle{S,NS}}(a_{\scriptscriptstyle{S}})\}~.
\end{equation} 
From Lemma \ref{lem:triv} in Appendix \ref{appB}  (for the notation used
see (\ref{sheafglobal}) and (\ref{coiAcoH})) and the $H$-comodule
algebra isomorphism (\ref{restrA})
we immediately conclude that the pull-back $\sft(S^4)$ is isomorphic to
$\mathcal{O}(S^7)$ as an $H$-comodule algebra. Then the subalgebra of coinvariants 
is $\mathcal{O}(S^4)$ and the Hopf-Galois extension $\sft(S^4)^{coH}\subseteq \sft(S^4)$
describes the instanton bundle $S^7\to S^4$. 

\sk
 Finally  the sheaf $\sft$ is a sheaf of
 $(K,H)$-bicomodule algebras, where the $K$-coactions are given by 
\begin{align}
\rho^{\sft}_{\scriptscriptstyle{N}}: 
\ocn
 \ot H &
~\longrightarrow ~
K\otimes \ocn
\ot H~,\\
\1\otimes \begin{pmatrix}
 w_1 & -  w_2^*
\\[.4em]
w_2 &  w_1^* 
\end{pmatrix}&~\longmapsto~  \begin{pmatrix}
 t_2 & 0
\nn\\[.4em]
0 &  t_2^*
\end{pmatrix}\overset{.}{\otimes}\1\overset{.}{\otimes}\begin{pmatrix}
 w_1 & -  w_2^*
\\[.4em]
w_2 &  w_1^* 
\end{pmatrix}\nn\\
 \alpha\ot \1 &~ \longmapsto~ t_1 t_2^* \ot \alpha\ot \1\nn\\
\beta\ot \1 &~\longmapsto ~t_1^* t_2^*  \ot \beta\ot \1 \nn\\[.8em]
\rho^{\sft}_{\scriptscriptstyle{S}}: 
\ocs
 \ot H &
~\longrightarrow ~
K\otimes \ocs
\ot H~,\\
\1\otimes \begin{pmatrix}
 w_1 & -  w_2^*
\\[.4em]
w_2 &  w_1^* 
\end{pmatrix}&~\longmapsto~  \begin{pmatrix}
 t_1 & 0
\\[.4em]
0 &  t_1^*
\end{pmatrix}\overset{.}{\otimes}1\overset{.}{\otimes}\begin{pmatrix}
 w_1 & -  w_2^*
\\[.4em]
w_2 &  w_1^* 
\end{pmatrix}\nn\\
\alpha \ot \1&~ \longmapsto~ t_1 t_2^* \ot \alpha\ot \1\nn\\
\beta\ot \1 &~\longmapsto ~t_1^* t_2^* \ot \beta \ot \1\nn
 \end{align}
with $ x\ot \1$,  $c_{\scriptscriptstyle{N}}^{\pm 1}\ot 1$ and
$c_{\scriptscriptstyle{S}}^{\pm 1}\ot \1$ coinvariant;
likewise
$\rho^{\sft}_{\scriptscriptstyle{NS}}: 
\ocns
 \ot H 
\rightarrow
K\otimes \ocns\ot H$ is the extension of $\rho^{\sft}_{\scriptscriptstyle{N}}$ obtained by defining $c_{\scriptscriptstyle{S}}^{\pm 1}\ot \1$ to be coinvariant.  
Observe that the $K$-coactions $\rho^{\sft}_{\scriptscriptstyle{N}}$
and $\rho^{\sft}_{\scriptscriptstyle{S}}$ differ from each other on $1 \ot H$, henceforth   the nontrivial
restriction map $r^{\sft}_{\scriptscriptstyle{S,NS}}$ is a morphism of
$K$-comodule algebras. 
\sk

We can now consider the 2-cocycle $\sigma$ in \eqref{cocycleT2} on $K$
and use it to deform $\sft(S^4)^{coH}\subseteq \sft(S^4)$ to
${}_\sigma\sft(S^4)^{coH}\subseteq {}_\sigma\sft(S^4)$, and the commutative and trivial Hopf-Galois extensions in
(\ref{HGtrivialsheaf}) into the noncommutative and trivial
 Hopf-Galois extensions 
\begin{eqnarray}\label{NCHGtrivialsheaf} &&{}_\sigma\ocn
\subseteq 
 {}_\sigma\ocn
\otimes H=: {}_\sigma\sft(U_{\scriptscriptstyle{N}})\nn\\[.4em] 
&& {}_\sigma\ocs
\subseteq
  {}_\sigma\ocs
\otimes H=: {}_\sigma\sft(U_{\scriptscriptstyle{S}})\nn\\[.4em]
 && {}_\sigma\ocns
\subseteq 
  {}_\sigma\ocns
\otimes H=: {}_\sigma\sft(U_{\scriptscriptstyle{NS}})~.
\end{eqnarray} 
The corresponding restriction maps are 
$\Sigma(r^{\sft}_{\scriptscriptstyle{N,NS}})\!=\!r^{\sft}_{\scriptscriptstyle{N,NS}}\!:
{}_\sigma\sft(U_{\scriptscriptstyle{N}})\rightarrow
{}_\sigma\sft(U_{\scriptscriptstyle{NS}})$, 
$\Sigma(r^{\sft}_{\scriptscriptstyle{S,NS}})\!=\!r^{\sft}_{\scriptscriptstyle{S,NS}}\!:{}_\sigma\sft(U_{\scriptscriptstyle{S}})\to{}_\sigma\sft(U_{\scriptscriptstyle{NS}})$ and 
$\Sigma(pr^\sft_{\scriptscriptstyle{1}})=pr^\sft_{\scriptscriptstyle{1}}: {}_\sigma\sft(S^4)\to {}_\sigma\sft(U_{\scriptscriptstyle{N}})$,
$\Sigma(pr^\sft_{\scriptscriptstyle{2}})=pr^\sft_{\scriptscriptstyle{2}}: {}_\sigma\sft(S^4)\to {}_\sigma\sft(U_{\scriptscriptstyle{S}})$;
they are $(K _\sigma,H)$-comodule maps and define the
sheaf $ {}_\sigma\sft$, that by construction is locally cleft.
Since the Hopf-Galois extension $\sft(S^4)^{coH}\subseteq \sft(S^4)$ is
isomorphic to $\mathcal{O}(S^4)\subseteq
\mathcal{O}(S^7)$ then ${}_\sigma\sft(S^4)^{coH}\subseteq
{}_\sigma\sft(S^4)$ is
isomorphic to $\mathcal{O}(S^4_\theta)\subseteq
\mathcal{O}(S^7_\theta)$, and the sheaf of Hopf Galois extensions
${}_\sigma\sft$
gives a sheaf description of the Hopf bundle over $S^4_\theta$ addressed in
 Example \ref{exCL}.


\appendix

\section{Twists, 2-cocycles and untwisting}\label{app:twists}
We briefly outline the duality between 
the notions of Drinfeld twists  \cite{Dri83, Dri85} and 2-cocycles
that was mentioned in Section \ref{sec:twists} and illustrate the `untwisting procedure'. 

\subsection{Drinfeld twists}
\begin{defi}
Let $\U$ be a bialgebra (or Hopf algebra). An invertible counital twist  on 
$\U$, or
simply a {\bf twist},  is an invertible  element $\F \in \U \ot \U$ such
that   $(\varepsilon \ot \id) (\F)= \1= (\id \ot \varepsilon)(\F)$ and
\be\label{twist}
(\F \ot \1)[(\Delta \ot \id)(\F)]= (\1 \ot \F)[(\id \ot \Delta) (\F)] ~.
\ee
We use the notations $\F=\f^{ \alpha} \ot \f_\alpha\in \U
\otimes \U$ and  $\F^{-1}=: \ofu{\alpha} \ot
\ofd{\alpha}\in \U\otimes\U$ (with summations understood).
\end{defi}

Given a twist $\F\in \U \otimes\U$ we can deform the bialgebra (or
Hopf algebra) $\U$ according to the following
\begin{prop}\label{prop:twist}
Let $\F=\f^{ \alpha} \ot \f_\alpha$  be a twist on a bialgebra  $\U$. Then
the algebra $\U$ with coproduct 
\be
\Delta_\F(\xi):= \F \Delta(\xi) \F^{-1}= \f^{~\alpha} \one{\xi} \ofu{\beta} \ot \f_\alpha \two{\xi}\ofd{\beta}~,
\ee
for all $\xi\in \U$, and counit unchanged is a bialgebra, denoted $\U_\F$. If moreover $\U$ is a Hopf algebra, then the
twisted bialgebra $\U_\F$
is a Hopf algebra with antipode $S_\F(\xi):=u_\F S(\xi) u_\F^{-1}$, where
$u_\F:=\f^{~\alpha} S(\f_\alpha)$, with inverse $u_\F^{-1}= S(\ofu{\alpha})\ofd{\alpha}$.
\end{prop}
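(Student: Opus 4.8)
This is a standard "twist a Hopf algebra" computation, dual to Proposition \ref{prop:co} for 2-cocycles, so the strategy is to verify each axiom directly, exploiting the cocycle-type condition \eqref{twist} satisfied by $\F$ exactly where associativity/coassociativity would otherwise fail. First I would check that $\Delta_\F$ is an algebra homomorphism: since $\Delta$ is one and conjugation by the fixed invertible element $\F\in\U\ot\U$ is an algebra automorphism of $\U\ot\U$, the composite $\Delta_\F=\F\Delta(\,\cdot\,)\F^{-1}$ is automatically multiplicative, so this step is immediate. Counitality of $\Delta_\F$ follows from the counitality of $\Delta$ together with the normalization $(\varepsilon\ot\id)(\F)=\1=(\id\ot\varepsilon)(\F)$ (and the same for $\F^{-1}$, which one deduces by applying $\varepsilon$ to $\F\F^{-1}=\1\ot\1$): applying $\varepsilon\ot\id$ to $\Delta_\F(\xi)$ collapses the left legs of $\F$ and $\F^{-1}$ to units, returning $\xi$, and symmetrically on the other side.

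The substantive point is \emph{coassociativity} of $\Delta_\F$, and this is where the twist condition \eqref{twist} is the engine. I would compute $(\Delta_\F\ot\id)\circ\Delta_\F$ and $(\id\ot\Delta_\F)\circ\Delta_\F$ separately; each produces an expression of the form $\big(\text{product of }\F\text{'s}\big)\,(\Delta\ot\id)\Delta(\xi)\,\big(\text{product of }\F^{-1}\text{'s}\big)$, where I have already used coassociativity of the undeformed $\Delta$ to identify the middle factors. The two outer $\F$-products are precisely the two sides of \eqref{twist} (and their inverses the two sides of the inverted relation), so equation \eqref{twist} forces the two composites to agree. This is the same mechanism by which the $2$-cocycle identity \eqref{lcocycle} guarantees associativity of $m_\cot$ in Proposition \ref{prop:co}; indeed the whole statement is the Drinfeld-twist dual of that proposition, and in Appendix \ref{app:twists} the paper is making this duality explicit, so I expect this step to be the main obstacle only in the sense of bookkeeping the leg-indices of $\F\ot\1$, $(\Delta\ot\id)\F$, etc.

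For the Hopf algebra claim, assuming now $\U$ is Hopf, I would verify that $S_\F:=u_\F S(\,\cdot\,)u_\F^{-1}$ is an antipode for $(\U,\Delta_\F,\varepsilon)$, i.e.\ that $m\circ(S_\F\ot\id)\circ\Delta_\F=\eta\varepsilon=m\circ(\id\ot S_\F)\circ\Delta_\F$. The key auxiliary facts are that $u_\F=\f^\alpha S(\f_\alpha)$ is invertible with $u_\F^{-1}=S(\ofu{\alpha})\ofd{\alpha}$ and that $u_\F$ intertwines the antipode axiom: expanding $m\circ(S_\F\ot\id)\circ\Delta_\F(\xi)$ and inserting the formulae for $\Delta_\F$, $S_\F$ and $u_\F$, one uses the ordinary antipode property $m\circ(S\ot\id)\Delta=\eta\varepsilon$ of $\U$ together with the twist condition \eqref{twist} to collapse the $\F$-factors and land on $\varepsilon(\xi)\1$. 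The invertibility of $u_\F$ and the explicit inverse would themselves be checked by a short computation using \eqref{twist} and the antipode axiom. I would treat the detailed index-chasing verification of $u_\F^{-1}=S(\ofu{\alpha})\ofd{\alpha}$ and of the two antipode identities as routine, since they mirror the computation of $S_\cot=u_\cot*S*\bar u_\cot$ in Proposition \ref{prop:co}; the one place demanding care is correctly transporting the twist relation \eqref{twist} through the antipode, where the standard trick is to apply $S$ or $S\ot S$ to \eqref{twist} and use $S$'s anti-multiplicativity to produce the relations governing $u_\F$.
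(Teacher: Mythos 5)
Your outline is correct and is the standard Drinfeld argument; note that the paper itself gives no proof of Proposition \ref{prop:twist} at all --- it states it as a classical result and defers to the references (\cite[\S 2.3]{Majid}, \cite[\S 10.2]{KS}), so there is no in-paper proof to compare against. Each step you describe is sound: multiplicativity and counitality of $\Delta_\F$ are immediate as you say, coassociativity reduces via $(\Delta_\F\ot\id)(\F)=(\F\ot\1)(\Delta\ot\id)(\F)(\F^{-1}\ot\1)$ to conjugation by the two sides of \eqref{twist}, and the antipode identities collapse once one knows $u_\F^{}\,u_\F^{-1}=\1=u_\F^{-1}u_\F^{}$ together with the contractions $S(\f^\alpha)\,u_\F^{-1}\f_\alpha=\1$ and $\ofu{\beta}\,u_\F^{}\,S(\ofd{\beta})=\1$ (the latter two follow from $\F^{-1}\F=\1\ot\1$ alone). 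The only substantive computation you leave implicit is the verification that $S(\ofu{\alpha})\ofd{\alpha}$ really inverts $u_\F$, which is the one place where \eqref{twist} must be pushed through $S$ as you indicate; spelling that out would complete the proof, but the strategy as stated has no gap.
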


Furthermore, if  $A $ is a left $\U$-module algebra via 
$\rhd: \U \ot A \ra A$, then 
the $\bbK$-module $A$ with unchanged unit and twisted product
\be\label{lmod-F}
a \bullet_\F a':= (\ofu{\alpha} \rhd a)\, (\ofd{\alpha} \rhd a') ~, 
\ee
for all  $a,a' \in A$,
is a left $\U_\F$-module algebra with respect to the same action. 
We denote the twisted algebra by $A_\F$, with $\U_\F$-module structure
given by $\rhd$, now thought of as 
a map  $\U_\F \ot A_\F \ra A_\F$.

\subsection{Duality between twists and $2$-cocycles}\label{dualconst}
We here clarify how the two constructions of deforming by 2-cocycles
and twists are dual to each other.
 Suppose $H$ and $\U$ are dually paired bialgebras (or Hopf algebras)
 with pairing  $\langle~,~ \rangle : \U \times H \ra \bbK$, i.e., for
 all $\xi,\zeta\in \U$ and $h,k\in H$ we have 
$\langle\xi\zeta , h\rangle=\langle \xi, \one{h}\rangle\langle \zeta,\two{h}\rangle$, $\langle\xi,hk\rangle=\langle\one{\xi},h\rangle\langle\two{\xi}
   ,k\rangle$, $\langle \xi, \1_H\rangle=\varepsilon_\U(\xi)$, $\langle \1_\U, h\rangle=\varepsilon_H(h)$.
 Then to each invertible and counital  twist $\F=\f^\alpha \ot \f_\alpha \in \U \ot \U$ there corresponds a convolution invertible 
 and unital 2-cocycle $\cot_\F : H \otimes H \ra \bbK$ on $H$ defined by 
\be\label{tw2coc}
\cot_\F(h \ot k) := \langle \f^\alpha ,h \rangle \, \langle \f_\alpha , k\rangle ~,
\ee
for all  $h,k\in H$,
with convolution inverse $\bar\gamma_\F(h \ot k) = \langle \ofu{\alpha} ,h \rangle  \, \langle \ofd{\alpha} , k\rangle 
$, for all $h,k\in H$.
The 2-cocycle condition  for $\cot_\F$ follows from the  twist
condition for $\F$; indeed condition  \eqref{twist} in the
$\F=\f^\alpha\ot\f_\alpha$ notation reads as
\be\label{twist-exp} 
 \f^\alpha\, \one{\f^\beta}\ot \f_\alpha\, \two{\f^\beta} \ot {\f_\beta}
=
 \f^\alpha\ot \f^\beta\, \one{\f_\alpha} 
\ot \f_\beta \,\two{\f_\alpha}~,
\ee 
so that
\begin{eqnarray}
\nn \cot_\F ({\one{g}} \ot{\one{h}}) \,\cot_\F({\two{g}\two{h}}\ot{k}) &=&
\langle \f^\alpha, \one{g}  \rangle \, \langle \f_\alpha,\one{h}  \rangle\,
\langle \one{\f^\beta},\two{g}  \rangle\, \langle \two{\f^\beta},\two{h}  \rangle\,
\langle {\f_\beta},k  \rangle \\
\nn&=&
\langle \f^\alpha \one{\f^\beta},g \rangle \, \langle \f_\alpha \two{\f^\beta},h \rangle\,
\langle {\f_\beta},k  \rangle\\
\nn&=&
\langle \f^\alpha ,g \rangle \, \langle \f^\beta \one{\f_\alpha},h \rangle\,
\langle \f_\beta \two{\f_\alpha},k \rangle \\
\nn&=&
\langle \f^\alpha ,g \rangle\, \langle \f^\beta ,\one{h} \rangle\,
\langle  \one{\f_\alpha},\two{h} \rangle\,
\langle \f_\beta ,\one{k} \rangle \,
\langle  \two{\f_\alpha}, \two{k} \rangle 
\\
\nn&=&
\langle \f^\alpha ,g \rangle  \,\langle \f^\beta , \one{h} \rangle \,
\langle \f_\beta , \one{k} \rangle \,
\langle \f_\alpha , \two{h} \two{k}  \rangle \\
&=& 
 \cot_\F( {\one{h}}\ot{\one{k}}) \,\cot_\F ({g}\ot{\two{h}\two{k}}) ~ .
\end{eqnarray}
If we use $\F$ to twist the coproduct in $\U$ according to Proposition
 \ref{prop:twist} and $\cot_\F$ to deform the product in $H$ as in Proposition \ref{prop:co}, then  
the deformed  bialgebras (or Hopf algebras) $\U_\F$ and $H_{\cot_{\F}}$ are dually paired via the same pairing
 $\langle ~,~ \rangle$; indeed, it is easy to prove that $\langle \Delta_\F (\xi) ,h \ot k \rangle = \langle \xi , h \cdot_{\cot_\F} k \rangle$
 for all $\xi\in\U$ and $h,k\in H$.
 \sk

Notice that if $A$ is a right $H$-comodule algebra via $\delta^A : A\to A\otimes H\,,~ a \mapsto \zero{a} \ot \one{a}$, then $A$ is 
a left $\U$-module algebra with left $\U$-action $\rhd: \U\ot A\to A$,
$(\xi,a)\mapsto \xi \rhd a:= \zero{a} \langle \xi, \one{a} \rangle$. Hence,
 we can twist the product in $A$ by using $\F$  as in \eqref{lmod-F}   or
by using  $\cot_\F$ as in \eqref{rmod-twist}. The two constructions give the same algebra $A_\F = A_{\cot_\F}$; indeed,
\begin{eqnarray}
\nn a \bullet_{\cot_\F} a' &=& \zero{a} \zero{a'} \, \bar\gamma_\F ({\one{a}} \ot{\one{a'}})
=
\zero{a} \zero{a'} \, \langle \ofu{\alpha}, \one{a} \rangle\,  \langle \ofd{\alpha}, \one{a'} \rangle \\
&=&
(\ofu{\alpha} \rhd a)\, (\ofd{\alpha} \rhd a') = a \bullet_\F a'~,
\end{eqnarray}
for all $a,a^\prime\in A$.
Finally we  observe that for a  2-cocycle $\cot_\F$ associated
with a twist 
$\F=\f^\alpha \ot \f_\alpha \in \U \ot \U$, the map $\varphi_{A,A}$ introduced in Theorem
 \ref{thm:funct} reads
\be
\varphi_{A,A}(a \ot a')= (\ofu{\alpha} \rhd a) \ot (\ofd{\alpha} \rhd a')=: \F^{-1}  \rhd(a \ot a') ~,
\ee
for all $a,a' \in A_{\cot_\F}$.

\subsection{Untwisting with $2$-cocycles}
We show that if  we  twist a bialgebra (or Hopf algebra)  $H$ to $H_\cot$ via a 2-cocycle $\cot$ on $H$  we
can untwist $H_\cot$ to $H$ via the 2-cocycle $\bar\cot$ on
$H_\cot$. More in general we have,

\begin{prop}Let $\cot$ be a 2-cocycle on a bialgebra (or Hopf algebra) $H$, and
let $\hg$ be the corresponding twisted bialgebra (or Hopf algebra).  
Then $\tau$ is a 2-cocycle on $H_\gamma$ if and only if $\tau *\cot$ is a 2-cocycle
 on $H$.
Furthermore, the twisted bialgebras (or Hopf algebras)
${(\hg)}_{\!\tau}$ and $H_{\tau *\cot}$ coincide. 
 \end{prop}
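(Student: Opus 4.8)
The plan is to work entirely inside the convolution algebras $\mathrm{Hom}(H^{\ot 2},\bbK)$ and $\mathrm{Hom}(H^{\ot 3},\bbK)$, exploiting that $\hg$ and $H$ share the same underlying coalgebra, so that the convolution product $*$ (which only sees coproduct and counit) is literally the same for both. First I would dispose of the ``unital and convolution invertible'' clauses of the two cocycle definitions simultaneously: since the coalgebra of $\hg$ equals that of $H$, a map $\tau:H\ot H\to\bbK$ is unital and convolution invertible on $\hg\ot\hg$ iff it is so on $H\ot H$, and in that case $\tau*\cot$ is convolution invertible with inverse $\bar\cot*\bar\tau$ (because $(\tau*\cot)*(\bar\cot*\bar\tau)=\tau*(\cot*\bar\cot)*\bar\tau=\tau*\bar\tau=\varepsilon\ot\varepsilon$, the convolution unit) and unital (evaluate on $\1\ot h$ and $h\ot\1$ using $\Delta(\1)=\1\ot\1$). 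Thus both remaining requirements reduce to the genuine $2$-cocycle (associativity-type) identities.

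The heart of the argument is a convolution identity in $\mathrm{Hom}(H^{\ot3},\bbK)$. For a unital convolution-invertible $\omega:H\ot H\to\bbK$ and a fixed product on $H$, write $\omega_L(g\ot h\ot k)=\omega(\one g\ot\one h)\,\omega(\two g\cdot\two h\ot k)$ and $\omega_R(g\ot h\ot k)=\omega(\one h\ot\one k)\,\omega(g\ot\two h\cdot\two k)$, so that the cocycle condition \eqref{lcocycle} is exactly $\omega_L=\omega_R$. Denote by $A_L,A_R$ these maps for $\omega=\tau$ with the twisted product $\mt$ of $\hg$, by $B_L,B_R$ those for $\omega=\cot$ with the product of $H$, and by $C_L,C_R$ those for $\omega=\tau*\cot$ with the product of $H$. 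Expanding $\mt$ and the convolution $\tau*\cot$ in Sweedler notation and then collapsing a factor $\bar\cot*\cot=\varepsilon\ot\varepsilon$ (which appears exactly once in each expression, thanks to multiplicativity of the coproduct on the merged legs), I would verify
\[
C_L=A_L*B_L~,\qquad C_R=A_R*B_R~.
\]
Since $\cot$ is a $2$-cocycle on $H$ we have $B_L=B_R$, and this map is convolution invertible (being a convolution product of the invertibles $\cot\circ(\id\ot\id\ot\varepsilon)$ and $\cot\circ(m\ot\id)$). Cancelling the invertible $B_L=B_R$ on the right gives $C_L=C_R\iff A_L=A_R$, i.e.\ $\tau*\cot$ is a $2$-cocycle on $H$ iff $\tau$ is a $2$-cocycle on $\hg$.

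Finally I would prove the ``furthermore''. The product of ${(\hg)}_{\!\tau}$ is $\tau(\one h\ot\one k)\,(\two h\mt\two k)\,\bar\tau(\three h\ot\three k)$; substituting the defining formula for $\mt$ from Proposition \ref{prop:co} and regrouping the scalars into $(\tau*\cot)(\one h\ot\one k)$ and $(\bar\cot*\bar\tau)(\three h\ot\three k)=\overline{(\tau*\cot)}(\three h\ot\three k)$ yields exactly the product of $H_{\tau*\cot}$. The coproduct and counit are left unchanged by every twist, so the two bialgebra structures coincide; in the Hopf case the antipode is the convolution inverse of $\id_H$ and is therefore determined by the bialgebra structure, so it agrees automatically (one need not compare the antipode formulas).

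The main obstacle is the Sweedler-index bookkeeping in the two identities $C_L=A_L*B_L$ and $C_R=A_R*B_R$; once the single collapse $\bar\cot*\cot=\varepsilon\ot\varepsilon$ is located, everything else is formal. Conceptually these identities are the $2$-cocycle dual of the well-known fact that Drinfeld twists compose, $(\U_\F)_{\F'}=\U_{\F'\F}$, with $\cot_{\F'\F}=\cot_{\F'}*\cot_\F$ under the pairing of Appendix \ref{app:twists}; this gives a useful sanity check on the order $\tau*\cot$ but not a self-contained proof, which is why I would carry out the convolution computation directly.
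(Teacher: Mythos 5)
Your proof is correct and follows essentially the same route as the paper's: your factorizations $C_L=A_L*B_L$ and $C_R=A_R*B_R$ are exactly the paper's intermediate rewriting of the cocycle condition for $\tau*\cot$ obtained from the identity $\co{\one{g}}{\one{h}}\,\two{g}\two{h}=\one{g}\mt\one{h}\;\co{\two{g}}{\two{h}}$, and your identification of the products of ${(\hg)}_{\!\tau}$ and $H_{\tau*\cot}$ is the same regrouping of scalars. Your convolution-algebra packaging does make explicit two points the paper leaves tacit --- the invertibility of $B_L=B_R$ in $\mathrm{Hom}(H^{\ot 3},\bbK)$ that licenses the final cancellation, and the unitality and convolution-invertibility clauses of the cocycle definition --- but the underlying computation is identical.
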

\begin{proof}
By definition $\tau *\cot$ is a 2-cocycle on $H$ if and only if for all
$g,h,k\in H$,
$$
(\tau *\cot)(\one{g} \ot\one{h}) \, (\tau *\cot)(\two{g}\two{h} \ot{k}) =(\tau *\cot)(\one{h} \ot{\one{k}}) \,(\tau *\cot)({g} \ot{\two{h}\two{k}}) ~,
$$
this equality equivalently reads
\begin{multline*}
\taug{\one{g}}{\one{h}}  \co{\two{g}}{\two{h}} 
 \taug{\three{g}\three{h}}{\one{k}} \co{\four{g}\four{h}}{ \two{k}}\\
=\taug{\one{h}}{\one{k}}  \co{\two{h}}{\two{k}} 
 \taug{\one{g}}{\three{h}\three{k}} \co{\two{g}}{\four{h}\four{k}} ~,
\end{multline*}
and since
$\cot(\one{g}\ot\one{h})\two{g}\two{h}=\one{g}\!\mt\one{h}\,\cot(\two{g}\ot\two{h})$
for all $g,h$, the equality holds  if and only if 
\begin{multline*}
\taug{\one{g}}{\one{h}} 
 \taug{\two{g}\mt\two{h}}{k}  \co{\three{g}}{\three{h}} \co{\four{g}\four{h}}{ \two{k}}\\
 =  
\taug{{\one{h}}}{\one{k}}   \taug{g}{\two{h}\mt\two{k}} \co{\three{h}}{\three{k}}  \co{\two{g}}{\four{h}\four{k}} ~,
\end{multline*}
i.e., since $\cot$ is a twist on $H$, if and only if $\tau$ is a twist
on $H_\cot$:
$$
\taug{\one{g}}{\one{h}} \taug{\two{g} \mt \two{h}}{k} 
=  \taug{\one{h}}{\one{k}} \taug{g}{\two{h}\mt \two{k}}~.
$$
It is straightforward to show that  the twisted product $\cdot_{\tau *\cot}$ in $H_{\tau * \cot}$
equals the twisted product ${\cdot_\cot}_\tau$ in ${(\hg)}_{\!\tau\!\:}$; indeed,
$$h\cdot_{\tau *\cot} g=  (\tau *\cot)(\one{h}\ot\one{k})  \, \two{h}\two{k} \, ( \,\overline{\!\tau\! *\!\cot\!}\,)(\three{h} \ot\three{k}) =  \tau(\one{h}\ot\one{k})  \, \two{h}\mt\two{k} \, \tau(\three{h} \ot\three{k}) =h {\,\cdot_\cot}_{\tau\,} k ~.$$ 
Since the antipode if it exists is unique we immediately have the
statement for Hopf algebras.
\end{proof}

Setting $\tau=\bar\cot$, since $\bar\cot *\cot=\varepsilon\ot
\varepsilon$ is trivially a 2-cocycle, we conclude
that the twisted bialgebra (or Hopf algebra)
 $\hg$ can be `untwisted' via the convolution inverse  $\bar\cot$:
\begin{cor}\label{cor:untwist}
 If $\cot$ is a 2-cocycle on the bialgebra (or Hopf algebra)  $H$, then its
 convolution inverse $\bar\cot$ is a  2-cocycle on the bialgebra (or
 Hopf algebra) $\hg$,
 and
 ${(\hg)}_{\bar\cot}$
is isomorphic to $H$ (through the identity map). 
\end{cor}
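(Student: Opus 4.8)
The plan is to obtain this Corollary as the special case $\tau=\bar\cot$ of the preceding Proposition, so that essentially nothing new has to be proved beyond one routine verification. First I would recall that, by definition of the convolution inverse,
\be
\bar\cot * \cot = \varepsilon\ot\varepsilon~,
\ee
where $\varepsilon\ot\varepsilon : H\ot H\to\bbK$, $h\ot k\mapsto \varepsilon(h)\varepsilon(k)$, is the unit of the convolution algebra $\mathrm{Hom}(H\ot H,\bbK)$. This map is a \emph{trivial} $2$-cocycle on $H$: it is unital and convolution invertible (it is its own inverse, since $(\varepsilon\ot\varepsilon)*(\varepsilon\ot\varepsilon)=\varepsilon\ot\varepsilon$), and both sides of the cocycle condition \eqref{lcocycle} collapse to $\varepsilon(g)\varepsilon(h)\varepsilon(k)$ by counitality of $\Delta$.

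Next I would apply the ``if'' implication of the preceding Proposition with $\tau=\bar\cot$: since $\bar\cot * \cot=\varepsilon\ot\varepsilon$ is a $2$-cocycle on $H$, it follows at once that $\bar\cot$ is a $2$-cocycle on $\hg$, which is the first assertion. The ``furthermore'' part of that Proposition then yields the equality of twisted bialgebras (or Hopf algebras) $(\hg)_{\bar\cot} = H_{\bar\cot * \cot} = H_{\varepsilon\ot\varepsilon}$.

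It therefore only remains to check that twisting $H$ by the trivial cocycle $\varepsilon\ot\varepsilon$ returns $H$ itself through the identity map. Since the coproduct and counit are left unchanged by the twisting procedure (Proposition \ref{prop:co}), I would simply compare products and antipodes. Inserting $\cot=\varepsilon\ot\varepsilon$ into \eqref{hopf-twist} and using counitality, the deformed product reads
\be
h\cdot_{\varepsilon\ot\varepsilon} k = \varepsilon(\one{h})\varepsilon(\one{k})\,\two{h}\two{k}\,\varepsilon(\three{h})\varepsilon(\three{k}) = hk~,
\ee
so it coincides with the original product. For the antipode, the map in \eqref{uxS} evaluates to $u_{\varepsilon\ot\varepsilon}(h) = \varepsilon(\one{h})\varepsilon(S(\two{h})) = \varepsilon(h)$, and likewise $\bar u_{\varepsilon\ot\varepsilon}=\varepsilon$, whence $S_{\varepsilon\ot\varepsilon} = u_{\varepsilon\ot\varepsilon} * S * \bar u_{\varepsilon\ot\varepsilon} = \varepsilon * S * \varepsilon = S$. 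Thus $H_{\varepsilon\ot\varepsilon}$ equals $H$ as a bialgebra (resp.\ Hopf algebra), with the identity map as the claimed isomorphism.

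The entire substance of the statement is already carried by the quoted Proposition; the only genuinely new item is the triviality check of the last paragraph, which presents no real obstacle and is purely computational. Hence I expect the argument to be short and self-contained.
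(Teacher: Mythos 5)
Your proposal is correct and follows essentially the same route as the paper: the corollary is obtained by setting $\tau=\bar\cot$ in the preceding untwisting Proposition, using that $\bar\cot*\cot=\varepsilon\ot\varepsilon$ is trivially a $2$-cocycle on $H$, so that $(\hg)_{\bar\cot}=H_{\varepsilon\ot\varepsilon}=H$. Your explicit check that the trivial cocycle $\varepsilon\ot\varepsilon$ leaves the product and antipode of $H$ unchanged is a detail the paper leaves implicit, but it is the right (and only) remaining verification.
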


\section{Equivalence of closed monoidal categories and the $\Q$-map}\label{appC}
In this section we show how the $\Q$-map of Theorem \ref{prop:mapQ}
is related (by duality) to the natural transformation
which establishes that twisting may be regarded as an 
equivalence of closed monoidal categories.
\sk

Recall from Theorem \ref{prop:mapQ} that $\Q : \underline{H_\cot} \to \underline{H}_\cot$
is a right $H_\cot$-comodule isomorphism, where $\underline{H_\cot}$ carries the $\Ad_\cot$-coaction
and  $\underline{H}_\cot$ the $\Ad$-coaction (regarded as an $H_\cot$-coaction).
Assume that $H'$ and $H$ are dually paired Hopf algebras 
with pairing $\langle~,~\rangle: H'\otimes H\to \bbK$, and let $\gamma$ be a
2-cocycle on $H$ with corresponding dual twist $\F\in H'\otimes H'$.  Then
the Hopf algebras $H_\gamma$ and $H'_\F$ are dually paired and the
right $H_\cot$-comodules $\underline{H_\gamma}$ and
$\underline{H}_\gamma$ are dually paired to the right $H'_\F$-modules 
$\underline{H'_\F}$ and ${\underline H}'_\F$. These coincide with $H^\prime$
as $\bbK$-modules and by definition have right
$H'_\F$-adjoint actions respectively given by  
\begin{flalign}
\blacktriangleleft_{\F}\,  : \underline{H'_\F}  \otimes H_\F^\prime\longrightarrow  \underline{H'_\F} ~,~~\zeta\otimes \xi 
\longmapsto S_\F(\xi_{\scriptscriptstyle{(1)_\F}})\, \zeta\, \xi_{\scriptscriptstyle{(2)_\F}}~
\end{flalign}
and
\begin{flalign}
\blacktriangleleft \,: {\underline H}'_\F \otimes   H_\F^\prime \longrightarrow  {\underline H}'_\F ~,~~\zeta\otimes \xi 
\longmapsto S(\one{\xi})\, \zeta\, \two{\xi} ~.
\end{flalign}
(The dual pairing extends also to a dual pairing between right $H_\cot$-comodule
coalgebras and right $H_\F^\prime$-module algebras).
\sk

The isomorphism $\Q: \underline{H_\gamma}\to \underline{H}_\gamma$ of
Theorem \ref{prop:mapQ} can be dualized to an isomorphism 
\begin{flalign}
\Q': {\underline H}'_\F \longrightarrow  \underline{H'_\F} 
\end{flalign}
by setting $\langle \Q'(\xi),h\rangle=\langle\xi,\Q(h)\rangle$, for all $\xi\in
{\underline H}'_\F$ and $h\in \underline{H_\gamma}$. 
Explicitly, we have that
\begin{flalign}
\Q'(\xi)=\f^\beta \,(\xi\blacktriangleleft \f_\beta) = \f^{ \beta} \, S(\f_{\beta_{(1)}})\, \xi\, \f_{\beta_{(2)}}~,
\end{flalign}
for all $\xi\in {\underline H}'_\F$. 
Recall that right $H'$-modules are equivalently left ${{H'}^{op}}^{cop}$-modules and right 
$H'_\F$-modules are left ${({H'}_\F)^{op}}^{cop}=({{{H'}^{op}}^{cop}})_{{\F^{op}}^{cop}}$-modules, 
where ${\F^{op}}^{cop}={\F_{21}^{-1}}=\ofd{\alpha}\otimes \ofu{\alpha}$.
The map $\Q':  {\underline H}'_\F \to  \underline{H'_\F} $ is therefore
equivalently an isomorphism of left $({{{H'}^{op}}^{cop}})_{{\F^{op}}^{cop}}$-modules, 
and using the left $({{{H'}^{op}}^{cop}})_{{\F^{op}}^{cop}}$-action it reads as
\begin{flalign}
\Q'(\xi) = \f_{\beta_{(1)^{cop}}} \,\cdot^{op}\,\xi \,\cdot^{op}\, {S^{op}}^{cop} (\f_{\beta_{(2)^{cop}}})\,\cdot^{op}\,\f^{\beta}
= (\f_{\beta}{{\blacktriangleright}^{op}}^{cop} \xi)\,\cdot^{op}\,\f^\beta~.
\end{flalign}
Referring to \cite[Section 3.2]{AS}, it follows that $\Q'$ is precisely the isomorphism
$D_{{\F^{op}}^{cop}}$ for the Hopf algebra ${{{H'}^{op}}^{cop}}$ twisted by
the twist ${\F^{op}}^{cop}$.
It has been shown in \cite{BSS} that such $D$-maps have a categorical interpretation in terms of the
natural isomorphism which establishes that twisting is an equivalence of closed monoidal categories.
Hence, in conclusion, the dual of our $\Q$-map can be given a categorical interpretation.

\section{The twisted sheaf of the Hopf bundle over $S^4_\theta$: top down approach}\label{appB} 
We complement the example of the twisted sheaf in \S \ref{S4astwistedsheaf}
by presenting a top down approach: we first describe $S^7$ as a ringed
space, then on these algebras (rings)  of coordinate functions on 
opens of $S^7$ we induce the $H$-coaction 
leading to a sheaf $\ft$ of $H$-comodule algebras (with $H={\mathcal{O}}(SU(2))$).
Next we show that this is a locally cleft sheaf of $H$-Hopf Galois
extensions, and as a corollary that it is
naturally isomorphic to the sheaf $\sft$ of \S \ref{S4astwistedsheaf}, $\ft\simeq\sft$.
 Finally, in the last of the paragraphs titled in italics, the torus action on $\pi:S^7 \ra
S^4$ is pulled back to this sheaf description and the corresponding
twist deformation is obtained. In Section \ref{appsheafB} we study the subsheaf of
$H$-coinvariants, it is generated by two copies (of the exponential
version) of the
Moyal-Weyl
algebra on $\mathbb{R}^4_\theta$ that describe $S^4_\theta$ as a ringed space.
\bigskip

{\noindent {\it The sheaf $\ft$ over $S^4$ of coordinate functions on opens of $S^7$}}\\
As in \S \ref{S4astwistedsheaf}  we consider the sphere $S^4$ with topology $\{\emptyset,
U_{\scriptscriptstyle{N}}, U_{\scriptscriptstyle{S}},
U_{\scriptscriptstyle{NS}}, S^4 \}$; it is generated by the
basis with (open) sets $\emptyset$,  
 $U_{\scriptscriptstyle{N}}$, $U_{\scriptscriptstyle{S}}$ and their
 intersection $U_{\scriptscriptstyle{NS}}$. The topology on $S^4$
 induces a topology on $S^7$ given by the opens $\pi^{-1}(U)$, with $U$ open in $S^4$.

We define a sheaf $\ft$ of algebras on $S^4$ by
assigning an algebra to each open of the basis for the topology on $S^4$. To the
empty set $\emptyset$ we assign the algebra $\ft(\emptyset)$ that is the one-element
algebra (where unit and zero elements coincide), while to the remaining
open sets of the basis we define
$\ft(U)$  as quotients of central real extensions of
$A=\mathcal{O}(S^7)$,
the coordinate algebra on $S^7 $ generated by the commuting elements
 $z_i, z^*_i$ ($i=1,...4$) satisfying the sphere condition $\sum
z_i z_i^*=1$. Explicitly we define
\begin{eqnarray}\label{sheafglobal}
\ft(U_{\scriptscriptstyle{N}})&\!\!\!:=&\!\!\! \mathcal{O}(S^7)[c_{\scriptscriptstyle{N}},c_{\scriptscriptstyle{N}}^{-1}] ~\big/ \langle z_3z_3^*+z_4z_4^*-c_{\scriptscriptstyle{N}}^{2} ~,~c_{\scriptscriptstyle{N}}c_{\scriptscriptstyle{N}}^{-1}-1\rangle~,\\[.6em]
\ft(U_{\scriptscriptstyle{S}})&\!\!\!:=& \!\!\!\mathcal{O}(S^7)[c_{\scriptscriptstyle{S}},c_{\scriptscriptstyle{S}}^{-1}] ~\big/ \langle z_1z_1^*+z_2z_2^*-c_{\scriptscriptstyle{S}}^{2} 
~,~c_{\scriptscriptstyle{S}}c_{\scriptscriptstyle{S}}^{-1}-1\rangle~,\nn\\[.6em]
\ft(U_{\scriptscriptstyle{NS}})&\!\!\!:=&\!\!\! \mathcal{O}(S^7)[c_{\scriptscriptstyle{N}}^{\pm 1},c_{\scriptscriptstyle{S}}^{\pm 1} ] 
~\big/ \langle z_3z_3^*+z_4z_4^*-c_{\scriptscriptstyle{N}}^{2} , ~~ z_1z_1^*+z_2z_2^*-c_{\scriptscriptstyle{S}}^{2}
~,~c_{\scriptscriptstyle{N}}c_{\scriptscriptstyle{N}}^{-1}-1
~,~c_{\scriptscriptstyle{S}}c_{\scriptscriptstyle{S}}^{-1}-1
\rangle~.\nn
\end{eqnarray}
For each open set $U \subset S^4$,  $\ft(U)$ is the algebra
of coordinate functions on $\pi^{-1} (U)\subset S^7$. Indeed 
extending the algebra $
\mathcal{O}(S^7)$ by the generator $c_{\scriptscriptstyle{N}}^{-1}$
corresponds, geometrically,  to  restricting to the subspace of $S^7$
of those points with $z_3z_3^*+z_4z_4^*$ never vanishing. Now recalling
relation (\ref{4sphere-coinv}) between coordinates on $S^7$ and on
$S^4$, we see that these are the points with $x\not= 1$, i.e., they are
the points of $\pi^{-1} (U_{\scriptscriptstyle{N}})$ (we use the same notation for the coordinate functions and the point coordinates). Conversely, enlarging the algebra with $c_{\scriptscriptstyle{N}}$ 
does not have a geometrical significance, but it is a pure algebraic
operation designed to add the square root of the positive real element
$z_3z_3^*+z_4z_4^*= c_{\scriptscriptstyle{N}}^{2}$ already belonging
to the algebra $\mathcal{O}(S^7)$. The same discussion is valid for
the elements $c_{\scriptscriptstyle{S}}^{ \pm 1}$, so that
$\ft(U_{\scriptscriptstyle{S}})$ and
$\ft(U_{\scriptscriptstyle{NS}})$ are coordinate algebras on $\pi^{-1}
(U_{\scriptscriptstyle{S}})$ and $\pi^{-1} (U_{\scriptscriptstyle{NS}})$ respectively.
\\

The assignment $\ft: U \mapsto \ft(U)$, with restriction morphisms 
given by the canonical inclusions $i_{\scriptscriptstyle{N}}: \ft(U_{\scriptscriptstyle{N}}) \hookrightarrow  \ft(U_{\scriptscriptstyle{NS}})$ and $
i_{\scriptscriptstyle{S}}: \ft(U_{\scriptscriptstyle{S}}) \hookrightarrow \ft(U_{\scriptscriptstyle{NS}}) $
defines\footnote{We also have the restriction morphisms 
 $\ft(U)\to\ft(\emptyset)$ that are canonical
 (and characterize the one-element algebra as the terminal object in the category of algebras).}
a sheaf of algebras over $S^4$.  
The algebra of global sections $\ft(S^4)$ is the pull-back 
\begin{equation}\label{restrA}
\ft(S^4) :=
\big\{(a_{\scriptscriptstyle{N}},a_{\scriptscriptstyle{S}}) \in \ft(U_{\scriptscriptstyle{N}}) \times \ft(U_{\scriptscriptstyle{S}}) ~|~  i_{\scriptscriptstyle{N}}(a_{\scriptscriptstyle{N}})=
i_{\scriptscriptstyle{S}}(a_{\scriptscriptstyle{S}})
\big \}\simeq \mathcal{O}(S^7)~.
\end{equation}
where $\times$ is the (categorical) product of $\ast$-algebras.
In the last equality we have observed that $\ft(S^4)$ is isomorphic to the
coordinate algebra $ \mathcal{O}(S^7)$ of $S^7$, indeed
$\ft(S^4)$ is the diagonal of
$\mathcal{O}(S^7)\times\mathcal{O}(S^7)\hookrightarrow\ft(U_{\scriptscriptstyle{N}}) \times \ft(U_{\scriptscriptstyle{S}}) $. \\[-.5em]
\bigskip

\noindent {\it The $H={\mathcal O}(SU(2))$-comodule structure on $\ft$ and the subsheaf  $\mathcal{B}=\mathcal{A}^{coH}$}\\
We recall from Example \ref{exCL} that $\ft(S^4)$ is a right $H={\mathcal O}(SU(2))$-comodule
algebra with right 
coaction given by (cf.\ \eqref{princ-coactSU2}):
\be\label{coazSU2}
\, u
 \longmapsto
u
\overset{.}{\otimes}
\begin{pmatrix}
 w_1 & -w_2^*
\vspace{2pt}
\\
w_2 & w_1^*\end{pmatrix} \quad , \qquad 
u:=
\begin{pmatrix}
z_1& z_2 & z_3& z_4
\vspace{2pt}
\\
-z_2^*  &
  z_1^* &
  -z_4^*
& z_3^*
\end{pmatrix}^t 
 \ee
(where  $\overset{.}{\otimes}$ denotes the composition of $\ot$ with
the matrix multiplication) and that the $H$-coinvariant subalgebra
$B=A^{coH}$ 
is generated by the elements
\be\label{4sphere-coinv2}
\alpha:= 2(z_1 z_3^* + z^*_2 z_4)~, \quad 
\beta:= 2(z_2 z_3^* - z^*_1 z_4)~, \quad 
x:= z_1 z_1^* + z_2 z_2^* - z_3 z_3^* -z_4 z_4^* ~, \quad 
\ee 
(and their $*$-conjugated $\alpha^*, \beta^*$,  with $x^*=x$) that satisfy
$
\alpha^* \alpha + \beta^* \beta + x^2=1 . 
$
Thus the subalgebra $B=A^{coH}$  of coinvariants is isomorphic to the algebra 
$\mathcal{O}(S^4)$ of coordinate  functions  on  $S^4$.
\sk
Since in particular the elements
\begin{equation}\label{cNcSx}
 c_{\scriptscriptstyle{N}}^{2}=z_3z_3^*+z_4z_4^*=\frac{1}{2}(1-x)~~,~~~~
c_{\scriptscriptstyle{S}}^{2}=z_1z_1^*+z_2z_2^*=\frac{1}{2}(1+x)
\end{equation}
 are
$H$-coinvariant, we then define,  for each open set $U \subset S^4$,   the $H$-comodule
structure on $\ft(U)$ via (\ref{coazSU2}) and by requiring $c_{\scriptscriptstyle{N}}$ and
$c_{\scriptscriptstyle{S}}$ to be $H$-coinvariant. In this way the
canonical inclusions
$\ft(S^4) \hookrightarrow  \ft(U_{\scriptscriptstyle{N}})$, $
\ft(S^4) \hookrightarrow \ft(U_{\scriptscriptstyle{S}}) $, $\ft(U_{\scriptscriptstyle{N}}) \stackrel{i_{\scriptscriptstyle{N}}}{\hookrightarrow}  \ft(U_{\scriptscriptstyle{NS}})$ and $
\ft(U_{\scriptscriptstyle{S}})
\stackrel{i_{\scriptscriptstyle{S}}}{\hookrightarrow} \ft(U_{\scriptscriptstyle{NS}}) $ are trivially right
$H$-comodule algebra inclusions. We have thus shown that $\ft$
is a sheaf of $H=\mathcal{O}(SU(2))$-comodule algebras.
The $H$-comodule structure on $\ft(S^4)$ is obtained from the pull-back
(\ref{restrA}), thought now as pull-back of $H$-comodule algebras; 
the isomorphism $\ft(S^4)\simeq
\mathcal{O}(S^7)$ then becomes an $H$-comodule algebra isomorphism.

\sk
The subalgebras of $H$-coinvariants are given by 
\be\label{coiAcoH} 
\ft(U_{\scriptscriptstyle{N}})^{coH}=
\ocn
\quad , \quad 
\ft(U_{\scriptscriptstyle{S}})^{coH}= \ocs
\quad , \quad
\ft(U_{\scriptscriptstyle{NS}})^{coH}=
\ocns
\ee
where $\ocn$
denotes the $*$-subalgebra of $\ft(U_{\scriptscriptstyle{N}})$
generated by the elements $\alpha,
\beta, x, c_{\scriptscriptstyle{N}}^{\pm 1}$, and similarly for the other basic open sets. Notice that
$\ft^{coH}(U)=\sft^{coH}(U)$ as defined in (\ref{HGtrivialsheaf}).
The algebra of global coinvariant sections is $\ft(S^4)^{coH}\simeq{\mathcal{O}}(S^4)$, and,
similarly to (\ref{restrA}), it is isomorphic  to the pull-back
\begin{equation}\label{Bsette}
\{(b_{\scriptscriptstyle{N}},b_{\scriptscriptstyle{S}}) \in \ft(U_{\scriptscriptstyle{N}})^{coH} \times \ft(U_{\scriptscriptstyle{S}})^{coH} ~|~  i_{\scriptscriptstyle{N}}(b_{\scriptscriptstyle{N}})=
i_{\scriptscriptstyle{S}}(b_{\scriptscriptstyle{S}}) \}~.
\end{equation}

In \S \ref{appsheafB}  we explicitly show that the subsheaf $\ft^{coH}$ of 
coinvariant elements (complemented by
$\ft({\emptyset})^{coH}=\ft(\emptyset)$) is that of coordinate functions
on the opens
$\emptyset, U_{\scriptscriptstyle{N}}, U_{\scriptscriptstyle{S}}, U_{\scriptscriptstyle{NS}}, S^4$. 
\bigskip

\noindent {\it The sheaf $\ft$ is a locally cleft sheaf of $H$-Hopf-Galois extensions}\\
The $H$-comodule algebra isomorphism $\ft(S^4)\simeq
\mathcal{O}(S^7)$ shows that the global sections $\ft(S^7)$ are an
 $H=\mathcal{O}(SU(2))$-Hopf-Galois extensions of the global
 coinvariant sections  $\ft(S^4)^{coH}\simeq \mathcal{O}(S^4)$.
Recalling the general theory, cf.\ \S \ref{sec:sheaves}, this shows that the
sheaf $\ft$  of $H$-comodule algebras
is a sheaf of $H$-Hopf-Galois extensions.

In order to prove that $\ft$ is  locally cleft  we consider the
open covering $\{U_{\scriptscriptstyle{N}},
U_{\scriptscriptstyle{S}}\}$ of $S^4$ and show that $\ft(U_{\scriptscriptstyle{N}})^{coH}\subseteq \ft (U_{\scriptscriptstyle{N}})$ and $\ft
(U_{\scriptscriptstyle{S}})^{coH}\subseteq \ft (U_{\scriptscriptstyle{S}})$ are
cleft extensions.

We first observe that the  matrix elements
\begin{align}
\begin{pmatrix}
w^{\scriptscriptstyle{N}}_1 & -(w^{\scriptscriptstyle{N}}_2)^*
\\[.4em]
w^{\scriptscriptstyle{N}}_2 & (w^{\scriptscriptstyle{N}}_1)^*
\end{pmatrix}:=c_{\scriptscriptstyle{N}}^{-1} \begin{pmatrix}
 z_3 & -  z_4^*
\\[.4em]
z_4&  z_3^* 
\end{pmatrix}~
\end{align}
generate a $*$-subalgebra of  $\ft (U_{\scriptscriptstyle{N}})$
isomorphic to  $H=\mathcal{O}(SU(2))$;
indeed this matrix has unit determinant since
$c_{\scriptscriptstyle{N}}^{-2} (z_3z_3^*+z_4z_4^*)  =1$.
Similarly  the matrix elements
\begin{align}
\begin{pmatrix}
w^{\scriptscriptstyle{S}}_1 & -(w^{\scriptscriptstyle{S}}_2)^*
\\[.4em]
w^{\scriptscriptstyle{S}}_2 & (w^{\scriptscriptstyle{S}}_1)^*
\end{pmatrix}:=c_{\scriptscriptstyle{S}}^{-1}\begin{pmatrix}
  z_1 &- z_2^*
\\[.4em]
 z_2& z_1^* 
\end{pmatrix}~,
\end{align}
generate an $H=\mathcal{O}(SU(2))$ $*$-subalgebra of  $\ft (U_{\scriptscriptstyle{N}})$.

By using the matrix $u$ in \eqref{coazSU2} and the matrix projector
$\textsf{P}=uu^*$ (whose entries are the generators of $S^4$, see \eqref{proj-inst}) we introduce  ``local trivialization maps'':$\;$\footnote{\label{f5}
Recall that 
 the Hopf bundle $\pi:S^7 \rightarrow S^4$ trivializes 
on the two charts $U_{\scriptscriptstyle{N}},
U_{\scriptscriptstyle{S}}$ with trivializations (we use the same notation for the coordinate functions and the point coordinates)
$$
\pi^{-1}(U_{\scriptscriptstyle{N}}) \longrightarrow U_{\scriptscriptstyle{N}} \times SU(2)~, ~~ 
{z}=(z_1,z_2,z_3,z_4) \longmapsto \left( \pi({z}), \frac{1}{(|z_3|^2+|z_4|^2)^{\frac{1}{2}}}\begin{pmatrix}
 z_3 & -z_4^*
\vspace{2pt}
\\
z_4 & z_3^*\end{pmatrix}
 \right)$$
and
$$
\pi^{-1}(U_{\scriptscriptstyle{S}}) \longrightarrow U_{\scriptscriptstyle{S}} \times SU(2)~, ~~ 
{z}=(z_1,z_2,z_3,z_4) \longmapsto \left( \pi({z}), \frac{1}{(|z_1|^2+|z_2|^2)^{\frac{1}{2}}}\begin{pmatrix}
 z_1 & -z_2^*
\vspace{2pt}
\\
z_2 & z_1^*\end{pmatrix}
 \right) ~~.$$
The datum of the transition functions characterizing the bundle is
contained in the trivialization maps. In the present case we have  just one intersection
$U_{\scriptscriptstyle{NS}},$ defining the transition function
$$g_{\scriptscriptstyle{NS}}: U_{\scriptscriptstyle{NS}}\longrightarrow SU(2)~,~~~\pi(z)\longmapsto 
{\frac{(|z_3|^2+|z_4|^2) ^{\frac{1}{2}}}{(|z_1|^2+|z_2|^2)^{\frac{1}{2}}}}\begin{pmatrix}
 z_1 & -z_2^*
\vspace{2pt}
\\
z_2 & z_1^*\end{pmatrix}
\begin{pmatrix}
 z_3 & -z_4^*
\vspace{2pt}
\\
z_4 & z_3^*\end{pmatrix}
^{-1}~,
$$
i.e.\ 
$$
(\alpha,\beta,x)\longmapsto 
\frac{1}{2} c_{\scriptscriptstyle{N}}^{-1}c_{\scriptscriptstyle{S}}^{-1}\begin{pmatrix}
 \alpha & -  \beta^*
\\[.4em]
\beta &  \alpha^* 
\end{pmatrix}~.
$$
} 
\begin{align}\label{loc-triv-P}
\!\!\!\!\Psi_{\scriptscriptstyle{N}}: \ft(U_{\scriptscriptstyle{N}})  &\longrightarrow ~\ft(U_{\scriptscriptstyle{N}})^{coH} \ot H
\qquad 
&\!\!\!\!\!\!\Psi_{\scriptscriptstyle{S}}: \ft(U_{\scriptscriptstyle{S}}) &\longrightarrow  ~\ft(U_{\scriptscriptstyle{S}})^{coH} \ot H
\\ \nn
 u &\longmapsto  c_{\scriptscriptstyle{N}}^{-1} ~\textsf{P}
\overset{.}{\otimes}
c_{\scriptscriptstyle{N}}^{-1}
\begin{pmatrix}
1 &0 
\\
0& 1 
\\
z_3 & -z_4^*
\\
z_4& z_3^* 
\end{pmatrix} 
~,
\qquad
&u& \longmapsto c_{\scriptscriptstyle{S}}^{-1} ~ \textsf{P}
\overset{.}{\otimes}
c_{\scriptscriptstyle{S}}^{-1} 
\begin{pmatrix}
z_1 & -z_2^*
\\
z_2& z_1^* 
\\
1 &0 
\\
0& 1 
\end{pmatrix}~,
\end{align}
(extended as $*$-algebra maps).

\begin{lem} \label{lem:triv}
The maps $\Psi_{\scriptscriptstyle{N}}$ and 
$\Psi_{\scriptscriptstyle{S}}$ in (\ref{loc-triv-P}) 
are well defined algebra morphisms and are isomorphisms of left
 $\ft(U_{\scriptscriptstyle{N}})^{coH}$-modules (respectively
 $\ft(U_{\scriptscriptstyle{S}})^{coH}$-modules) and  also of right
 $H$-comodule algebras, where 
$\ft(U_{\scriptscriptstyle{N}})^{coH} \ot H_{\scriptscriptstyle{N}}
$
and $\ft(U_{\scriptscriptstyle{S}})^{coH} \ot H_{\scriptscriptstyle{S}}  $
have $H$-coaction given by the coproduct, $\id\otimes\Delta$
(cf.\ \eqref{deltaVW}). 
Hence they structure $\ft(U_{\scriptscriptstyle{N}})$ and $\ft(U_{\scriptscriptstyle{S}})$
as cleft  Hopf-Galois extension.
\end{lem}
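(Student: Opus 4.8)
The plan is to verify the three claimed properties of $\Psi_{\scriptscriptstyle{N}}$ (the argument for $\Psi_{\scriptscriptstyle{S}}$ being entirely symmetric, obtained by exchanging the roles of the charts) and then to deduce cleftness via the characterization recalled at the end of \S \ref{sec:HG}. First I would check that $\Psi_{\scriptscriptstyle{N}}$ is \emph{well defined} as a $*$-algebra map: since it is defined on generators and extended multiplicatively, one must confirm that the defining relations of $\ft(U_{\scriptscriptstyle{N}})$ (the $S^7$ sphere relation $\sum z_iz_i^*=1$, the relation $z_3z_3^*+z_4z_4^*=c_{\scriptscriptstyle{N}}^{2}$, and $c_{\scriptscriptstyle{N}}c_{\scriptscriptstyle{N}}^{-1}=1$) are mapped to valid relations in $\ft(U_{\scriptscriptstyle{N}})^{coH}\ot H$. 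The key computation is that the matrix entries of $c_{\scriptscriptstyle{N}}^{-1}\bigl(\begin{smallmatrix} z_3 & -z_4^* \\ z_4 & z_3^*\end{smallmatrix}\bigr)$ land in a $*$-subalgebra isomorphic to $H=\mathcal{O}(SU(2))$, which holds because this matrix has unit determinant (using $c_{\scriptscriptstyle{N}}^{-2}(z_3z_3^*+z_4z_4^*)=1$, already noted in the excerpt), together with the fact that $c_{\scriptscriptstyle{N}}^{-1}\textsf{P}$ has entries in the coinvariant subalgebra $\ft(U_{\scriptscriptstyle{N}})^{coH}=\ocn$.

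Second I would establish that $\Psi_{\scriptscriptstyle{N}}$ is a \emph{morphism of right $H$-comodule algebras}, i.e.\ that it intertwines the coaction \eqref{coazSU2} on $\ft(U_{\scriptscriptstyle{N}})$ with the coaction $\id\ot\Delta$ on $\ft(U_{\scriptscriptstyle{N}})^{coH}\ot H$. Because the left factor $c_{\scriptscriptstyle{N}}^{-1}\textsf{P}$ is coinvariant, the coaction on the image only acts on the $H$-factor, and one checks that applying $\delta^{\ft(U_{\scriptscriptstyle{N}})}$ to $u$ and then $\Psi_{\scriptscriptstyle{N}}$ agrees with applying $\Psi_{\scriptscriptstyle{N}}$ and then $\id\ot\Delta$; this reduces to the comultiplicativity of the $SU(2)$ matrix $\bigl(\begin{smallmatrix} w_1 & -w_2^* \\ w_2 & w_1^*\end{smallmatrix}\bigr)$ under $\Delta$. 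Third, \emph{left $\ft(U_{\scriptscriptstyle{N}})^{coH}$-linearity} is essentially built into the definition since the coinvariant generators are sent to themselves times $\1_H$, so $\Psi_{\scriptscriptstyle{N}}$ restricts to the identity on coinvariants. For \emph{bijectivity}, I would exhibit the inverse explicitly: the composite $(\mathrm{mult})\circ(\Psi_{\scriptscriptstyle{N}}\ot \mathrm{ev})$ recovers the $z_i$ from the projector entries and the local frame, i.e.\ one inverts the trivialization by reading off $u = (c_{\scriptscriptstyle{N}}^{-1}\textsf{P})\cdot(\text{frame})$, which is manifestly invertible once the frame matrix is a unit-determinant $SU(2)$ element.

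Finally, once $\Psi_{\scriptscriptstyle{N}}$ is an isomorphism of left $\ft(U_{\scriptscriptstyle{N}})^{coH}$-modules and right $H$-comodules onto $\ft(U_{\scriptscriptstyle{N}})^{coH}\ot H$ with the standard $\id\ot\Delta$ comodule structure, this is precisely the normal basis property: the inverse $\theta:=\Psi_{\scriptscriptstyle{N}}^{-1}:\ft(U_{\scriptscriptstyle{N}})^{coH}\ot H\to\ft(U_{\scriptscriptstyle{N}})$ is an isomorphism of left $B$-modules and right $H$-comodules, which by the discussion around \eqref{cleaving}--\eqref{j-theta} is equivalent to the existence of a cleaving map, hence $\ft(U_{\scriptscriptstyle{N}})^{coH}\subseteq\ft(U_{\scriptscriptstyle{N}})$ is a cleft Hopf-Galois extension. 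I expect the main obstacle to be the \textbf{well-definedness / image-identification step}: one must carefully verify that the image of $\Psi_{\scriptscriptstyle{N}}$ really lies in $\ft(U_{\scriptscriptstyle{N}})^{coH}\ot H$ and that all defining relations are respected, in particular that the $*$-structure is preserved and that the entries of $c_{\scriptscriptstyle{N}}^{-1}\textsf{P}$ are genuinely coinvariant — a bookkeeping computation with the explicit generators $\alpha,\beta,x$ of \eqref{4sphere-coinv2} and the identities \eqref{cNcSx}. The comodule and module properties, by contrast, follow rather formally once the map is seen to be well defined.
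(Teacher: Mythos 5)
Your proposal is correct and follows essentially the same route as the paper: verify on generators that the defining relations (the sphere relation and $z_3z_3^*+z_4z_4^*=c_{\scriptscriptstyle{N}}^{2}$) are preserved, note that $H$-colinearity and left $B$-linearity are immediate from the form of the map, and conclude cleftness from the normal basis property. The only presentational difference is that the paper establishes bijectivity not by writing an inverse formula but by checking (via the identities showing $\alpha\ot\1$, $\beta\ot\1$, $x\ot\1$ are quadratic in the $z_i^{\scriptscriptstyle{N}}$) that $\Psi_{\scriptscriptstyle{N}}$ gives a one-to-one correspondence between generators and relations on both sides — which is a cleaner substitute for your somewhat loosely specified $(\mathrm{mult})\circ(\Psi_{\scriptscriptstyle{N}}\ot\mathrm{ev})$ inverse.
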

\begin{proof}
Explicitly, these local trivialization maps are given by
\begin{eqnarray}\label{zN}
\Psi_{\scriptscriptstyle{N}}: \ft(U_{\scriptscriptstyle{N}})  &\longrightarrow& \ft(U_{\scriptscriptstyle{N}})^{coH} \ot H
\\
z_1 &\longmapsto &z_1^{\scriptscriptstyle{N}}:= \frac{c_{\scriptscriptstyle{N}}^{-1}}{2}  (\alpha \ot w_1^{\scriptscriptstyle{N}} - \beta^* \ot w_2^{\scriptscriptstyle{N}}  ) \nn
\\ \nn
z_2 &\longmapsto&z_2^{\scriptscriptstyle{N}}:= \frac{c_{\scriptscriptstyle{N}}^{-1}}{2} (\beta \ot w_1^{\scriptscriptstyle{N}} + \alpha^* \ot w_2^{\scriptscriptstyle{N}})
\\ \nn
z_3 &\longmapsto&z_3^{\scriptscriptstyle{N}}:={c_{\scriptscriptstyle{N}}} \ot w_1^{\scriptscriptstyle{N}}
\\ \nn
z_ 4&\longmapsto&z_4^{\scriptscriptstyle{N}}:= {c_{\scriptscriptstyle{N}}}\ot w_2^{\scriptscriptstyle{N}}
\\
c_{\scriptscriptstyle{N}}^{\pm 1}  &\longmapsto&  c_{\scriptscriptstyle{N}}^{\pm 1}   \ot |w^N|^2 = c_{\scriptscriptstyle{N}}^{\pm 1}   \ot 1 \nn~,
\end{eqnarray}
and 
\begin{eqnarray}\label{zS}
\Psi_{\scriptscriptstyle{S}}: \ft(U_{\scriptscriptstyle{S}}) &\longrightarrow & \ft(U_{\scriptscriptstyle{S}})^{coH} \ot H
\\ \nn
z_ 1&\longmapsto&z_1^{\scriptscriptstyle{S}}:= {c_{\scriptscriptstyle{S}}} \ot w_1^{\scriptscriptstyle{S}}
\\ \nn
z_ 2&\longmapsto&z_2^{\scriptscriptstyle{S}}:= {c_{\scriptscriptstyle{S}}} \ot w_2^{\scriptscriptstyle{S}}
\\ \nn
z_ 3&\longmapsto&z_3^{\scriptscriptstyle{S}}:=\frac{c_{\scriptscriptstyle{S}}^{-1}}{2}  (  \alpha^* \ot w_1^{\scriptscriptstyle{S}} + \beta^* \ot w_2^{\scriptscriptstyle{S}})
\\ \nn
z_ 4&\longmapsto&z_4^{\scriptscriptstyle{S}}:= \frac{c_{\scriptscriptstyle{S}}^{-1}}{2} (- \beta \ot w_1^{\scriptscriptstyle{S}} + \alpha \ot w_2^{\scriptscriptstyle{S}})
\\
c_{\scriptscriptstyle{S}}^{\pm 1} &\longmapsto& c_{\scriptscriptstyle{S}}^{\pm 1}   \ot |w^{\scriptscriptstyle{S}}|^2 
= c_{\scriptscriptstyle{S}}^{\pm 1}   \ot 1\nn~.
\end{eqnarray}
Since
\begin{align}\label{loc-sect}
 z_1^{\scriptscriptstyle{N}} (z_3^{\scriptscriptstyle{N}})^* + (z_2^{\scriptscriptstyle{N}})^* z_4^{\scriptscriptstyle{N}}= \frac{1}{2}\alpha \ot 1 \quad , \qquad
z_2^{\scriptscriptstyle{N}} (z_3^{\scriptscriptstyle{N}})^* -
  (z_1^{\scriptscriptstyle{N}})^* z_4^{\scriptscriptstyle{N}}=
\frac{1}{2}\beta \ot 1  ~,\nn
\\[.3em]
z_1^{\scriptscriptstyle{N}} (z_1^{\scriptscriptstyle{N}})^* + z_2^{\scriptscriptstyle{N}} (z_2^{\scriptscriptstyle{N}})^* - z_3^{\scriptscriptstyle{N}} (z_3^{\scriptscriptstyle{N}})^* -z_4^{\scriptscriptstyle{N}} (z_4^{\scriptscriptstyle{N}})^* = (1-2 c_{\scriptscriptstyle{N}}^{2})=x \ot 1~,~~~~~~~~~
\end{align}
the elements $z_i^{\scriptscriptstyle{N}},
c_{\scriptscriptstyle{N}}^\pm$ generate
  $\ft(U_{\scriptscriptstyle{N}})^{coH} \ot H$.
Use of \eqref{cNcSx}  shows that 
$\Psi_{\scriptscriptstyle{N}}(\sum z_iz_i^*)=\sum
z_i^{\scriptscriptstyle{N}} (z_i^{\scriptscriptstyle{N}})^*=1$,
and
$\Psi_{\scriptscriptstyle{N}}(z_3z_3^*+z_4z_4^*)= 
z^{\scriptscriptstyle{N}}_3(z^{\scriptscriptstyle{N}}_3)^*+z^{\scriptscriptstyle{N}}_4(z_4^{\scriptscriptstyle{N}})^*
=c_{\scriptscriptstyle{N}}^{2}$ 
so that 
$\Psi_{\scriptscriptstyle{N}}$ is a well defined algebra map. It is a
one to one correspondence between generators and relations defining
$\ft(U_{\scriptscriptstyle{N}})$ and generators and relations defining
$\ft(U_{\scriptscriptstyle{N}})^{coH} \ot H$, hence it is a $*$-algebra isomorphism.
Identical expressions hold for the elements
$z_i^{\scriptscriptstyle{S}},c_{\scriptscriptstyle{S}}^{\pm 1}$, so that also
$\Psi_{\scriptscriptstyle{S}}$ is a $*$-algebra isomorphism.
It is evident from \eqref{coazSU2} that
$\Psi_{\scriptscriptstyle{N}}$ and $\Psi_{\scriptscriptstyle{S}}$ are
$H$-comodule maps and left $\ft(U_{\scriptscriptstyle{N}})^{coH}$,
respectively $\ft(U_{\scriptscriptstyle{S}})^{coH}$-module maps.
\end{proof}
Notice that as a corollary of this lemma  the sheaf $\ft$
and the sheaf $\sft$ of \S \ref{S4astwistedsheaf} are naturally isomorphic
 sheaves of  $H$-Hopf-Galois extensions on $S^4$.

\bigskip

\sk
\noindent {\it The $K$-comodule structure on $\ft$ and the locally cleft sheaf of $H$-Hopf-Galois extensions ${}_\sigma\ft$}\\
Recall from Example {\ref{exCL}} that $K=\mathcal{O} (\mathbb{T}^2) $ denotes the commutative $*$-Hopf algebra of coordinates on the torus $\mathbb{T}^2$.
For each open set 
$U$, the algebra $\ft(U)$ carries a $K$-comodule
structure where the left $K$-coaction is the $*$-algebra map defined on generators by (cf.\  \eqref{coazioneT-S7})
\be\label{coazT}
u \longmapsto \mathrm{diag}(t_1,t_1^*,t_2,t^*_2) \overset{.}{\otimes} u 
\quad, \quad  c_{\scriptscriptstyle{N}}^{\pm 1} \longmapsto 1 \ot c_{\scriptscriptstyle{N}}^{\pm 1} 
\quad, \quad  c_{\scriptscriptstyle{S}}^{\pm 1}  \longmapsto 1 \ot c_{\scriptscriptstyle{S}}^{\pm 1} 
\ee
(here  $\overset{.}{\otimes}$ denotes the composition of $\ot$ with the matrix multiplication).
 For each open
$U$ the $K$-coaction and the $H$-coaction  given in \eqref{coazSU2}
satisfy the compatibility condition \eqref{compatib} and 
thus $\ft(U)$ is a  $(K, H)$-bicomodule algebra.
Furthermore the restriction morphisms of $\ft$ are morphisms of
$(K,H)$-bicomodule algebras, so that  $\ft$ is a sheaf of $(K, H)$-bicomodule algebras.

We can now consider the 2-cocycle $\sigma$ in \eqref{cocycleT2} on $K$ and 
use it to deform the sheaf $\ft$. According to the general theory in
\S \ref{sec:sheaves},  for each open set $U \subseteq S^4$, 
 $\ft(U)$  is deformed into a $(K_\sigma,H)$-bicomodule algebra
 ${}_\sigma\ft(U)$ that is also an $H$-Hopf-Galois extension. The resulting
 sheaf of $H$-Hopf-Galois extensions ${}_\sigma\ft$ gives a
 sheaf-description of the Hopf bundle over $S^4_\theta$ addressed in
 Example \ref{exCL} because, since $\ft(S^4)\simeq\mathcal{O}(S^7)$ and 
$\ft(S^4)^{coH}\simeq\mathcal{O}(S^4)$ (cf.\ discussion after
(\ref{restrA}) and before (\ref{Bsette})), then
 ${}_\sigma\ft(S^4)\simeq\mathcal{O}(S^7_\theta)$ and  ${}_\sigma\ft(S^4)^{coH}\simeq\mathcal{O}(S^4_\theta)$.

\subsection{The spheres $S^4$ and $S^4_\theta$ as  ringed spaces}\label{appsheafB}

We show that the subsheaf of coinvariant elements $\sft^{coH}=\ft^{coH}$
introduced in \S  \ref{S4astwistedsheaf} (see also
(\ref{coiAcoH})) is that of the algebras (rings)
of coordinate functions on the opens $U_{\scriptscriptstyle{N}},
U_{\scriptscriptstyle{S}}, U_{\scriptscriptstyle{NS}}, S^4$, and correspondingly,
that the subsheaf of coinvariant elements ${}_\sigma \sft^{coH}$ arises
from the Moyal-Weyl algebra on $\mathbb{R}^4_\theta$.

\begin{lem}\label{lem:phiN}
Let $\mathscr{B}(U_{\scriptscriptstyle{N}})$ denote the commutative $*$-algebra 
generated by elements ${ x_1, x_2, x_1^*, x^*_2}$ together with $\rho_{\scriptscriptstyle{N}}^{\pm 1}$ satisfying
$\rho_{\scriptscriptstyle{N}}^{-2} (1+ x_1 x_1^*+x_2 x_2^*)=1$ and 
$\rho_{\scriptscriptstyle{N}}^{-1} \rho_{\scriptscriptstyle{N}}=1$. Let 
$\mathscr{B}(U_{\scriptscriptstyle{S}})$  denote
the commutative $*$-algebra generated by elements $ y_1, y_2, y_1^*, y^*_2$ together with  $\rho_{\scriptscriptstyle{S}}^{\pm1}$ satisfying $\rho_{\scriptscriptstyle{S}}^{-2} (1+y_1 y_1^*+y_2 y_2^*)=1$ and
$\rho_{\scriptscriptstyle{S}}^{-1}\rho_{\scriptscriptstyle{S}}=1$.
The maps
\be\label{piNS}
\begin{array}{rclrcl}
\phi_{\scriptscriptstyle{N}}: \mathscr{B}(U_{\scriptscriptstyle{N}}) &\longrightarrow&  
\sft(U_{\scriptscriptstyle{N}})^{coH}
\qquad 
&\phi_{\scriptscriptstyle{S}}: \mathscr{B}(U_{\scriptscriptstyle{S}}) &\longrightarrow&  \sft(U_{\scriptscriptstyle{S}})^{coH}
\\
 x_1 &\longmapsto&  \frac{1}{2}\alpha c_{\scriptscriptstyle{N}}^{-2} \; \quad \vspace{3pt}
& y_1 &\longmapsto&  \frac{1}{2} \alpha c_{\scriptscriptstyle{S}}^{-2} \; \quad 
\\
 x_2  &\longmapsto&  \frac{1}{2} \beta c_{\scriptscriptstyle{N}}^{-2} 
& y_2  &\longmapsto& \frac{1}{2} \beta c_{\scriptscriptstyle{S}}^{-2}
\\[.4em]
\rho_{\scriptscriptstyle{N}}^{\pm 1}&\longmapsto& c_{\scriptscriptstyle{N}}^{\mp 1} 
&\rho_{\scriptscriptstyle{S}}^{\pm 1} &\longmapsto& c_{\scriptscriptstyle{S}}^{\mp 1} 
\end{array}
\ee
(extended as $*$-algebra maps) define $*$-algebra isomorphisms.  
\end{lem}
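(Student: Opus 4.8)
The plan is to verify directly that each map $\phi_{\scriptscriptstyle{N}}$ and $\phi_{\scriptscriptstyle{S}}$ is a well-defined $*$-algebra homomorphism, and then exhibit an explicit inverse to conclude that it is an isomorphism. Since the argument is entirely symmetric under the exchange $N \leftrightarrow S$ (via $\alpha\mapsto\alpha^*$ or the antipodal identification $x\mapsto -x$, $c_{\scriptscriptstyle{N}}\leftrightarrow c_{\scriptscriptstyle{S}}$), I would carry out the computation only for $\phi_{\scriptscriptstyle{N}}$ and remark that $\phi_{\scriptscriptstyle{S}}$ follows \emph{mutatis mutandis}.

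First I would check that $\phi_{\scriptscriptstyle{N}}$ respects the defining relations of $\mathscr{B}(U_{\scriptscriptstyle{N}})$, so that it descends to a map on the quotient. The only nontrivial relation is $\rho_{\scriptscriptstyle{N}}^{-2}(1 + x_1 x_1^* + x_2 x_2^*)=1$. Under $\phi_{\scriptscriptstyle{N}}$ this becomes the claim that, in $\sft(U_{\scriptscriptstyle{N}})^{coH} = \ocn$,
\begin{equation*}
c_{\scriptscriptstyle{N}}^{2}\Big(1 + \tfrac14 \alpha\alpha^* c_{\scriptscriptstyle{N}}^{-4} + \tfrac14 \beta\beta^* c_{\scriptscriptstyle{N}}^{-4}\Big) = 1.
\end{equation*}
Using $c_{\scriptscriptstyle{N}}^{2}=\tfrac12(\1-x)$ together with the $S^4$ sphere relation $\alpha^*\alpha+\beta^*\beta+x^2=\1$ (equivalently $\tfrac14(\alpha\alpha^*+\beta\beta^*)=\tfrac14(1-x^2)=c_{\scriptscriptstyle{N}}^{2}c_{\scriptscriptstyle{S}}^{2}$), the left-hand side reduces to $c_{\scriptscriptstyle{N}}^{2} + c_{\scriptscriptstyle{N}}^{-2}c_{\scriptscriptstyle{N}}^{2}c_{\scriptscriptstyle{S}}^{2} = c_{\scriptscriptstyle{N}}^{2}+c_{\scriptscriptstyle{S}}^{2}=\tfrac12(\1-x)+\tfrac12(\1+x)=\1$, as required. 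Compatibility with the $*$-structure is immediate since $\phi_{\scriptscriptstyle{N}}$ sends $x_i^*$ to the $*$-conjugate of $\phi_{\scriptscriptstyle{N}}(x_i)$ and $\rho_{\scriptscriptstyle{N}}$ is self-adjoint in the sense $c_{\scriptscriptstyle{N}}^*=c_{\scriptscriptstyle{N}}$.

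Next I would construct the inverse. The target $\ocn$ is generated by $\alpha,\beta,x,c_{\scriptscriptstyle{N}}^{\pm1}$ with $x$ redundant (since $c_{\scriptscriptstyle{N}}^{2}=\tfrac12(\1-x)$ gives $x=\1-2c_{\scriptscriptstyle{N}}^{2}$), so effectively by $\alpha,\beta,c_{\scriptscriptstyle{N}}^{\pm1}$. The natural candidate for the inverse sends $\alpha\mapsto 2\rho_{\scriptscriptstyle{N}}^{-2} x_1$, $\beta\mapsto 2\rho_{\scriptscriptstyle{N}}^{-2} x_2$, $c_{\scriptscriptstyle{N}}^{\pm1}\mapsto \rho_{\scriptscriptstyle{N}}^{\mp1}$. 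I would check this is well defined (the image of the generator $\alpha^*\alpha+\beta^*\beta+x^2-\1$ must vanish, which is again the relation just verified, read backwards) and that composing with $\phi_{\scriptscriptstyle{N}}$ in both orders gives the identity on generators. The latter is a short bookkeeping check: e.g.\ $\phi_{\scriptscriptstyle{N}}(2\rho_{\scriptscriptstyle{N}}^{-2}x_1)=2c_{\scriptscriptstyle{N}}^{2}\cdot\tfrac12\alpha c_{\scriptscriptstyle{N}}^{-2}=\alpha$.

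The main obstacle—really the only subtle point—is ensuring that the generator counts and relations match up exactly, i.e.\ that the map is a bijection on the level of generators-and-relations presentations rather than merely a surjection. Concretely, one must confirm that no additional relations are hidden: that the defining relation of $\mathscr{B}(U_{\scriptscriptstyle{N}})$ is \emph{exactly} pulled back to the sphere relation of $\ocn$ (not a proper consequence of it), and that invertibility of $\rho_{\scriptscriptstyle{N}}$ correctly corresponds to invertibility of $c_{\scriptscriptstyle{N}}$. Once the relation $\tfrac14(\alpha\alpha^*+\beta\beta^*)=c_{\scriptscriptstyle{N}}^{2}c_{\scriptscriptstyle{S}}^{2}$ is identified as the faithful translation of $\rho_{\scriptscriptstyle{N}}^{-2}(1+x_1x_1^*+x_2x_2^*)=\1$, the isomorphism follows, and the identical argument with $\alpha,\beta,c_{\scriptscriptstyle{N}}$ replaced by their southern counterparts $\alpha^*,\beta^*,c_{\scriptscriptstyle{S}}$ disposes of $\phi_{\scriptscriptstyle{S}}$.
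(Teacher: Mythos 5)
Your proposal is correct and follows essentially the same route as the paper: verify that $\phi_{\scriptscriptstyle{N}}$ preserves the single defining relation $\rho_{\scriptscriptstyle{N}}^{-2}(1+x_1x_1^*+x_2x_2^*)=1$ using $c_{\scriptscriptstyle{N}}^2=\tfrac12(1-x)$ and the sphere relation (your identity $\tfrac14(\alpha\alpha^*+\beta\beta^*)=c_{\scriptscriptstyle{N}}^2c_{\scriptscriptstyle{S}}^2$ is exactly the computation the paper calls ``easy''), exhibit the same explicit inverse $\alpha\mapsto 2x_1\rho_{\scriptscriptstyle{N}}^{-2}$, $\beta\mapsto 2x_2\rho_{\scriptscriptstyle{N}}^{-2}$, $c_{\scriptscriptstyle{N}}^{\mp1}\mapsto\rho_{\scriptscriptstyle{N}}^{\pm1}$, and dispose of $\phi_{\scriptscriptstyle{S}}$ by the analogous computation. (Only a cosmetic slip: the southern map sends $y_1\mapsto\tfrac12\alpha c_{\scriptscriptstyle{S}}^{-2}$, not $\tfrac12\alpha^* c_{\scriptscriptstyle{S}}^{-2}$, but this does not affect the argument.)
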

\begin{proof}
The inverse maps are given by
\begin{eqnarray*}
\phi_{\scriptscriptstyle{N}}^{-1}: \quad
c_{\scriptscriptstyle{N}}^{\mp1} \longmapsto \rho_{\scriptscriptstyle{N}}^{\pm1}
\; ; \quad
\alpha \longmapsto 2 x_1 \rho_{\scriptscriptstyle{N}}^{-2} 
\; ; \quad 
\beta \longmapsto 2 x_2 \rho_{\scriptscriptstyle{N}}^{-2} 
\; ; \quad
\\
\phi_{\scriptscriptstyle{S}}^{-1}: \quad
c_{\scriptscriptstyle{S}}^{\mp1} \longmapsto \rho_{\scriptscriptstyle{S}}^{\pm1}
\; ; \quad
\alpha \longmapsto 2 y_1 \rho_{\scriptscriptstyle{S}}^{-2} 
~ ; \quad
\beta \longmapsto 2 y_2 \rho_{\scriptscriptstyle{S}}^{-2} 
~ . \quad
\end{eqnarray*}
By using  $(1-x)=2c_{\scriptscriptstyle{N}}^{2} $, valid in $\sft(U_{\scriptscriptstyle{N}})^{coH}$, it is  easy to show that the map $\phi_{\scriptscriptstyle{N}}$ is an algebra map, i.e.\
preserves the identity $\rho_{\scriptscriptstyle{N}}^{-2} (1+ x_1 x_1^*+x_2 x_2^*)=1$.
An analogous computation,  using again \eqref{cNcSx},   shows that $\phi_{\scriptscriptstyle{S}}$ is an algebra map.
\end{proof}
 
Because of this lemma the  algebras  $\sft(U_{\scriptscriptstyle{N}})^{coH}$  and 
$\sft(U_{\scriptscriptstyle{S}})^{coH}$ are interpreted as two (isomorphic) copies of the algebra of coordinate functions  on $\IR^4$. (Adding to the algebra generated by 
$ x_1, x_2, x_1^*, x^*_2$ the 
 generators 
$\rho_{\scriptscriptstyle{N}}^{\pm1}$ is geometrically ineffective,
similarly for 
$\rho_{\scriptscriptstyle{S}}^{\pm1}$).
Specifically, they  describe the algebras of coordinate functions on the open sets 
$U_{\scriptscriptstyle{N}},U_{\scriptscriptstyle{S}}$, 
obtained via stereographic projections
from the North and South poles of the $4$-sphere.\footnote{
Using the same notation for the coordinate functions and the point coordinates,
a point  $(\alpha, \alpha^*,\beta, \beta^*, x) \in  S^4$ maps via the stereographic projection 
from the North pole to the point  $(x_1,x_1^*,x_2,x_2^*) \in \mathbb{R}^4$ with 
coordinates $x_1= \alpha (1-x)^{-1}$, $x_2= \beta (1-x)^{-1}$.
While, via stereographic projection from the South pole, it projects to the point 
with coordinates  $y_1= \alpha (1+x)^{-1}$,  $y_2= \beta (1+x)^{-1}$.
The coordinate function $\rho_{\scriptscriptstyle{N}}^{-1}$  in $\mathscr{B}(U_{\scriptscriptstyle{N}})$, as $\rho_{\scriptscriptstyle{S}}^{-1}$ in $\mathscr{B}(U_{\scriptscriptstyle{S}})$,
has no geometrical significance since $1+ x_1 x_1^*+x_2 x_2^*$
has always a well defined and invertible square root (being $1+ x_1 x_1^*+x_2 x_2^*\geq 1$). Conversely,
 from $\alpha \alpha^* + \beta \beta^* +x^2=1$, 
 it follows that $r^2=x_1 x_1^*+x_2 x_2^*=(1+x)(1-x)^{-1} $ is defined and is non-zero 
when the point $(\alpha, \alpha^*,\beta, \beta^*, x)$ we started from 
belongs to $S^4 \backslash \{N, S\}$. 
The algebra extension of  $\mathscr{B}(U_{\scriptscriptstyle{N}})$  by $r^{-2}$ considered in Lemma \ref{lem:int} geometrically 
corresponds indeed to the restriction
to the points in the intersection of the charts
$U_{\scriptscriptstyle{N}}$ and $U_{\scriptscriptstyle{S}}$  of
$S^4$. 
} 

Similarly, the following lemma shows that the algebra  $\sft(U_{\scriptscriptstyle{NS}})^{coH}$
is that of coordinate functions on 
$\mathbb{R}^4$ minus the origin.
\begin{lem}\label{lem:int} 
We denote by $\mathscr{B}(U_{\scriptscriptstyle{NS}})$ 
the algebra extension  of
$\mathscr{B}(U_{\scriptscriptstyle{N}})$ by  central real elements $r^{\pm 1}$, satisfying $r r^{-1}=1$, $r^2:=x_1 x_1^*+x_2 x_2^*$.
The map $\phi_{\scriptscriptstyle{N}}$ in \eqref{piNS}
extends to an algebra isomorphism
$\phi_{\scriptscriptstyle{NS}}: \mathscr{B}(U_{\scriptscriptstyle{NS}}) \stackrel{\simeq}{\ra}  
\sft(U_{\scriptscriptstyle{NS}})^{coH}
$ by setting
$$ r^{-1} \longmapsto   c_{\scriptscriptstyle{N}}  c_{\scriptscriptstyle{S}}^{-1} 
\quad , \quad 
r \longmapsto   c_{\scriptscriptstyle{S}}  c_{\scriptscriptstyle{N}}^{-1} 
~.$$
\end{lem}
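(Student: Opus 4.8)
The plan is to realise $\phi_{\scriptscriptstyle{NS}}$ as the unique $*$-algebra extension of the isomorphism $\phi_{\scriptscriptstyle{N}}$ of Lemma \ref{lem:phiN} which sends the adjoined generators $r^{\pm 1}$ to $(c_{\scriptscriptstyle{S}}c_{\scriptscriptstyle{N}}^{-1})^{\pm 1}$, and then to exhibit an explicit two-sided inverse. Since $\mathscr{B}(U_{\scriptscriptstyle{NS}})$ is by definition generated over $\mathscr{B}(U_{\scriptscriptstyle{N}})$ by the central real elements $r^{\pm 1}$ subject only to $rr^{-1}=1$ and $r^2 = x_1 x_1^* + x_2 x_2^*$, the map $\phi_{\scriptscriptstyle{NS}}$ is well defined as soon as the proposed images of $r^{\pm 1}$ satisfy the images of these relations. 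Invertibility, $(c_{\scriptscriptstyle{S}}c_{\scriptscriptstyle{N}}^{-1})(c_{\scriptscriptstyle{N}}c_{\scriptscriptstyle{S}}^{-1})=1$, and reality (both $c_{\scriptscriptstyle{N}},c_{\scriptscriptstyle{S}}$ being self-adjoint) are immediate, so the only substantive point is the square-root relation.

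First I would compute $\phi_{\scriptscriptstyle{N}}(x_1 x_1^* + x_2 x_2^*)$. Using $\phi_{\scriptscriptstyle{N}}(x_1)=\tfrac12\alpha c_{\scriptscriptstyle{N}}^{-2}$, $\phi_{\scriptscriptstyle{N}}(x_2)=\tfrac12\beta c_{\scriptscriptstyle{N}}^{-2}$ and the self-adjointness of $c_{\scriptscriptstyle{N}}$, commutativity gives $\phi_{\scriptscriptstyle{N}}(x_1 x_1^* + x_2 x_2^*)=\tfrac14(\alpha\alpha^*+\beta\beta^*)c_{\scriptscriptstyle{N}}^{-4}$. The sphere relation $\alpha^*\alpha+\beta^*\beta+x^2=1$ (cf.\ \eqref{4sphere-coinv2}) rewrites $\alpha\alpha^*+\beta\beta^*=1-x^2=(1-x)(1+x)$, and the identities $2c_{\scriptscriptstyle{N}}^2=1-x$, $2c_{\scriptscriptstyle{S}}^2=1+x$ of \eqref{cNcSx} turn this into $4c_{\scriptscriptstyle{N}}^2 c_{\scriptscriptstyle{S}}^2$. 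Hence $\phi_{\scriptscriptstyle{N}}(x_1 x_1^*+x_2 x_2^*)=c_{\scriptscriptstyle{S}}^2 c_{\scriptscriptstyle{N}}^{-2}=(c_{\scriptscriptstyle{S}}c_{\scriptscriptstyle{N}}^{-1})^2=\phi_{\scriptscriptstyle{NS}}(r)^2$, so the defining relation is respected and $\phi_{\scriptscriptstyle{NS}}$ is a well-defined $*$-algebra morphism extending $\phi_{\scriptscriptstyle{N}}$.

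Next I would write down the candidate inverse, extending $\phi_{\scriptscriptstyle{N}}^{-1}$ of Lemma \ref{lem:phiN} by $c_{\scriptscriptstyle{S}}\mapsto r\rho_{\scriptscriptstyle{N}}^{-1}$ and $c_{\scriptscriptstyle{S}}^{-1}\mapsto r^{-1}\rho_{\scriptscriptstyle{N}}$ (recall $c_{\scriptscriptstyle{N}}^{\mp1}\mapsto\rho_{\scriptscriptstyle{N}}^{\pm1}$, $\alpha\mapsto 2x_1\rho_{\scriptscriptstyle{N}}^{-2}$, $\beta\mapsto 2x_2\rho_{\scriptscriptstyle{N}}^{-2}$, and $x\mapsto 1-2\rho_{\scriptscriptstyle{N}}^{-2}$). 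The only relation to verify beyond those already handled in Lemma \ref{lem:phiN} is $c_{\scriptscriptstyle{S}}^2=\tfrac12(1+x)$: its image under the candidate inverse is $r^2\rho_{\scriptscriptstyle{N}}^{-2}=(x_1 x_1^*+x_2 x_2^*)\rho_{\scriptscriptstyle{N}}^{-2}$, and the defining relation $\rho_{\scriptscriptstyle{N}}^{-2}(1+x_1 x_1^*+x_2 x_2^*)=1$ of $\mathscr{B}(U_{\scriptscriptstyle{N}})$ gives $(x_1 x_1^*+x_2 x_2^*)\rho_{\scriptscriptstyle{N}}^{-2}=1-\rho_{\scriptscriptstyle{N}}^{-2}$, which matches the image $1-\rho_{\scriptscriptstyle{N}}^{-2}$ of $\tfrac12(1+x)$. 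Checking the two composites on generators is then immediate: the two maps invert each other on $r^{\pm1}$ and on $c_{\scriptscriptstyle{S}}^{\pm1}$ (e.g.\ $\phi_{\scriptscriptstyle{NS}}(r\rho_{\scriptscriptstyle{N}}^{-1})=c_{\scriptscriptstyle{S}}c_{\scriptscriptstyle{N}}^{-1}c_{\scriptscriptstyle{N}}=c_{\scriptscriptstyle{S}}$), and on the remaining generators they reduce to the mutually inverse pair $\phi_{\scriptscriptstyle{N}},\phi_{\scriptscriptstyle{N}}^{-1}$.

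I do not expect a genuine obstacle: the entire content is the algebraic identity $\alpha\alpha^*+\beta\beta^*=4c_{\scriptscriptstyle{N}}^2 c_{\scriptscriptstyle{S}}^2$ together with the localisation identity $(x_1 x_1^*+x_2 x_2^*)\rho_{\scriptscriptstyle{N}}^{-2}=1-\rho_{\scriptscriptstyle{N}}^{-2}$. The only mildly delicate point is the bookkeeping of invertible generators, namely recognising that adjoining an invertible square root $r$ of $x_1 x_1^*+x_2 x_2^*$ on the source corresponds, under $\phi_{\scriptscriptstyle{N}}$, to adjoining the invertible $c_{\scriptscriptstyle{S}}$ on the target (since $\phi_{\scriptscriptstyle{N}}$ maps $x_1 x_1^*+x_2 x_2^*$ to $c_{\scriptscriptstyle{S}}^2 c_{\scriptscriptstyle{N}}^{-2}$ and $c_{\scriptscriptstyle{N}}^{-1}$ is already an invertible square root of $c_{\scriptscriptstyle{N}}^{-2}$), whence the assignment $r\mapsto c_{\scriptscriptstyle{S}}c_{\scriptscriptstyle{N}}^{-1}$ is forced and yields the desired isomorphism.
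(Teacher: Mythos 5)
Your proof is correct and follows essentially the same route as the paper: both verify that $\phi_{\scriptscriptstyle{N}}$ sends $x_1x_1^*+x_2x_2^*$ to $c_{\scriptscriptstyle{S}}^2c_{\scriptscriptstyle{N}}^{-2}$ via the sphere relation and the identities $2c_{\scriptscriptstyle{N}}^2=1-x$, $2c_{\scriptscriptstyle{S}}^2=1+x$, and both define the inverse by $c_{\scriptscriptstyle{S}}^{\mp 1}\mapsto \rho_{\scriptscriptstyle{N}}^{\pm 1}r^{\mp 1}$. Your write-up merely spells out a few routine verifications (well-definedness of the inverse, composites on generators) that the paper leaves implicit.
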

\begin{proof}
Observe that 
$\phi_{\scriptscriptstyle{N}}(r^2)=\frac{1}{2} (1 + x) c_{\scriptscriptstyle{N}}^{-2}$.  Then,
since
 $2 c_{\scriptscriptstyle{S}}^{2}=  (1+x)$ (see \eqref{cNcSx}),  we immediately conclude that  
$\phi_{\scriptscriptstyle{N}}(r^2)= \phi_{\scriptscriptstyle{NS}}(r^2)$.
For the inverse map set $\phi_{\scriptscriptstyle{NS}}^{-1}: ~
c_{\scriptscriptstyle{S}}^{\mp1} \mapsto \rho_{\scriptscriptstyle{N}}^{\pm1} r^{\mp1}$. 
\end{proof}
\medskip

The restriction maps characterizing the subsheaf of coinvariants $\mathscr{B}\simeq \sft^{coH}$ 
are  the composition of the canonical inclusions $i_{\scriptscriptstyle{N}}: \sft(U_{\scriptscriptstyle{N}})^{coH} \hookrightarrow  \sft(U_{\scriptscriptstyle{NS}})^{coH} $ and $
i_{\scriptscriptstyle{S}}: \sft(U_{\scriptscriptstyle{S}})^{coH}
\hookrightarrow \sft(U_{\scriptscriptstyle{NS}})^{coH} $ with the
 isomorphisms $\phi_{\scriptscriptstyle{N}}$,
 $\phi_{\scriptscriptstyle{S}}$ and $\phi_{\scriptscriptstyle{NS}}^{-1}$. 
These restriction maps are $*$-algebra homomorphisms and their
explicit expression  on the
generators reads
\begin{flalign}\label{restr}
r^{\mathscr{B}}_{\scriptscriptstyle{N,NS}}: \mathscr{B}(U_{\scriptscriptstyle{N}}) &\longrightarrow \mathscr{B}(U_{\scriptscriptstyle{NS}}) \, , \quad x_i \longmapsto x_i ~,\quad \rho_{\scriptscriptstyle{N}}^{\pm 1}\longmapsto  \rho_{\scriptscriptstyle{N}}^{\pm 1} ~,\\
\nn r^{\mathscr{B}}_{\scriptscriptstyle{S,NS}}: \mathscr{B}(U_{\scriptscriptstyle{S}}) &\longrightarrow \mathscr{B}(U_{\scriptscriptstyle{NS}}) \, , \quad y_i \longmapsto x_i r^{-2}~,
\quad
\rho_{\scriptscriptstyle{S}}^{\pm1} \longmapsto \rho_{\scriptscriptstyle{N}}^{\pm1} r^{\mp1}~.
\end{flalign}
The algebra of global sections $\mathscr{B}(S^4)$ is the pull-back 
\begin{equation}\label{restrB}
\mathscr{B}(S^4) =
\big\{(b_{\scriptscriptstyle{N}},b_{\scriptscriptstyle{S}}) \in \mathscr{B}(U_{\scriptscriptstyle{N}}) \times \mathscr{B}(U_{\scriptscriptstyle{S}}) ~|~  r^{\mathscr{B}}_{\scriptscriptstyle{N,NS}}(b_{\scriptscriptstyle{N}})=
r^{\mathscr{B}}_{\scriptscriptstyle{S,NS}}(b_{\scriptscriptstyle{S}})
\big \}~.
\end{equation}
The algebra $\mathscr{B}(S^4)$ is generated by the elements $(\rho_{\scriptscriptstyle{N}}^{-2} x_i, \rho_{\scriptscriptstyle{S}}^{-2} y_i)$, $(\rho_{\scriptscriptstyle{N}}^{-2} x_i^*, \rho_{\scriptscriptstyle{S}}^{-2} y_i^*)$, $i=1,2$ and
$(1-2 \rho_{\scriptscriptstyle{N}}^{-2}, 2 \rho_{\scriptscriptstyle{S}}^{-2} -1)$ and is a copy of the
coordinate algebra $\mathcal{O}(S^4)=\mathcal{O}(S^7)^{coH}.$ 
\sk

Using the isomorphisms of Lemma \ref{lem:phiN} and Lemma
\ref{lem:int}  it is immediate to induce from $\sft(U)^{coH}$ the
$K={\mathcal{O}}(\mathbb{T}^2)$-comodule structure on
$\mathscr{B}(U)$, ($U=U_{\scriptscriptstyle{N}},
U_{\scriptscriptstyle{S}}, U_{\scriptscriptstyle{NS}}$)
 and to see that the
restriction maps (\ref{restr}) are $K$-comodule maps.  Considering the twist
(\ref{cocycleT2}) on $K$ we then obtain
the noncommutative algebra
${}_\sigma\mathscr{B}(U_{\scriptscriptstyle{N}})$ that is the
(geometrically trivial
central extension via the real elements $\rho_{\scriptscriptstyle{N}}^\pm$
of the) coordinate algebra on $\mathbb{R}^4_\theta$; i.e.\ the
(exponential version of the Moyal-Weyl) algebra defined by the
commutation relations $x_1 {\mtcols}\ x_2=e^{-2\pi i
  \theta}x_2{\mtcols}\ x_1$. Similarly for
${}_\sigma\mathscr{B}(U_{\scriptscriptstyle{S}})$, and for
${}_\sigma\mathscr{B}(U_{\scriptscriptstyle{NS}})$ that is the
geometrically nontrivial central extension of
${}_\sigma\mathscr{B}(U_{\scriptscriptstyle{N}})$ via the real elements $r^{\pm1}$. 
These algebras and the restriction maps
$\Sigma(r^{\mathscr{B}}_{\scriptscriptstyle{N,NS}})=r^{\mathscr{B}}_{\scriptscriptstyle{N,NS}}$,
$\Sigma(r^{\mathscr{B}}_{\scriptscriptstyle{S,NS}})=r^{\mathscr{B}}_{\scriptscriptstyle{S,NS}}$,
define the sheaf ${}_\sigma\mathscr{B} $ of noncommutative coordinates
algebras over $S^4$, i.e.\ define $S^4_\theta$ as a ringed space.

\end{document}